\newcommand{\mr}[1]{\mathrm{#1}}
\newcommand{\mf}[1]{\mathfrak{#1}}
\newcommand{\mc}[1]{\mathcal{#1}}
\newcommand{\mb}[1]{\mathbb{#1}}
\newcommand{\Z}{\mb{Z}}
\newcommand{\Q}{\mb{Q}}
\newcommand{\zp}{\mb{Z}_p}
\newcommand{\qp}{\mb{Q}_p}
\newcommand{\C}{\mb{C}}
\newcommand{\F}{\mb{F}}
\newcommand{\qbar}{\overline{\Q}}
\newcommand{\otil}[1]{\tilde{#1}}
\DeclareMathAccent{\wtilde}{\mathord}{largesymbols}{"65}
\newcommand{\qpbar}{\overline{\Q_p\!\!}\,\,}
\newcommand{\qpbt}{\overline{\Q_p\!\!}{\!\!\!\!\!\!\!\phantom{\bar{\Q}}}^{\times}}
\newcommand{\id}{\mr{id}}
\newcommand{\Eis}{I}
\newcommand{\cts}{\mr{cts}}
\newcommand{\et}{\text{\'et}}
\newcommand{\ur}{\mr{ur}}
\newcommand{\Fr}{\mr{Fr}}
\newcommand{\La}{\Lambda}
\newcommand{\Lai}{\La^{\iota}}
\newcommand{\Ga}{\Gamma}
\newcommand{\ord}{\mr{ord}}
\newcommand{\Iw}{\mr{Iw}}
\newcommand{\sub}{\mr{sub}}
\newcommand{\quo}{\mr{quo}}
\newcommand{\cusps}{\{\mr{cusps}\}}
\newcommand{\mcS}{\mc{S}}
\newcommand{\tGa}{\widetilde{\Gamma}}
\newcommand{\w}{w	}
\newcommand{\mcT}{\mc{T}}
\newcommand{\tmcT}{\tilde{\mc{T}}}
\newcommand{\mfS}{\mf{S}}
\newcommand{\mfC}{\mf{C}}
\newcommand{\invlim}[1]{\varprojlim_{#1} \,}
\newcommand{\ps}[1]{\llbracket #1 \rrbracket}
\newcommand{\cotimes}[1]{\,\hat{\otimes}_{#1} \,}
\newcommand{\cozp}{\cotimes{\zp}}
\DeclareMathOperator{\Hom}{Hom} 
\DeclareMathOperator{\End}{End} \DeclareMathOperator{\Gal}{Gal}
 \DeclareMathOperator{\Cone}{Cone}
\DeclareMathOperator{\Cole}{Col}
\DeclareMathOperator{\cor}{cor}
\DeclareMathOperator{\Inf}{Inf}
\DeclareMathOperator{\inv}{inv}
\DeclareMathOperator{\ev}{ev}
\DeclareMathOperator{\Spec}{Spec}
\DeclareMathOperator{\loc}{loc}
\newtheorem{theorem}{Theorem}[section]
\newtheorem{proposition}[theorem]{Proposition}
\newtheorem{lemma}[theorem]{Lemma}
\newtheorem{corollary}[theorem]{Corollary}
\newtheorem*{thm}{Theorem}
\newtheorem{hypothesis}[theorem]{Hypothesis}
\newtheorem{conjecture}[theorem]{Conjecture}
\theoremstyle{definition}
\newtheorem{definition}[theorem]{Definition}
\newtheorem{notation}[theorem]{Notation}
\theoremstyle{remark}
\newtheorem{remark}[theorem]{Remark}
\newtheorem{question}[theorem]{Question}
\newtheorem*{ack}{Acknowledgments}
\renewcommand{\baselinestretch}{1.2}
\numberwithin{equation}{section}
\newcommand\extrafootertext[1]{%
    \bgroup
    \renewcommand\thefootnote{\fnsymbol{footnote}}%
    \renewcommand\thempfootnote{\fnsymbol{mpfootnote}}%
    \footnotetext[0]{#1}%
    \egroup
}
\begin{document}

\title{An extension of the Fukaya-Kato method\extrafootertext{\emph{Address}: Department of Mathematics, University of California, Los Angeles, 520 Portola Plaza, Los Angeles, CA 90095. \emph{Email}: sharifi@math.ucla.edu}
}
\author{Romyar T. Sharifi}
\date{}
\maketitle

\begin{abstract}
In the groundbreaking paper \cite{fk-pf}, T.~Fukaya and K.~Kato proved a slight weakening of a conjecture of the author's \cite{me-Lfn} under an assumption that a Kubota-Leopoldt $p$-adic $L$-function has no multiple zeros.  This article describes a refinement of their method that sheds light on the role of the $p$-adic $L$-function.
\end{abstract}

\section{Introduction}

\subsection{Overview}

For a positive integer $M$, we explore the conjectural relationship between 
\begin{itemize}
	\item modular symbols in the quotient $P$ of the real part of a first homology group of a modular curve of level $M$ by the action
	of an Eisenstein ideal $I$, and 
	\item cup products of cyclotomic units in a second Galois cohomology group $Y$ of the cyclotomic field $\Q(\mu_M)$ with 
	ramification restricted to $M$.
\end{itemize}	 
More specifically, we consider the likewise-denoted maximal quotients of $p$-parts of the inverse limits $P$ and $Y$ of the above groups
in towers of levels $M = Np^r$ on which $(\Z/Np\Z)^{\times}$ acts through a given even character $\theta$ via diamond operators and Galois elements, respectively. For the precise conditions on $p$, $N$, and $\theta$, see \ref{pNcond} and \ref{thetahyp}, or the next subsection.

In \cite{me-Lfn}, we constructed two maps $\varpi \colon P \to Y$ and $\Upsilon \colon Y \to P$ and conjectured them to be inverse to each other, up to a canonical unit suspected to be $1$ (see Conjecture \ref{sconj}).  The map $\varpi$ was defined explicitly to take a modular symbol to a sum of cup products of cyclotomic $Np$-units, while $\Upsilon$ was defined through the Galois action on the homology of a modular curve, or a tower thereof, in the spirit of the Mazur-Wiles method of proof of the main conjecture.  By the main conjecture, both the homology group $P$ and the Galois cohomology group $Y$ are annihilated by a power series $\xi$ in the Iwasawa algebra corresponding to a $p$-adic $L$-function.  This power series $\xi$ is (roughly) both a generator of the characteristic ideal of $Y$ and the constant term of an ordinary family of Eisenstein series determined by $\theta$.

In a 2012 preprint (now published \cite{fk-pf}), Fukaya and Kato proved the key identity that 
\begin{equation} \label{keyident}
	\xi' \Upsilon \circ \varpi = \xi'
\end{equation}
modulo $p$-power torsion in $P$, where $\xi'$ is the derivative of $\xi$ with
respect to the variable of the $p$-adic $L$-function. In Theorem \ref{FKident}, we show that this identity holds in $P$ itself, employing joint work from \cite{fks2}. At least up to torsion in $P$, the conjecture follows if $\xi'$ happens to be relatively prime to $\xi$ in the relevant Iwasawa algebra.

Considerable progress has been made in the study of $\Upsilon$ by Wake and Wang-Erickson \cite{wwe} and Ohta \cite{ohta-mu}, by different methods. In cases that $\Upsilon$ is known to be an isomorphism and $Y$ is pseudo-cyclic, the identity of Fukaya and Kato implies the original conjecture, i.e., up to unit. This pseudo-cyclicity was related to the question of localizations of Hecke algebras being Gorenstein in the work of Wake and Wang-Erickson, as well as to the question of $\Upsilon$ being a pseudo-isomorphism. Ohta shows that $\Upsilon$ is in fact an isomorphism under an assumption on the relevant Dirichlet character that holds in the case of trivial tame level, supposing a certain nonvanishing of $L$-values modulo $p$. We note that this implies in particular that $P$ has no torsion in such eigenspaces, as $Y$ does not.

The pseudo-cyclicity of $Y$ is expected to hold as a consequence of a well-known and widely believed conjecture of Greenberg's on the finiteness of the plus part of the unramified Iwasawa module. Moreover, since the $p$-adic $L$-functions in question are unlikely to ever have multiple zeros, one would expect the unit in our conjecture to always be $1$, as in its stronger form. Nevertheless, this might appear to reduce the conjecture to chance, which is less than desirable. This motivates us to attempt a finer study.

Our primary aim in this paper is to study the role of $\xi'$ in the work of Fukaya-Kato and ask whether it is possible to remove it in the method. For this, we will need to reconcile the distinct natures of the two occurrences of $\xi'$ in the Fukaya-Kato identity \eqref{keyident}. On the left-hand side, $\xi' \colon Y \to Y$ is identified with a Bockstein-like connecting map between two Galois cohomology groups, both isomorphic to $Y$. 
On the right-hand side, $\xi' \colon P \to P$ arises as an Eisenstein reduction of a composition of a zeta map carrying modular symbols to sums of Beilinson-Kato elements and a Coleman map that serves as a $p$-adic regulator.
The zeta map is the tie that binds the two sides of \eqref{keyident} together, for when its negative is composed with 
pullback by the cusp $\infty$, one obtains the map $\varpi \colon P \to Y$. 

As we shall see, it would be possible to remove $\xi'$ from both sides of \eqref{keyident} but for a global obstruction
related to the existence of a zeta map at an intermediate level.  
Shapiro's lemma allows one to view an Iwasawa cohomology group over the cyclotomic $\zp$-extension $\Q_{\infty}$ of $\Q$, which is to say an inverse limit of cohomology groups up the cyclotomic tower under corestriction, as a Galois cohomology group over $\Q$ with coefficients in an induced module.  The latter module is the completed tensor product of the original coefficients with a free cyclic module $\La^{\iota}$ over the Iwasawa algebra $\La = \zp\ps{X}$ on which Galois elements act as inverses of group elements.

Our key innovation is the consideration of the cohomology of a quotient of these induced coefficients by an arithmetically relevant two-variable power series divisible by the first variable $X$. In particular, the cohomology of this \emph{intermediate quotient} (see Definition \ref{interquot}) is not the cohomology of any intermediate extension, yet it lies between cohomology over $\Q$ and Iwasawa cohomology over $\Q_{\infty}$. 
In Theorem \ref{loccohdag}, we give a surprisingly clean and quite general construction of \emph{intermediate Coleman maps} on the local-at-$p$ first cohomology groups of intermediate quotients for Tate twists of unramified $\zp\ps{G_{\qp}}$-modules. We hope these will find application beyond this work.

We show that the global obstruction to removing $\xi'$ would vanish under a divisibility of Beilinson-Kato elements by one minus the $p$th Hecke operator at an intermediate stage between Iwasawa cohomology and cohomology at the ground level: see Question \ref{zdagger}. This ``intermediate global divisibility'' can be rephrased as the existence of a certain \emph{intermediate zeta map}. 
The main result of this paper, Theorem \ref{equivform}, states that the global obstruction to our conjecture is equivalent to the weaker statement of existence of what would be a reduction of this map modulo the Eisenstein ideal. This reduced map
is required to be compatible with the reduced zeta map at the ground level of $\Q$ and the reduction of a local-at-$p$ intermediate zeta map that we construct.

Of course, this leaves us with the question of whether these global intermediate zeta maps are likely to exist. As such, we perform a feasibility check for an analogue of the conditions of Theorem \ref{equivform} in a simpler setting, with cyclotomic units in place of Beilinson-Kato elements. That is, in Section \ref{testcase}, we explore the analogues of global obstruction and divisibility for cohomology with coefficients in a Tate module, rather than the \'etale homology of a tower of modular curves. We show that the global obstruction in the cohomology of the intermediate quotient does in fact vanish in this setting, while verifying intermediate global divisibility only under an assumption of vanishing of a $p$-part of a class
group of a totally real abelian field. This is in line with our suspicions that intermediate global divisibility may be too much to hope for in general, while still lending some credence to the conjecture that $\Upsilon$ and $\varpi$ are indeed inverse maps, and not just by chance.

\subsection{Detailed summary}

As a guide to a thorough reading of this rather involved work, we review the method of Fukaya and Kato (see \cite[Section 2]{fks} for
another review) in order to describe our refinement and the constructions it involves.

\subsubsection{Lattices and their reductions}

Let $N \ge 1$, and fix a prime $p \ge 5$ such that $p \nmid N\varphi(N)$.
Let $\theta$ be an even $p$-adic character of $(\Z/Np\Z)^{\times}$ of conductor divisible by $N$ 
such that $p \mid B_{2,\theta^{-1}}$ and such that $\theta$ does not induce the mod $p$ cyclotomic character on
decomposition at $p$.
The $\theta$-isotypical component $\mf{h}_{\theta}$ of the Eisenstein localization of Hida's ordinary Hecke algebra
 is a module over a Iwasawa algebra $\La_{\theta}$ of inverse diamond operators, with coefficient ring the $\zp$-algebra generated by 
 the values of $\theta$. That the Eisenstein quotient has the form
$(\mf{h}/I)_{\theta} \cong \La_{\theta}/\xi$
was proven in \cite{wiles} as a consequence of the Iwasawa main conjecture. 

Let $\mcT_{\theta}$ be the inverse limit of $\theta$-Eisenstein components
of the \'etale cohomology groups $H^1_{\et}(X_1(Np^r)_{/\qbar},\zp(1))$ under trace maps. It is a rank two $\mf{h}_{\theta}$-module with a commuting
global Galois action that on decomposition at $p$ that fits in a short exact sequence
$$
	0 \to \mcT_{\sub} \to \mcT_{\theta} \to \mcT_{\quo} \to 0,
$$
where $\mcT_{\sub} \cong \mf{h}_{\theta}$ as a $\mf{h}_{\theta}$-module, and $\mcT_{\quo}$ has unramified Galois action and 
is isomorphic over $\mf{h}_{\theta}$ to the $\theta$-Eisenstein component $\mcS_{\theta}$ of the $\La$-adic cusp forms.

The Eisenstein quotient $T = \mcT_{\theta}/I\mcT_{\theta}$ fits in an exact sequence
\begin{equation} \label{keyexseq}
	0 \to P \to T \to Q \to 0
\end{equation}
of global, locally split Galois modules with $P \xrightarrow{\sim} \mcT_{\quo}/I\mcT_{\quo}$ and 
$\mcT_{\sub}/I\mcT_{\sub} \xrightarrow{\sim} Q$ via the canonical maps.
Here, $Q$ can be identified with the Tate twist of $\Lai_{\theta}/\xi$ by considering its generator that is Poincar\'e dual to 
a particular modular symbol. This identification is the ingredient that makes $\Upsilon$ canonical in \cite{me-Lfn}.
For more detail, see Section \ref{background}.

\subsubsection{Left-hand side}

We can reinterpret the left-hand side $\xi' \Upsilon \circ \varpi$ of \eqref{keyident} via the following identifications.
\begin{itemize}
	\item The map $\Upsilon \colon Y \to P$ is a connecting map 
	$H^2(Q(1)) \to H^3_c(P(1))$ for the Tate twist of \eqref{keyexseq}, from unramified outside $Np$ 
	to compactly supported cohomology over $\Q$.
\end{itemize}

In fact, we have identifications $H^i(Q(1)) = Y$ for $i \in \{1,2\}$, while $H^i_c(P(1)) = P$ for $i \in \{2,3\}$.
See Section \ref{redlat} for more detail.
	
\begin{itemize}
	\item The map $-\varpi \colon P \to Y$ is the composition $\bar{z}_Q^{\sharp}$ 
	of the reduction $\bar{z}^{\sharp} \colon P \to H^1(T(1))$ of a zeta map $z^{\sharp} \colon \mcS_{\theta} \to H^1(\mc{T}_{\theta}(1))$
	with the canonical map $H^1(T(1)) \to H^1(Q(1))$. 
\end{itemize}

Here, the composition $H^1(T(1)) \to Y$ has a reinterpretation as the pullback by the cusp $\infty$ mentioned earlier, and 
$\bar{z}^{\sharp}$ is the reduction of a zeta map $z^{\sharp} \colon \mcS_{\theta} \to H^1(\mc{T}_{\theta}(1))$
(see Theorems \ref{zsharp} and \ref{zsharpvarpi}).	
	
\begin{itemize}
	\item The map $\xi' \colon Y \to Y$ is a connecting map $\partial \colon H^1(Q(1)) \to H^2(Q(1))$ for the tensor product of $Q(1)$ with
	the exact sequence
	\begin{equation} \label{Bocksteinseq}
		0 \to \zp \xrightarrow{X} \Lai/X^2 \to \zp \to 0.
	\end{equation}
\end{itemize}

This is recalled in Theorem \ref{FKgalcoh} (see \cite[\S 9.3]{fk-pf}). The rough idea expounded upon in Remark \ref{diffquot} is that 
$\partial$ is the derivative of $\xi$ in $Q = (\La_{\theta}/\xi)^{\iota}(1)$ with respect to the $X$ in the map of \eqref{Bocksteinseq}.

The left-hand side of \eqref{keyident} may then be viewed as coming from the commutative diagram
\begin{equation} \label{leftside}
	\SelectTips{cm}{} \xymatrix{
	P \ar[r]^-{-\bar{z}^{\sharp}_Q} \ar[rd]_{\varpi} & H^1(Q(1)) \ar@{=}[d] \ar[r]^{\partial} & H^2(Q(1)) \ar@{=}[d] \ar[r] & 
	H^3_c(P(1)) \ar@{=}[d], \\
	& Y \ar[r]^{\xi'} & Y \ar[r]^{\Upsilon} & P
	}
\end{equation}

\subsubsection{Intermediate quotients}

We remove $\xi'$ from the center square of \eqref{leftside} using intermediate quotients.
For a compact $\mf{h}_{\theta}$-module $M$ with a commuting continuous Galois action,
the intermediate quotient for $M$ with respect to an element $\alpha$ of the completed tensor product $\La \cozp \mf{h}_{\theta}$ is 
defined (cf. Definition \ref{interquot}) as
$$
	M^{\dagger} = \frac{\La^{\iota} \cozp M}{X\alpha(\La^{\iota} \cozp M)},
$$
a quotient of $\Lai \cozp M$ of which $M$ is a quotient.
If $I$ annihilates $M$, then so does $\xi \in \mf{h}_{\theta}$. In this case, 
we typically take $X\alpha$ to be the slight modification 
$\otil{\xi} - 1 \otimes \xi$ of the diagonalization $\otil{\xi} \in \La \cozp \La_{\theta}$ of $\xi$, which makes it divisible by $X$:
see Section \ref{intermedquo}.

In place of the tensor product of $Q$ with \eqref{Bocksteinseq}, 
we consider in Section \ref{refinedcoh} the exact sequence
\begin{equation} \label{modexseq}
	0 \to Q \xrightarrow{\otil{\xi}} (\La^{\iota} \cozp Q)/X\otil{\xi} \to Q^{\dagger} \to 0.
\end{equation}
In Theorem \ref{partial}, we prove the following.
\begin{thm}
	The connecting map 
	$$
		\partial^{\dagger} \colon H^1(Q^{\dagger}(1)) \to H^2(Q(1))
	$$ 
	arising from \eqref{modexseq} 
	factors through the quotient 
	$H^1(Q(1))$ and induces the identity on $Y$.
\end{thm}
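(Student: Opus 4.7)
The plan is to compute $\partial^{\dagger}$ via a cocycle calculation combined with naturality relative to the Fukaya-Kato Bockstein of Theorem~\ref{FKgalcoh}.

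First, since $\xi$ annihilates $Q$, the endomorphism $1 \otimes \xi$ vanishes on $N := \La^{\iota} \cozp Q$, so $\tilde\xi$ acts on $N$ as $X\alpha$, and \eqref{modexseq} rewrites as $0 \to Q \xrightarrow{X\alpha} M^{\dagger} \to Q^{\dagger} \to 0$ with $M^{\dagger} := N/X^2\alpha N$. There is a morphism of short exact sequences from this sequence to the Fukaya-Kato sequence $0 \to Q \xrightarrow{X} N/X^2 N \to Q \to 0$ given by multiplication by $\alpha_0 := \alpha \bmod X$ on the left, the natural surjection $M^{\dagger} \twoheadrightarrow N/X^2 N$ in the middle (using $X^2 \alpha N \subset X^2 N$), and $\pi \colon Q^{\dagger} \twoheadrightarrow Q$ on the right. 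Since $\tilde\xi$ is the diagonalization of $\xi$, the element $\alpha_0$ agrees with $\xi'$ up to a unit absorbed by the canonical identifications $H^i(Q(1)) = Y$, and naturality of connecting maps together with Theorem~\ref{FKgalcoh} yields
$$\alpha_0 \cdot \partial^{\dagger} = \xi' \cdot \pi^{\ast}$$
as maps $H^1(Q^{\dagger}(1)) \to H^2(Q(1))$.

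Next, to establish the factorization $\partial^{\dagger} = \bar\partial^{\dagger} \circ \pi^{\ast}$, take $[c^{\dagger}] \in \ker \pi^{\ast}$. After modifying by a coboundary one may assume $c^{\dagger}$ takes values in $XQ^{\dagger}$, and writing $c^{\dagger} = Xn$ for a cochain $n$ valued in $N$ (well-defined modulo $\alpha N$), an explicit expansion of $d\tilde c$ using the action $\sigma X = X(1 + Xw(\sigma))$ on $\La^{\iota}$ shows that $\partial^{\dagger}[c^{\dagger}]$ is represented by a class $[r_0] \in H^2(Q(1))$ with $\alpha_0 [r_0] = 0$. The vanishing $[r_0] = 0$ follows from structural control of the $\alpha_0$-torsion on $Y$, identifying $\alpha_0$ with $\xi'$ under the canonical isomorphism.

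Finally, applying the naturality relation to any class $[c] \in H^1(Q(1))$ with $\pi^{\ast}$-lift $[c^{\dagger}]$ gives $\alpha_0 \bar\partial^{\dagger}[c] = \xi'[c]$, so the identification of $\alpha_0$ with $\xi'$ on $Y$ forces $\bar\partial^{\dagger} = \id$ after cancellation. The main obstacle is the vanishing of the $\alpha_0$-torsion class $[r_0]$ in the second paragraph: while formal naturality cleanly fixes the induced map once the factorization is known, establishing the factorization itself is where one must invoke the structural input on $Y$ to rule out exotic torsion.
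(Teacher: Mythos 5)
There is a genuine gap, and it sits exactly where the whole point of this theorem lives. Your naturality square, obtained by mapping the $\otil{\xi}$-sequence \eqref{modexseq} (rewritten with $X\alpha$ on the left, $\alpha = \otil{\xi}_1$) to the Fukaya--Kato Bockstein sequence via $\alpha_0 = \alpha(0) = \xi'$, the reduction $N/X^2\alpha N \twoheadrightarrow N/X^2 N$, and $\pi\colon Q^{\dagger}\twoheadrightarrow Q$, is correct and gives
$$
\xi' \cdot \partial^{\dagger} \;=\; \partial_Q \circ \pi^{\ast} \;=\; \xi' \cdot \varepsilon_Q \circ \pi^{\ast}.
$$
(Note that $\alpha_0 = \xi'$ exactly, with no unit discrepancy; by Proposition~\ref{tildef}, $\otil{\xi}_1 \equiv 1\otimes\xi^{(1)} \bmod X$.) But this is just a repackaging of the original Fukaya--Kato identity. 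To conclude $\partial^{\dagger} = \varepsilon_Q\circ\pi^{\ast}$ from it you must cancel $\xi'$, which requires that $H^2(\mc{O},Q(1))\cong Y$ have no $\xi'$-torsion. That is precisely the hypothesis --- $\xi$ and $\xi'$ coprime, i.e.\ no multiple zeros of the $p$-adic $L$-function --- that the theorem is expressly designed to avoid. The ``structural control of the $\alpha_0$-torsion on $Y$'' you invoke does not hold in general: $Y$ is a torsion $\La_{\theta}$-module with characteristic ideal $(\xi)$ and nothing rules out $\xi'$-torsion when $\xi$ has repeated factors. Your cocycle computation only re-derives $\xi'[r_0]=0$; it does not force $[r_0]=0$.

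The paper's proof takes a structurally different route that sidesteps cancellation altogether. Instead of comparing \eqref{modexseq} to the $X$-Bockstein, it interposes a third short exact sequence $0 \to \Lai_{\theta}(1) \xrightarrow{\otil{\xi}} (\La\cozp\Lai_{\theta}(1))/X\otil{\xi} \to Q\cotimes{R}\La_{\theta}\to 0$ sitting between the defining $\xi$-sequence for $Q$ and \eqref{modexseq}, related by reduction mod $\w(X)$ on one side and by the quotient mod $1\otimes\xi$ on the other (this is the diagram surrounding \eqref{techdiag}). Combined with Proposition~\ref{cohquot}, which identifies $H^1(\mc{O},Q^{\dagger}(1)) \to H^1(\mc{O},Q(1))$ with reduction mod $\w(X)$ on $H^1(\mc{O},Q(1))\cotimes{R}(\La_{\theta}/\xi)$, this gives the factorization and the identification of the induced map with $\varepsilon_Q$ directly, with no torsion hypothesis at any stage. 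This is not a cosmetic difference: the comparison with the $X$-Bockstein is the Fukaya--Kato computation that produces $\xi'$, and the new comparison is what replaces it by $1$.
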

As explained in Remark \ref{diffquot}, since the first map in \eqref{modexseq} is now $\otil{\xi}$ with $\otil{\xi}(0) = \xi$,
the map $\partial^{\dagger}$ induces on $Y$ multiplication by the derivative $1$ of $\xi$ with respect to itself, instead of $X$.

Finally, there exists a map $\bar{z}_Q^{\dagger} \colon P \to H^1(Q^{\dagger}(1))$ (see the proof of Theorem \ref{equivform}) 
making the triangle in the following diagram commute
$$
	\SelectTips{cm}{} \xymatrix{
	P \ar[r]^-{-\bar{z}_Q^{\dagger}} \ar[rd]_-{\varpi} 
	& H^1(Q^{\dagger}(1)) \ar@{->>}[d] \ar[r]^{\partial^{\dagger}} & H^2(Q(1)) \ar@{=}[d] \ar[r] & 
	H^3_c(P(1)) \ar@{=}[d], \\
	& Y \ar[r]^{1} & Y \ar[r]^{\Upsilon} & P.
	}
$$

\subsubsection{Right-hand side}

The right-hand side of \eqref{keyident} is the composition of a zeta map and a Coleman map.
The Coleman map for the Tate module takes a norm compatible
sequence of units in the cyclotomic $\zp$-extension of $\qp$ to a logarithm of its Coleman power series \cite{coleman}.
For Coleman maps in general, see Section \ref{coleman}. We focus here on the case central to \cite{fk-pf}.
 
A modified logarithm and $\La$-adic Eichler-Shimura \cite{ohta-es} provide an isomorphism 
$$
	\Cole^{\flat} \colon H^1_{\loc}(\mcT_{\quo}(1)) \xrightarrow{\sim} \mfS_{\theta}
$$ 
from the absolute Galois cohomology of $\qp$. 
Here, $\mfS_{\theta}$ is a group of $\La$-adic cusp forms non-canonically isomorphic to $\mcS_{\theta}$.
Up the cyclotomic $\zp$-extension, there is a Coleman map 
$$
	\Cole \colon H^1_{\Iw,\loc}(\mcT_{\quo}(1)) \xrightarrow{\sim} \La \cozp \mfS_{\theta}.
$$ 
on the Iwasawa cohomology group
$H^1_{\Iw,\loc}(\mcT_{\quo}(1)) = H^1_{\loc}(\Lai \cozp \mcT_{\quo}(1))$.
Both $\mcS_{\theta}$ and $\mfS_{\theta}$ are canonically $P$ modulo $I$, and 
we fix an isomorphism $\mcS_{\theta} \xrightarrow{\sim} \mfS_{\theta}$ that reduces to the identity.

The remaining key ingredient from \cite{fk-pf} and \cite{fks2} is a zeta map
$$
	z \colon \La \cozp \mcS_{\theta} \to H^1_{\Iw}(\mcT_{\theta}(1))
$$ 
to Iwasawa cohomology over $\Q_{\infty}$.
Let $z_{\quo}$ be the composition of $z$ with the canonical map to
$H^1_{\Iw,\loc}(\mcT_{\quo}(1))$.
A $p$-adic regulator computation recalled from \cite{fk-pf} in Theorem \ref{zetamap} gives
\begin{equation} \label{colzeta}
	\Cole \circ z_{\quo} = \alpha \colon \La \cozp \mfS_{\theta} \to \La \cozp \mcS_{\theta}
\end{equation}
for a particular $\alpha \in \La \cozp \mf{h}_{\theta}$ with $\xi' = \alpha(0) \bmod I$.

The ground level maps $z^{\sharp}$ and $\Cole^{\flat}$  
compare with $z$ and $\Cole$ after applying evaluation-at-$0$ maps $\ev_0 \colon \La \to \zp$
and inverse limits of corestriction maps as follows:
\begin{eqnarray} \label{zetacolrelns}
	(1-U_p) z^{\sharp} \circ \ev_0 = \cor \circ z &\mr{and}& \Cole^{\flat} \circ \cor = (1-U_p) \ev_0 \circ \Cole
\end{eqnarray} 
(see Theorem \ref{zsharp} and Proposition \ref{Col_vs_flat}).
Combining \eqref{colzeta} and \eqref{zetacolrelns} allows one to show that
$$
	\Cole^{\flat} \circ z^{\sharp}_{\quo} = \alpha(0) \colon \mcS_{\theta} \to \mfS_{\theta},
$$ 
where $z^{\sharp}_{\quo}$ is the composition of $z^{\sharp}$ with $H^1(\mcT_{\theta}(1)) \to H^1_{\loc}(\mcT_{\quo}(1))$,
and $\Cole^{\flat} \circ z^{\sharp}_{\quo} \bmod I$ provides the $\xi' \colon P \to P$ on the right-hand side of \eqref{keyident}:
see Proposition \ref{zquosharp}.

\subsubsection{Equating the two sides}

Let $\bar{z}_P^{\sharp}$ denote the composition of $\bar{z}^{\sharp} \colon P \to H^1(T(1))$ with the canonical map $H^1(T(1)) \to H^1_{\loc}(P(1))$, i.e., the reduction of $z_{\quo}^{\sharp}$. Fukaya and Kato obtain a commutative diagram
\begin{equation} \label{rightside}
	\SelectTips{cm}{} \xymatrix{
	P \ar[r]^-{-\bar{z}_P^{\sharp}} \ar@{=}[d] & H^1_{\loc}(P(1)) \ar[r] \ar[rd]^-{-\Cole^{\flat}} & 
	H^2_c(P(1)) \ar[r]^{\partial} \ar@{=}[d] & H^3_c(P(1)) \ar@{=}[d] \\
	P \ar[rr]_{\xi'} & & P \ar[r]_1 & P,
	}
\end{equation}
where the middle horizontal arrow is from the Poitou-Tate sequence, and the right-hand one is the connecting map in the tensor product
of \eqref{Bocksteinseq} with $P(1)$ (see Lemmas \ref{globcompconn} and \ref{connectP}).

Additionally, cohomological lemmas (see Corollary \ref{commsquare} and Lemma \ref{conndiag}) provide the commutativity of
\begin{equation} \label{connectside}
	\SelectTips{cm}{} \xymatrix{
	H^1(T(1)) \ar[r] \ar[d] & H^1(Q(1)) \ar[r]^{\partial} \ar[d]^{-\partial} & H^2(Q(1)) \ar[d]^{\partial} \\
	H^1_{\loc}(P(1)) \ar[r] & H^2_c(P(1)) \ar[r]^{\partial} & H^3_c(P(1)).
	}
\end{equation}
Since $\bar{z}^{\sharp} \colon P \to H^1(T(1))$ 
induces both $\bar{z}_P^{\sharp}$ and $\bar{z}_Q^{\sharp}$, 
 the diagrams \eqref{leftside} and \eqref{rightside} can be combined using \eqref{connectside}, as in \eqref{FK_diag}. This yields 
Theorem \ref{FKident} (asserted in \cite{fks2}), which improves the same identity of Fukaya and Kato on $P \otimes \qp$.
\begin{thm}
	We have $\xi' \Upsilon \circ \varpi = \xi'$ as endomorphisms of $P$.
 \end{thm}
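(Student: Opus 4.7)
The strategy, already outlined in the introduction, is to assemble the three diagrams \eqref{leftside}, \eqref{rightside}, and \eqref{connectside} along their common source $\bar{z}^{\sharp}$ and common target $H^3_c(P(1))$, and then to read off the identity by a diagram chase applied to an arbitrary $m \in P$. All arithmetic content has been extracted in previous results in the paper, so the proof reduces to verifying that the assembled diagram commutes and producing the two equal outputs at $H^3_c(P(1)) = P$.

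First I would fix the reduced zeta map $\bar{z}^{\sharp} \colon P \to H^1(T(1))$ from Theorem \ref{zsharp}, together with its two images $\bar{z}^{\sharp}_Q$ and $\bar{z}^{\sharp}_P$ under the canonical maps $H^1(T(1)) \to H^1(Q(1))$ and $H^1(T(1)) \to H^1_{\loc}(P(1))$ coming from the exact sequence \eqref{keyexseq}. By Theorem \ref{zsharpvarpi} I have $-\bar{z}^{\sharp}_Q = \varpi$ under $Y = H^1(Q(1))$, and \eqref{leftside} then identifies the top composition $P \to H^1(Q(1)) \to H^2(Q(1)) \to H^3_c(P(1))$ with $\xi' \Upsilon \circ \varpi$: here Theorem \ref{FKgalcoh} realizes $\xi' \colon Y \to Y$ as the Bockstein $\partial$ attached to the tensor product of \eqref{Bocksteinseq} with $Q(1)$, and the connecting map $H^2(Q(1)) \to H^3_c(P(1))$ realizes $\Upsilon$.

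On the other side, I would use \eqref{rightside} to identify the top composition $P \to H^1_{\loc}(P(1)) \to H^2_c(P(1)) \to H^3_c(P(1))$ with $\xi' \colon P \to P$. The non-formal ingredient is the relation $\Cole^{\flat} \circ z^{\sharp}_{\quo} \equiv \xi' \pmod{I}$ of Proposition \ref{zquosharp}, obtained by combining the Iwasawa-theoretic computation \eqref{colzeta} (Theorem \ref{zetamap}) with the ground-level compatibilities \eqref{zetacolrelns}; Lemmas \ref{globcompconn} and \ref{connectP} then provide the commutativity of the Poitou-Tate triangle and of the Bockstein square, respectively, in \eqref{rightside}. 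At this stage, the two top compositions have been identified with $\xi' \Upsilon \circ \varpi(m)$ and $\xi'(m)$ respectively, for any given $m \in P$.

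To equate these two outputs, I would invoke the commutativity of the linking diagram \eqref{connectside}, whose left square is Corollary \ref{commsquare} and whose right square is Lemma \ref{conndiag}. Applied to $\bar{z}^{\sharp}(m) \in H^1(T(1))$, this ladder pushes the element both through $H^1(Q(1))$ and through $H^1_{\loc}(P(1))$ to a common element of $H^3_c(P(1))$; combined with \eqref{leftside} and \eqref{rightside} this forces $\xi' \Upsilon \circ \varpi(m) = \xi'(m)$ for every $m$. I expect the main place requiring care to be the bookkeeping of signs: there is a minus sign on the middle vertical $-\partial$ in \eqref{connectside}, there are signs on $-\bar{z}^{\sharp}_P$ and $-\bar{z}^{\sharp}_Q$ in \eqref{rightside} and \eqref{leftside}, and the two Bocksteins together with the Poitou-Tate connecting map all contribute signs that must cancel consistently. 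Once the signs are tracked, no further work is needed, since the required arithmetic input has already been packaged into Theorems \ref{zsharp}, \ref{zsharpvarpi}, \ref{FKgalcoh}, \ref{zetamap}, and Proposition \ref{zquosharp}.
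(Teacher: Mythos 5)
Your proposal is correct and takes essentially the same approach as the paper, which assembles all of the cited results into the single large commutative diagram \eqref{FK_diag} (the amalgamation of \eqref{leftside}, \eqref{rightside}, and \eqref{connectside} that the introduction already previews) and reads off the identity by tracing its outside.
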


\subsubsection{Intermediate Coleman and local zeta maps}

In Section \ref{inter Coleman}, we construct intermediate Coleman maps sitting between $\Cole$ and $\Cole^{\flat}$ but
with properties similar to the latter. The central example for us is the following 
(see Corollary \ref{loccohdagS} of Theorem \ref{loccohdag} in the general setting).

\begin{thm}
	There is an isomorphism of $\La \cozp \mf{h}_{\theta}$-modules 
	$$
		\Cole^{\dagger} \colon H^1_{\loc}(\mcT_{\quo}^{\dagger}(1)) \xrightarrow{\sim} \mfS_{\theta}^{\star} 
		= \frac{(1-U_p,\alpha)(\La \cozp \mfS_{\theta})}{X\alpha(\La \cozp \mfS_{\theta})}
	$$
	that equals $(1-U_p)\Cole$ when precomposed with 
	$H^1_{\Iw,\loc}(\mcT_{\quo}(1)) \to H^1_{\loc}(\mcT_{\quo}^{\dagger}(1))$.
\end{thm}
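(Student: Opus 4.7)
The plan is to apply local Galois cohomology at $p$ to the defining short exact sequence
$$
0 \to \Lai \cozp \mcT_{\quo}(1) \xrightarrow{X\alpha} \Lai \cozp \mcT_{\quo}(1) \to \mcT_{\quo}^{\dagger}(1) \to 0
$$
of the intermediate quotient and to combine the resulting long exact sequence with Shapiro's lemma identifications $H^i_{\loc}(\Lai \cozp \mcT_{\quo}(1)) = H^i_{\Iw,\loc}(\mcT_{\quo}(1))$. Because $\mcT_{\quo}(1)$ is torsion-free and the $G_{\qp(\mu_{p^n})}$-invariants vanish at each finite level, $H^0_{\Iw,\loc}(\mcT_{\quo}(1)) = 0$, and the long exact sequence contains
$$
0 \to H^1_{\Iw,\loc}(\mcT_{\quo}(1))/X\alpha \to H^1_{\loc}(\mcT_{\quo}^{\dagger}(1)) \to H^2_{\Iw,\loc}(\mcT_{\quo}(1))[X\alpha] \to 0.
$$

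Set $M = \La \cozp \mfS_{\theta}$. Via the Coleman isomorphism $\Cole$ of Theorem \ref{zetamap}, the leftmost term becomes $M/X\alpha M$. I would define $\Cole^{\dagger}$ on the image of $H^1_{\Iw,\loc}(\mcT_{\quo}(1))$ in $H^1_{\loc}(\mcT_{\quo}^{\dagger}(1))$ by $(1-U_p)\Cole \bmod X\alpha M$, whose image lies in the submodule $((1-U_p) M + X\alpha M)/X\alpha M$ of $\mfS_{\theta}^{\star}$. This is one of the two generating submodules in the stated presentation of $\mfS_{\theta}^{\star}$, and the compatibility with $(1-U_p)\Cole$ demanded by the theorem then holds by construction.

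For the quotient piece, I would identify $H^2_{\Iw,\loc}(\mcT_{\quo}(1))[X\alpha]$ with the complementary subquotient $((1-U_p)M + \alpha M)/((1-U_p)M + X\alpha M)$ of $\mfS_{\theta}^{\star}$. The key input is that, for the unramified module $\mcT_{\quo}$, local Tate duality in the Iwasawa setting — together with the $\La$-adic Eichler-Shimura identification used in defining $\Cole$, under which Frobenius on $\mcT_{\quo}$ acts as $U_p$ — yields $H^2_{\Iw,\loc}(\mcT_{\quo}(1)) \cong M/(1-U_p)M$ in a manner compatible with the actions of $X\alpha$. A direct computation of the $X\alpha$-torsion on $M/(1-U_p)M$ then matches the desired quotient.

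The main obstacle is organizational rather than conceptual: I must verify that the snake-lemma lift from the cokernel piece up to $H^1_{\loc}(\mcT_{\quo}^{\dagger}(1))$ assembles consistently with the $(1-U_p)\Cole$-based definition on the Iwasawa image, so that the two identifications glue into a single isomorphism onto $\mfS_{\theta}^{\star}$ rather than merely an abstract extension of the two outer pieces. This is likely handled uniformly by the general construction of Theorem \ref{loccohdag}, which presumably builds $\Cole^{\dagger}$ via an explicit cocycle on $G_{\qp}$ adapted to the Frobenius structure on $\mcT_{\quo}$, and the present corollary follows by specializing to the $\alpha$ of \eqref{colzeta}.
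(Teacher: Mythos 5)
Your reading of the long exact sequence coming from multiplication by $X\alpha$ on $\Lai \cozp \mcT_{\quo}(1)$ is correct, as are your identifications of the two outer terms: the sub-piece $H^1_{\Iw,\loc}(\mcT_{\quo}(1))/X\alpha \cong (\La \cozp \mfS_{\theta})/X\alpha$ goes by $(1-U_p)\Cole$ into the submodule $(1-U_p)(\La \cozp \mfS_{\theta}) \bmod X\alpha$ of $\mfS_{\theta}^{\star}$, and the quotient $H^2_{\Iw,\loc}(\mcT_{\quo}(1))$ — which is \emph{all} of $H^2_{\Iw,\loc}$, since $\Gamma$ acts trivially on it and hence it is already killed by $X\alpha$ — matches $S_{\theta} = \mfS_{\theta}/(U_p-1)\mfS_{\theta}$ via the invariant map, which is exactly the cokernel of $1-U_p$ on $\mfS_{\theta}^{\star}$. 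But the assembly of these two identifications into a single isomorphism is not organizational; it is the entire mathematical content of the result, and you explicitly punt on it.

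What is missing is the comparison between the Iwasawa-level and ground-level Coleman maps, $\Cole^{\flat}\circ\cor = (1-U_p)\ev_0\circ\Cole$ (Proposition~\ref{Col_vs_flat}, repackaged as Corollary~\ref{inverse_equal}: $\cor\circ\Cole^{-1} = (\Cole^{\flat})^{-1}\circ\ev_0\circ(1-\varphi^{-1})$). This identity is precisely what makes the two outer identifications compatible on the overlap; without it, you have two descriptions that need not splice into one extension. The paper avoids having to exhibit a section of the cohomology extension by instead constructing the \emph{inverse} map $\lambda\colon\mfC^{\star}(\mcT_{\quo})\to H^1(\qp,\mcT_{\quo}^{\dagger}(1))$ from the pushout universal property of $\mfC^{\star}$: the map $\mfC^{\dagger}\to H^1(\qp,\mcT_{\quo}^{\dagger}(1))$ (reduction of $\Cole^{-1}$ mod $X\alpha$) and the map $D(\mcT_{\quo})\to H^1(\qp,\mcT_{\quo}^{\dagger}(1))$ (the composite $\alpha\circ(\Cole^{\flat})^{-1}$) agree where they must by Corollary~\ref{inverse_equal}, so they glue on the pushout, and Lemma~\ref{simpleCstar} then identifies $\mfC^{\star}(\mcT_{\quo})$ with your $\mfS_{\theta}^{\star}$ and the pushout map from $\mfC^{\dagger}$ with multiplication by $1-U_p = 1-\varphi^{-1}$. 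If you bring in Proposition~\ref{Col_vs_flat} and re-express the target as a pushout, your outline becomes a proof; as written it leaves the decisive step unjustified.
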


The map $\Cole^{\dagger}$ is an amalgamation of $\Cole$ modulo $X\alpha$ with the invariant map of local class field theory. The composition 
$\overline{\Cole}^{\dagger}$ of $\Cole^{\dagger}$ with the surjection $\mfS_{\theta}^{\star} \to \mfS_{\theta}/(U_p-1)\mfS_{\theta}$ given by ``reduction modulo $1-U_p$ and division by $\alpha(0)$'' allows us to construct, somewhat artificially, a local intermediate zeta map
$$
	z_{\quo}^{\dagger} \colon \La \cozp \mcS_{\theta} \to H^1_{\loc}(\mcT_{\quo}^{\dagger}(1)) 
$$
with $\overline{\Cole}^{\dagger} \circ z_{\quo}^{\dagger} \colon \La \cozp \mcS_{\theta} \to \mfS_{\theta}/(U_p-1)\mfS_{\theta}$ the 
canonical quotient: see Proposition \ref{interloczeta}. Its reduction modulo $I$ induces $1$ on $P$, which now replaces $\xi'$ on the right of \eqref{keyident}.

\subsubsection{Reduced intermediate zeta maps}

Our refinement of \eqref{rightside}, the cohomological lemmas for which are proven in Section \ref{refinedglobcoh}, is a commutative diagram
$$
	\SelectTips{cm}{} \xymatrix{
	\La \cozp P \ar@{->>}[d]^-{\ev_0} \ar[r]^-{-\bar{z}_P^{\dagger}} & H^1_{\loc}(P^{\dagger}(1)) \ar[r]\ar[rd]^-{-\overline{\Cole}^{\dagger}} & 
	H^2_c(P^{\dagger}(1)) \ar[r]^{\partial} 
	\ar@{=}[d] & H^3_c(P(1)) \ar@{=}[d] \\
	P \ar[rr]_1 & & P \ar[r]_1 & P,
	}
$$
where $\bar{z}_P^{\dagger}$ is the mod $I$ reduction of $\bar{z}_{\quo}^{\dagger}$.
However, we do not know whether an intermediate reduced zeta map 
$\bar{z}^{\dagger}$ that induces both $\bar{z}_P^{\dagger}$  and $\bar{z}_Q^{\dagger}$ exists.
That is, the tie that would bind the two sides of the identity $\Upsilon \circ \varpi = 1$ 
is missing. 
Our main theorem (Theorem \ref{equivform}) is as follows.

\begin{thm}
	Conjecture \ref{sconj} holds if and only if there exists a $\La \otimes_{\zp} (\mf{h}/I)_{\theta}$-module homomorphism
	$$
		\bar{z}^{\dagger} \colon \Lambda \otimes_{\zp} P \to H^1(T^{\dagger}(1))
	$$ 
	compatible with both $\bar{z}^{\sharp}$
	and the reduction modulo $I$ of $z^{\dagger}_{\quo}$.
\end{thm}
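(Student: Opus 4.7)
My plan is to cast $\bar{z}^{\dagger}$ as the \emph{missing tie} that would bind the refined left- and right-hand diagrams developed throughout the paper, in exactly the role that $\bar{z}^{\sharp}$ plays in the Fukaya-Kato proof binding \eqref{leftside} and \eqref{rightside}.

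For the implication $(\Leftarrow)$, assume $\bar{z}^{\dagger}$ exists with the stated compatibilities. Pushing it forward along the surjection $T^{\dagger} \twoheadrightarrow Q^{\dagger}$ and factoring through $\ev_0$ recovers $\bar{z}_Q^{\dagger}$, by the compatibility with $\bar{z}^{\sharp}$, while localizing at $p$ and pushing along the quotient to $P^{\dagger}$ recovers $\bar{z}_P^{\dagger}$, by the compatibility with $z^{\dagger}_{\quo} \bmod I$. I would then assemble the intermediate-quotient analogue of \eqref{FK_diag}, gluing the refined left-hand and right-hand diagrams along $H^3_c(P(1))$ via a version of \eqref{connectside} for $P^{\dagger}, Q^{\dagger}, T^{\dagger}$. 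The upper route computes $\Upsilon \circ \varpi$, since $\partial^{\dagger}$ induces the identity on $Y$ (Theorem \ref{partial}), while the lower route computes the identity, since $\overline{\Cole}^{\dagger} \circ z^{\dagger}_{\quo}$ is the canonical quotient (Proposition \ref{interloczeta}). Equating them forces $\Upsilon \circ \varpi = 1$ on $P$.

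For the implication $(\Rightarrow)$, I would cast the construction of $\bar{z}^{\dagger}$ as a lifting problem. The two compatibility conditions prescribe the reductions of $\bar{z}^{\dagger}$ to $H^1(Q^{\dagger}(1))$ (through $\ev_0$) and to $H^1_{\loc}(P^{\dagger}(1))$; the task is to realize both simultaneously by a global class in $H^1(T^{\dagger}(1))$. The obstruction lies in an $\mr{Ext}^1$-type group attached to the exact sequence $0 \to P^{\dagger} \to T^{\dagger} \to Q^{\dagger} \to 0$ obtained from \eqref{keyexseq} by tensoring with $\Lai$ and dividing by $X\alpha$. A diagram chase parallel to the proof of Theorem \ref{FKident}, using the ground-level lift $\bar{z}^{\sharp} \colon P \to H^1(T(1))$ and the local splitting at $p$ of \eqref{keyexseq}, should identify this obstruction with the class of $\Upsilon \circ \varpi - 1$ in $\End(P)$. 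Under the conjecture it vanishes, producing the desired $\bar{z}^{\dagger}$.

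The central difficulty is the forward direction: identifying the obstruction precisely requires careful bookkeeping of the interaction between $\ev_0$, the local splitting at $p$, and the connecting maps defining the intermediate quotients. I expect this computation to reduce to recognizing the relevant connecting map as the same $\Upsilon$ appearing on the left-hand side of \eqref{keyident}, now freed of the $\xi'$ factor by our refinement.
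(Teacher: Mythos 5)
Your backward direction is essentially the paper's argument: with $\bar{z}^{\dagger}$ in hand, the refined diagram \eqref{refined_diag} closes, and tracing the top route gives $\Upsilon \circ \varpi$ (via Theorem \ref{zsharpvarpi}, the factorization $-\Theta^{\dagger} = \Upsilon \circ \varepsilon_Q$ from Section \ref{refinedcoh}, and Corollary \ref{daggersquare}) while the bottom route gives $\id_P$ (via Lemma \ref{dagcompconn} and Proposition \ref{zquodagger}).

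For the forward direction, your obstruction-theoretic framing is slightly off and, as stated, underdetermined. The sequence $0 \to H^1(\mc{O},P^{\dagger}(1)) \to H^1(\mc{O},T^{\dagger}(1)) \to H^1(\mc{O},Q^{\dagger}(1)) \to 0$ is exact (Lemma \ref{connzero}) and canonically \emph{split} (Proposition \ref{daggersplit}), so there is no $\Ext^1$-type obstruction to lifting. The genuine constraint is whether the two prescriptions — the projection to $H^1(\mc{O},T(1))$ must equal $\bar{z}^{\sharp} \circ \ev_0$, and the localization to $H^1(\qp,P^{\dagger}(1))$ must equal $\bar{z}_{\quo}^{\dagger} \circ \ev_0$ — can be realized simultaneously; neither of these alone pins down the $H^1(\mc{O},Q^{\dagger}(1))$ component, since the first only prescribes the further quotient in $H^1(\mc{O},Q(1)) \cong Y$. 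The paper resolves this by \emph{constructing} the candidate: $\bar{z}_Q^{\dagger}$ is the unique $\La \cozp \La_{\theta}$-module map sending $1 \otimes x \mapsto -\varpi(x) \otimes 1$ (so the module structure, not the compatibility condition, does the determination), and $\bar{z}_P^{\dagger}$ is the projection of $\bar{z}_{\quo}^{\dagger} \circ \ev_0$ to $H^1(\mc{O},P^{\dagger}(1))$ through the splitting. With this explicit choice, compatibility with $\bar{z}^{\sharp}$ holds unconditionally (the $\ell \mid N$ local components vanish by Lemma \ref{cohomatell}), and compatibility with $\bar{z}_{\quo}^{\dagger}$ holds if and only if $\Upsilon \circ \varpi = \id_P$. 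Your instinct that "the obstruction is $\Upsilon \circ \varpi - 1$" is the right conclusion, but the route to it is via this explicit candidate and the factorization of $-\Theta^{\dagger}$ through $\Upsilon$, not a general $\Ext^1$ argument.
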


In an ideal world, $\bar{z}^{\dagger}$ would be the reduction of an intermediate zeta map
$z^{\dagger} \colon \La \cozp \mcS_{\theta} \to H^1(\mcT_{\theta}^{\dagger}(1))$ that induces $z_{\quo}^{\dagger}$ and
for which $(1-U_p)z^{\dagger}$ agrees with the composition of $z$ with $H^1_{\Iw}(\mcT_{\theta}(1)) \to H^1(\mcT_{\theta}^{\dagger}(1))$.
Unlike the existence of $\bar{z}^{\dagger}$, this seems somewhat unlikely to hold in all cases. See Section \ref{refinedglobcoh} and the test case of Section \ref{testcase} in the cyclotomic setting.

\section{Background} \label{background}

In this section, we introduce many of our objects of study, both modular and Galois cohomological, and known results on them. 

\subsection{Ordinary Hecke modules} \label{Eisparts}

\begin{notation} \label{pNcond}
	Let $p \ge 5$ be a prime and $N \ge 4$ a positive integer with $p \nmid N\varphi(N)$, for $\varphi$ the Euler-phi
	function.
\end{notation}

As described, for instance, in \cite[Section 2]{ohta-es}, let $\mf{h}^{\ord}$ denote Hida's $\zp$-Hecke algebra acting on the space $\mf{S}^{\ord}$ of ordinary ``$\La$-adic'' cusp forms of level $Np^{\infty}$.  Similarly, let $\mf{H}^{\ord}$ denote Hida's Hecke algebra acting on the space $\mf{M}^{\ord}$ of ordinary $\La$-adic forms of level $Np^{\infty}$.  Note that $\mf{h}^{\ord}$ is a quotient of $\mf{H}^{\ord}$.

In addition to $\mfS^{\ord}$, we have two related $\mf{h}^{\ord}$-modules.

\begin{definition} The following inverse limits are taken with respect to trace maps. 
	\begin{enumerate}
		\item[a.] We let $\mcS^{\ord}$ denote the fixed part under complex conjugation (or ``plus part'', denoted by
		the superscript ``$+$'')
		$$
			\mcS^{\ord} = \varprojlim_r H_1(X_1(Np^r)(\C),\zp)^{\ord,+}
		$$
		of the space of ordinary $\La$-adic cuspidal modular symbols.
		\item[b.] We let $\mcT^{\ord}$ denote the inverse limit
		$$
			\mcT^{\ord} = \varprojlim_r H^1_{\et}(X_1(Np^r)_{/\qbar},\zp(1))^{\ord}
		$$
		of ordinary parts of first \'etale cohomology groups of the closed modular curves $X_1(Np^r)$.
	\end{enumerate}
\end{definition}

\begin{remark}
	Viewing $\qbar$ as the algebraic numbers in $\C$,
	we have an isomorphism $\mcT^{\ord,+} \cong \mcS^{\ord}$ of $\mf{h}^{\ord}$-modules, induced by the usual 
	comparisons of \'etale and Betti cohomology at the individual stages of the tower. 
	We note that Hecke actions on inverse limits of cohomology (as opposed to homology) groups are via the dual, or adjoint, operators.
\end{remark}

Similarly but less crucially for our purposes, we have the following $\mf{H}^{\ord}$-modules.

\begin{definition}\
	\begin{enumerate}
		\item[a.] We let $\mc{M}^{\ord}$ denote the plus part
		$$
			\mc{M}^{\ord} = \varprojlim_r H_1(X_1(Np^r)(\C),\{\mr{cusps}\},\zp)^{\ord,+}
		$$
	 	of the space of ordinary $\La$-adic modular symbols.
		\item[b.] We let $\tmcT^{\ord}$ denote the inverse limit
		$$
			\tmcT^{\ord} = \varprojlim_r H^1_{\et}(Y_1(Np^r)_{/\qbar},\zp(1))^{\ord}
		$$
		of ordinary parts of \'etale cohomology groups of open modular curves.
		Similarly, we let $\tmcT_c^{\ord}$ denote the inverse limit of the ordinary parts of the compactly
		supported \'etale cohomology groups $H^1_{c,\et}(Y_1(Np^r)_{/\qbar},\zp(1))$.
	\end{enumerate}
\end{definition}

\begin{remark}
	As in the cuspidal case, the $\mf{H}^{\ord}$-modules $\mc{M}^{\ord}$ and $\tmcT^{\ord,+}$ are isomorphic.
\end{remark}

Throughout this paper, let us use $G_K$ to denote the absolute Galois group of a field $K$.

\begin{remark} \label{poincare}
	Since signs are quite subtle in this work, we mention some conventions of algebraic topology 
	used here and in \cite{fk-pf} (cf. \cite[2.7]{kato}), as well as some calculations which follow from them. 
	Consider the compatible $G_{\Q}$-equivariant Poincar\'e duality pairings on \'etale cohomology:
	\begin{eqnarray*}
		&H^1_{\et}(X_1(Np^r)_{/\qbar},\zp(1)) \times H^1_{\et}(X_1(Np^r)_{/\qbar},\zp(1)) \xrightarrow{\cup} \zp(1), \\
		&H^1_{\et}(Y_1(Np^r)_{/\qbar},\zp(1)) \times H^1_{c,\et}(Y_1(Np^r)_{/\qbar},\zp(1)) \xrightarrow{\cup} \zp(1).
	\end{eqnarray*}
	Viewing $\qbar$ as the algebraic numbers in $\C$, these are compatible with the usual pairings of Poincar\'e duality for 
	the isomorphic Betti cohomology groups of the $\C$-points of the modular curves, which are given by evaluation of the cup product on a 
	fundamental class given by the standard orientation  of the Riemann	surface $X_1(Np^r)(\C)$. 
	These cup products induce identifications
	\begin{eqnarray*}
		&H^1_{\et}(X_1(Np^r)_{/\qbar},\zp(1)) \xrightarrow{\sim} H_1(X_1(Np^r)(\C),\zp), \\
		&H^1_{\et}(Y_1(Np^r)_{/\qbar},\zp(1)) \xrightarrow{\sim} H_1(X_1(Np^r)(\C),\{\mr{cusps}\},\zp)
	\end{eqnarray*}
	that take a class $g$ to the unique homology class $\gamma$ such that the map $h \mapsto g \cup h$
	agrees with evaluating the cohomology class $g$ on $\gamma$.
	
	Now, any unit $g$ on $Y_1(Np^r)_{/\qbar}$ gives rise via Kummer theory to a similarly denoted
	class in $H^1_{\et}(Y_1(Np^r)_{/\qbar},\zp(1))$. The order $\ord_x\, g$ of the zero of $g$
	at a cusp $x$ satisfies
	\begin{equation} \label{poincareident}
		 \ord_x\, g = g \cup h_x = \partial_x g,
	\end{equation}
	where $h_x \in H^1_{c,\et}(Y_1(Np^r)_{/\qbar},\zp(1))$ is the image of $x$ under the canonical connecting map,
	and where $\partial_x g$ is the boundary at $x$ in $H_0(\{x\},\zp) \cong \zp$
	of the relative homology class corresponding to $g$.
	These identities can be verified by comparison with de Rham cohomology:
	$$
		g \cup h_x = \frac{1}{2\pi i} \int_X \frac{dg}{g} \wedge d\eta_x = \frac{1}{2\pi i} \int_{\partial D_x} \frac{dg}{g} = \ord_x \, g
	$$
   	for a smooth function $\eta_x$ that is $1$ on a small closed disk $D_x$ about $x$ and $0$ outside of 
	a larger one in $Y_1(Np^r)$.
	On the other hand, if $g$ is sent to the class of $\gamma$, then 
	$$
		g \cup h_x = \int_{\gamma} d\eta_x = \sum_y \partial_y g \cdot \eta_x(y) = \partial_x g,
	$$
	where the sum is taken over all cusps $y$ of $X_1(Np^r)$.
\end{remark}

\subsection{Iwasawa modules}

\begin{definition}
	Set $\Z_{p,N} = \varprojlim_r \Z/Np^r\Z$, set $\tilde{\La} = \zp\ps{\Z_{p,N}^{\times}/\langle -1 \rangle}$, and
	set $\Delta = (\Z/Np\Z)^{\times}$.
\end{definition}

Note that we have a canonical decomposition $\Z_{p,N}^{\times} \cong \Delta \times (1+p\Z_p)$.

\begin{definition} Set $\Lambda = \zp\ps{1+p\zp}$, let $\chi$ denote the isomorphism
	$$
		\chi = (1-p^{-1})\log \colon 1+p\zp \xrightarrow{\sim} \zp,
	$$  
	let $t \in 1+p\zp$ be such that $\chi(t) = 1$, let $\gamma \in \Lambda$ be the group element defined by $t$,
	and set $X = \gamma-1 \in \La$.
\end{definition}

Note that these definitions allow us to consider $\tilde{\Lambda}$ as the $\Lambda = \zp\ps{X}$-algebra
$\Lambda[ \Delta/\langle -1 \rangle]$.
	
\begin{definition} \
	\begin{enumerate}
		\item[a.] Set $\tilde{\Gamma} = \Gal(\Q(\mu_{Np^{\infty}})^+/\Q)$.
		\item[b.] Let $\Q_{\infty}$ denote the cyclotomic $\zp$-extension of $\Q$, and set 
		$\Gamma = \Gal(\Q_{\infty}/\Q)$.
	\end{enumerate}
\end{definition}

We have an isomorphism $\tilde{\Gamma} \xrightarrow{\sim} \Z_{p,N}^{\times}/\langle -1 \rangle$  
given by the cyclotomic character, which we use to identify $\tilde{\La}$ with $\zp\ps{\tilde{\Gamma}}$.
We similarly identify $\La$ with $\zp\ps{\Ga}$.  We also use this isomorphism to identify $\Delta/\langle -1 \rangle$
with a subgroup (and quotient) of $\tilde{\Gamma}$.

\begin{remark}
	Note that $\mf{h}^{\ord}$ is a $\tilde{\La}$-algebra on which group elements act as inverses of 
	diamond operators. (This choice of inverses, as opposed to actual diamond operators, is made so that the
	maps that feature in Conjecture \ref{sconj} that is the subject of this work are of $\tilde{\La}$-modules.)
	At times, we may work with $\tilde{\La}$-modules with distinct actions of inverse diamond operators
	and Galois elements.  The action that we are considering should be discernable from context.
\end{remark}

\begin{definition} \
	\begin{enumerate}
		\item[a.] Let $\Z_{\infty}$ denote the integer ring of $\Q_{\infty}$.
		\item[b.] Set $\mc{O} = \Z[\frac{1}{Np}]$ and
		$\mc{O}_{\infty} = \Z_{\infty}[\tfrac{1}{Np}]$. For $r \ge 1$, let $\mc{O}_r$ be the ring of $Np$-integers in the
		degree $p^{r-1}$ extension of $\Q$ in $\Q_{\infty}$.
		\item[c.] Let $\Q_{p,\infty}$ denote the cyclotomic $\zp$-extension of $\Q_p$, and
		let $\Q_{p,r}$ denote the unique degree $p^{r-1}$ extension of $\Q_p$ in $\Q_{p,\infty}$.
		\item[d.] For each prime $\ell \neq p$, let $\Q_{\ell,\infty}$ denote the unramified $\zp$-extension of $\Q_{\ell}$.
	\end{enumerate}
\end{definition}

\begin{definition}
	For any algebraic extension $F$ of $\Q$, we consider the set
	$S$ of primes dividing $Np\infty$.  We let $G_{F,S}$ denote the Galois group of the maximal
	$S$-ramified (i.e., unramified outside $S$) extension of $F$.
\end{definition}

\begin{remark}
	For a compact $\zp\ps{G_{\Q,S}}$-module $M$, we use the notation $H^i(\mc{O},M)$ to denote
	the $i$th continuous Galois cohomology group $H^i_{\mr{cts}}(G_{\Q,S},M)$.
	This is consistent with the common interpretation
	of this group as the continuous \'etale cohomology group of the locally constant sheaf on the \'etale
	site of $\Spec \mc{O}$
	attached to $M$ (cf. \cite[Proposition II.2.9]{milne}). Here, note that 
	the continuous Galois and \'etale cohomology groups of $M$ 
	are isomorphic to the corresponding inverse limits of cohomology groups of the finite 
	$G_{\Q,S}$-quotients of $M$ \cite[\S 3.2]{lim}.
	
	We also have compactly supported and local-at-$\ell$
	cohomology groups, denoted 
	\begin{eqnarray*}
		H^i_c(\mc{O},M) = H^i_{c,\cts}(G_{\Q,S},M) &\mr{and}& 
		H^i(\Q_{\ell},M) = H^i_{\cts}(G_{\Q_{\ell}},M)
	\end{eqnarray*} 
	respectively, 
	in the latter case for $M$ a compact $\zp\ps{G_{\Q_{\ell}}}$-module.
	Since $p$ is odd, the compactly supported Galois cohomology groups agree with compactly 
	supported \'etale cohomology in its usual sense (cf. \cite[\S II.2]{milne}).
	
	We extend this notation to define $H^i(\mc{O}_r,M)$ as the continuous 
	$S$-ramified cohomology group of the fraction field of $\mc{O}_r$ with $M$-coefficients, and likewise in other
	settings.
\end{remark}

We may view $\tilde{\Gamma}$ as a quotient of $G_{\Q,S}$.

\begin{definition}
	For a $\tilde{\La}$-module $M$, we consider $M$ as a $\tilde{\La}\ps{G_{\Q,S}}$-module $M^{\iota}$ by 
	letting $\sigma \in G_{\Q,S}$ act by multiplication by the inverse of its image in $\tilde{\Gamma}$.
\end{definition}

We may define Iwasawa cohomology groups by taking completed tensor products with $\Lai$: see \cite{me-selmer}.
(Note that $\sigma \in G_{\Q,S}$ acts on $\Lai$ by multiplication by the inverse of its image in $\Gamma$.)

\begin{definition}
	For a compact $\zp\ps{G_{\Q,S}}$-module $M$, the $i$th \emph{$S$-ramified Iwasawa cohomology group} of $M$ is
	$$
		H^i_{\Iw}(\mc{O}_{\infty},M) = H^i(\mc{O}, \Lai \cozp M). 
	$$
\end{definition}

\begin{remark}
	We have compactly supported Iwasawa cohomology groups and local-at-$\ell$, for primes $\ell$,
	Iwasawa cohomology groups
	\begin{eqnarray*}
		H^i_{c,\Iw}(\mc{O}_{\infty},M) = H^i_c(\mc{O}, \Lai \cozp M) &\mr{and}&
		H^i_{\Iw}(\Q_{\ell,\infty},M)= H^i(\Q_{\ell}, \zp\ps{\Gamma_{\ell}}^{\iota} \cozp M)
	\end{eqnarray*}
	of a compact $\zp\ps{G_{\Q,S}}$-module or, respectively, $\zp\ps{G_{\Q_{\ell}}}$-module $M$,
	where $\Gamma_{\ell}$ denotes the decomposition group at $\ell$ in $\Gamma$.
	We will also consider Iwasawa cohomology for $\mc{O}_{\infty}[\mu_{Np}]$, defined
	using $\tilde{\Lambda}$ in place of $\La$ (and similarly, local-at-$\ell$ cohomology for the
	decomposition group).
\end{remark}

We make some remarks on of Galois cohomology with restricted ramification, 
Poitou-Tate duality, and Iwasawa cohomology. We refer the reader to \cite{nsw}, \cite{lim}, and \cite[Section 2]{me-selmer} for a more thorough review of their various properties, as well as, for instance, the analogous but simpler Tate duality.

\begin{remark} \label{galcohfacts}
	Let $M$ be a compact $\zp\ps{G_{\Q,S}}$-module.
	\begin{enumerate}
		\item[a.] Via Shapiro's lemma, we have an identification
		$$ 	
			H^i_{\Iw}(\mc{O}_{\infty},M) \cong \varprojlim_r H^i(\mc{O}_r, M)
		$$
		where the inverse limit is taken with respect to corestriction maps,
		and similarly for the other sorts of Iwasawa cohomology groups.
		For a prime $\ell$, the group $H^i(\Q_{\ell},\Lai \cozp M)$ is 
		the product of Iwasawa cohomology groups $H^i_{\Iw}(\Q_{\ell,\infty},M)$ 
		over the finite set of primes of $\Q_{\infty}$ dividing $\ell$. For $\ell = p$, this
		is just a single prime.
		\item[b.] Poitou-Tate duality provides canonical isomorphisms 
		$$
			H^i_c(\mc{O},M(1)) \cong H^{3-i}(G_{\Q,S},M^{\vee})^{\vee},
		$$
		where on the right we use the profinite $G_{\Q,S}$-cohomology group of the discrete Pontryagin dual $M^{\vee}$. 
		This in turn gives isomorphisms
		$$
			H^i_{c,\Iw}(\mc{O}_{\infty},M(1)) \cong H^{3-i}(G_{\Q_{\infty},S},M^{\vee})^{\vee}
		$$ 
		of $\La$-modules.
		For $i = 3$, we obtain the \emph{invariant maps} 
		\begin{eqnarray*}
			H^3_c(\mc{O},M(1)) \cong M_{G_{\Q,S}} &\mr{and}&
			H^3_{c,\Iw}(\mc{O}_{\infty},M(1)) \cong M_{G_{\Q_{\infty},S}},
		\end{eqnarray*}
		and in particular third compactly supported cohomology functors are right exact.
	\end{enumerate}
\end{remark}

\subsection{Local actions at $p$} \label{localact}

\begin{notation}
	We fix an even $p$-adic Dirichlet character $\theta \colon \Delta \to \qpbt$
	and let $R$ denote the $\zp$-algebra generated by the values of $\theta$.  
\end{notation}

We consider $R$ as a quotient of $\zp[\Delta]$ via the $\zp$-linear map to $R$ 
induced by $\theta$.

\begin{definition}
	The \emph{$\theta$-part} $M_{\theta}$ of a $\zp[\Delta]$-module $M$ is the $R$-module $M_{\theta} = M \otimes_{\zp[\Delta]} R$.
\end{definition}
 
\begin{remark}
	Given a compact $\tilde{\Lambda}$-module $M$, we view $M_{\theta}$ as a module over the complete local ring
	$\La_{\theta} := R\ps{\Gamma} = R\ps{X}$.  
	We will most typically think of $\La_{\theta}$ as the $\theta$-part of the algebra of inverse diamond 
	operators, whereas $\La$ will often be viewed as an algebra of Galois elements.
\end{remark}

\begin{definition} \label{Tquosub} \
\begin{enumerate}
	\item[a.] Let $\mcT_{\quo}^{\ord}$ and $\tmcT_{\quo}^{\ord}$ denote the maximal 
	unramified $\mf{H}_{\theta}\ps{G_{\qp}}$-quotients of $\mcT_{\theta}^{\ord}$ and $\tmcT_{\theta}^{\ord}$, respectively.
	\item[b.] Let $\mcT_{\sub}^{\ord}$ denote the kernel of the quotient map $\mcT_{\theta}^{\ord} \to 
	\mcT_{\quo}^{\ord}$, 
	which is also the kernel of $\tmcT_{\theta}^{\ord} \to \tmcT_{\quo}^{\ord}$.  
\end{enumerate}
\end{definition}

\begin{definition} \label{Ddef}
	For a compact unramified $\mf{R}\ps{G_{\qp}}$-module $U$ with $\mf{R}$ a compact $\zp$-algebra, we set
	$$
		D(U) = (U \cozp W)^{\Fr_p = 1},
	$$
	i.e., the fixed part of the completed tensor product
	for the diagonal action of the Frobenius $\Fr_p$, 
	where $W$ is the completion of the valuation ring of $\qp^{\ur}$.
\end{definition}

\begin{remark}\label{Disos}
	In the notation of Definition \ref{Ddef}, the following hold.
	\begin{enumerate}
		\item[a.] There is a (noncanonical) natural isomorphism between the forgetful functor from 
		compact unramified $\mf{R}\ps{G_{\qp}}$-modules to compact
		$\mf{R}$-modules and $D$, under which each $U \to D(U)$ is an isomorphism \cite[1.7.6]{fk-pf}.  
		\item[b.] Endowing $D(U)$ for each $U$ with the additional action of $\varphi = 1 \otimes \Fr_p$, 
		any choice of natural isomorphism as above induces canonical isomorphisms
		$$
			U/(1-\Fr_p)U \xrightarrow{\sim} D(U)/(1-\varphi)D(U).
		$$ 
	\end{enumerate}
\end{remark}
 
The following $\La$-adic Eichler-Shimura isomorphisms can be found in \cite[1.7.9]{fk-pf} and extend work of Ohta from \cite{ohta-es}.

\begin{theorem}[Ohta, Fukaya-Kato] \label{DTquo}
	We have canonical isomorphisms 
	\begin{eqnarray*}
		D(\mcT_{\quo}^{\ord}) \cong \mf{S}_{\theta}^{\ord} &\mr{and}& 
		D(\tmcT_{\quo}^{\ord}) \cong \mf{M}_{\theta}^{\ord}
	\end{eqnarray*}	
	of $\mf{H}_{\theta}^{\ord}$-modules.
\end{theorem}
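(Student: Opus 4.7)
The plan is to establish both isomorphisms by passing to an inverse limit over $r$ of finite-level Eichler--Shimura-type comparisons for the ordinary part at $p$, and then taking $\theta$-isotypic components. At each level $r$, the ordinary $\theta$-component of $H^1_{\et}(X_1(Np^r)_{/\qbar},\zp(1))$ is an extension at $p$ whose unramified quotient comes from the \'etale part of the canonical connected--\'etale decomposition of the ordinary Barsotti--Tate group $J_1(Np^r)[p^{\infty}]^{\ord}$. Because $D$ restricts to an equivalence on compact unramified $\zp\ps{G_{\qp}}$-modules (Remark \ref{Disos}.a), applying it to this quotient recovers an abstractly isomorphic compact module; the real task is to identify it Hecke-equivariantly with cusp forms.

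The central finite-level input is the classical Eichler--Shimura comparison in its ordinary form. The \'etale quotient of the ordinary Tate module is Cartier-dual to the character group of the multiplicative part, and applying $D$ converts the \'etale Tate module into the tangent space of the dual formal group. Via the Hodge--Tate decomposition on the ordinary locus and the Kodaira--Spencer isomorphism, this tangent space is identified with the space of ordinary weight-two cusp forms on $X_1(Np^r)$, after the appropriate Tate twist. Hecke-equivariance then follows from the functoriality of these identifications under correspondences, with the duality interchanging the action on cohomology with the adjoint action on forms that is used to define the action of $\mf{h}^{\ord}$ on $\mf{S}^{\ord}$.

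To obtain the $\La$-adic version, I would pass to the inverse limit over $r$. Since $D$ is defined as a fixed submodule of a completed tensor product with the compact ring $W$, it commutes with inverse limits of compact unramified modules. Moreover, under the finite-level Eichler--Shimura identifications, the trace maps on \'etale cohomology correspond to the standard degeneracy maps on $\La$-adic cusp forms, so the limits match. An identical argument applied to the open curves $Y_1(Np^r)$ gives the variant in terms of $\La$-adic modular forms. Taking $\theta$-parts and assembling yields the two stated canonical isomorphisms.

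The principal obstacle, which is what requires the delicate work in \cite{ohta-es} extended in \cite[\S 1.7]{fk-pf}, is pinning down normalizations tightly enough that the identifications are canonical as $\mf{H}_{\theta}^{\ord}$-modules rather than merely canonical up to a unit. This requires careful tracking of the Tate twist, the sign in the Cartier-duality pairing, and above all the matching of the $U_p$-action on \'etale cohomology (which is used to define the ordinary projector via its dual) with the classical $U_p$ acting on forms, together with the corresponding comparison between trace maps on $\mcT^{\ord}$ and the dual of the $p$-stabilization on forms.
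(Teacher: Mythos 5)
The paper itself does not prove this statement: it is quoted, with attribution to Ohta and Fukaya--Kato, from the discussion preceding it, which points to \cite[1.7.9]{fk-pf} (itself an extension of \cite{ohta-es}). There is therefore no in-paper proof against which to compare your sketch; the content lives entirely in those references.

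On its own terms, your outline is a reasonable reconstruction of the strategy used there: finite-level Eichler--Shimura via the connected--\'etale decomposition of the ordinary $p$-divisible group of $J_1(Np^r)$, identification of the unramified quotient with (co)tangent-space/cusp-form data through Hodge--Tate theory and Kodaira--Spencer, passage to the $\Lambda$-adic limit, and matching of transition maps. You also correctly single out the substantive difficulty, which is pinning down the isomorphism canonically as a map of $\mf{H}_{\theta}^{\ord}$-modules -- matching the $U_p$-normalizations, Tate twists, and the fact (noted in the paper) that Hecke operators act on the inverse limits $\mcT^{\ord}$ and $\tmcT^{\ord}$ through the adjoint operators, whereas $\mf{S}^{\ord}$ and $\mf{M}^{\ord}$ carry the usual action. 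Two cautions: your claim that applying $D$ ``converts the \'etale Tate module into the tangent space of the dual formal group'' is phrased more strongly than is warranted -- $D$ as used here (Definition \ref{Ddef}, Remark \ref{Disos}) is a device for trivializing the unramified Galois action after the geometric comparisons have been made, not itself the comparison; and the assertion that ``$D$ commutes with inverse limits'' deserves a word of justification (it holds because $\cozp W$ commutes with inverse limits of compact modules and $(-)^{\Fr_p=1}$ is left exact). Neither affects the overall plausibility of the sketch, but as written the argument defers all the genuine work to the cited sources, which is appropriate given that the paper does the same.
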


\begin{remark} \label{Lafree}
	A well-known result of Hida theory (see Ohta \cite[Theorem 1.4.3]{ohta-es}) states that $\mcT_{\theta}^{\ord}$
	and $\tmcT_{\theta}^{\ord}$ are $\La_{\theta}$-free of finite rank.  
\end{remark}

Ohta \cite[Section 4]{ohta-es} constructed a perfect ``twisted Poincar\'e duality'' pairing 
\begin{equation} \label{Ohta_pair}
	\langle\, \ ,\ \rangle \colon \mcT_{\theta}^{\ord} \times \mcT_{\theta}^{\ord} \to \La_{\theta}^{\iota}(1)
\end{equation}
of $\La_{\theta}[G_{\Q,S}]$-modules for which $(Tx,y) = (x,Ty)$ for all $x, y \in \mcT_{\theta}$ and $T \in \mf{h}_{\theta}^{\ord}$.
This is compatible with an analogously defined pairing
\begin{equation} \label{Ohta_pair2}
	\langle\, \  ,\ \rangle \colon \tmcT_{\theta}^{\ord} \times \tmcT_{c,\theta}^{\ord} \to \Lai_{\theta}(1)
\end{equation}
of Ohta \cite[Theorem 1.3.3]{ohta-cong} with the same properties, but taking $T \in \mf{H}_{\theta}^{\ord}$.

\begin{remark}
	The submodule $\mcT_{\sub}^{\ord}$ is isotropic with respect to Ohta's pairing, yielding a perfect 
	$\La_{\theta}$-duality between the $\mf{h}_{\theta}^{\ord}\ps{G_{\qp}}$-modules 
	$\mcT_{\sub}^{\ord}$ and $\mcT_{\quo}^{\ord}$ \cite[Theorem 4.3.1]{ohta-es}.
	As a consequence, $\mcT_{\sub}^{\ord}$ is isomorphic
	to $\mf{h}_{\theta}^{\iota}(1)$ as an $\mf{h}_{\theta}^{\ord}\ps{G_{\qp^{\ur}}}$-module, 
	where $\qp^{\ur}$ denotes the maximal unramified extension of $\qp$ \cite[1.7.13]{fk-pf}.
\end{remark}

\subsection{Eisenstein parts and quotients}

For an $\mf{H}^{\ord}$-module $M$, we let $M_{\mf{m}}$ denote its Eisenstein part: the product of its localizations at the maximal ideals containing $T_{\ell}-1-\ell\langle \ell \rangle$ for primes $\ell \nmid Np$
and $U_{\ell}-1$ for primes $\ell \mid Np$.

\begin{definition} \
	\begin{enumerate}
		\item[a.] We define the cuspidal Hecke algebra $\mf{h}$ as the Eisenstein part 
		$\mf{h}_{\mf{m}}^{\ord}$ of Hida's ordinary cuspidal
		Hecke algebra $\mf{h}^{\ord}$.
		\item[b.] The \emph{Eisenstein ideal} $I$ of $\mf{h}$ is the ideal generated 
		by $T_{\ell}-1-\ell\langle \ell \rangle$ for primes $\ell \nmid Np$ and $U_{\ell}-1$ for primes $\ell \mid Np$.
	\end{enumerate}
\end{definition}

We also set $\mf{H} = \mf{H}_{\mf{m}}^{\ord}$ and in general use the following notational convention.  

\begin{notation}
	For an $\mf{H}^{\ord}$-module denoted $M^{\ord}$, we set $M = M^{\ord}_{\mf{m}}$.  
\end{notation}

By applying this convention, we obtain $\mf{H}$-modules $\mf{S}$, $\mf{M}$, $\mc{S}$, $\mc{M}$, $\mcT$, $\tmcT$, $\mc{T}_{\quo}$, $\tmcT_{\quo}$, and $\mc{T}_{\sub}$.  (Note that $\mc{T}_{\sub}$ and $\mc{T}_{\quo}$
are a submodule and a quotient of $\mcT_{\theta}$, rather than just $\mcT$.)
It is only these Eisenstein parts that will be of use to us in the rest of the paper, so we focus solely on them, eschewing greater generality, but obtaining somewhat finer results in the later consideration of zeta elements.

We make the following assumptions on our even character $\theta$.

\begin{hypothesis} \label{thetahyp} We suppose that the following conditions on $\theta$ hold:
	\begin{enumerate}
		\item[a.] $p$ divides the generalized Bernoulli number $B_{2,\theta^{-1}}$.
		\item[b.] $\theta$ has conductor $N$ or $Np$,
		\item[c.] $\theta \neq 1, \omega^2$ (if $N = 1$),
		\item[d.] either $\theta\omega^{-1}|_{(\Z/p\Z)^{\times}} \neq 1$ or 
		$\theta|_{(\Z/N\Z)^{\times}}(p) \neq 1$, 
	\end{enumerate}
\end{hypothesis}

\begin{remark}	
	Hypothesis \ref{thetahyp}a tells us that $\mf{h}_{\theta} \neq 0$.
\end{remark}

Using Hypothesis \ref{thetahyp}d, we have the following exactly as in the work of Ohta \cite[\S 3.4]{ohta-cong} (cf. \cite[6.3.12]{fk-pf}).

\begin{lemma} \label{latticesplit}
	The exact sequence 
	$$
		0 \to \mcT_{\sub} \to \mcT_{\theta} \to \mcT_{\quo} \to 0
	$$
	is canonically split as a sequence of $\mf{h}_{\theta}$-modules.
\end{lemma}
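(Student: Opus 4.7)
The plan is to construct, from the decomposition-group action at $p$, a canonical $\mf{h}_{\theta}$-linear idempotent $e$ on $\mcT_{\theta}$ with image $\mcT_{\sub}$; the kernel $\ker(e)$ then provides the desired splitting, and its canonicity follows from that of $e$. The starting point is that $\mcT_{\quo}$ is unramified while $\mcT_{\sub} \cong \mf{h}_{\theta}^{\iota}(1)$ as an $\mf{h}_{\theta}[G_{\qp^{\ur}}]$-module (from the remark on Ohta's duality), so inertia at $p$ acts on $\mcT_{\sub}$ through an explicit character of the tame quotient $(\Z/p\Z)^{\times}$ built from $\omega$ and $\theta$, and trivially on $\mcT_{\quo}$.

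In case (i) of Hypothesis \ref{thetahyp}d, this inertia character is nontrivial modulo the maximal ideal $\mf{m}_R$ of $R$. For a topological generator $\sigma$ of the tame inertia, $\sigma$ then acts on $\mcT_{\sub}$ as multiplication by some $a \in R$ with $a - 1 \in R^{\times}$, and as the identity on $\mcT_{\quo}$. In the block form coming from writing $\mcT_{\theta}$ as an extension of $\mcT_{\quo}$ by $\mcT_{\sub}$, the endomorphism $\sigma - 1$ is upper-triangular with diagonal entries $(a - 1,\, 0)$, so $e := (a - 1)^{-1}(\sigma - 1)$ is the desired $\mf{h}_{\theta}$-linear idempotent with image $\mcT_{\sub}$. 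In case (ii), the tame inertia characters coincide modulo $\mf{m}_R$, but the Frobenius distinguishes the pieces: modulo the Eisenstein ideal, $\Fr_p$ acts on $\mcT_{\quo}$ as $U_p \equiv 1$ and on $\mcT_{\sub}$ by a distinct expression involving $\theta(p)$, which by hypothesis is not $1$. The characteristic polynomial of $\Fr_p$ on the rank-two $\mf{h}_{\theta}$-module $\mcT_{\theta}$ therefore factors modulo the maximal ideal of $\mf{h}_{\theta}$ into two coprime linear factors, and Hensel's lemma in the complete local ring $\mf{h}_{\theta}$ lifts this to a factorization with roots $\alpha, \beta$ such that $\alpha - \beta$ is a unit; the operator $(\alpha - \beta)^{-1}(\Fr_p - \beta)$ is then the required idempotent.

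The main obstacle is case (ii): one must compute the Frobenius action on each graded piece over $\mf{h}_{\theta}$ itself, rather than just over its residue field, and verify the coprimality condition needed to apply Hensel's lemma. This delicate but ultimately routine calculation is precisely the content of Ohta's argument in \cite[Section 3.4]{ohta-cong} (compare \cite[6.3.12]{fk-pf}), whose hypotheses are exactly met by our Hypothesis \ref{thetahyp}. Canonicity in both cases is automatic, since $e$ is built from intrinsic Galois-theoretic data (an inertia or Frobenius element together with scalar eigenvalues determined by the characters of the graded pieces) with no further choices involved.
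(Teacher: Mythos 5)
The paper gives no proof of its own, deferring directly to Ohta \cite[\S 3.4]{ohta-cong} and Fukaya--Kato \cite[6.3.12]{fk-pf}, so there is no internal argument to compare against; your idempotent construction correctly reproduces the mechanism behind those citations, including the case split along the two disjuncts of Hypothesis~\ref{thetahyp}d. Two points are worth tightening. In case~(i), the scalar $a$ by which your inertia generator $\sigma$ acts on $\mcT_{\sub} \cong \mf{h}_{\theta}^{\iota}(1)$ lies a priori in $\mf{h}_{\theta}^{\times}$ rather than in $R$, since inertia acts through the cyclotomic and inverse-diamond characters and not merely through $(\Z/p\Z)^{\times}$; what you actually use, correctly, is that its image in the residue field of $\mf{h}_{\theta}$ is $(\omega\theta^{-1})(\sigma)$, so $a-1$ is a unit precisely under the first disjunct. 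Second, the closing claim that canonicity is ``automatic \ldots\ with no further choices involved'' is too quick: both the inertia generator and a Frobenius lift are genuine choices, and in case~(ii) inertia still acts nontrivially on $\mcT_{\sub}$ (only trivially modulo the maximal ideal), so different Frobenius lifts could in principle produce different idempotents. The cleaner route to canonicity, and the way the cited sources effectively package it, is uniqueness: Hypothesis~\ref{thetahyp}d forces the residual $G_{\qp}$-characters on $\mcT_{\sub}$ and $\mcT_{\quo}$ to differ, hence $\Hom_{\mf{h}_{\theta}\ps{G_{\qp}}}(\mcT_{\quo},\mcT_{\sub}) = 0$, so any two $G_{\qp}$-equivariant $\mf{h}_{\theta}$-linear splittings of the sequence coincide.
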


We consider the following power series corresponding to the Kubota-Leopoldt $p$-adic $L$-function of interest.

\begin{definition}
	Let $\xi \in \La_{\theta}$ be the element characterized by the property that
	$$
		\xi(t^s-1) = L_p(\omega^2\theta^{-1},s-1)
	$$
	for all $s \in \zp$.
\end{definition}

The Mazur-Wiles proof of the main conjecture over $\Q$ implies the following, first stated by Wiles \cite[Theorem 4.1]{wiles} in the more general context of totally real fields, and reproven in the Mazur-Wiles setting by Ohta in \cite[Corollary A.2.4]{ohta-cong}.
	
\begin{theorem}[Wiles] \label{redhecke}
	We have $(\mf{h}/\Eis)_{\theta} \cong \La_{\theta}/\xi$.  
\end{theorem}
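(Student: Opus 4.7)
The plan is to establish the isomorphism by producing a natural surjection $\La_\theta/(\xi) \twoheadrightarrow (\mf{h}/\Eis)_\theta$ and then matching sizes via the Iwasawa main conjecture over $\Q$.

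First, I would produce a surjection $\La_\theta \twoheadrightarrow (\mf{h}/\Eis)_\theta$. By definition of $\Eis$, we have $T_\ell \equiv 1 + \ell \langle \ell \rangle \pmod{\Eis}$ for primes $\ell \nmid Np$ and $U_\ell \equiv 1 \pmod{\Eis}$ for primes $\ell \mid Np$. Consequently, modulo $\Eis$ every generator of $\mf{h}_\theta$ as a $\zp$-algebra is a polynomial in the diamond operators, so identifying $\La_\theta$ with the $\theta$-part of the completed $\zp$-algebra of inverse diamond operators (as built into the definition of $\La_\theta$ in the text) yields the desired ring surjection onto $(\mf{h}/\Eis)_\theta$.

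Next, I would show $\xi$ lies in the kernel. This step relies on Hida's ordinary $\La$-adic Eisenstein family $E_\theta \in \mf{M}_\theta^{\ord}$: it is a Hecke eigenform whose $T_\ell$- and $U_\ell$-eigenvalues are precisely the generators of $\Eis$, and whose constant term at the cusp $\infty$ is, up to a $\La_\theta$-unit, the power series $\xi$. The constant-term map gives a $\mf{H}_\theta^{\ord}$-equivariant homomorphism $\mf{M}_\theta^{\ord} \to \La_\theta$ sending $E_\theta$ to this unit multiple of $\xi$. Since $(\mf{h}/\Eis)_\theta$ receives its Hecke action through the cuspidal Hecke algebra $\mf{h}_\theta$, not the full $\mf{H}_\theta$, the $q$-expansion relations -- together with Hypotheses \ref{thetahyp}(b), (c), (d), which prevent any unwanted cusp-mixing -- force $\xi$ to map to zero in $(\mf{h}/\Eis)_\theta$. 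This yields the factored surjection $\La_\theta/\xi \twoheadrightarrow (\mf{h}/\Eis)_\theta$.

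Finally, I would upgrade surjection to isomorphism by invoking the Iwasawa main conjecture. The theorem of Mazur-Wiles identifies $(\xi)$ with the characteristic ideal of the $\theta$-eigenspace of the Iwasawa module of ideal class groups, and a Mazur-Wiles-style construction of unramified extensions from the lattice $\mcT_\theta$ (using the canonical splitting of Lemma \ref{latticesplit} and Ohta's duality \eqref{Ohta_pair}) produces a comparison showing that the $\La_\theta$-length of $(\mf{h}/\Eis)_\theta$ is bounded below by the $\La_\theta$-length of $\La_\theta/\xi$. Since our surjection goes from a module of this length onto one of at-least-this length, it must be an isomorphism. The principal obstacle is this last lower bound, which is essentially the full strength of the Mazur-Wiles theorem in the $\theta$-eigenspace; the preceding steps are formal manipulations with Hecke relations and $q$-expansions.
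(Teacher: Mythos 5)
The paper does not prove this theorem; it states it as a known result, citing Wiles~\cite[Theorem~4.1]{wiles} and, in the equivariant form needed here, Ohta~\cite[Corollary~A.2.4]{ohta-cong}. Your sketch does follow the standard shape of that literature argument: a surjection $\La_\theta/\xi \twoheadrightarrow (\mf{h}/\Eis)_\theta$ coming from the Hecke relations mod $\Eis$ and the constant term of the ordinary $\La$-adic Eisenstein family, upgraded to an isomorphism by a length comparison tied to the Iwasawa main conjecture.

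One point in your final step deserves scrutiny. You assert that the Mazur-Wiles cocycle construction from $\mcT_\theta$ gives a \emph{lower} bound on the $\La_\theta$-length of $(\mf{h}/\Eis)_\theta$ in terms of $\La_\theta/\xi$. But the most natural output of that construction is a $\La_\theta$-linear surjection from a subquotient of the unramified Iwasawa module onto a faithful cyclic $(\mf{h}/\Eis)_\theta$-module, which yields the divisibility $\mathrm{char}_{\La_\theta}((\mf{h}/\Eis)_\theta) \mid \mathrm{char}_{\La_\theta}(Y_\theta)=(\xi)$ --- i.e., the \emph{same} inequality you already have from the constant-term surjection, not the complementary one. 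The bound in the opposite direction is the nontrivial input; in Wiles' original proof it is obtained alongside the analytic class number formula and a Fitting-ideal computation on the Hecke side, not from the lattice construction alone, while Ohta derives it from the main conjecture by a related but distinct argument. You should also make explicit that passing from a surjection of torsion $\La_\theta$-modules with equal characteristic ideals to an isomorphism uses that $\La_\theta/\xi$ has no nontrivial pseudo-null submodules, which holds because $\xi$ is coprime to $p$ by Ferrero-Washington. These are refinements rather than fatal gaps, but as written the key step (where the converse inequality comes from) is not actually supplied by the ingredient you name.
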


\begin{definition}
	We let $T = \mcT_{\theta}/\Eis\mcT_{\theta}$.  
\end{definition}

We recall the following from \cite[Section 6.3]{fk-pf} (cf. \cite[Corollary 4.9]{me-Lfn}).
	
\begin{proposition} \label{Qiso}
	The reduced lattice $T$ has an $(\mf{h}/\Eis)_{\theta}\ps{G_{\Q,S}}$-quotient $Q$ canonically isomorphic to 
	$(\mf{h}/\Eis)_{\theta}^{\iota}(1)$.
\end{proposition}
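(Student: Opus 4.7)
The plan is to identify $Q$ with a reduction of the local-at-$p$ subspace $\mcT_{\sub}$, invoke Ohta's twisted Poincar\'e duality to get the shape $(\mf{h}/I)_{\theta}^{\iota}(1)$, and then upgrade the resulting $G_{\qp^{\ur}}$-equivariant isomorphism to a $G_{\Q,S}$-equivariant one via the Eisenstein congruences.

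First, by Lemma \ref{latticesplit}, the local sequence $0 \to \mcT_{\sub} \to \mcT_{\theta} \to \mcT_{\quo} \to 0$ is canonically split as a sequence of $\mf{h}_{\theta}$-modules. Reducing modulo $I$ gives a canonical decomposition
$$
	T \;\cong\; (\mcT_{\quo}/I\mcT_{\quo}) \oplus (\mcT_{\sub}/I\mcT_{\sub})
$$
of $(\mf{h}/I)_{\theta}$-modules, and the second summand maps isomorphically onto $Q$ under the projection $T \twoheadrightarrow Q$. I will use this to transport the global $G_{\Q,S}$-action on $Q$ back to the subquotient $\mcT_{\sub}/I\mcT_{\sub}$, which a priori only carries a $G_{\qp}$-action.

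Next, I will invoke the remark following Ohta's pairing \eqref{Ohta_pair}, which gives an isomorphism $\mcT_{\sub} \cong \mf{h}_{\theta}^{\iota}(1)$ as $\mf{h}_{\theta}\ps{G_{\qp^{\ur}}}$-modules. Reducing modulo $I$ and applying Theorem \ref{redhecke} to replace $\mf{h}_{\theta}/I$ by $\La_{\theta}/\xi$, I obtain an isomorphism $Q \cong (\mf{h}/I)_{\theta}^{\iota}(1)$ of $(\mf{h}/I)_{\theta}\ps{G_{\qp^{\ur}}}$-modules. Canonicity is pinned down by requiring the isomorphism to arise, under Ohta's pairing, from a distinguished generator of $\mcT_{\quo}/I\mcT_{\quo} \cong P$ coming via $\La$-adic Eichler-Shimura from a specific modular symbol — the Poincar\'e dual generator alluded to in the introduction and in \cite{me-Lfn}.

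The hard part will be promoting this $G_{\qp^{\ur}}$-equivariant identification to a $G_{\Q,S}$-equivariant one. For this I plan to exploit the Eisenstein congruences $T_{\ell} \equiv 1+\ell\langle\ell\rangle \pmod{I}$ for $\ell \nmid Np$ and $U_{\ell} \equiv 1 \pmod{I}$ for $\ell \mid Np$. Through the Eichler-Shimura congruence relations, these force the Frobenius at any $\ell \nmid Np$ to act on $Q$ as $\ell\langle\ell\rangle^{-1}$, which is precisely the action on $(\mf{h}/I)_{\theta}^{\iota}(1)$; together with unramifiedness of $Q$ outside $p$ and the already-established compatibility on the decomposition group at $p$, Chebotarev density then extends the identification to a $G_{\Q,S}$-equivariant isomorphism, as desired.
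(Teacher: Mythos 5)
Your argument has a circularity that needs to be removed before it could work: you invoke the quotient map $T \twoheadrightarrow Q$ to say ``the second summand maps isomorphically onto $Q$'' and then to ``transport the global $G_{\Q,S}$-action on $Q$'' back to $\mcT_{\sub}/\Eis\mcT_{\sub}$, but the content of the Proposition is precisely to \emph{construct} $Q$ together with its $G_{\Q,S}$-structure. What you actually have at the outset is that $\mcT_{\sub}/\Eis\mcT_{\sub}$ is a $G_{\qp}$-stable submodule of $T$, split off only as a map of $\mf{h}_\theta$-modules (Lemma \ref{latticesplit}); that gives an $\mf{h}_\theta$-linear projection $T \to \mcT_{\sub}/\Eis\mcT_{\sub}$, but not, a priori, one of $G_{\Q,S}$-modules. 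The closing Chebotarev step cannot repair this: to compare $\mr{Fr}_\ell$ on your tentative $Q$ with multiplication by $\ell\langle\ell\rangle^{-1}$ you already need to know the $G_{\Q,S}$-action on $Q$, while the Eichler--Shimura congruence modulo $\Eis$ only tells you that $\mr{Fr}_\ell$ annihilates the quadratic $(x-1)(x-\ell\langle\ell\rangle)$ on all of $T$, which does not pin down its action on a (possibly non-semisimple) subquotient whose Galois structure is not yet established.

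The paper takes a quite different route that sidesteps both problems. One chooses a specific Galois-invariant vector, namely $\xi e_\infty$ with $e_\infty$ the image of the compatible system of relative homology classes $\{0\to\infty\}_r$ inside the Manin--Drinfeld-modified module $\tmcT \otimes_{\mf{H}}\mf{h}$, and defines the surjection $T \to (\mf{h}/\Eis)^\iota_\theta(1)$ directly as $y \mapsto \langle\xi e_\infty, y\rangle$ using Ohta's $G_{\Q,S}$-equivariant pairing \eqref{Ohta_pair}. Equivariance then holds by construction, and the genuine work is to show that $\xi e_\infty$ is part of a $\La_\theta$-basis of $\mcT_\theta$ --- using $\La_\theta$-freeness of $\tmcT \otimes_{\mf{H}}\mf{h}$, Remark \ref{Lafree}, and Theorem \ref{redhecke} --- so that the map is surjective with the right kernel. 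Your instinct that canonicity comes from a distinguished modular symbol and Ohta duality is correct in spirit, but the symbol enters as the element paired \emph{against}, not as a generator of $\mcT_{\quo}/\Eis\mcT_{\quo}$, and the duality invoked is the global nondegeneracy of \eqref{Ohta_pair}, not the $G_{\qp^{\ur}}$-equivariant identification $\mcT_{\sub} \cong \mf{h}_\theta^\iota(1)$ (whose global equivariance is exactly what is in question).
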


\begin{proof}
	Consider the Manin-Drinfeld modification of the inverse limit of the first homology groups of $X_1(Np^r)$
	relative to the cusps, which is isomorphic to $\tmcT \otimes_{\mf{H}} \mf{h}$ by \cite[Lemma 4.1]{me-Lfn}.
	Its quotient by $\mcT$ is isomorphic to $\mf{h}/I$, generated by 
	the image $e_{\infty}$ of the compatible sequence 
	of relative homology classes $\{0 \to \infty\}_r$ of the geodesic paths from $0$ to $\infty$ in the
	upper half-plane \cite[Lemma 4.8]{me-Lfn}. The $\La_{\theta}$-module $\tmcT \otimes_{\mf{H}} \mf{h}$ is free,
	as it has no $X$-torsion and its quotient by $X$ is $R$-free as the Manin-Drinfeld modification
	of the Eisenstein part of the relative homology of $X_1(Np)$ (cf. \cite[(6.2.9)]{fk-pf}).
	By Remark \ref{Lafree} and Theorem \ref{redhecke}, we then see that $\xi e_{\infty}$ must be an element
	of a $\La_{\theta}$-basis of $\mcT_{\theta}$ (cf. \cite[(6.2.10)]{fk-pf}).  The desired surjection is given by
	$y \mapsto \langle \xi e_{\infty},y \rangle$ on $y \in \mcT_{\theta}$, using the nondegeneracy of Ohta's pairing \eqref{Ohta_pair}.
\end{proof}

\begin{remark} \label{signchange}
	We have made a sign change here from our original map and that of \cite[6.3.18]{fk-pf}. That is, we pair
	with $\xi e_{\infty}$ on the left, rather than the right. 
\end{remark}

We define $P$ as the kernel of the quotient map $T \to Q$, yielding an exact sequence
\begin{equation} \label{globmodeis}
	0 \to P \to T \to Q \to 0
\end{equation}
of $(\mf{h}/\Eis)_{\theta}\ps{G_{\Q,S}}$-modules.
We recall the following from the main results of \cite[Section 6.3]{fk-pf}.

\begin{proposition} \label{exseqredcoh}
	The canonical maps $P \to \mcT_{\quo}/\Eis\mcT_{\quo}$ and $\mcT_{\sub}/\Eis\mcT_{\sub} \to Q$ are
	isomorphisms of $(\mf{h}/I)_{\theta}\ps{G_{\qp}}$-modules.  Moreover, the action of $G_{\Q,S}$ on 
	$P$ is trivial, and $P$ can be identified with the fixed part of $T$ under any complex conjugation.
\end{proposition}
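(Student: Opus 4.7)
The plan is to identify the two filtrations on $T$ -- the one obtained from Lemma~\ref{latticesplit} by reducing modulo $I$ and the one $0 \to P \to T \to Q \to 0$ defining $P$ -- and to show they are complementary. Because Lemma~\ref{latticesplit} provides a canonical splitting of $0 \to \mcT_{\sub} \to \mcT_{\theta} \to \mcT_{\quo} \to 0$ as $\mf{h}_{\theta}$-modules, reduction modulo $I$ yields a split short exact sequence
\[
0 \to \mcT_{\sub}/I\mcT_{\sub} \to T \to \mcT_{\quo}/I\mcT_{\quo} \to 0
\]
of $(\mf{h}/I)_{\theta}\ps{G_{\qp}}$-modules, with the left term inheriting the isomorphism with $(\mf{h}/I)_{\theta}^{\iota}(1)$ of $G_{\qp^{\ur}}$-modules from the isomorphism $\mcT_{\sub} \cong \mf{h}_{\theta}^{\iota}(1)$ recalled after \eqref{Ohta_pair2}.

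The key step is to show that the composite $\mcT_{\sub}/I\mcT_{\sub} \hookrightarrow T \twoheadrightarrow Q$ is an isomorphism; the complementary isomorphism $P \xrightarrow{\sim} \mcT_{\quo}/I\mcT_{\quo}$ then follows by length considerations, as the split sequence forces $T \cong P \oplus \mcT_{\sub}/I\mcT_{\sub}$. By construction of $Q$ in Proposition~\ref{Qiso}, this composite is $y \mapsto \langle \xi e_{\infty}, y\rangle$ reduced modulo $\xi$. Isotropy of $\mcT_{\sub}$ under Ohta's pairing \eqref{Ohta_pair} means the restriction depends only on the image of $\xi e_{\infty}$ in $\mcT_{\quo}$, and the perfect $\La_{\theta}$-duality between $\mcT_{\sub}$ and $\mcT_{\quo}$ then produces an isomorphism provided this image is a $\La_{\theta}$-generator of $\mcT_{\quo}$. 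Since $\xi e_{\infty}$ belongs to a $\La_{\theta}$-basis of $\mcT_{\theta}$ (as in the proof of Proposition~\ref{Qiso}), the question reduces to showing $\xi e_{\infty}$ is not in $\mcT_{\sub}$, which can be verified via $q$-expansions at the cusp $\infty$, where $e_{\infty}$ corresponds to the unramified direction in the ordinary $p$-local filtration.

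For the remaining assertions: the $G_{\qp}$-action on $P \cong \mcT_{\quo}/I\mcT_{\quo}$ is unramified by definition of $\mcT_{\quo}$. For $G_{\Q,S}$, the Eichler-Shimura relation $\Fr_{\ell}^2 - T_{\ell}\Fr_{\ell} + \ell\langle\ell\rangle = 0$ on $T$ at primes $\ell \notin S$ factors modulo $I$ (using $T_{\ell} \equiv 1 + \ell\langle\ell\rangle$) as $(\Fr_{\ell}-1)(\Fr_{\ell}-\ell\langle\ell\rangle) = 0$. Since $\Fr_{\ell}$ acts via $\ell\langle\ell\rangle^{\pm 1}$ on the cyclotomic twist $Q$, its restriction to $P$ must act as $1$; Chebotarev density then gives triviality of the $G_{\Q,S}$-action on $P$. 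For the complex conjugation identification, one observes that $c$ acts as $-1$ on $Q$ (the cyclotomic twist contributes $-1$ while the $\iota$-twist is trivial, as $c$ maps to the unit class in $\tilde{\Gamma}$), so the eigenspace decomposition at the odd prime $p$ makes $P$ the $+1$-eigenspace of $c$ on $T$.

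The main obstacle is the verification that $\xi e_{\infty}$ has nonzero image in $\mcT_{\quo}$, equivalently, that it is not in $\mcT_{\sub}$. This is a geometric statement about the location of the cusp $\infty$ in the ordinary filtration and is where the Manin-Drinfeld construction underlying Proposition~\ref{Qiso} interlocks with the local $p$-adic structure of Section~\ref{localact}.
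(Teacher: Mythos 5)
Your proposal takes a genuinely different route from the paper's for the main assertion, and it has gaps that the paper's argument avoids.

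For the first isomorphism, the paper's proof is short and character-theoretic: the cokernel of $\pi\colon \mcT_{\sub}/I\mcT_{\sub}\to Q$ is simultaneously a quotient of $\mcT_{\quo}/I\mcT_{\quo}$ (trivial $\Delta_p$-action) and of $Q$ ($\Delta_p$ acts via $\omega\theta^{-1}$), so Hypothesis~\ref{thetahyp}d forces it to vanish; injectivity then follows because both sides are free of rank one over $(\mf{h}/I)_{\theta}$. You instead try to run the argument through Ohta's pairing and the location of $\xi e_{\infty}$. Two problems arise. First, your reduction of the problem to showing $\xi e_{\infty} \notin \mcT_{\sub}$ is too weak: even if the image $\overline{\xi e_{\infty}}$ in $\mcT_{\quo}$ is nonzero, that does not make the induced map $\mcT_{\sub}/I\mcT_{\sub}\to Q$ surjective after reduction mod $\xi$, which is what is actually needed. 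Second, the phrase ``$\La_{\theta}$-generator of $\mcT_{\quo}$'' does not make sense when $\mf{h}_{\theta}\neq\La_{\theta}$, since then $\mcT_{\quo}$ has $\La_{\theta}$-rank greater than one; you would at best want an $\mf{h}_{\theta}$-module statement, and even then freeness of $\mcT_{\quo}$ over $\mf{h}_{\theta}$ is not guaranteed. You flag this step as ``the main obstacle'' and defer it to an unproved assertion about $q$-expansions; that is precisely the substantive content of the proposition, so the proof is incomplete. It is also telling that your approach never invokes Hypothesis~\ref{thetahyp}d, which the proposition actually relies on.

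For the triviality of the $G_{\Q,S}$-action, the paper argues via the determinant of the 2-dimensional modular representation, which reduces mod $I$ to the character on $Q$; since the action on $T$ is block upper triangular with $P$ stable, this forces the $P$-block to be trivial. Your Eichler--Shimura argument is the Kurihara/Harder--Pink route the paper alludes to, but as written it has a gap: from $(\Fr_{\ell}-1)(\Fr_{\ell}-\ell\langle\ell\rangle)=0$ on $T$ and $\Fr_{\ell}$ acting as $\ell\langle\ell\rangle$ on $Q$ one cannot immediately deduce $\Fr_{\ell}=1$ on $P$ without first addressing whether $1-\ell\langle\ell\rangle$ is a non-zerodivisor in $(\mf{h}/I)_{\theta}$; the conclusion has to be drawn for a density-one set of $\ell$ where the eigenvalues can be separated and then propagated by Chebotarev and continuity. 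The complex-conjugation step you give is the same as the paper's.
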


\begin{proof}
	The cokernel of the map $\pi \colon \mcT_{\sub}/\Eis\mcT_{\sub} \to Q$ is an 
	$(\mf{h}/\Eis)_{\theta}\ps{G_{\qp}}$-module
	quotient of $\mcT_{\quo}/\Eis\mcT_{\quo}$.  The $\Delta_p$-action on $\mcT_{\quo}/\Eis\mcT_{\quo}$
	is trivial, while the $\Delta_p$-action on $Q$ is via $\omega\theta^{-1}$, so by Hypothesis \ref{thetahyp}d,
	we have that $\pi$ is surjective.  Moreover, $\mcT_{\sub}/\Eis\mcT_{\sub}$ and $Q$ are both free
	of rank one over $(\mf{h}/\Eis)_{\theta}$, so $\pi$ must also be injective.
	This forces the other map to be an isomorphism as well.
	
	Next, let us briefly outline the argument of Kurihara and Harder-Pink yielding the triviality of the action on $P$,
	as in \cite[6.3.15]{fk-pf}.
	By Lemma \ref{latticesplit}, we have a direct sum decomposition $T = P \oplus Q$ as 
	$(\mf{h}/I)_{\theta}$-modules, with $P$ being $G_{\Q,S}$-stable.
	The character defining the determinant of the action of $G_{\Q,S}$ on the modular representation
	in which $\mcT_{\theta}$ is a lattice reduces exactly to the character defining the action on $Q$.  
	Consequently, $G_{\Q,S}$ must act trivially on $P$.  Since complex conjugation then acts trivially on $P$
	and as $-1$ on the quotient $Q$, we have the final claim.
\end{proof}

\begin{corollary}
	The maps $\mcT_{\theta}^+ \to \mcT_{\quo}$ and $\tmcT_{\theta}^+ \to \tmcT_{\quo}$ 
	are isomorphisms.  
\end{corollary}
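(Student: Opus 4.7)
The plan is to leverage Proposition \ref{exseqredcoh} together with Nakayama's lemma, exploiting the $\pm$-eigenspace decomposition under complex conjugation $c$ available since $p \ge 5$. Concretely, $2 \in \mf{h}_{\theta}^{\times}$, so any $\mf{h}_{\theta}$-module $M$ with a commuting $c$-action decomposes as $M = M^+ \oplus M^-$, and passage to $\pm$-parts is exact and commutes with reduction modulo $I$.

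From Proposition \ref{exseqredcoh}, $c$ acts trivially on $P \cong \mcT_{\quo}/I\mcT_{\quo}$, and $c$ acts as $-1$ on $Q \cong \mcT_{\sub}/I\mcT_{\sub}$: the latter because $Q \cong (\mf{h}/I)_{\theta}^{\iota}(1)$ is the Tate twist of a module on which $c$ is trivial, the $\iota$-factor being $c$-invariant since $-1$ is killed in $\tilde{\Gamma}$. Hence $P^- = 0$ and $Q^+ = 0$, which yields $\mcT_{\quo}^-/I\mcT_{\quo}^- = 0$ and $\mcT_{\sub}^+/I\mcT_{\sub}^+ = 0$. Since $\mf{h}_{\theta}$ is local with $I$ contained in its maximal ideal, Nakayama's lemma gives $\mcT_{\quo}^- = 0$ and $\mcT_{\sub}^+ = 0$. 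Taking $+$-parts of the short exact sequence $0 \to \mcT_{\sub} \to \mcT_{\theta} \to \mcT_{\quo} \to 0$ then collapses it to an isomorphism $\mcT_{\theta}^+ \xrightarrow{\sim} \mcT_{\quo}$, settling the cuspidal case.

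For the open analogue, $\tmcT_{\sub} = \mcT_{\sub}$ by Definition \ref{Tquosub}, so $\tmcT_{\sub}^+ = 0$ is immediate from what was just shown. The remaining claim $\tmcT_{\quo}^- = 0$ I would extract from the snake lemma applied to the vertical inclusion from the cuspidal sequence into its open counterpart (identity on $\mcT_{\sub}$), which identifies $\tmcT_{\quo}/\mcT_{\quo}$ with $\tmcT_{\theta}/\mcT_{\theta}$. After Eisenstein localization, this cokernel sits entirely in the $+$-part under the identifications $\mcS = \mcT^+$ and $\mc{M} = \tmcT^+$, via the Manin-Drinfeld description used in the proof of Proposition \ref{Qiso}. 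Taking $-$-parts of $0 \to \mcT_{\quo} \to \tmcT_{\quo} \to \tmcT_{\theta}/\mcT_{\theta} \to 0$ then gives $\tmcT_{\quo}^- = 0$, and the same $+$-part argument as in the cuspidal case produces the desired isomorphism.

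The main obstacle I expect is confirming that $\tmcT_{\theta}/\mcT_{\theta}$ is concentrated in the $+$-part after Eisenstein localization; this is a Manin-Drinfeld-style computation in the Eisenstein setting, and rests on the sign conventions from Remark \ref{poincare}. The remainder of the argument is a formal Nakayama's lemma exercise.
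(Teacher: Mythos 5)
Your proposal has a genuine gap at its core: $\mcT_{\sub}$ and $\mcT_{\quo}$ do not carry actions of complex conjugation, so the eigenspaces $\mcT_{\sub}^{\pm}$ and $\mcT_{\quo}^{\pm}$ on which your Nakayama argument is based are not defined. By Definition \ref{Tquosub}, $\mcT_{\quo}$ is the maximal unramified $\mf{H}_{\theta}\ps{G_{\qp}}$-quotient of $\mcT_{\theta}$ and $\mcT_{\sub}$ its kernel, so these are $\mf{h}_{\theta}\ps{G_{\qp}}$-modules only; complex conjugation does not lie in a decomposition group at $p$, does not stabilize $\mcT_{\sub}$ inside $\mcT_{\theta}$, and does not descend to $\mcT_{\quo}$. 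The content of the corollary is precisely that the $c$-eigenspace decomposition $\mcT_{\theta} = \mcT_{\theta}^+ \oplus \mcT_{\theta}^-$ is compatible with the (purely local-at-$p$) decomposition $\mcT_{\theta} = \mcT_{\sub} \oplus \mcT_{\quo}$ of Lemma \ref{latticesplit}, so tacitly equipping $\mcT_{\sub}$ and $\mcT_{\quo}$ with $c$-actions begs the question. Note also that Proposition \ref{exseqredcoh} gives the triviality of $c$ on $P$ and the isomorphism $\mcT_{\sub}/I\mcT_{\sub} \xrightarrow{\sim} Q$ only as $(\mf{h}/I)_{\theta}\ps{G_{\qp}}$-modules; it does not supply a $c$-action on $\mcT_{\sub}/I\mcT_{\sub}$ through which to transport $Q^+ = 0$.

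The correct route stays entirely inside the genuine $c$-module $\mcT_{\theta}$: consider the $\mf{h}_{\theta}$-module map $\mcT_{\sub} \to \mcT_{\theta}/\mcT_{\theta}^+$ given by inclusion followed by the quotient. Both sides make sense, and modulo $I$ this becomes the isomorphism $\mcT_{\sub}/I\mcT_{\sub} \xrightarrow{\sim} Q = T/T^+$ of Proposition \ref{exseqredcoh}, so Nakayama gives surjectivity — this is the kernel of truth in your plan, correctly located. But injectivity is then extracted not from a second Nakayama argument but from structure: $\mcT_{\sub}$ is $\mf{h}_{\theta}$-free of rank one, and $\mcT_{\theta}/\mcT_{\theta}^+$ has the same $\La_{\theta}$-rank, so a surjection from the former onto the latter must be an isomorphism. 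Isomorphy of $\mcT_{\theta}^+ \to \mcT_{\quo}$ follows. The open case is reduced to this one by noting $\tmcT_{\sub} = \mcT_{\sub}$ (Definition \ref{Tquosub}) and that $\mcT_{\theta}/\mcT_{\theta}^+ \to \tmcT_{\theta}/\tmcT_{\theta}^+$ is an isomorphism because the Eisenstein boundary $\tmcT_{\theta}/\mcT_{\theta}$ is concentrated in the plus part — the one point your sketch correctly anticipates.
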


\begin{proof}
	The maps $\mcT_{\theta}/\mcT_{\theta}^+ \to \tmcT_{\theta}/\tmcT_{\theta}^+$ and $\mcT_{\sub} \to
	\tmcT_{\sub}$ are isomorphisms, so it suffices 
	to show that $\mcT_{\sub} \to \mcT_{\theta}/\mcT_{\theta}^+$
	is an isomorphism.  We know that it is surjective by Proposition \ref{exseqredcoh} and Nakayama's lemma.
	But $\mcT_{\sub}$ is a free $\mf{h}_{\theta}$-module of rank $1$, and 
	$\mcT_{\theta}/\mcT_{\theta}^+$ is an $\mf{h}_{\theta}$-module of rank $1$, so the surjectivity forces
	the map to be an isomorphism.
\end{proof}

As in \cite[6.3.4]{fk-pf}, we see that our sequence \eqref{globmodeis} is uniquely locally split.

\begin{proposition} \label{exseqredunr}
	For each prime $\ell \mid Np$, the sequence \eqref{globmodeis} is uniquely split as a sequence of 
	$(\mf{h}/\Eis)_{\theta}\ps{G_{\Q_{\ell}}}$-modules.
\end{proposition}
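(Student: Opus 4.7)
The plan is to reduce both existence and uniqueness of a $G_{\Q_\ell}$-splitting to a $\Hom$/$\Ext^1$ analysis, using that $P$ and $Q$ have explicit descriptions as $G_{\Q_\ell}$-modules. Since $G_{\Q,S}$ acts trivially on $P$ by Proposition \ref{exseqredcoh}, a $(\mf{h}/\Eis)_\theta\ps{G_{\Q_\ell}}$-splitting of \eqref{globmodeis} is a $G_{\Q_\ell}$-equivariant section of $T \to Q$, and the difference of two such splittings lies in $\Hom_{(\mf{h}/\Eis)_\theta\ps{G_{\Q_\ell}}}(Q,P)$. By triviality of the action on $P$, any such homomorphism factors through the coinvariants $Q_{G_{\Q_\ell}}$.

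For uniqueness, I would identify $Q$ with $(\mf{h}/\Eis)_\theta^\iota(1)$ via Proposition \ref{Qiso}, so that a choice of generator makes each $\sigma \in G_{\Q_\ell}$ act by multiplication by the unit $u_\sigma = \chi_{\mr{cyc}}(\sigma)\cdot\tilde\sigma^{-1}$ in the local ring $(\mf{h}/\Eis)_\theta \cong \La_\theta/\xi$. Then $Q_{G_{\Q_\ell}}$ is the quotient of this local ring by the ideal generated by $\{u_\sigma - 1 : \sigma \in G_{\Q_\ell}\}$, and it suffices to exhibit one $\sigma$ with $u_\sigma - 1$ a unit modulo the maximal ideal. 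For $\ell \mid N$ with $\ell \neq p$, Hypothesis \ref{thetahyp}(b) forces $\theta$ to have maximal conductor at $\ell$, so $\theta$ is nontrivial on the image of inertia $I_\ell$ in $\Delta$, and an inertia element with $\theta(\tilde\sigma_\Delta) \neq 1$ does the job (note that $\chi_{\mr{cyc}}$ is trivial on $I_\ell$ for $\ell \neq p$). For $\ell = p$, the fact that $p$ is totally ramified in $\Q(\mu_{p^\infty})$ provides inertia elements with $\chi_{\mr{cyc}}(\sigma)$ arbitrary in $\zp^\times$, and the relevant parts of Hypothesis \ref{thetahyp} rule out the collapse of $u_\sigma - 1$ to the maximal ideal.

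For existence at $\ell = p$, I would reduce the $\mf{h}_\theta$-splitting of Lemma \ref{latticesplit} modulo $\Eis$ and combine it with the canonical identifications of Proposition \ref{exseqredcoh}. The splitting of Lemma \ref{latticesplit} is in fact $G_{\qp}$-equivariant because it is cut out by Hecke-algebra idempotents separating the two actions of $U_p$, and $U_p$ commutes with the Galois action. For $\ell \mid N$ with $\ell \neq p$, I would argue that the extension class in $H^1(G_{\Q_\ell}, \Hom_R(Q,P)) \cong H^1(G_{\Q_\ell}, P \otimes Q^\vee)$ vanishes; since $P$ is Galois-trivial, this reduces to a computation in $H^1(G_{\Q_\ell}, (\mf{h}/\Eis)_\theta(-1)^\iota)$, which one handles by local duality and the absence of Frobenius fixed vectors in $Q$ (the same nontriviality of $\theta$ on $I_\ell$ used for uniqueness).

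The main obstacle is the local analysis at ramified primes $\ell \mid N$, where one must simultaneously verify both that the coinvariants vanish (for uniqueness) and that the obstruction class vanishes (for existence), using only the character-theoretic input from Hypothesis \ref{thetahyp} together with the explicit descriptions of $P$, $Q$, and the action on $Q$ coming from Proposition \ref{Qiso}. Once the character-theoretic unit is identified as above, both conclusions fall out of the same ideal/cohomology computation, so the crux is really bookkeeping with the decomposition of $\tilde\Gamma$ into its $\Delta$- and $1+p\zp$-components across the primes $\ell \mid Np$.
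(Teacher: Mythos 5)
Your uniqueness argument and your existence argument at primes $\ell \mid N$ are in the same spirit as the paper's (which reduces both to the nontriviality of the prime-to-$p$ character $\omega\theta^{-1}$ of $\Delta_\ell$ on $Q$ via Hypothesis \ref{thetahyp}b and \ref{thetahyp}d, coupled with the trivial action on $P$). However, your existence argument at $\ell = p$ contains a genuine error.

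You assert that the $\mf{h}_\theta$-module splitting of Lemma \ref{latticesplit} ``is in fact $G_{\qp}$-equivariant because it is cut out by Hecke-algebra idempotents separating the two actions of $U_p$.'' This is false. Lemma \ref{latticesplit} is only a splitting of $\mf{h}_\theta$-modules, and the ordinary filtration $0 \to \mcT_{\sub} \to \mcT_\theta \to \mcT_{\quo} \to 0$ is in general not split as a sequence of $G_{\qp}$-modules: $\mcT_{\sub}$ is ramified (isomorphic to $\mf{h}_\theta^\iota(1)$ on inertia) while $\mcT_{\quo}$ is unramified, and the local extension class is generically nonzero. There is also only one action of $U_p$ on $\mcT_\theta$ rather than two, so no Hecke idempotent can effect such a decomposition; you may be conflating the distinct local characters of $\mcT_{\sub}$ and $\mcT_{\quo}$ with a $G_{\qp}$-equivariant direct sum. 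Moreover, if the unreduced sequence were locally split, the entire construction of $\Upsilon$ (built from the nontriviality of the cocycle $b$ measuring the global extension) would be in jeopardy.

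The correct and much simpler argument, which the paper summarizes as a ``direct consequence of Proposition \ref{exseqredcoh},'' does not use Lemma \ref{latticesplit} at all at this stage. Proposition \ref{exseqredcoh} shows that the canonical $(\mf{h}/I)_\theta\ps{G_{\qp}}$-module map $\mcT_{\sub}/I\mcT_{\sub} \to Q$, i.e., the composite of the inclusion $\mcT_{\sub}/I\mcT_{\sub} \hookrightarrow T$ with the projection $T \twoheadrightarrow Q$, is an isomorphism. Its inverse followed by that inclusion is itself the desired $G_{\qp}$-equivariant section of $T \to Q$. In other words, upon reduction modulo $I$ the sub and quotient of the ordinary filtration exchange roles with $P$ and $Q$, and $Q \cong \mcT_{\sub}/I\mcT_{\sub}$ sits inside $T$ as a $G_{\qp}$-stable complement of $P$ with no further work.
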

		
\begin{proof}
	For $\ell = p$, this is a direct consequence of Proposition \ref{exseqredcoh}.  For $\ell \mid N$,
	this follows from Hypothesis \ref{thetahyp}b and the facts that the decomposition group 
	$\Delta_{\ell}$ at $\ell$ in $\Delta$ acts trivially on $P$ and via $\omega\theta^{-1}$ on $Q$.
\end{proof}		
		
We also have the following results on $P$.
		
\begin{remark} \label{quo_results} \
	\begin{enumerate}
		\item[a.] The $G_{\Q,S}$-action on $P$ is trivial, and we have 
		a canonical isomorphism $P \cong \mf{S}_{\theta}/\Eis\mf{S}_{\theta}$ 
		of $\mf{h}_{\theta}$-modules. For this, note that $U_p$ acts as an arithmetic Frobenius on 
		$\mcT_{\quo}$ by \cite[1.8.1]{fk-pf} and that $D(\mcT_{\quo}) \cong \mf{S}_{\theta}$, and apply
		Proposition \ref{exseqredcoh} and Remark \ref{Disos}b.
		\item[b.] The $p$-adic $L$-function $\xi$ divides the $\La_{\theta}$-characteristic ideal 
		of $P$ (for the action of inverse diamond operators) by an argument of Mazur-Wiles and Ohta 
		(see \cite[7.1.3]{fk-pf}).
	\end{enumerate}
\end{remark}

Putting these isomorphisms together with 
Remark \ref{Disos}a and Proposition \ref{DTquo}, we have isomorphisms 
$\mfS_{\theta} \cong \mcT_{\theta}^+ \cong \mcS_{\theta}$ and 
$\mf{M}_{\theta} \cong \tmcT_{\theta}^+ \cong \mc{M}_{\theta}$ on Eisenstein components.  Note that the first of each of these
pairs of isomorphisms is noncanonical, only becoming canonical upon reduction modulo $U_p-1$, but we can and do fix compatible choices.

\section{Cohomological study} \label{galcoh}

In this section, we first introduce known results on the cohomology of the reduced lattice that is the quotient $T$ of $\mcT_{\theta}$ by the Eisenstein ideal. 
We recall the work of Fukaya and Kato \cite{fk-pf} in which the derivative $\xi'$ of a Kubota-Leopoldt $p$-adic $L$-function $\xi$ appears in the study of certain connecting homomorphisms in the cohomology of subquotients of $T(1)$.  We then perform an analogous study, replacing $T$ by the ``intermediate'' quotient 
$$
	T^{\dagger} = (\Lai \cozp T)/\otil{\xi}(\Lai \cozp T) 
$$
of $\Lai \cozp T$, where $\otil{\xi}$ is a diagonalization of $\xi$ in $\La \cozp \La_{\theta}$. We show that in this setting the role of $\xi'$ is played more simply by $1$.

\subsection{Cohomology of the reduced lattice} \label{redlat}

In this subsection, we define the explicit map $\varpi \colon P \to Y$ and the map $\Upsilon \colon Y \to P$ obtained from the Galois action on $T$, and we recall our conjecture that they are mutual inverses. 
We then describe Galois cohomological aspects of the work of Fukaya and Kato on the conjecture.
In particular, we provide an interpretation of $\Upsilon$ as a connecting map in the cohomology of $T(1)$, and we identify a connecting map in the compactly supported cohomology of $P(1)$ with $\xi'$.

\begin{definition} 
	We set $Y = H^2_{\Iw}(\mc{O}_{\infty}[\mu_{Np}],\zp(2))_{\theta}$ and consider it as a $\La_{\theta}$-module
	for the action of inverse diamond operators.
\end{definition}

\begin{remark} \label{Yiwamod}
	Let $Y'$ denote the $\theta$-eigenspace of the 
	Tate twist of the minus part of the unramified Iwasawa module over $\Q(\mu_{Np^{\infty}})$.
	Then the canonical maps 
	$$
		Y' \to H^2_{\Iw}(\Z_{\infty}[\tfrac{1}{p},\mu_{Np}],\zp(2))_{\theta} \hookrightarrow Y
	$$
	are isomorphisms by our hypotheses on $\theta$. In particular, the characteristic ideal of $Y$ is generated by
	$\xi$ by the Iwasawa main conjecture.
\end{remark}

\begin{remark}
	It follows from Shapiro's lemma that
	$$
		H^i(\mc{O},\Lai_{\theta}(2)) \cong H^i_{\Iw}(\mc{O}_{\infty}[\mu_{Np}],\zp(2))_{\theta}
	$$	
	for $i \in \Z$. In particular, we may identify $Y$ with $H^2(\mc{O},\Lai_{\theta}(2))$.
\end{remark}

We recall the following \cite[9.1.4]{fk-pf}.

\begin{lemma}[Fukaya-Kato] \label{Qcoh}
	The cohomology groups $H^i(\mc{O},Q(1))$ are zero for $i \notin \{1,2\}$ and are isomorphic to $Y$ otherwise.
	More precisely, the connecting map in the long exact sequence
	attached to
	$$
		0 \to \Lai_{\theta}(2) \xrightarrow{\xi} \Lai_{\theta}(2) \to Q(1) \to 0
	$$
	induces an isomorphism $H^1(\mc{O},Q(1)) \xrightarrow{\sim} Y$, and the quotient map in said sequence induces
	an isomorphism $Y \xrightarrow{\sim} H^2(\mc{O},Q(1))$.
\end{lemma}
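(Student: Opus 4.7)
I would chase the long exact sequence of $G_{\Q,S}$-cohomology attached to the displayed short exact sequence. The sequence itself is produced by first rewriting $Q \cong (\La_\theta/\xi)^\iota(1)$ via Proposition \ref{Qiso} and Theorem \ref{redhecke}, then twisting by $\zp(1)$: $Q(1)$ presents as the cokernel of multiplication by $\xi$ on $\Lai_\theta(2)$, and this map is injective because $\xi$ is a non-zero-divisor in the domain $\La_\theta$.

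Using Shapiro's lemma to identify $H^i(\mc{O},\Lai_\theta(2))$ with $H^i_{\Iw}(\mc{O}_\infty[\mu_{Np}],\zp(2))_\theta$, the degree-$2$ term is $Y$ by definition. Two vanishings trim the sequence: $H^0$ vanishes because $\Gamma$ already acts on $\Lai$ via the inverse of its natural projection, leaving no invariants even before imposing full $G_{\Q,S}$-equivariance; and $H^i = 0$ for $i \geq 3$ because $p$ is odd and $G_{\Q,S}$ has $p$-cohomological dimension $2$. The remaining long exact sequence reads
$$0 \to H^1 \xrightarrow{\xi} H^1 \to H^1(\mc{O},Q(1)) \xrightarrow{\delta} Y \xrightarrow{\xi} Y \to H^2(\mc{O},Q(1)) \to 0,$$
where $H^1 := H^1(\mc{O},\Lai_\theta(2))$.

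The pair of claimed isomorphisms thereby reduces to the conjunction that $(\alpha)$ multiplication by $\xi$ annihilates $Y$ and $(\beta)$ multiplication by $\xi$ is surjective on $H^1$. From $(\alpha)$, the quotient map gives the iso $Y \xrightarrow{\sim} H^2(\mc{O},Q(1))$ and makes the connecting map $\delta$ surjective; from $(\beta)$, the map $\delta$ is also injective. Assertion $(\alpha)$ is the delicate point — strictly stronger than the characteristic-ideal statement of the main conjecture recalled in Remark \ref{Yiwamod} — and would be extracted from the sharper form of Iwasawa theory available under Hypothesis \ref{thetahyp}, in line with the cyclicity of $Y$ alluded to in the introduction's ``pseudo-cyclicity'' discussion. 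Assertion $(\beta)$ should then follow from $(\alpha)$ by Poitou-Tate duality, which converts $\xi$-surjectivity on the Iwasawa $H^1$ into $\xi$-annihilation of a Pontryagin dual of $Y$.

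The main obstacle is clearly $(\alpha)$: the Iwasawa main conjecture controls only $\mr{char}_{\La_\theta}(Y)$, whereas we need $\mr{Ann}_{\La_\theta}(Y) = (\xi)$. Once this is in hand, the remainder is routine long-exact-sequence bookkeeping with no further surprises.
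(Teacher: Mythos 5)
Your long-exact-sequence bookkeeping and the reduction to the two claims $(\alpha)$ and $(\beta)$ are correct, and you rightly flag $(\alpha)$ (that $\xi$ annihilates $Y$) as needing more than the bare main conjecture; the paper invokes Stickelberger theory for this. However, your treatment of $(\beta)$ is where the argument breaks down. You try to deduce surjectivity of $\xi$ on $H^1(\mc{O},\Lai_{\theta}(2))$ from $(\alpha)$ via Poitou--Tate duality, but the sketch given does not actually produce such a duality: Poitou--Tate pairs $H^i_c$ against $H^{3-i}$ of the Pontryagin dual, not $H^1(\mc{O},\Lai_\theta(2))$ against $Y = H^2(\mc{O},\Lai_\theta(2))$ in the way you suggest, and no concrete reason is given why $\xi$-annihilation of one group should force $\xi$-surjectivity on the other.

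The paper avoids $(\beta)$ entirely by proving the much stronger fact that $H^1(\mc{O},\Lai_\theta(2)) = 0$. Via Shapiro's lemma and Kummer theory, this group is (the Tate twist of) the $\theta$-eigenspace of the inverse limit of $p$-completions of $p$-units up the cyclotomic tower over $\Q(\mu_{Np})$, and that eigenspace is zero precisely because $\theta$ is even, $\theta \neq \omega^2$, and Hypothesis \ref{thetahyp}d holds. Once $H^1$ vanishes, your six-term sequence collapses to $0 \to H^1(\mc{O},Q(1)) \to Y \xrightarrow{\xi} Y \to H^2(\mc{O},Q(1)) \to 0$, and the only remaining input is $(\alpha)$. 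So the missing idea in your proposal is the vanishing of $H^1(\mc{O},\Lai_\theta(2))$, which replaces your unsubstantiated $(\beta)$ and is where the parity and primitivity hypotheses on $\theta$ do their real work.

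Two smaller points. Your justification that $H^0$ vanishes ("$\Gamma$ acts on $\Lai$ via the inverse of its projection, leaving no invariants") is not quite the right reason — the relevant input is the absence of $G$-invariants of $\zp(2)$ at each finite level, not anything about the $\Lai$-twist per se. And your pessimism about $(\alpha)$ being out of reach of standard results is misplaced: Stickelberger's theorem gives $\xi Y = 0$ directly in this minus-part eigenspace, as the paper notes.
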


\begin{proof}
	The group $H^1(\mc{O},\Lai_{\theta}(2))$ vanishes since it is isomorphic to the Tate twist of the group of norm compatible
	systems of $p$-completions of $p$-units in the cyclotomic $\zp$-extension of $\Q(\mu_{Np})$, its $\theta$-eigenspace
	is zero since $\theta$ is even, not equal to $\omega^2$, and Hypothesis \ref{thetahyp}d holds.
	Since $G_{\Q,S}$ has $p$-cohomological
	dimension $2$ \cite[Proposition 10.11.3]{nsw}, we have an exact sequence
	$$
		0 \to H^1(\mc{O},Q(1)) 
		\to H^2(\mc{O},\Lai_{\theta}(2)) \xrightarrow{\xi} H^2(\mc{O},\Lai_{\theta}(2))
		\to H^2(\mc{O},Q(1)) \to 0
	$$
	in which the middle map is zero by Stickelberger theory (or the main conjecture and the fact that $Y$
	has no $p$-torsion).
\end{proof}

We also note the following simple lemma on the compactly supported cohomology of $P$, employing 
Poitou-Tate duality as in Remark \ref{galcohfacts}b.

\begin{lemma} \label{Pcoh}
	The compactly supported cohomology groups $H^i_c(\mc{O},P(1))$ are zero for $i \notin \{2,3\}$ and
	are isomorphic to $P$ otherwise. For $i=3$, the isomorphism is given by the invariant
	map, whereas for $i=2$, we have a canonical isomorphism $H^2_c(\mc{O},P(1)) \cong \Gamma \cozp P$
	of Poitou-Tate duality that we compose with the map induced by $-\chi \colon \Gamma \to \zp$.
	Moreover, the natural maps 
	$$
		H^i_c(\Z[\tfrac{1}{p}],P(1)) \to H^i_c(\mc{O},P(1)) 
	$$ 
	are isomorphisms.
\end{lemma}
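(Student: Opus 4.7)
The plan is to invoke Poitou-Tate duality (Remark \ref{galcohfacts}c) to identify $H^i_c(\mc{O},P(1)) \cong H^{3-i}(G_{\Q,S},P^{\vee})^{\vee}$ and thereby reduce everything to computing Galois cohomology with coefficients in $P^{\vee}$, on which $G_{\Q,S}$ acts trivially by Proposition \ref{exseqredcoh}. The right-hand sides then depend only on the structure of $G_{\Q,S}$ acting on trivial $p$-primary modules, and by class field theory together with the hypothesis $p \nmid \varphi(N)$, the maximal pro-$p$ abelian quotient $G_{\Q,S}^{\mr{ab},p}$ is precisely $\Gamma$: by Kronecker-Weber the maximal $S$-unramified abelian extension is contained in $\Q(\mu_{Np^{\infty}})$ with Galois group $(\Z/N)^{\times} \times \zp^{\times}$, and only the cyclotomic $\zp$-extension contributes to the pro-$p$ part.

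The vanishing for $i \notin \{2,3\}$ is immediate except at $i=1$: the case $i \geq 4$ is for dimension reasons, and $i=0$ follows because $G_{\Q,S}$ has $p$-cohomological dimension $2$. For $i=1$, I would write $P^{\vee}$ as a direct limit of its finite $p$-primary subgroups $A$ (on each of which $G_{\Q,S}$ acts trivially) and apply Tate's global Euler characteristic formula, which here reduces to
$$
\frac{|H^0(G_{\Q,S},A)|\cdot|H^2(G_{\Q,S},A)|}{|H^1(G_{\Q,S},A)|} = \frac{|A^c|}{|A|} = 1,
$$
since complex conjugation $c$ acts trivially. Because $H^0(G_{\Q,S},A)=A$ and $H^1(G_{\Q,S},A)=\Hom_{\cts}(\Gamma,A)\cong A$, we obtain $H^2(G_{\Q,S},A)=0$; passing to the colimit gives $H^2(G_{\Q,S},P^{\vee})=0$, hence $H^1_c(\mc{O},P(1))=0$. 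For $i=3$, the invariant map of Remark \ref{galcohfacts}c is exactly the claimed identification with $P$. For $i=2$, the identification $H^1(G_{\Q,S},P^{\vee}) \cong \Hom_{\cts}(\Gamma,P^{\vee})$ is canonically Pontryagin dual to $\Gamma \cozp P$ via the adjunction
$$
\Hom_{\cts}(\Gamma \cozp P, \qp/\zp) \cong \Hom_{\cts}(\Gamma, P^{\vee}),
$$
yielding the canonical isomorphism $H^2_c(\mc{O},P(1)) \cong \Gamma \cozp P$; composing with the $\zp$-isomorphism induced by $-\chi$ gives the stated identification with $P$.

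For the comparison with the $\Z[\tfrac{1}{p}]$-cohomology, the same Poitou-Tate identification reduces the claim to the assertion that inflation $H^i(G_{\Q,S_0},P^{\vee}) \to H^i(G_{\Q,S},P^{\vee})$ is an isomorphism for $S_0=\{p,\infty\}$. Since $P^{\vee}$ is a trivial module, the analysis above applies verbatim with $S_0$ in place of $S$; in particular $G_{\Q,S_0}^{\mr{ab},p} = \Gamma = G_{\Q,S}^{\mr{ab},p}$ (again using $p \nmid \varphi(N)$, so that the surjection $G_{\Q,S}^{\mr{ab},p} \twoheadrightarrow G_{\Q,S_0}^{\mr{ab},p}$ has no pro-$p$ kernel coming from tame inertia at $\ell \mid N$). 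The main technical point to keep track of is the canonicity of the $\Gamma \cozp P$ identification and the compatibility of signs with the $-\chi$ convention; once that bookkeeping is settled, the rest is a direct application of Poitou-Tate, Tate's formula, and class field theory.
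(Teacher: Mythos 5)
Your argument is correct and follows the same overall strategy as the paper: apply Poitou--Tate duality to reduce everything to Galois cohomology with coefficients in the trivial module $P^{\vee}$, exploit that $G_{\Q,S}^{\mr{ab},p}=\Gamma$ under $p\nmid\varphi(N)$ (and likewise for $S_0=\{p,\infty\}$), and read off the groups. The one place you genuinely diverge is the vanishing $H^1_c(\mc{O},P(1))=0$, equivalently $H^2(\mc{O},P^{\vee})=0$: the paper reduces to $H^2(\mc{O},\Z/p\Z)=0$ and cites inflation--restriction for $\Q(\mu_p)/\Q$ plus Kummer theory, whereas you invoke Tate's global Euler characteristic formula for each finite $p$-primary $A$ with trivial action, getting $\chi=|A^{c}|/|A|=1$ (complex conjugation acts trivially on $P^{\vee}$ by Proposition \ref{exseqredcoh}) and then $|H^2|=|H^1|/|H^0|=1$ from the computation $H^1(G_{\Q,S},A)\cong\Hom_{\cts}(\Gamma,A)\cong A$. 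Both are standard and both work; your route has the minor advantage that the input it needs --- the identification of $G_{\Q,S}^{\mr{ab},p}$ with $\Gamma$ --- is precisely the fact already doing the work in the $i=2$ case and in the $\Z[\tfrac1p]$-comparison, so the only genuinely new ingredient is the Euler characteristic formula itself.
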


\begin{proof}
	As $P$ has trivial Galois action, the invariant map of Remark \ref{galcohfacts}b provides a canonical isomorphism
	$H^3_c(\mc{O},P(1)) \cong P$.
	For $i = 1$, we similarly have
	$$
		H^1_c(\mc{O},P(1)) \cong H^2(\mc{O},P^{\vee})^{\vee} = 0
	$$
	in that $H^2(\mc{O},\Z/p\Z) = 0$ (as seen by a standard argument involving 
	inflation-restriction for $\Q(\mu_p)/\Q$ and Kummer theory).
	Since the above arguments work for any compact $\zp$-module $M$ with trivial $G_{\Q,S}$-action, 
	the functors $M \mapsto H^i_c(\mc{O},M(1))$ are exact for $i = 2,3$ and are trivial for all other $i$.
	The maximal pro-$p$, abelian, $S$-ramified extension of $\Q$ is $\Q_{\infty}$
	in that no prime dividing $N$ is $1$ modulo $p$, so we have
	$$
		H^2_c(\mc{O},P(1)) \cong H^2_c(\mc{O},\zp(1)) \cozp P \cong H^1(\mc{O},\qp/\zp)^{\vee} \cozp P
		\cong \Gamma \cozp P.
	$$
	and we apply the isomorphism $-\chi \colon \Gamma \to \zp$ to obtain the result.  
	A similar argument gives the analogous statements for $\Z[\frac{1}{p}]$ and through it the isomorphisms. 
\end{proof}

We can define a cocycle $b \colon G_{\Q,S} \to \Hom_{\mf{h}}(Q,P)$ using the exact sequence \eqref{globmodeis}
by 
$$
	b(\sigma)(q) = \sigma\left( \widetilde{\sigma^{-1}q}\right) - \tilde{q}
$$
for $q \in Q$, letting $\tilde{x}$ denote the image of $x$ under a fixed $\mf{h}_{\theta}$-module splitting $Q \to T$.
Then $b$ restricts to an everywhere unramified homomorphism on the absolute Galois group of 
$\Q(\mu_{Np^{\infty}})$ by Proposition \ref{exseqredunr}, which we can view as having domain $Y$ by Remark \ref{Yiwamod}.  
Through the isomorphism of Proposition \ref{Qiso}, we have moreover a canonical
isomorphism $\Hom_{\mf{h}}(Q,P) \cong P$ of $\La_{\theta}$-modules. The result is the desired
map $\Upsilon$ (see \cite[Section 4.4]{me-Lfn}, though note that we have not multiplied by any additional unit here).

\begin{definition}
	Let $\Upsilon \colon Y \to P$ denote the homomorphism of $\La_{\theta}$-modules induced by $b$ and 
	Proposition \ref{Qiso}.
\end{definition}

We also have a map in the other direction that takes a trace-compatible system of Manin symbols
to a corestriction compatible system of cup products of cyclotomic units.  

\begin{definition}
	Let $\varpi \colon \mc{S}_{\theta} \to Y$ denote the map constructed in 
	\cite[Proposition 5.7]{me-Lfn}, with reference to \cite[5.3.3]{fk-pf}, where the latter is shown to factor through $P$.  
\end{definition}	

We also use $\varpi$ to denote the induced map $\varpi \colon P \to Y$. 

\begin{remark} \label{varpi}
	We recall that $\varpi$ is the restriction of the inverse limit under trace and corestriction of maps 
	\begin{equation} \label{varpi_r}
		\varpi_r \colon H_1(X_1(Np^r),C_1^0(Np^r),\zp)^+ \to H^2_{\et}(\Z[\mu_{Np^r},\tfrac{1}{Np}],\zp(2))^+,\ 
		[u:v]_r \mapsto (1-\zeta_{Np^r}^u,1-\zeta_{Np^r}^v)_r,
	\end{equation}
	for $r \ge 1$, where $u, v \in \Z/Np^r\Z - \{0\}$ with $(u,v) = (1)$.  We briefly define the symbols that appear.
	
	On the right-hand side of \eqref{varpi_r}, the symbol $(\ ,\ )_r$ denotes the projection to the $+$-part of the 
	pairing on cyclotomic $Np$-units 
	induced by the cup product 
	$$
		H^1_{\et}(\Z[\mu_{Np^r},\tfrac{1}{Np}],\zp(1)) \times 
		H^1_{\et}(\Z[\mu_{Np^r},\tfrac{1}{Np}],\zp(1)) \xrightarrow{\cup} 
		H^2_{\et}(\Z[\mu_{Np^r},\tfrac{1}{Np}],\zp(2))
	$$
	and Kummer theory.
	We also set $\zeta_M = e^{2\pi i/ M}$ for $M \ge 1$, again viewing $\qbar$ inside $\C$.
	 
	On the left-hand side of \eqref{varpi_r}, we have
	$$
		[u:v]_r = \left( w_r\left(\begin{smallmatrix} a&b\\c&d \end{smallmatrix}\right)\{0 \to \infty\}_r \right)^+,
	$$
	where $w_r$ is the Atkin-Lehner involution of level $Np^r$ and the matrix
	$\left(\begin{smallmatrix} a&b\\c&d \end{smallmatrix}\right) \in \mr{SL}_2(\Z)$ has bottom row 
	$(u,v) \bmod Np^r$.  (Note that $w_r\{0 \to \infty\}_r = \{\infty \to 0\}_r$.)  We project the 
	resulting element to the plus part after the operations, denoting this with $(\ \ )^+$.  Since $u, v \neq 0$, the 
	symbol $[u:v]_r$ lies in $H_1(X_1(Np^r),C_1^0(Np^r),\zp)^+$, where 
	$C_1^0(Np^r)$ denotes the cusps not over $0 \in X_0(Np^r)$.
\end{remark}

We recall the conjecture of \cite{me-Lfn}.

\begin{conjecture} \label{sconj}
	The maps $\varpi \colon P \to Y$ and $\Upsilon \colon Y \to P$ are inverse maps.
\end{conjecture}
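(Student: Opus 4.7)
My plan is to prove Conjecture \ref{sconj} in two stages: first reducing it to the existence of an intermediate zeta map, and then constructing that map directly. For the reduction, I would set up the commutative diagrams \eqref{leftside} and \eqref{rightside} interpreting the two sides of the Fukaya-Kato identity $\xi'\Upsilon\circ\varpi=\xi'$ of Theorem \ref{FKident}, and then replace the Bockstein connecting map attached to \eqref{Bocksteinseq} by the intermediate-quotient connecting map attached to \eqref{modexseq}. On the left this turns $\xi'$ into $1$ on $Y$ by Theorem \ref{partial}; on the right it turns $\xi'$ into $1$ on $P$ via the intermediate Coleman map $\Cole^{\dagger}$ of Theorem \ref{loccohdag} composed with the local intermediate zeta map $z^{\dagger}_{\quo}$ of Proposition \ref{interloczeta}. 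Gluing the two modified diagrams along $\bar{z}^{\sharp}$ then requires a global intermediate reduced zeta map $\bar{z}^{\dagger}\colon \La\otimes_{\zp}P\to H^1(T^{\dagger}(1))$ compatible with both $\bar{z}^{\sharp}$ and the reduction modulo $I$ of $z^{\dagger}_{\quo}$, giving the equivalence of Theorem \ref{equivform}.

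For the construction of $\bar{z}^{\dagger}$, the natural approach is to produce an intermediate Iwasawa-level zeta map
$$
	z^{\dagger}\colon \La\cozp\mcS_{\theta}\longrightarrow H^1(\mcT_{\theta}^{\dagger}(1))
$$
factoring the composition of the Fukaya-Kato Iwasawa zeta map $z\colon\La\cozp\mcS_{\theta}\to H^1_{\Iw}(\mcT_{\theta}(1))$ with the canonical projection $H^1_{\Iw}(\mcT_{\theta}(1))\to H^1(\mcT_{\theta}^{\dagger}(1))$ through multiplication by $1-U_p$, and then take $\bar{z}^{\dagger}$ to be its mod-$I$ reduction. Compatibility of $\bar{z}^{\dagger}$ with $\bar{z}^{\sharp}$ would then follow from the relation $(1-U_p)z^{\sharp}\circ\ev_0=\cor\circ z$ of \eqref{zetacolrelns}; compatibility with $z^{\dagger}_{\quo}$ would follow from the local divisibility of $z_{\quo}$ by $1-U_p$ already encoded in the Coleman-map calculation \eqref{colzeta}.

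The hard part, and the true bottleneck, is showing that the image of $z$ in $H^1(\mcT_{\theta}^{\dagger}(1))$—equivalently, $z$ modulo the element $X\otil{\xi}$—is globally divisible by $1-U_p$. This \emph{intermediate global divisibility} of Beilinson-Kato classes, at a level strictly between Iwasawa cohomology over $\Q_{\infty}$ and ground-level cohomology over $\Q$, is the single remaining obstacle after the formal reduction. The cyclotomic-unit test case of Section \ref{testcase} is designed precisely to gauge the difficulty of this step: it will demonstrate that even the analogous divisibility of cyclotomic units in that much simpler setting requires the vanishing of the $p$-part of a class group of a suitable totally real abelian field, a hypothesis tightly linked to the plus part of Greenberg's conjecture. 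One would expect the Hecke-theoretic analogue to demand a comparably deep arithmetic input—for instance, a vanishing or pseudo-nullity result for a Selmer or congruence module attached to $\mf{h}_{\theta}$—rather than yielding to the purely cohomological machinery developed here. Consequently, while the first stage of the plan is within reach of the methods of this paper, a full unconditional proof of Conjecture \ref{sconj} will almost certainly require supplementary arithmetic input establishing the intermediate global divisibility.
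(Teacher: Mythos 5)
The statement you are addressing is labelled a \emph{conjecture}, and the paper in fact leaves it open: it proves the identity $\xi'\Upsilon\circ\varpi=\xi'$ in $\End_{\mf{h}_\theta}(P)$ (Theorem \ref{FKident}), the equivalence of the conjecture with the existence of a reduced intermediate zeta map $\bar{z}^{\dagger}$ (Theorem \ref{equivform}), and a test case in Section \ref{testcase}. Your proposal is candid in ending without a proof, and its first stage---assembling the diagrams \eqref{leftside} and \eqref{rightside}, replacing the Bockstein sequence \eqref{Bocksteinseq} by the intermediate quotient sequence \eqref{modexseq}, and using Theorems \ref{partial} and \ref{loccohdag} together with Proposition \ref{interloczeta} to reduce to the existence of $\bar{z}^{\dagger}$---is faithful to the route the paper actually takes.

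There is, however, a genuine imprecision in the second stage that misstates where the bottleneck lies. You propose to construct $\bar{z}^{\dagger}$ by first producing an intermediate Iwasawa-level zeta map $z^{\dagger}\colon\La\cozp\mcS_{\theta}\to H^1(\mcT_{\theta}^{\dagger}(1))$ factoring $z$ through $1-U_p$, i.e., answering Question \ref{zdagger} affirmatively, and then reducing modulo $I$. But Theorem \ref{equivform} is an equivalence with the existence of $\bar{z}^{\dagger}$, which is a strictly weaker statement than the existence of $z^{\dagger}$: the paper separates these carefully, with Proposition \ref{refined_zeta_map} showing $z^{\dagger}\Rightarrow\bar{z}^{\dagger}$ but not conversely. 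Moreover, inside the proof of Theorem \ref{equivform} the paper constructs a concrete candidate for $\bar{z}^{\dagger}$ directly, as a sum of components $\bar z_P^{\dagger}$ and $\bar z_Q^{\dagger}$ in $H^1(\mc O,P^{\dagger}(1))\oplus H^1(\mc O,Q^{\dagger}(1))$, and shows this candidate is compatible with $\bar z^{\sharp}$ by construction---so the only open issue is its compatibility with $z_{\quo}^{\dagger}$, which is precisely equivalent to $\Upsilon\circ\varpi=1$. Routing through $z^{\dagger}$ is therefore overkill and does not capture the exact nature of the obstruction.

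This also leads you to misread the test case. The cyclotomic analogue of the reduced map $\bar z^{\dagger}$ is shown to exist \emph{unconditionally} in Proposition \ref{zdagcycl}; it is only the stronger analogue of $z^{\dagger}$ (Question \ref{zdaggercycl}) that requires the vanishing $Y_{\theta}=0$ of a real class group, as shown in Proposition \ref{zdaggercyclY0}. You present the test case as evidence that the \emph{needed} divisibility demands deep arithmetic input, whereas the paper's point is the opposite: the condition actually equivalent to the conjecture clears unconditionally in the test case, and this is precisely what, in the author's words, ``lends some credence'' to the conjecture. If you contrast the $z^{\dagger}$-type divisibility against the $\bar z^{\dagger}$-type compatibility, the second stage of your plan should be reorganized around the weaker target and around the explicit candidate furnished in the proof of Theorem \ref{equivform}.
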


Actually, Conjecture \ref{sconj} was originally conjectured by the author up to a canonical unit.  There were indications
that this unit might be $1$ (if sign conventions were correct), but while the author advertised this suspicion rather widely and included it in preprint versions of the paper, he opted not to conjecture it in the final published version.  It was the work of Fukaya and Kato in \cite{fk-pf} that finally made it clear that the unit should indeed be $1$, not least because one would expect that the hypotheses under which they can prove it should hold without exception.  Nevertheless, one does not actually know how to prove that their hypotheses always hold.  Indeed, this paper is motivated by a desire to explore where the difficulty lies in removing them.

\begin{remark} \label{equivalent}
	Hida theory tells us that the $\La_{\theta}$-characteristic ideal of $P$ is divisible by $(\xi)$
	in that $\mcT_{\quo}$ is $\mf{h}_{\theta}$-faithful and $(\mf{h}/I)_{\theta}$ is annihilated by (a multiple of) $\xi$.
	Moreover, the main conjecture \cite{mw}
	tells us that the $\La_{\theta}$-characteristic ideal of $Y$ is equal to $(\xi)$.  
	As $Y$ is well known to be $p$-torsion free (i.e., by results of Iwasawa and Ferrero-Washington), 
	Conjecture \ref{sconj} is reduced to showing that $\Upsilon \circ \varpi = 1$ on $P$.
\end{remark}
	
Consider the complex
\begin{equation} \label{selmercplx}
	C_f(\mc{O},T(1)) = \Cone\left( C(\mc{O},T(1)) \to \bigoplus_{\ell \mid Np} C(\Q_{\ell},P(1)) \right)[-1],
\end{equation}
where ``$C$'' here is used to denote the standard inhomogeneous cochain complexes, and the map in the cone uses 
the local splitting $T \to P$.  We have an exact sequence
of complexes
$$
	0 \to C_c(\mc{O},P(1)) \to C_f(\mc{O},T(1)) \to C(\mc{O},Q(1)) \to 0,
$$
where $C_c$ is the complex defining compactly supported cohomology, which has connecting
homomorphisms
\begin{equation} \label{selmerconn}
	H^i(\mc{O},Q(1)) \to H^{i+1}_c(\mc{O},P(1))
\end{equation}
for $i \ge 0$.  For $i = 1$, let us denote this connecting homomorphism by $\Theta$. 
The connecting homomorphism for $i = 2$ can be identified with $\Upsilon$, which follows from \cite[9.4.3]{fk-pf}, noting 
Remark \ref{oppositesign} below.
One could simply take this as the definition of $\Upsilon$ for the purposes of this article. Nevertheless, we give a fairly detailed sketch of the proof using the results of \cite{me-selmer}, as it is by now an old result due independently to the author.

\begin{lemma} \label{connmapUps}
	Under the identifications of Lemmas \ref{Qcoh} and \ref{Pcoh}, the connecting homomorphism
	$$
		H^2(\mc{O},Q(1)) \to H^3_c(\mc{O},P(1))
	$$
	is $\Upsilon \colon Y \to P$.
\end{lemma}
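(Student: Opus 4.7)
The plan is to compute the connecting homomorphism $\Theta\colon H^2(\mc{O},Q(1))\to H^3_c(\mc{O},P(1))$ at the cochain level and identify it with the recipe defining $\Upsilon$. Fix the global $\mf{h}_\theta$-module splitting $s\colon Q\to T$ of \eqref{globmodeis} provided by Lemma \ref{latticesplit}; then the cocycle $b(\sigma)(q)=\sigma\cdot s(\sigma^{-1}q)-s(q)$ in the definition of $\Upsilon$ is precisely the failure of $s$ to be Galois-equivariant. A direct inhomogeneous cochain calculation yields the identity
\[
d(s\circ\alpha)=b\cup\alpha
\]
for any $2$-cocycle $\alpha$ valued in $Q(1)$, where the cup product is taken with respect to the evaluation pairing $\Hom_{\mf{h}_\theta}(Q,P)\otimes_{\mf{h}_\theta}Q\to P$ and $s\circ\alpha$ is the lift of $\alpha$ to a $T(1)$-valued cochain.

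Next I would account for compact support. By Proposition \ref{exseqredunr}, at each prime $\ell\mid Np$ there is a unique local $\mf{h}_\theta\ps{G_{\Q_\ell}}$-splitting of \eqref{globmodeis}, and after modifying $s$ by an $\mf{h}_\theta$-linear local correction (which changes the lift $s\circ\alpha$ by a local coboundary) one may assume $s$ restricts to the local splittings on decomposition groups at primes in $S$. The restriction of $b$ to each such decomposition group then vanishes, so the $3$-cocycle $b\cup\alpha$ lies in the subcomplex $C_c^3(\mc{O},P(1))$ of the mapping cone defining $C_f$ in \eqref{selmercplx}, and thus represents $\Theta([\alpha])$.

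It then remains to identify the class of $b\cup\alpha$ in $P$. Under Lemma \ref{Qcoh}, a class $y\in Y$ corresponds to $[\alpha]$ where $\alpha$ is the pushforward to $Q(1)$ of an Iwasawa $2$-cocycle representing $y\in H^2_{\Iw}(\mc{O}_\infty[\mu_{Np}],\zp(2))_\theta$. Via Shapiro's lemma and the fact that $b$ restricted to $G_{\Q(\mu_{Np^\infty})}$ is unramified, the cup product $b\cup\alpha$ corresponds to the evaluation at $y$ of the $\La_\theta$-homomorphism $Y\to\Hom_{\mf{h}_\theta}(Q,P)$ induced by $b$, and the canonical isomorphism $\Hom_{\mf{h}_\theta}(Q,P)\cong P$ of Proposition \ref{Qiso} together with the invariant map identification of Lemma \ref{Pcoh} returns exactly $\Upsilon(y)$.

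The hard step is sign bookkeeping. The coboundary sign in the cone complex $C_f$, the sign change noted in Remark \ref{signchange}, the direction of the quotient map normalizing the isomorphism of Lemma \ref{Qcoh}, and the factor $-\chi$ in Lemma \ref{Pcoh} all contribute, and the verification that they conspire to give $\Upsilon$ rather than $-\Upsilon$ is the technical heart of the matter, carried out under the present conventions in \cite[9.4.3]{fk-pf} and in \cite{me-selmer}.
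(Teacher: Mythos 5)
Your overall strategy — realize the connecting homomorphism as left cup product with the $1$-cocycle $b$, then use Shapiro's lemma and the evaluation pairing $\Hom_{\mf{h}_\theta}(Q,P)\otimes Q\to P$ to recover $\Upsilon$ — is the same one the paper uses. The cochain identity $d(s\circ\alpha)=b\cup\alpha$ is correct and is essentially the content behind the paper's citation of \cite[Proposition 2.3.3]{me-selmer} for the $G_{\Q,S}$-cohomology statement.

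The step that does not hold up as written is the handling of compact support. You assert that ``after modifying $s$ by an $\mf{h}_\theta$-linear local correction one may assume $s$ restricts to the local splittings on decomposition groups at primes in $S$,'' and then deduce that $b|_{G_{\Q_\ell}}=0$ so that $b\cup\alpha$ already lies in $C_c^3$. But $s$ is a single global $\mf{h}_\theta$-module splitting, while the unique $\mf{h}_\theta\ps{G_{\Q_\ell}}$-splittings $s_\ell$ of Proposition \ref{exseqredunr} are determined separately at each $\ell$ and need not agree with each other (nor with the canonical $\mf{h}_\theta$-splitting coming from Lemma \ref{latticesplit}, except at $\ell=p$ where Proposition \ref{exseqredcoh} gives this). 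What is true is only that $b|_{G_{\Q_\ell}}$ is a coboundary, namely $d(s-s_\ell)$ in $C^1(\Q_\ell,\Hom_{\mf{h}_\theta}(Q,P))$; consequently the Selmer-complex differential of the lift $(s\circ\alpha,0)$ has genuine local components $(s-s_\ell)\circ\alpha|_\ell$ that are not zero and need to be carried along. Your version passes from ``$b|_{G_{\Q_\ell}}$ is a coboundary'' to ``$b|_{G_{\Q_\ell}}=0$,'' which is where the gap lies. These local terms are exactly the bookkeeping the paper avoids by instead identifying the connecting map, via a commutative square comparing the Iwasawa and ground-level compactly supported groups, with left cup product in the Iwasawa setting and then appealing to Poitou–Tate duality with the sign computation of \cite[Propositions 2.4.3, 3.1.3]{me-selmer}. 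To repair your argument you should either track the $(s-s_\ell)\circ\alpha|_\ell$ terms explicitly through the invariant-map identification of $H^3_c(\mc{O},P(1))$ with $P$, or follow the paper's duality route.

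Your final paragraph on sign conventions is apt; the direction of the identification of $Q$ with $(\mf{h}/I)_\theta^\iota(1)$ in Proposition \ref{Qiso} (Remark \ref{signchange}) and the $-\chi$ in Lemma \ref{Pcoh} are indeed why the lemma gives $\Upsilon$ rather than $-\Upsilon$, in contrast to \cite[9.4.3]{fk-pf} (cf. Remark \ref{oppositesign}).
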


\begin{proof}
	We consider a diagram
	$$
		\SelectTips{cm}{} \xymatrix{
		H^2_{\Iw}(\mc{O}_{\infty}[\mu_{Np}],\zp(2))_{\theta} \ar[r] \ar[d]^{\wr} & H^3_{c,\Iw}(\mc{O}_{\infty}[\mu_{Np}],P(1)) \ar[d]^{\wr} \\
		H^2(\mc{O},Q(1)) \ar[r] & H^3_c(\mc{O},P(1)),
		}
	$$
	where the connecting homomorphism that is the lower map is given by left cup product with $b \colon G_{\Q,S} \to \Hom_{\mf{h}}(Q,P)$
	by \cite[Proposition 2.3.3]{me-selmer}.
	The left vertical map employs the surjection $\La_{\theta}^{\iota}(1) \to Q$
	determined by Proposition \ref{Qiso}, and the right vertical map uses the quotient map $\tilde{\La}^{\iota} \to \zp$, which is
	to say it becomes corestriction via Shapiro's lemma.
	The diagram is then commutative taking the upper horizontal map to be given by left cup product with the 
	cocycle $G_{\Q(\mu_{Np^{\infty}}),S} \to P$ given by following the restriction of $b$ with evaluation at the canonical generator of $Q$.
	Recall that this cocycle is a homomorphism that by definition factors through $\Upsilon \colon Y \to P$. 
	That the upper horizontal map then 
	agrees with $\Upsilon$ via the identifications of the groups with $Y$ and $P$ is seen 
	by noting that it is Pontryagin dual via Poitou-Tate duality to the Pontryagin dual of $\Upsilon$, 
	via an argument mimicking the proof of \cite[Proposition 3.1.3]{me-selmer} (noting Proposition 2.4.3 therein, which in particular implies that 
	the signs agree).
\end{proof}

\begin{remark} \label{oppositesign}
	The connecting map $H^2(\mc{O},Q(1)) \to H^3_c(\mc{O},P(1))$ that we use is 
	the negative of the corresponding map in \cite{fk-pf}, since the identification of $Q$ with $(\mf{h}/I)^{\iota}_{\theta}(1)$
	of Proposition \ref{Qiso}, and hence of $Y$ with $H^2(\mc{O},Q(1))$, 
	is of opposite sign to that of \cite[6.4.3]{fk-pf}. This explains why the connecting map
	is identified with $\Upsilon$ in Lemma \ref{connmapUps}, whereas it is identified with $-\Upsilon$ in \cite[9.4.3]{fk-pf}.
\end{remark}	

\begin{definition} \label{bockstein}
	For a $\zp\ps{G_{\Q,S}}$-module (or $\zp\ps{G_{\Q_{\ell}}}$-module)
	$M$, let $\partial_M$ denote a connecting homomorphism in a long exact
	sequence in cohomology attached to the Tate twist of the exact sequence
	\begin{equation} \label{extnclass}
		0 \to M \xrightarrow{X} \Lai/(X^2) \cozp M \xrightarrow{\bmod X} M \to 0.
	\end{equation}
\end{definition}	
	
\begin{remark} \label{connmapsign} 
	The maps $\partial_M$ for any $\zp\ps{G_{\Q,S}}$-module $M$ agree with left cup product by the cocycle 
	$-\chi$ defining the extension class \eqref{extnclass} (cf. \cite[Proposition 2.3.3]{me-selmer}). 
	As pointed out in \cite[9.3.4]{fk-pf}, the sign in $-\chi$ occurs as $G_{\Q}$ acts on $\Lai$ through left multiplication by the inverse 
	of its quotient map to $\Gamma \subset \La$.
\end{remark}

\begin{lemma} \label{loccompconn}
	Let $M$ be a compact or discrete $\zp\ps{G_{\Q,S}}$-module. Then the diagram
	$$
		\SelectTips{cm}{} \xymatrix{
		\bigoplus_{\ell \mid Np} H^1(\Q_{\ell},M(1)) \ar[r]^{\partial_M} \ar[d] & \bigoplus_{\ell \mid Np} H^2(\Q_{\ell},M(1)) \ar[d] \\
		H^2_c(\mc{O},M(1)) \ar[r]^{\partial_M} & H^3_c(\mc{O},M(1))
		}
	$$
	anticommutes.
\end{lemma}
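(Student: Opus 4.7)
The plan is to realize both boundary maps in the square as cohomological connecting maps from a single $3\times 3$ grid of cochain complexes, and then invoke the standard anticommutativity of iterated connecting homomorphisms. Specifically, the vertical arrow is the connecting map attached to the short exact sequence of complexes
\[
0 \to C_c(\mc{O},-) \to C(\mc{O},-) \to \bigoplus_{\ell \mid Np} C(\Q_{\ell},-) \to 0,
\]
using the shifted mapping-cone model of compactly supported cochains that makes this literally exact at the cochain level. The horizontal arrow $\partial_M$ is the boundary attached to the short exact sequence of coefficients
\[
0 \to M(1) \xrightarrow{X} (\Lai/X^2)\cozp M(1) \to M(1) \to 0.
\]
Applying the first sequence of functors to each of the three terms of the second produces a $3\times 3$ grid of cochain complexes with short exact rows and short exact columns. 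Both compositions in our square are the two natural ways of going from $H^1$ of the upper-right corner $\bigoplus_{\ell} C(\Q_{\ell},M(1))$ to $H^3$ of the lower-left corner $C_c(\mc{O},M(1))$ in that grid.

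A direct cochain chase — or equivalently the formal fact that in such a $3\times 3$ diagram the two iterated connecting maps differ by a sign of $-1$ — then yields the asserted anticommutativity. Alternatively, one can argue via cup products: by Remark \ref{connmapsign}, $\partial_M$ is left cup product with $-\chi$, regarded as a class in $H^1(\mc{O},\zp)$ and its local restrictions. The Poitou--Tate boundary satisfies the compatibility
\[
\partial_{\mathrm{PT}}(c|_{\ell} \cup \alpha) = (-1)^{\deg c}\, c \cup \partial_{\mathrm{PT}}(\alpha)
\]
for a global class $c$ and a local class $\alpha$; taking $c=-\chi$ of degree one reproduces the sign $-1$ directly from the graded-commutativity of cup products with boundary.

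The main obstacle is bookkeeping rather than substance. One must fix a cochain model in which both exact sequences live simultaneously, so that the $3\times 3$ diagram is literal rather than derived; one must check that compactly supported, absolute, and local cochain functors all behave compatibly with the Bockstein exact sequence of coefficients; and one must verify that the sign of $-1$ produced by the homological algebra agrees, under the sign conventions fixed earlier (in particular for $\partial_M$, for the Poitou--Tate boundary, and for the identification $\Gamma \cong \zp$ via $-\chi$), with the anticommutativity asserted in the statement rather than with strict commutativity. Once these conventions are pinned down, the lemma is a formal consequence of the standard principle.
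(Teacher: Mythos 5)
Your argument is correct and at bottom the same as the paper's: both reduce the lemma to the sign in the cochain-level differential of $C_c$. The paper simply performs the one-line computation directly — with $C_c^i(\mc{O},M(1)) = \bigoplus_{\ell\mid Np} C^{i-1}(\Q_\ell,M(1)) \oplus C^i(\mc{O},M(1))$ and $d_c(x,y) = (-dx-\mr{res}(y),dy)$, the composition through the upper-right sends a local $1$-cocycle $\phi$ to the class of $(\partial_M\phi,0)$, while the composition through the lower-left lifts $(\phi,0)$ to coefficients $\Lai/X^2\cozp M(1)$, applies $d_c$, and divides by $X$ to obtain $(-\partial_M\phi,0)$; the $-$ sign is exactly the $-d$ in the local slot of the cone differential — rather than appealing to the $3\times3$-grid anticommutativity principle in the abstract. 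One imprecision in your setup needs fixing. With the mapping-fiber model that the paper uses, the sequence $0\to C_c(\mc{O},-)\to C(\mc{O},-)\to\bigoplus_\ell C(\Q_\ell,-)\to 0$ you write down is \emph{not} a short exact sequence of complexes: the map $C_c\to C$ is the projection $(x,y)\mapsto y$, not an inclusion. The literal short exact sequence for that model is $0\to\bigoplus_\ell C^*(\Q_\ell,-)[-1]\to C_c^*(\mc{O},-)\to C^*(\mc{O},-)\to 0$, under which the Poitou--Tate map $\bigoplus_\ell H^i(\Q_\ell,-)\to H^{i+1}_c(\mc{O},-)$ is induced by an inclusion of complexes, not a connecting homomorphism, so the $3\times3$ principle as you have stated it does not apply directly. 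To make your argument literal, replace $C_c$ by the quasi-isomorphic genuine kernel $\ker(\mr{res})$, for which your sequence really is exact and the vertical map really is a connecting homomorphism. Your cup-product alternative is consistent with Remark~\ref{connmapsign}, though the stated compatibility of the Poitou--Tate boundary with cup products has its own sign conventions that would need to be pinned down to the paper's before it can be cited.
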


\begin{proof}
	Recall that 
	$$
		C_c(\mc{O},M(1)) = \ker\left( C(\mc{O},M(1)) \xrightarrow{\mr{res}} \bigoplus_{\ell \mid Np} C(\Q_{\ell},M(1)) \right)[-1]
	$$
	which is to say that 
	$$
		C_c^i(\mc{O},M(1)) = \left( \bigoplus_{\ell \mid Np} C^{i-1}(\Q_{\ell},M(1)) \right) \oplus C^i(\mc{O},M(1)),
	$$
	with the differential taking $(x, y)$ to $(-d^{i-1}(x)-\mr{res}(y),d^i(y))$. The composition
	$$
		H^1(\Q_{\ell},M(1)) \xrightarrow{\partial_M} H^2(\Q_{\ell},M(1)) \to H^3_c(\mc{O},M(1))
	$$
	takes a class $\phi$ to the image of the compactly supported cocycle $(\partial_M(\phi),0)$, whereas the
	composition
	$$
		H^1(\Q_{\ell},M(1)) \to H^2_c(\mc{O},M(1)) \xrightarrow{\partial_M} H^3_c(\mc{O},M(1))
	$$
	takes $\phi$ to $\partial_M(\phi,0) = (-\partial_M(\phi),0)$ in that the differential 
	used to compute the connecting homomorphism restricts to the negative of the local differential.
\end{proof}

We also have the following lemma.

\begin{lemma} \label{connectP}
	The connecting homomorphism $\partial_P \colon H^2_c(\mc{O},P(1)) \to H^3_c(\mc{O},P(1))$
	is identified with the identity map on $P$ via the isomorphisms of Lemma \ref{Pcoh}.
\end{lemma}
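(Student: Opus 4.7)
The plan is to combine Remark~\ref{connmapsign}, which identifies $\partial_P$ with left cup product by the $1$-cocycle $-\chi \in H^1(\mc{O},\zp)$, with the explicit description of $H^2_c(\mc{O},P(1)) \cong P$ from Lemma~\ref{Pcoh}, where the \emph{same} map $-\chi \colon \Gamma \to \zp$ is used to pass from $\Gamma \cozp P$ to $P$. The goal is to show that these two occurrences of $-\chi$ match up via Poitou-Tate duality, so that the endomorphism of $P$ induced by $\partial_P$ is the identity.

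First I would reduce to the case $P = \zp$. Since $G_{\Q,S}$ acts trivially on $P$, the exact sequence \eqref{extnclass} with $M = P$ is the completed tensor product over $\zp$ of the corresponding sequence for $M = \zp$ with $P$. The functors $H^i_c(\mc{O}, -(1))$ applied to trivial $\zp$-modules are exact and commute with completed tensor products over $\zp$ by Lemma~\ref{Pcoh}, and the connecting map is natural in this tensor product. Thus it suffices to verify that $\partial_{\zp} \colon H^2_c(\mc{O},\zp(1)) \to H^3_c(\mc{O},\zp(1))$ is the identity map on $\zp$ under the isomorphisms of Lemma~\ref{Pcoh}, and then tensor with $P$.

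Next, I would unwind Poitou-Tate duality. By its construction, the isomorphism $H^2_c(\mc{O},\zp(1)) \cong \Gamma$ used in Lemma~\ref{Pcoh} is characterized by the property that an element $\alpha$ is sent to the unique $\gamma_\alpha \in \Gamma$ satisfying
$$
    \phi(\gamma_\alpha) = \inv(\phi \smile \alpha) \in \qp/\zp
$$
for every $\phi \in H^1(\mc{O},\qp/\zp) \cong \Hom_{\cts}(\Gamma,\qp/\zp)$, and this formula extends by continuity to $\phi \in H^1(\mc{O},\zp)$ with values in $\zp = H^3_c(\mc{O},\zp(1))$. Applying this with $\phi = -\chi$, and using graded commutativity of cup product to match the conventions of Remark~\ref{connmapsign} (noting that the sign in $-\chi$ was inserted precisely to make the two conventions agree, as recalled there), I get
$$
    \partial_{\zp}(\alpha) = \inv((-\chi) \smile \alpha) = -\chi(\gamma_\alpha) \in \zp.
$$
But by definition, the isomorphism of Lemma~\ref{Pcoh} sends $\alpha$ to $-\chi(\gamma_\alpha) \in \zp$ as well. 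Thus $\partial_{\zp}$ is the identity, and completed-tensoring with $P$ concludes the proof.

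The main obstacle will be the careful bookkeeping of signs: graded commutativity of the cup product on $H^1 \times H^2$ introduces a sign, and the Poitou-Tate pairing can be written using either order of cup product. Reconciling these with the sign convention in Remark~\ref{connmapsign} (where the extension class of \eqref{extnclass} is $-\chi$ rather than $\chi$, owing to the left action of $G_\Q$ on $\Lai$ via the inverse of the cyclotomic quotient) is the only real work; any ambiguity in the choice of $\pm\chi$ would change the identity $\partial_P = \id$ to $\partial_P = -\id$, so it is critical that the same signed $\chi$ appears in both Remark~\ref{connmapsign} and Lemma~\ref{Pcoh}, which is exactly what Lemma~\ref{Pcoh} was set up to ensure.
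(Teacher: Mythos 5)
Your proposal is correct and follows essentially the same route as the paper: identify $\partial_P$ with left cup product by $-\chi$ (Remark~\ref{connmapsign}), then use Poitou-Tate duality to see that this matches the $-\chi\colon\Gamma\to\zp$ built into the identification of Lemma~\ref{Pcoh}; the paper works with $P$ directly via Pontryagin duals, while you reduce to $\zp$ and unwind the pairing explicitly, but the substance is the same. One small slip in your discussion of obstacles: graded commutativity on $H^1\times H^2$ introduces no sign (since $(-1)^{1\cdot2}=1$), which is precisely why the cup product with the degree-two class $\alpha$ can be moved to the other side harmlessly, as the paper notes by pointing to the even degree of $H^2_c(\mc{O},P(1))$.
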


\begin{proof}
	As noted in Remark \ref{connmapsign}, the connecting map $\partial_P$ is given by left cup product
	with $-\chi \in H^1(\mc{O},\zp)$. 	
	By the commutativity
	(with elements of the even degree cohomology group $H^2_c(\mc{O},P(1))$) and associativity
	of cup products, $\partial_P$ is Poitou-Tate dual to the map $H^0(\mc{O},P^{\vee}) \to H^1(\mc{O},P^{\vee})$ 
	also given by left cup product with $-\chi$. In turn, this dual is identified with the dual of the isomorphism
	$\Gamma \otimes_{\zp} P \to P$ induced by $-\chi \colon \Gamma \to \zp$.
	Finally, this is exactly our prior identification of $H^2_c(\mc{O},P(1))$ with $P \cong H^3_c(\mc{O},P(1))$.
\end{proof}

The following exercise in Galois cohomology encapsulates a key aspect of the work of Fukaya-Kato 
\cite[Sections 9.3-9.5]{fk-pf}.  We omit the proof, as the reader will find its key ideas contained in the
refined study that follows (cf. Proposition \ref{snakesquare} for the commutativity of the left-hand square
and Lemma \ref{conndiag} for the middle square on the right).

\begin{theorem}[Fukaya-Kato] \label{FKgalcoh}
	Let $\xi' = \xi'_{\theta} \in \La_{\theta}$ be such that 
	$$
		\xi'_{\theta}(t^s-1) = L_p'(\omega^2\theta^{-1},s-1)
	$$
	for all $s \in \zp$.  
	Then the diagram 
	$$
		\SelectTips{cm}{} \xymatrix{
		& Y \ar[r]^{\xi'} & Y \ar[d]^{\wr} \\
		H^1(\mc{O},T(1)) \ar[r] \ar[d] & H^1(\mc{O},Q(1)) \ar[d]^{-\Theta} \ar[r] \ar[u]^{\wr}
		\ar[r]^{\partial_Q} & H^2(\mc{O},Q(1)) \ar[d]^{\Upsilon}  \\
		\bigoplus_{\ell \mid Np} H^1(\Q_{\ell},P(1)) \ar[r] & H^2_c(\mc{O},P(1)) 
		\ar[r]^{\partial_P} & H^3_c(\mc{O},P(1)) \ar[d]^{\wr} \\
		& P \ar[u]^{\wr} \ar[r]^{\id}_{\sim} & P
		}
	$$
	commutes, where the indicated vertical isomorphisms are those of  Lemmas \ref{Qcoh} and \ref{Pcoh},
	and the leftmost vertical map uses the unique local splittings of Proposition \ref{exseqredunr}.
\end{theorem}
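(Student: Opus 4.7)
My plan is to treat the diagram as a juxtaposition of four rectangles, verifying commutativity of each. The bottom rectangle asserts that $\partial_P \colon H^2_c(\mc{O}, P(1)) \to H^3_c(\mc{O}, P(1))$ is the identity on $P$ under the isomorphisms of Lemma \ref{Pcoh}; this is precisely the content of Lemma \ref{connectP}. The leftmost rectangle --- that the composition $H^1(\mc{O}, T(1)) \to H^1(\mc{O}, Q(1)) \xrightarrow{-\Theta} H^2_c(\mc{O}, P(1))$ agrees with the composition through $\bigoplus_{\ell \mid Np} H^1(\Q_\ell, P(1))$ --- follows directly from the construction of $\Theta$ as the connecting map for the Selmer-type complex \eqref{selmercplx}, where the local maps use the unique splittings guaranteed by Proposition \ref{exseqredunr}.

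The middle rectangle, which asserts the compatibility of the Bockstein-type connecting maps $\partial_Q$ and $\partial_P$ with the global connecting maps $-\Theta$ and $\Upsilon$, is the main cohomological input. I would prove it by considering the bicomplex obtained by tensoring the short exact sequence $0 \to P(1) \to T(1) \to Q(1) \to 0$ with $0 \to \zp \xrightarrow{X} \Lai/(X^2) \to \zp \to 0$, and invoking the naturality of connecting homomorphisms together with Lemma \ref{loccompconn}. Since $\partial_Q$ and $\partial_P$ are both given by left cup product with $-\chi$ (Remark \ref{connmapsign}) while $-\Theta$ and $\Upsilon$ are connecting maps from the Selmer complex (Lemma \ref{connmapUps}), the needed commutativity reduces to associativity of cup products and the standard sign conventions for iterated connecting maps; this is the argument the author defers to Lemma \ref{conndiag}.

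The top rectangle --- that the composition $Y \xrightarrow{\sim} H^1(\mc{O}, Q(1)) \xrightarrow{\partial_Q} H^2(\mc{O}, Q(1)) \xrightarrow{\sim} Y$ equals multiplication by $\xi'$ --- is the crux of the theorem and where the $p$-adic $L$-function derivative enters. My approach is as follows. Via Lemma \ref{Qcoh}, both $Y$-isomorphisms arise from the exact sequence $0 \to \Lai_\theta(2) \xrightarrow{\xi} \Lai_\theta(2) \to Q(1) \to 0$. Tensoring this with the Bockstein sequence $0 \to \zp \xrightarrow{X} \Lai/(X^2) \to \zp \to 0$ produces a $3 \times 3$ diagram of $G_{\Q,S}$-modules whose iterated connecting maps compute exactly our composition. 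A snake-lemma style unwinding identifies the resulting endomorphism of $Y$ with multiplication by the image in $\La_\theta$ of $\frac{d\xi}{dX}$ under the natural derivation on $\La$, which by the interpolation property defining $\xi'_\theta$ (chain rule in $s$) coincides with $\xi'$.

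The main obstacle will be bookkeeping: carefully tracking the normalization of $\chi \colon \Gamma \xrightarrow{\sim} \zp$, the choice of splitting $\Lai/(X^2) \cong \zp \oplus \zp X$ as $\zp$-modules, and the sign produced by the $G_\Q$-action via $\sigma \mapsto \bar{\sigma}^{-1}$ on $\Lai$ (cf. Remark \ref{connmapsign}). To keep signs and derivatives straight, I would replace the abstract snake argument with an explicit cocycle computation representing a class in $H^2(\mc{O}, \Lai_\theta(2))$ and tracing it through the Bockstein. The key identity comes down to observing that at the cochain level, lifting $\bmod\,\xi$, cupping with $-\chi$, and reducing again is precisely the power-series operation $\xi \mapsto \xi'$ modulo $\xi$, which is what $\partial_Q$ realizes on $Y \cong \La_\theta/\xi$-modules.
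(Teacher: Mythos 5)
The paper explicitly omits the proof of this theorem, deferring to Fukaya--Kato \cite[9.3--9.5]{fk-pf} and pointing to the dagger-setting analogues (Proposition \ref{snakesquare} for the left square, Lemma \ref{conndiag} for the middle-right square, Remark \ref{diffquot} for the top) together with Lemma \ref{connectP} for the bottom; your decomposition into four rectangles and your identification of exactly these ingredients therefore reconstructs the intended argument. One correction on the crux, however: the element produced by the Bockstein computation is $\xi^{(1)} = (X+1)\frac{d\xi}{dX}$, not $\frac{d\xi}{dX}$. This is built into the paper's definition of $\xi^{(n)}$ and the remark that $\xi^{(1)} = \xi'$, and it is what one recovers as $\otil{\xi}_1(0)$ in Remark \ref{diffquot}. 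The ``natural derivation'' you invoke must therefore be $\gamma\,\frac{d}{d\gamma} = (X+1)\frac{d}{dX}$ -- the derivation dual to the cocycle $\chi$ normalized by $\chi(\gamma) = 1$ in Remark \ref{connmapsign} -- rather than $\frac{d}{dX}$ literally, and the identification of this with $\xi'$ is not a formal ``chain rule in $s$'' observation but is precisely the content of the cocycle/snake-lemma computation whose bookkeeping you flag; glossing it as $\frac{d\xi}{dX}$ would yield a unit-multiple discrepancy of $\gamma = X+1$.
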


In particular, note that $\Theta$ is identified with $-\xi'\Upsilon$ as a map $Y \to P$.
In this section, we aim to remove the derivative by modifying the diagram.

\subsection{Intermediate quotients} \label{intermedquo}

Let $\cozp$ denote the completed tensor product over $\zp$.  We use it consistently even in cases for which
the usual tensor product gives the same module, in part to indicate that our modules carry a compact topology.

In this brief subsection, we introduce and study the diagonalized zeta function $\otil{\xi}$. We then define the intermediate quotient $M^{\dagger} = (\Lai \cozp M)/\otil{\xi}$ of a compact $\La_{\theta}\ps{G_{\Q,S}}$-module $M$ that is annihilated by $\xi$.
This quotient of $\Lai \cozp M$ has $M$ as a quotient, so it
can be said to be intermediate between $\Lai \cozp M$ and $M$. We note that though the $G_{\Q,S}$-cohomology of $M^{\dagger}$ then sits between the Iwasawa cohomology and the $G_{\Q,S}$-cohomology of $M$,
it is not identified with the cohomology of $M$ in an intermediate extension.

In the following, when we write $\La$ (as opposed to $\Lai$), we shall consider this $\La$ as carrying a trivial $G_{\Q,S}$-action.
Let $\otil{\xi} \in \La \cozp \La_{\theta} = R\ps{\Gamma^2}$ be the diagonal image of 
$\xi \in \La_{\theta} = R\ps{\Gamma}$. More concretely,
we can make the following definition.

\begin{definition}
	Write $\xi  = \sum_{i=0}^{\infty} a_i X^i = \sum_{i=0}^{\infty} a_i(\gamma-1)^i$ for some $a_i \in R$. We define
	the element $\otil{\xi}$ of $\La \cozp \La_{\theta}$ by 
	$$
		\otil{\xi} = \sum_{i=0}^{\infty} a_i (\gamma \otimes \gamma-1)^i.
	$$
\end{definition}

\begin{definition}
	For $n \ge 0$, set
	$$
		\xi^{(n)} = (X+1)^n\frac{1}{n!}\frac{d^n\xi}{dX^n}
		= (X+1)^n \sum_{i=n}^{\infty} a_i \binom{i}{n} X^{i-n} \in \La_{\theta}.
	$$
\end{definition}

\begin{remark}
	Note that $\xi^{(1)}$ is $\xi'$ of Proposition \ref{FKgalcoh}.  
\end{remark}

We have $\La \cozp \La_{\theta} \cong R\ps{X \otimes 1,1 \otimes X}$. 
We frequently refer to $X \otimes 1 \in \La \cozp \La_{\theta}$ more simply by $X$, and we set $V = 1 \otimes X$ as needed
to make the identification 
$$
	\La \cozp \La_{\theta} = R\ps{X,V}.
$$

While not used later, the following description of $\otil{\xi}$ gives one some insight into its form; we thank the referee for
suggesting this simple proof.

\begin{proposition} \label{tildef}
	We have $\otil{\xi} = \sum_{n=0}^{\infty} X^n \otimes \xi^{(n)}$.
\end{proposition}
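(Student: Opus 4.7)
My plan is to identify $\La \cozp \La_{\theta}$ with $R\ps{X,V}$ (using the notation $V = 1 \otimes X$ set up just above) and expand $\otil{\xi}$ directly as a double power series, then recognize the coefficient of $X^n$ as $\xi^{(n)}$ written in the variable $V$. The key elementary observation is that
\[
\gamma \otimes \gamma - 1 = (1+X)(1+V) - 1 = V + (1+V)X,
\]
so the binomial theorem gives
\[
(\gamma \otimes \gamma - 1)^i = \sum_{n=0}^{i} \binom{i}{n} (1+V)^n V^{i-n} X^n.
\]

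Substituting into $\otil{\xi} = \sum_{i \ge 0} a_i (\gamma \otimes \gamma - 1)^i$ and swapping the order of summation --- which is legitimate because $(\gamma \otimes \gamma - 1)^i \in (X,V)^i$, so that the double sum converges in the $(X,V)$-adic topology on $R\ps{X,V}$ and each monomial $X^n V^m$ receives contributions from only finitely many indices $i$ --- I would obtain
\[
\otil{\xi} = \sum_{n=0}^{\infty} X^n (1+V)^n \sum_{i \ge n} a_i \binom{i}{n} V^{i-n}.
\]
The final step is to recognize the inner sum as $\frac{1}{n!} \frac{d^n \xi}{dX^n}$ with $X$ formally replaced by $V$, since term-by-term differentiation gives $\frac{1}{n!} \frac{d^n \xi}{dX^n} = \sum_{i \ge n} a_i \binom{i}{n} X^{i-n}$ by definition of $\xi = \sum_i a_i X^i$. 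Multiplying by $(1+V)^n$ then produces exactly $\xi^{(n)}$ in the variable $V$, which in tensor-product notation is $1 \otimes \xi^{(n)}$, so the outer sum reads $\sum_n X^n \otimes \xi^{(n)}$ as desired.

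Conceptually this amounts to Taylor's formula for the one-variable power series $X \mapsto \xi((1+V)X + V)$ over the base ring $\La_\theta = R\ps{V}$, with the factor $(1+V)^n$ arising from iterated applications of the chain rule to the substitution $X \mapsto (1+V)X + V$. There is no real obstacle here beyond bookkeeping with two variables and justifying the rearrangement of sums; the substance of the proposition is simply that the diagonal embedding $\gamma \mapsto \gamma \otimes \gamma$, translated through $\gamma = 1+X$, becomes this explicit change of variable.
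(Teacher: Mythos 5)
Your proof is correct and follows essentially the same route as the paper's: rewrite $\gamma\otimes\gamma - 1$ as $X(1+V)+V$, apply the binomial theorem, interchange the sums, and identify the inner sum with $\xi^{(n)}$ in the variable $V$. The only difference is that you spell out the convergence justification for the rearrangement and add a Taylor-formula gloss, which the paper leaves implicit.
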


\begin{proof}
	We have
	$\tilde{\xi} = \sum_{i=0}^{\infty} a_i (X + V + XV)^i$.
	By the binomial theorem, we have
	$$
		\sum_{i=0}^{\infty} a_i (X + V + XV)^i = \sum_{i=0}^{\infty} a_i \sum_{n=0}^i \binom{i}{n} X^n (V+1)^n V^{i-n}
		= \sum_{n=0}^{\infty} X^n \cdot (V+1)^n \sum_{i=n}^{\infty} a_i \binom{i}{n} V^{i-n},
	$$
	the latter term being $\sum_{n=0}^{\infty} X^n \otimes \xi^{(n)}$.
\end{proof}

The following applies to $\otil{\xi}$ with $\mf{R} = \La_{\theta}$ by the Ferrero-Washington theorem \cite{fw}, as
the reduction of $\otil{\xi}$ modulo $V$ is $\xi$ under the identification $\La \cozp \La_{\theta}/(V) \cong \La_{\theta}$.

\begin{lemma} \label{diagmult}
	Let $\mf{R}$ be a complete local $\zp$-algebra with residue field $\mf{k}$, and let $M$ be a compact $\mf{R}$-module. 
	Let $\mu \in \La \cozp \mf{R}$ with nontrivial image in $\La \cozp \mf{k}$.
	Then $\mu$ is an injective $\La \cozp \mf{R}$-module endomorphism of $\La \cozp M$.
\end{lemma}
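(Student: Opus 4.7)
The plan is to apply Weierstrass preparation to $\mu$ in order to reduce to multiplication by a distinguished polynomial, and then to use Weierstrass division to exhibit an $\mf{R}$-module splitting of the resulting multiplication sequence that survives completed tensor product with $M$.

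To begin, I would identify $\La \cozp \mf{R}$ with $\mf{R}\ps{X}$ and apply the Weierstrass preparation theorem, which is valid over an arbitrary complete local ring, to write $\mu = u \cdot p(X)$ with $u \in \mf{R}\ps{X}^{\times}$ and $p(X) \in \mf{R}[X]$ a distinguished polynomial of degree $n$, where $n$ is the $X$-adic valuation of the nonzero image $\bar{\mu} \in \mf{k}\ps{X}$. Multiplication by $u$ is a $\La \cozp \mf{R}$-module automorphism of $\La \cozp M$, so it suffices to prove that multiplication by $p(X)$ is injective on $\La \cozp M$.

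For this, Weierstrass division applied to $p(X)$ in $\mf{R}\ps{X}$ says that each element may be uniquely expressed as $p(X)q + r$ with $q \in \mf{R}\ps{X}$ and $r$ a polynomial of degree less than $n$. This yields a direct sum decomposition of $\mf{R}$-modules
$$
\mf{R}\ps{X} = p(X)\,\mf{R}\ps{X} \oplus \mf{R}[X]_{<n},
$$
or equivalently a split short exact sequence
$$
0 \to \mf{R}\ps{X} \xrightarrow{p(X)} \mf{R}\ps{X} \to \mf{R}^n \to 0
$$
of $\mf{R}$-modules, with a splitting given by the inclusion of polynomials of degree less than $n$. Uniqueness in the division already forces injectivity of $p(X)$ on $\mf{R}\ps{X}$ itself.

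To conclude, the additive functor $\cotimes_{\mf{R}} M$ preserves this split exact sequence and gives
$$
0 \to \La \cozp M \xrightarrow{p(X)} \La \cozp M \to M^n \to 0,
$$
from which the desired injectivity of multiplication by $p(X)$, and hence of $\mu$, on $\La \cozp M$ follows. The one potentially delicate step is the invocation of Weierstrass preparation when $\mf{R}$ is not assumed to be Noetherian, but the classical argument for power series in one variable relies only on the completeness of $\mf{R}$ and so applies here without modification.
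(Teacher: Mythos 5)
Your argument is correct, and it takes a genuinely different route from the paper's. The paper's proof is very short: it passes to the associated graded for the $\mf{m}$-adic filtration on $M$, reducing to the case of a compact $\mf{k}$-vector space and ultimately to $M = \mf{k}$, where $\La \cozp M = \mf{k}\ps{X}$ is an integral domain on which $\mu$ acts as its nonzero image $\bar{\mu}$, hence injectively. You instead invoke Weierstrass preparation and division over the complete local ring $\mf{R}$: the hypothesis that $\bar{\mu} \neq 0$ in $\mf{k}\ps{X}$ is precisely what forces some coefficient of $\mu$ to be a unit of $\mf{R}$, so preparation applies; the resulting split exact sequence
$$0 \to \mf{R}\ps{X} \xrightarrow{p(X)} \mf{R}\ps{X} \to \mf{R}^n \to 0$$
survives the additive functor $\cotimes_{\mf{R}} M$ (note $\mf{R}\ps{X} \cotimes_{\mf{R}} M \cong \zp\ps{X} \cotimes_{\zp} M = \La \cozp M$), giving the result. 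This costs heavier machinery but buys a bit more: it identifies the cokernel of $\mu$ on $\La \cozp M$ as a finite sum of copies of $M$, a fact the paper states separately (right after Definition \ref{interquot}) by quoting Weierstrass preparation anyway — so your proof unifies the two. As for your flagged worry about the non-Noetherian case: the iterative proof of Weierstrass division converges provided $\mf{R}$ is $\mf{m}$-adically separated and complete, which certainly holds in every Noetherian instance the paper uses; but note that the paper's own reduction to the associated graded equally requires $\bigcap_n \mf{m}^n M = 0$, so neither proof is materially cleaner on this technical point.
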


\begin{proof}
	We must verify injectivity. Replacing $M$ by its associated graded for the powers of the maximal ideal of $\mf{R}$,
	it suffices to consider $M = \mf{k}$, so $\La \cozp M = \mf{k}\ps{X}$.
	Since $\mu$ acts as a nonzero element of $\mf{k}\ps{X}$ on the integral domain $\mf{k}\ps{X}$, it is clearly injective.
\end{proof}

If $M$ of Lemma \ref{diagmult} has the further structure of a continuous $\mf{R}$-linear $G_{\Q,S}$-action, then multiplication by $\mu \in \La \cozp \mf{R}$ is a continuous $(\La \cozp \mf{R})\ps{G_{\Q,S}}$-module endomorphism of $\Lai \cozp M$. In the case that $\mf{R} = \La_{\theta}$ and $M$ that is annihilated by $\xi$, our interest is in the intermediate quotient
$$
	M^{\dagger} = (\Lai \cozp M)/\otil{\xi}(\Lai \cozp M),
$$ 
where $\otil{\xi} \colon \Lai \cozp M \to \Lai \cozp M$ is injective by the lemma.
We give this dagger notation and the notion of an intermediate quotient 
a more general definition in Defintion \ref{interquot}. 

\begin{remark} \label{interrem}
	Let us remark on our choice of the word ``intermediate'': since $M$ is annihilated by $\xi$, the intermediate quotient $M^{\dagger}$
	is also the quotient of $\Lai \cozp M$ by $\otil{\xi} - 1 \otimes \xi$, which is divisible by the 
	variable $X$ of $\La$, so indeed $M^{\dagger}$ has $M$ as a quotient. We will often write 
	$$
		\otil{\xi}_1 = X^{-1}(\otil{\xi} - 1 \otimes \xi).
	$$
\end{remark}

\subsection{Refined cohomological study} \label{refinedcoh}

In this subsection, we study the cohomology of intermediate quotients. Specifically, we study the $S$-ramified cohomology of $Q^{\dagger}(1)$, the compactly supported cohomology of $P^{\dagger}(1)$, and their relationship via the cohomology of $T^{\dagger}(1)$. 
We shall replace the commutative diagram of Theorem \ref{FKgalcoh} with a similar commutative diagram
$$
	\SelectTips{cm}{} \xymatrix{
	& Y \ar[r]^{\id} & Y \ar[d]^{\wr} \\
	H^1(\mc{O},T^{\dagger}(1)) \ar[r] \ar[d] & H^1(\mc{O},Q^{\dagger}(1)) \ar[d]^{-\Theta^{\dagger}} \ar[r] \ar@{->>}[u]
	\ar[r]^{\partial_Q^{\dagger}} & H^2(\mc{O},Q(1)) \ar[d]^{\Upsilon}  \\
	\bigoplus_{\ell \mid Np} H^1(\Q_{\ell},P^{\dagger}(1)) \ar[r] & H^2_c(\mc{O},P^{\dagger}(1)) 
	\ar[r]^{\partial_P^{\dagger}} & H^3_c(\mc{O},P(1)) \ar[d]^{\wr} \\
	& P \ar[u]^{\wr} \ar[r]^{\id}_{\sim} & P
	}
$$
which has the distinguishing feature that the connecting map $\partial_Q^{\dagger}$ that takes the place of $\partial_Q$ induces the identity on $Y$,
as opposed to $\xi'$. 

\begin{definition}
	We denote by $\w$ the continuous $\zp$-algebra automorphism of $\La \cozp \La$ 
	and $\zp\ps{G_{\Q,S}}$-module isomorphism 
	$$
		\w \colon \Lai \cozp \Lai \xrightarrow{\sim} \Lai \cozp \La
	$$
	satisfying
	$$
		\w(\sigma \otimes \tau) = \sigma \otimes \sigma^{-1}\tau
	$$
	for $\sigma, \tau \in \Gamma$.
\end{definition}

We will apply the map induced by $\w$ to $\Lai \cozp \Lai_{\theta} = (\Lai \cozp \Lai) \cozp R^{\iota}$,
which we can think of as landing in $\Lai_{\theta} \cotimes{R} \La_{\theta} = (\Lai \cozp \La) \cozp R^{\iota}$.

\begin{proposition} \label{cohquot}
	For $i \in \Z$, we have isomorphisms 
	$$
		H^i(\mc{O},Q^{\dagger}(1)) \cong H^i(\mc{O},Q(1)) \cotimes{R} (\La_{\theta}/\xi)
	$$
	of $\La \cozp \La_{\theta}$-modules, where $f \in \La \cozp \La_{\theta}$ 
	acts as usual on the left and as $\w(f)$ on the right.
\end{proposition}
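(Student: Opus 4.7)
The plan is to present $Q^{\dagger}(1)$ as the cokernel of $\otil{\xi}$ acting on $\Lai \cozp Q(1)$ and then identify the resulting long exact sequence in Galois cohomology via the automorphism $\w$. By Lemma \ref{diagmult} applied to $\mf{R} = \La_{\theta}$, whose hypothesis is satisfied because the Ferrero-Washington theorem \cite{fw} gives $\mu_{\theta}(\xi)=0$, multiplication by $\otil{\xi}$ is injective on $\Lai \cozp Q(1)$, yielding a short exact sequence
\[
0 \to \Lai \cozp Q(1) \xrightarrow{\otil{\xi}} \Lai \cozp Q(1) \to Q^{\dagger}(1) \to 0
\]
of $(\La \cozp \La_{\theta})\ps{G_{\Q,S}}$-modules directly from the definition of $Q^{\dagger}$.

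Next I would identify the cohomology of the outer terms using $\w$. The defining property $\w(\sigma \otimes \tau) = \sigma \otimes \sigma^{-1}\tau$, extended $R$-linearly, provides a $G_{\Q,S}$-equivariant algebra isomorphism carrying $\Lai \cozp \Lai_{\theta}$ to $\Lai_{\theta} \cotimes{R} \La_{\theta}$ with \emph{trivial} Galois action on the second factor. Since $\w(1 \otimes \xi) = 1 \otimes \xi$, this descends and Tate-twists to
\[
\Lai \cozp Q(1) \;\cong\; \Lai_{\theta}(2) \cotimes{R} \La_{\theta}/\xi.
\]
By Ferrero-Washington and Weierstrass preparation, $\La_{\theta}/\xi$ is $R$-free of finite rank, hence $R$-flat, so it passes out of cohomology:
\[
H^i(\mc{O},\Lai \cozp Q(1)) \;\cong\; H^i(\mc{O},\Lai_{\theta}(2)) \cotimes{R} \La_{\theta}/\xi.
\]
By the arguments given in the proof of Lemma \ref{Qcoh}, $H^i(\mc{O},\Lai_{\theta}(2))$ vanishes for $i \neq 2$, equals $Y$ for $i = 2$, and is annihilated by $\xi$ (via the main conjecture and the fact that $Y$ is $p$-torsion free).

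A direct computation using $\w(\gamma \otimes \gamma) = \gamma \otimes 1$ shows $\w(\otil{\xi}) = \xi(X \otimes 1)$; one then checks that on $\Lai_{\theta}(2) \cotimes{R} \La_{\theta}/\xi$ this acts as multiplication by $\xi \in \La_{\theta}$ on the first tensor factor. Since $\xi$ annihilates $Y$, the induced map $\otil{\xi}$ on every $H^i(\mc{O},\Lai \cozp Q(1))$ is zero, and the long exact sequence collapses to give
\[
H^1(\mc{O},Q^{\dagger}(1)) \;\cong\; H^2(\mc{O},Q^{\dagger}(1)) \;\cong\; Y \cotimes{R} \La_{\theta}/\xi,
\]
with all other degrees vanishing, matching $H^i(\mc{O},Q(1)) \cotimes{R} \La_{\theta}/\xi$ by Lemma \ref{Qcoh}. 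Because all identifications were mediated by $\w$, the induced $\La \cozp \La_{\theta}$-module structure on the right-hand side is the $\w$-twisted one. The main obstacle is the bookkeeping around $\w$: checking that its $R$-linear extension is truly $G_{\Q,S}$-equivariant with the claimed trivial action on the second factor, that $\w(\otil{\xi})$ really does reduce to multiplication by $\xi$ on the first cohomology factor without introducing any subtle twist, and that the resulting module structure is precisely $f \mapsto \w(f)$.
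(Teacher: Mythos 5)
Your proof is correct, and it reaches the same conclusion via a genuinely different bookkeeping. The paper sets up a $3\times 3$ diagram and takes quotients by $1\otimes\xi$ along the columns and by $\w(\otil{\xi}) = \xi\otimes 1$ along the rows; this first yields the module-level isomorphism $Q^{\dagger} \cong Q \cotimes{R} (\La_{\theta}/\xi)$, after which cohomology is computed from the long exact sequence for $1 \otimes \xi$ acting on $H^i(\mc{O},Q(1))\cotimes{R}\La_{\theta}$, with Ferrero--Washington supplying the injectivity of $1\otimes\xi$. That route is uniform in $i$ and never needs to know the individual cohomology groups of $Q(1)$. You instead work directly with the defining short exact sequence for $Q^{\dagger}(1)$ (multiplication by $\otil{\xi}$ on $\Lai \cozp Q(1)$), use $\w$ to rewrite the middle term as $\Lai_{\theta}(2)\cotimes{R}\La_{\theta}/\xi$, and then invoke the concentration of $H^i(\mc{O},\Lai_{\theta}(2))$ in degree $2$ (from the proof of Lemma \ref{Qcoh}) plus $\xi Y = 0$ to see that $\otil{\xi}$ induces zero on the surviving cohomology. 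Your argument therefore imports the explicit vanishing of $H^1(\mc{O},\Lai_{\theta}(2))$ and the annihilation of $Y$ by $\xi$, which the paper's proof of this particular proposition avoids; in exchange you skip the intermediate identification $Q^{\dagger}\cong Q\cotimes{R}(\La_{\theta}/\xi)$ and the $3\times 3$ diagram entirely. You also correctly handle the $R$-flatness point: $\La_{\theta}/\xi$ is $R$-free by Weierstrass preparation once Ferrero--Washington gives $\mu = 0$, so it passes through cohomology. Both approaches are sound; the paper's has the minor advantage that the resulting isomorphisms are visibly the ones used downstream (e.g.\ in Theorem \ref{partial}), whereas in your version that compatibility would need to be checked by matching the two $\w$-mediated identifications.
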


\begin{proof}
	Consider the diagram
	\begin{equation} \label{coulddosnake}
		\SelectTips{cm}{} \xymatrix{
		& 0 \ar[d] & 0 \ar[d] & 0 \ar[d] & \\
		0 \ar[r] & \Lai_{\theta} \cotimes{R} \La_{\theta}(1) \ar[r]^{\xi \otimes 1} \ar[d]^{1 \otimes \xi} 
		&  \Lai_{\theta}  \cotimes{R} \La_{\theta}(1) \ar[d]^{1 \otimes \xi} \ar[r]
		& Q \cotimes{R} \La_{\theta} \ \ar[r] \ar[d]^{1 \otimes \xi} & 0 \\
		0 \ar[r] & \Lai_{\theta} \cotimes{R} \La_{\theta}(1) \ar[d] \ar[r]^{\xi \otimes 1} &  
		\Lai_{\theta} \cotimes{R} \La_{\theta}(1) \ar[r] \ar[d]&
		Q \cotimes{R} \La_{\theta} \ar[r] \ar[d] & 0 \\
		0 \ar[r] &  \Lai \cozp Q \ar[d] \ar[r]^{\otil{\xi}} & \Lai \cozp Q \ar[r] \ar[d] & 
		Q^{\dagger} \ar[r] \ar[d] & 0\\
		&0&0&0&
		}
	\end{equation}
	with exact rows and columns, where the maps
	$\Lai_{\theta} \cotimes{R} \La_{\theta}(1) \to \Lai \cozp Q$ 
	are given by the composition of $\w^{-1}$ with the quotient by $1 \otimes \xi$.
	Since $1 \otimes \xi = \w(1 \otimes \xi)$
	and $\xi \otimes 1 = \w(\otil{\xi})$ in $\La_{\theta} \cotimes{R} \La_{\theta}$, the diagram commutes.
	If we let $f \in \La \cozp \La_{\theta}$ act as $\w(f)$ on the terms in the first
	two rows and as usual in the third, then the diagram is of
	$(\La \cozp \La_{\theta})\ps{G_{\Q,S}}$-modules.

	In particular, we have a canonical isomorphism
	$Q^{\dagger} \cong Q \cotimes{R} (\La_{\theta}/\xi)$
	of $(\La \cozp \La_{\theta})\ps{G_{\Q,S}}$-modules, 
	again understanding that $f \in \La \cozp \La_{\theta}$
	acts on the right by $\w(f)$ (so, e.g., $1 \otimes \xi$ acts as zero).
	As $\La_{\theta}$ is free profinite over $R$ with trivial $G_{\Q,S}$-action, we then have
	\begin{eqnarray*}
		H^i(\mc{O},Q \cotimes{R} \La_{\theta}(1)) \cong 
		H^i(\mc{O},Q(1)) \cotimes{R} \La_{\theta}
	\end{eqnarray*}
	for all $i$.  So, we have an exact sequence
	\begin{multline*}
		\cdots \to H^i(\mc{O},Q(1)) \cotimes{R} \La_{\theta} \xrightarrow{1 \otimes \xi} 
		H^i(\mc{O},Q(1)) \cotimes{R} \La_{\theta} \to H^i(\mc{O}, Q^{\dagger}(1)) \\ \to 
		H^{i+1}(\mc{O},Q(1)) \cotimes{R} \La_{\theta}  \xrightarrow{1 \otimes \xi} 
		H^{i+1}(\mc{O},Q(1)) \cotimes{R} \La_{\theta}  \to \cdots
	\end{multline*}
	of $\La \cozp \La_{\theta}$-modules. Since $1 \otimes \xi$ has trivial kernel on 
	$H^{i+1}(\mc{O},Q(1)) \cotimes{R} \La_{\theta}$ by the Ferrero-Washington theorem,
	the exact sequence provides the result.
\end{proof}

For a compact $\La_{\theta}\ps{G_{\Q,S}}$-module $M$ annihilated by $\xi$, we have exact sequences
\begin{gather}
	0 \to M^{\dagger} \xrightarrow{X} (\Lai \cozp M)/X\otil{\xi} \to M \to 0, \label{horiz1} \\
	0 \to M \xrightarrow{\otil{\xi}} (\Lai \cozp M)/X\otil{\xi} \to M^{\dagger} \to 0 \label{horiz2}
\end{gather}
of $(\La \cozp \La_{\theta})\ps{G_{\Q,S}}$-modules, the left exactness of which is a consequence of Lemma \ref{diagmult}.

\begin{notation}
	We use $\partial_M^{\dagger}$ to denote connecting maps in the cohomology of the Tate twist of \eqref{horiz2}.
\end{notation}

The following refines \cite[9.3.3]{fk-pf} in our case of interest.

\begin{theorem} \label{partial}
	Let $\varepsilon_Q$ denote the composition
	$$
		H^1(\mc{O},Q(1)) \xrightarrow{\sim} H^2(\mc{O},\Lai_{\theta}(2)) \xrightarrow{\sim} H^2(\mc{O},Q(1)),
	$$
	of the isomorphisms of Lemma \ref{Qcoh}, which is identified with the identity on $Y$. 
	Then $\partial_Q^{\dagger}$ equals the composition
	$$
		H^1(\mc{O},Q^{\dagger}(1)) \twoheadrightarrow H^1(\mc{O},Q(1)) \xrightarrow{\varepsilon_Q} H^2(\mc{O},Q(1)),
	$$
	where the first map is a surjection induced by the quotient map $Q^{\dagger} \twoheadrightarrow Q$.
\end{theorem}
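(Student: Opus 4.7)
The plan is to deduce the theorem from naturality of connecting homomorphisms, applied to two morphisms of short exact sequences. First, I would observe the morphism of short exact sequences of $(\La \cozp \La_{\theta})\ps{G_{\Q,S}}$-modules
$$
\SelectTips{cm}{} \xymatrix{
0 \ar[r] & \Lai \cozp Q(1) \ar[r]^-{\otil{\xi}} \ar[d] & \Lai \cozp Q(1) \ar[r] \ar[d] & Q^{\dagger}(1) \ar@{=}[d] \ar[r] & 0 \\
0 \ar[r] & Q(1) \ar[r]^-{\otil{\xi}} & (\Lai \cozp Q(1))/X\otil{\xi} \ar[r] & Q^{\dagger}(1) \ar[r] & 0,
}
$$
in which the left and middle vertical maps are the natural quotients by $X$ and by $X\otil{\xi}$, respectively. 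Commutativity of the left square reduces to the identity $\otil{\xi} \cdot X(\Lai \cozp Q(1)) = X\otil{\xi} \cdot (\Lai \cozp Q(1))$, which holds as $X$ is injective on $\Lai \cozp Q(1)$ by Lemma \ref{diagmult}. Naturality then gives $\partial_Q^{\dagger} = p_{\ast} \circ \partial^{\sharp}$, where $\partial^{\sharp}$ is the connecting map of the top row and $p_{\ast} \colon H^2(\mc{O}, \Lai \cozp Q(1)) \to H^2(\mc{O}, Q(1))$ is induced by the quotient modulo $X$.

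Next I would identify $\partial^{\sharp}$ via the diagram \eqref{coulddosnake} of Proposition \ref{cohquot}. The top row above is, up to twist by $1$, the bottom row of \eqref{coulddosnake}, which is obtained by modding out the middle row of that diagram by $1 \otimes \xi$. The middle row is the short exact sequence of Lemma \ref{Qcoh} tensored over $R$ with $\La_{\theta}$, and its connecting homomorphism is the tensor product of the connecting map of Lemma \ref{Qcoh} with $1_{\La_{\theta}}$, hence the identity on $Y \cotimes{R} \La_{\theta}$ under the identifications of Lemma \ref{Qcoh}. Applying naturality to the vertical quotient from the middle to the bottom row and using the isomorphisms
$$H^1(\mc{O},Q^{\dagger}(1)) \cong Y \cotimes{R} (\La_{\theta}/\xi) \cong H^2(\mc{O},\Lai \cozp Q(1))$$
supplied by the proof of Proposition \ref{cohquot}, one concludes that $\partial^{\sharp}$ is the identity on $Y \cotimes{R} (\La_{\theta}/\xi)$.

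Finally, I would identify $p_{\ast}$ with the surjection $q \colon H^1(\mc{O},Q^{\dagger}(1)) \twoheadrightarrow H^1(\mc{O},Q(1))$ induced by $Q^{\dagger} \twoheadrightarrow Q$. Under the $\w$-isomorphism, the quotient by $X \in \La$ on $\Lai \cozp Q(1)$ translates into an identification of the outer and inner group-ring variables; at the level of cohomology, both $p_{\ast}$ and $q$ descend to the map $y \otimes \bar{f} \mapsto \bar{f} \cdot y$ on $Y \cotimes{R} (\La_{\theta}/\xi)$, where the action uses the $\La_{\theta}/\xi$-module structure on $Y$. Combined with the fact that $\epsilon_Q$ is the identity on $Y$, we obtain
$$\partial_Q^{\dagger} = p_{\ast} \circ \partial^{\sharp} = q = \epsilon_Q \circ q,$$
as desired.

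The main obstacle is this last identification: tracking how the module-level quotient by $X \in \La$ and the surjection $Q^{\dagger} \twoheadrightarrow Q$ translate through the $\w$-isomorphism to the same cohomological map on $Y \cotimes{R} (\La_{\theta}/\xi)$. Although ultimately formal, this requires careful bookkeeping of the two commuting $\La_{\theta}$-actions on the tensor product and use of the fact that the $\La_{\theta}$-action on $Y$ factors through $\La_{\theta}/\xi$.
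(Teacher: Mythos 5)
Your argument is correct and is essentially the same as the paper's: both rely on the ladder \eqref{coulddosnake} of Proposition~\ref{cohquot}, the $\w$-twist, and Lemma~\ref{Qcoh}. The difference is organizational. The paper constructs a single three-row ladder with \eqref{horiz2} for $Q$ at the bottom and the Lemma~\ref{Qcoh} sequence at the top, and reads off the commutative square \eqref{techdiag} directly. You instead split the ladder in two: first compare \eqref{horiz2} for $Q$ with the bottom row of \eqref{coulddosnake} (twisted) to get $\partial_Q^{\dagger} = p_\ast \circ \partial^{\sharp}$, and then use \eqref{coulddosnake} to identify $\partial^{\sharp}$. This saves drawing an intermediate row but requires you to independently identify $p_\ast$ --- which, as you correctly flag, is where the actual bookkeeping lives; that bookkeeping is exactly what the paper's middle column does, so there is no real shortcut here, just a rearrangement of the same ingredients.

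One small slip: the commutativity of the left square in your first diagram is a formal consequence of $\otil{\xi} X = X \otil{\xi}$ (no injectivity is needed); what Lemma~\ref{diagmult} actually provides is the injectivity of multiplication by $\otil{\xi}$ on $\Lai \cozp Q(1)$, which is what guarantees the left exactness of the bottom row \eqref{horiz2} --- i.e.\ that $\ker\bigl(\Lai\cozp Q(1) \xrightarrow{\otil{\xi}} (\Lai\cozp Q(1))/X\otil{\xi}\bigr) = X(\Lai \cozp Q(1))$. Citing injectivity of $X$ is both unneeded for commutativity and not what is needed for exactness.
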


\begin{proof}
	The commutative diagram
	$$
		\SelectTips{cm}{} \xymatrix{
		0 \ar[r] & \Lai_{\theta}(1) \ar[r]^-{\xi} & \Lai_{\theta}(1) \ar[r] & Q \ar[r]  & 0\\
		0 \ar[r] & \Lai_{\theta}(1) \ar[r]^-{\otil{\xi}} \ar@{->>}[d] \ar@{=}[u] & 
		(\La \cozp \Lai_{\theta}(1))/X\otil{\xi} 	\ar[r] \ar@{->>}[d] \ar@{->>}[u]_{\bmod X} &
		Q \cotimes{R} \La_{\theta} \ar[r] \ar@{->>}[d] \ar@{->>}[u]_{\bmod \w(X)} & 0 \\
		0 \ar[r] & Q \ar[r]^-{\otil{\xi}} & (\Lai \cozp Q)/X\otil{\xi} \ar[r] & 
		Q^{\dagger} \ar[r] & 0
		}
	$$
	with exact rows (the right horizontal arrow of the middle row being induced by $w$ and the quotient)
	gives rise to a commutative diagram of connecting maps
	\begin{equation} \label{techdiag}
		\SelectTips{cm}{} \xymatrix{
		 H^1(\mc{O},Q(1)) \ar[r]^{\sim} & H^2(\mc{O},\Lai_{\theta}(2)) \\
		H^1(\mc{O},Q(1)) \cotimes{R} \La_{\theta} \ar@{->>}[r] \ar@{->>}[u]_{\bmod \w(X)} 
		\ar@{->>}[d] & H^2(\mc{O},\Lai_{\theta}(2)) 
		\ar@{=}[u] \ar[d]^{\wr} \\
		 H^1(\mc{O},Q^{\dagger}(1)) \ar[r] & H^2(\mc{O},Q(1)),
		}
	\end{equation}
	where the composition $H^1(\mc{O},Q(1)) \to H^2(\mc{O},Q(1))$ is $\varepsilon_Q$.
	
	By Proposition \ref{cohquot}, the map $H^1(\mc{O},Q^{\dagger}(1)) \to H^1(\mc{O},Q(1))$ is identified with
	the reduction modulo $\w(X) = \gamma \otimes \gamma^{-1} - 1$ map
	\begin{equation} \label{H1quo}
		H^1(\mc{O},Q(1)) \cotimes{R} (\La_{\theta}/\xi)  \twoheadrightarrow H^1(\mc{O},Q(1)).
	\end{equation}
	Combining this with \eqref{techdiag} gives the result.
\end{proof}

\begin{remark} \label{corQ}
	Given \eqref{H1quo} and the identifications of Lemma \ref{Qcoh},
	we may rephrase Theorem \ref{partial} as the assertion that $\partial_Q^{\dagger} \colon Y \cotimes{R} (\La_{\theta}/\xi) \to Y$
	is given by multiplication: i.e., satisfies $\partial_Q^{\dagger}(y \otimes f) = f \cdot y$ for $y \in Y$ and $f \in \La_{\theta}/\xi$. 
\end{remark}

\begin{remark} \label{diffquot}
	Let us mention another approach, closer to the original
	work of Fukaya and Kato in \cite[9.3]{fk-pf}, to elucidate why this removes the $\xi'$ from
	the left-hand side of \eqref{keyident}.
	For this, we recall (cf. \cite[9.3.5]{fk-pf}) that $\partial_Q^{\dagger}$ is the negative of a
	snake lemma map $H^1(\mc{O},Q^{\dagger}(1)) \to H^2_{\Iw}(\mc{O}_{\infty},Q(1))$
	for the cohomology of the Tate twist of the diagram \eqref{coulddosnake} followed by
	corestriction $H^2_{\Iw}(\mc{O}_{\infty},Q(1)) \to H^2(\mc{O},Q(1))$. 
	The map $H^2_{\Iw}(\mc{O}_{\infty},Q(1)) \to H^2(\mc{O},Q^{\dagger}(1))$ is an isomorphism, so the former group, like 
	$H^1(\mc{O},Q^{\dagger}(1))$, is canonically identified with $Y \cotimes{R} (\La_{\theta}/\xi)$ via Lemma \ref{cohquot}.
	Under these identifications, we claim that the negative of the snake lemma map
	is the identity on $Y \cotimes{R} (\La_{\theta}/\xi)$.
	
	To see this, we start with an element of $H^1(\mc{O},Q^{\dagger}(1))$, which is necessarily
	annihilated by $\otil{\xi}$.
	We must apply $1 \otimes \xi$ to a lift to $H^2(\mc{O},\Lai_{\theta}(2)) \cotimes{R} \La_{\theta}$ of its image under the 
	connecting map induced by the rightmost column
	of \eqref{coulddosnake}. This amounts to applying $1 \otimes \xi - \otil{\xi}$ to said lift, taking into account that $f \in \La \cozp \La$
	acts on the latter group by $\w(f)$. We must then ``divide'' this quantity by $\otil{\xi} = \w^{-1}(\xi \otimes 1)$,
	which in the quotient $H^2_{\Iw}(\mc{O}_{\infty},Q(1)) \cong Y \cotimes{R} (\La_{\theta}/\xi)$ 
	by $1 \otimes \xi$ can be taken as division
	by $\otil{\xi} - 1 \otimes \xi$. Then we just note that
	$$
		-\frac{1 \otimes \xi - \otil{\xi}}{\otil{\xi} - 1 \otimes \xi} = 1.
	$$
	
	If, as in \cite[9.3.1]{fk-pf}, we had used $X$ in place of $\otil{\xi}$ and the Bockstein exact sequence
	arising from Definition \ref{bockstein} in place of \eqref{horiz2}, then the map $\partial_Q$ would similarly
	have been given by the negative of dividing $1 \otimes \xi - \xi \otimes 1$ by $\w(X)$
	and then taking its image under the multiplication map $\La_{\theta} \cotimes{R} \La_{\theta} \to \La_{\theta}$. 
	To compute this, we can first apply $\w^{-1}$ to $\w(X)^{-1}(\xi \otimes 1 - 1 \otimes \xi)$ to obtain $\otil{\xi}_1$,
	and then the multiplication map becomes evaluation at $X = 0$, without changing the result.
	This yields the quantity $\xi' = \otil{\xi}_1(0)$ on the left of \eqref{keyident}.
\end{remark}

\begin{proposition} \label{Psquare}
	We have a commutative square of isomorphisms
	$$
		\SelectTips{cm}{} \xymatrix{
			H^2_c(\mc{O},P(1)) \ar[r]^-{\sim} \ar[d]^{\wr} & 
			H^3_c(\mc{O},P^{\dagger}(1)) \ar[d]^{\wr} \\
			H^2_c(\mc{O},P^{\dagger}(1)) \ar[r]^-{\sim}_{\partial_P^{\dagger}} & H^3_c(\mc{O},P(1)),  \\
		}	
	$$
	between $\La$-modules canonically isomorphic to $P$, in which every vertical and horizontal map
	is identified with the identity map on $P$, and where the horizontal maps are the connecting homomorphisms 
	from \eqref{horiz2} and \eqref{horiz1} and the vertical maps are induced by multiplication by $\otil{\xi}_1$
	and the canonical quotient. The same holds with $\mc{O}$ replaced with $\Z[\tfrac{1}{p}]$.
\end{proposition}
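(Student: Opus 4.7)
The plan is to identify each of the four cohomology groups in the square canonically with $P$ and to exhibit each horizontal and vertical arrow as the identity on $P$, via naturality of connecting homomorphisms with respect to comparisons to the Bockstein sequence of Definition \ref{bockstein}. The identifications $H^i_c(\mc{O},P(1)) \cong P$ for $i \in \{2,3\}$ come from Lemma \ref{Pcoh}. For $H^3_c(\mc{O},P^{\dagger}(1))$, the invariant map of Remark \ref{galcohfacts}c identifies the group with $(P^{\dagger})_{G_{\Q,S}}$; applying right-exactness of coinvariants to $X\otil{\xi}_1(\Lai \cozp P) \hookrightarrow \Lai \cozp P \twoheadrightarrow P^{\dagger}$, using $(\Lai \cozp P)_{G_{\Q,S}} \cong P$ via the augmentation of $\Lai$, and noting that $X\otil{\xi}_1(\Lai \cozp P)$ lands in the kernel of the augmentation (each element carrying a factor of $X$), we obtain $(P^{\dagger})_{G_{\Q,S}} \cong P$. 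Under these identifications, the right vertical, induced by the canonical surjection $P^{\dagger} \twoheadrightarrow P$, is manifestly the identity on $P$.

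I will then construct two morphisms of short exact sequences. The first, from \eqref{horiz1} to the Bockstein, has left vertical the canonical quotient $P^{\dagger} \to P$, right vertical the identity on $P$, and middle vertical the quotient $(\Lai \cozp P)/X\otil{\xi} \twoheadrightarrow \Lai/X^2 \cozp P$ arising from $X^2 \otil{\xi}_1(\Lai \cozp P) \subseteq X^2(\Lai \cozp P)$. The second, from the Bockstein to \eqref{horiz2}, has left vertical the identity, middle vertical $[a] \mapsto [\otil{\xi}_1 a]$ (well-defined because $\otil{\xi}_1 \cdot X^2(\Lai \cozp P) = X^2\otil{\xi}_1(\Lai \cozp P)$), and right vertical the map $p \mapsto [\otil{\xi}_1(1 \otimes p)]$, namely multiplication by $\otil{\xi}_1$. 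A direct computation with elements of $\Lai \cozp P \cong P\ps{X}$ verifies commutativity of both diagrams. Naturality of connecting homomorphisms, together with Lemma \ref{connectP} identifying the Bockstein connecting map $\partial_P$ with the identity on $P$, then yields $\mr{can} \circ \delta_1 = \partial_P = \id_P$ and $\partial_P^{\dagger} \circ \otil{\xi}_1 = \partial_P = \id_P$ on $H^2_c(\mc{O},P(1))$, where $\delta_1$ denotes the top horizontal. Since $\mr{can}$ is already the identity, $\delta_1 = \id_P$ as well.

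The remaining task is to confirm that $\otil{\xi}_1$ and $\partial_P^{\dagger}$ are each individually isomorphisms, which will pin down the identification $H^2_c(\mc{O},P^{\dagger}(1)) \cong P$ consistently (the consistency being precisely the already-established identity $\partial_P^{\dagger} \circ \otil{\xi}_1 = \id_P$) and make both arrows the identity on $P$; I expect this to be the main obstacle. My plan is to combine the long exact sequences of \eqref{horiz1} and \eqref{horiz2}: the vanishing $H^1_c(\mc{O},P(1)) = 0$ from Lemma \ref{Pcoh} together with $\delta_1 = \id_P$ makes $H^2_c(\mc{O},P^{\dagger}(1)) \to H^2_c(\mc{O},(\Lai \cozp P)/X\otil{\xi}(1))$ an isomorphism with zero projection to $H^2_c(\mc{O},P(1))$; the resolution $0 \to \Lai \cozp P \xrightarrow{X^2 \otil{\xi}_1} \Lai \cozp P \to (\Lai \cozp P)/X\otil{\xi} \to 0$ together with control of $H^i_{c,\Iw}(\mc{O}_{\infty}, \zp(1)) \cozp P$ will then force the intermediate cohomology to be isomorphic to $P$. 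The statement with $\mc{O}$ replaced by $\Z[\tfrac{1}{p}]$ follows from the last assertion of Lemma \ref{Pcoh}, all constructions being compatible with the natural maps between the two localizations.
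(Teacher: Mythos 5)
Your overall plan is correct and leads to the same conclusion, but the route is more indirect than the paper's, and the step you correctly flag as the main remaining obstacle is exactly the substance of the paper's argument, so the proof is not complete as written.

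For the commutativity and the identification of the top horizontal and right vertical with the identity, your pair of morphisms through the Bockstein sequence works. Note, though, that the paper obtains commutativity at once from a single morphism of short exact sequences carrying \eqref{horiz2} to \eqref{horiz1}, with vertical arrows $\otil{\xi}_1 \colon P \to P^{\dagger}$, the identity on $(\Lai \cozp P)/X\otil{\xi}$, and the canonical quotient $P^{\dagger} \to P$; the two outer verticals are precisely the verticals of the square, so no detour through the Bockstein sequence is needed for commutativity. Your detour buys you $\partial_P^{\dagger}\circ\otil{\xi}_1 = \mathrm{id}$ and $\mathrm{can}\circ\delta_1 = \mathrm{id}$ together via Lemma \ref{connectP}, which is a reasonable alternative but requires two well-definedness checks in place of none.

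The gap is in the last paragraph, and two things need to be supplied. First, the key vanishing $H^2_{c,\Iw}(\mc{O}_{\infty},P(1)) = 0$ should be stated: by Poitou--Tate duality this group is dual to $H^1(G_{\Q_\infty,S},P^{\vee})$, i.e., the tensor product of $P$ with the Galois group of the maximal abelian pro-$p$ $S$-ramified extension of $\Q_\infty$, which is trivial because no prime dividing $N$ is $1\bmod p$; together with the triviality of $\otil{\xi}$ on $H^3_{c,\Iw}(\mc{O}_{\infty},P(1)) \cong P$, this drives the computation you are gesturing at. Second, even granting an abstract isomorphism $H^2_c(\mc{O},P^{\dagger}(1)) \cong P$ of $\La_\theta$-modules, concluding that $\otil{\xi}_1$ and $\partial_P^{\dagger}$ are \emph{individually} isomorphisms (rather than just a split injection and a surjection) needs an extra step, e.g.\ that a surjection onto a finitely generated $\La_\theta$-module from a module abstractly isomorphic to it is an isomorphism. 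The paper avoids both issues by resolving $P^{\dagger}$ directly via $0 \to \Lai\cozp P \xrightarrow{\otil{\xi}} \Lai\cozp P \to P^{\dagger} \to 0$, obtaining the isomorphism $H^2_c(\mc{O},P^{\dagger}(1)) \xrightarrow{\sim} H^3_{c,\Iw}(\mc{O}_\infty,P(1)) \cong P$ as a connecting map, \emph{defining} the identification with $P$ to be that composition, and then factoring the top horizontal through $H^3_{c,\Iw}(\mc{O}_\infty,P(1))$; under these choices both remaining arrows are manifestly the identity with no Noetherian argument required. If you want to complete your version, you should either adopt that choice of identification or spell out the Noetherian step.
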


\begin{proof}
	Commutativity follows from the morphism of exact sequences
	$$
		\SelectTips{cm}{} \xymatrix{
		0 \ar[r] & P \ar[r]^-{\otil{\xi}} \ar[d]^{\tilde{\xi}_1} & (\Lai \cozp P)/X\otil{\xi} \ar[r] \ar@{=}[d] & P^{\dagger} \ar[r] 
		\ar[d] & 0 \\
		0 \ar[r] & P^{\dagger} \ar[r]^-{X} & (\Lai \cozp P)/X\otil{\xi} \ar[r] & P \ar[r] & 0.
		}
	$$
	Via Poitou-Tate duality as in Remark \ref{galcohfacts}b (which in particular tells us that $M \mapsto H^3_c(\mc{O},M)$ 
	is right-exact), we have
	$$
		H^3_c(\mc{O},P^{\dagger}(1)) \cong H^3_{c,\Iw}(\mc{O}_{\infty},P(1))/\otil{\xi}
		\cong P/\otil{\xi}P \cong P,
	$$
	and an exact sequence
	$$
		H^2_{c,\Iw}(\mc{O}_{\infty},P(1)) \xrightarrow{\otil{\xi}} 
		H^2_{c,\Iw}(\mc{O}_{\infty},P(1)) \to H^2_c(\mc{O}, P^{\dagger}(1)) \to H^3_{c,\Iw}(\mc{O}_{\infty},P(1))  \xrightarrow{\otil{\xi}} 
		H^3_{c,\Iw}(\mc{O}_{\infty},P(1)).
	$$
	Note that $H^2_{c,\Iw}(\mc{O}_{\infty},P(1)) \cong H^1(G_{\Q_{\infty},S},P^{\vee})^{\vee}$ 
	is isomorphic to the tensor product with $P$ of the 
	Galois group of the maximal abelian pro-$p$, $S$-ramified extension of $\Q_{\infty}$, which is 
	trivial (since no prime dividing $N$ is $1$ modulo $p$),  
	so the first two terms are zero.  The last map is also zero since multiplication by $\otil{\xi}$ is
	trivial on $P$.  Thus, we have
	$$
		H^2_c(\mc{O},P^{\dagger}(1)) \xrightarrow{\sim} H^3_{c,\Iw}(\mc{O}_{\infty},P(1)) \xrightarrow{\sim} P,
	$$
	We choose the identification of $H^2_c(\mc{O},P^{\dagger}(1))$ with $P$ which makes this the identity map,
	and the right-hand vertical map is identified with the identity map on $P$ via invariant maps.   
	As for the upper map, note that it factors as 
	$$
		H^2_c(\mc{O},P(1)) \to H^3_{c,\Iw}(\mc{O}_{\infty},P(1)) \to H^3_c(\mc{O},P^{\dagger}(1)),
	$$
	where the first map is the connecting homomorphism,
	which is seen to be the
	identity map by using Poitou-Tate duality as in Lemma \ref{connectP}, and the second map is again clearly
	identified with the identity map on $P$.  The same argument works with $\mc{O}$ replaced
	by $\Z[\tfrac{1}{p}]$. 
\end{proof}

\begin{definition}
	We use $\varepsilon_P \colon H^2_c(\mc{O},P(1)) \to H^3_c(\mc{O},P(1))$ to denote the composition of isomorphisms
	in Proposition \ref{Psquare}.
\end{definition}

As a consequence of Lemma \ref{diagmult}, the sequence
$$
	0 \to P^{\dagger} \to T^{\dagger} \to Q^{\dagger} \to 0
$$
induced by \eqref{globmodeis} is exact. We define $\Theta^{\dagger}$ as the connecting map $H^1(\mc{O},Q^{\dagger}(1))
\to H^2_c(\mc{O},P^{\dagger}(1))$ analogous to $\Theta$ (see \eqref{selmerconn}).

\begin{lemma} \label{conndiag}
    The diagram of connecting homomorphisms
    $$
    	\SelectTips{cm}{} \xymatrix{
    		H^1(\mc{O},Q^{\dagger}(1)) \ar[r]^{\partial_Q^{\dagger}} \ar[d]^{\Theta^{\dagger}} & 
    		H^2(\mc{O},Q(1)) \ar[d]^{\Upsilon} \\
    		H^2_c(\mc{O},P^{\dagger}(1)) \ar[r]^{\partial_P^{\dagger}} & H^3_c(\mc{O},P(1))  \\
    	}	
    $$
    is anticommutative.
\end{lemma}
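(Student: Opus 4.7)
The plan is to realize all four of $\partial_Q^{\dagger}$, $\Upsilon$, $\Theta^{\dagger}$, and $\partial_P^{\dagger}$ as edge connecting homomorphisms arising from a single $3 \times 3$ array of short exact sequences of Selmer-type cone complexes, and then to invoke the standard sign lemma for iterated boundary maps.

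First, I would assemble a commutative $3 \times 3$ diagram of $(\La \cozp \La_{\theta})\ps{G_{\Q,S}}$-modules with rows
\begin{align*}
  &0 \to P \to T \to Q \to 0, \\
  &0 \to (\Lai \cozp P)/X\otil{\xi} \to (\Lai \cozp T)/X\otil{\xi} \to (\Lai \cozp Q)/X\otil{\xi} \to 0, \\
  &0 \to P^{\dagger} \to T^{\dagger} \to Q^{\dagger} \to 0,
\end{align*}
whose columns are the sequences \eqref{horiz2} applied to $P$, $T$, and $Q$. All rows and columns are exact, the middle row using Lemma \ref{diagmult}. After Tate twisting by $(1)$, I would form the Selmer-type cone complex $C_f(\mc{O},-)$ of \eqref{selmercplx} for each row. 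Each such cone requires, at every prime $\ell \mid Np$, a choice of local splitting of the rightmost arrow of the corresponding row; by Proposition \ref{exseqredunr} the splitting for the top row is unique, and those for the middle and bottom rows are obtained by applying the functors $(\Lai \cozp -)/X\otil{\xi}$ and $(-)^{\dagger}$ to the top splitting, so uniqueness forces compatibility with all vertical maps in the diagram.

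This produces a $3 \times 3$ array of short exact sequences of complexes, in which each row is of the form
$$ 0 \to C_c(\mc{O},-(1)) \to C_f(\mc{O},-(1)) \to C(\mc{O},-(1)) \to 0 $$
and the columns are exact sequences of complexes induced from the columns of the module diagram. Reading off the edge boundary maps, $\Upsilon$ is identified with the top-row connecting map $H^2(\mc{O},Q(1)) \to H^3_c(\mc{O},P(1))$ by Lemma \ref{connmapUps}, $\Theta^{\dagger}$ is the analogous bottom-row connecting map (by its definition), and $\partial_Q^{\dagger}$, $\partial_P^{\dagger}$ are the connecting maps of the rightmost and leftmost columns, respectively.

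I would conclude by invoking the well-known sign lemma: in any $3 \times 3$ array of short exact sequences of complexes with exact rows and columns, the two iterated boundary maps between opposite-corner cohomology groups differ by $-1$. (This is an avatar of $d^2 = 0$ in the total complex of the associated double complex, or equivalently the anticommutativity rule for Yoneda $\mr{Ext}$ products.) Applied at the $H^1$-level of our array, it yields $\Upsilon \circ \partial_Q^{\dagger} = -\partial_P^{\dagger} \circ \Theta^{\dagger}$, as required. The main technical point, and the main potential obstacle, is the functoriality of the Selmer cone construction---that the three rows admit a compatible system of local splittings at each $\ell \mid Np$---which the uniqueness clause of Proposition \ref{exseqredunr} handles automatically, since the vertical maps are $G_{\Q_\ell}$-equivariant.
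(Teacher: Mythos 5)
Your proposal is correct and is essentially the paper's proof: both assemble the same $3\times 3$ array of short exact sequences of complexes (the paper writes yours transposed, with $C_c(P(1)) \to C_c(P^{\ddagger}(1)) \to C_c(P^{\dagger}(1))$, etc.\ as rows and the Selmer cone sequences as columns) and then invoke the standard anticommutativity of iterated connecting homomorphisms. Your explicit remark about using Proposition \ref{exseqredunr} to get compatible local splittings across the three cone complexes is a sound point that the paper leaves implicit.
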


\begin{proof}
	Let us set $M^{\ddagger} = (\Lai \cozp M)/X\otil{\xi}$ for any compact $\La_{\theta}\ps{G_{\Q,S}}$-module $M$.
	We have a commutative diagram of exact sequences of complexes
	$$
		\SelectTips{cm}{} \xymatrix{
			& 0 \ar[d] & 0 \ar[d] & 0 \ar[d] \\
			0 \ar[r] & C_c(\mc{O},P(1)) \ar[r] \ar[d]  & C_c(\mc{O},P^{\ddagger}(1)) \ar[r] \ar[d]  
			& C_c(\mc{O},P^{\dagger}(1)) \ar[r] \ar[d] & 0\\
			0 \ar[r]& C_f(\mc{O},T(1)) \ar[r] \ar[d] 
			 & C_f(\mc{O},T^{\ddagger}(1)) \ar[r] \ar[d] & C_f(\mc{O},T^{\dagger}(1)) \ar[r] \ar[d] & 0\\
			0 \ar[r] & C(\mc{O},Q(1)) \ar[r] \ar[d] & C(\mc{O},Q^{\ddagger}(1)) \ar[r] \ar[d] 
			& C(\mc{O},Q^{\dagger}(1)) \ar[r] \ar[d] & 0  \\
			& 0 & 0 & 0,
		}
	$$
	where the Selmer complexes of the middle row are defined as in \eqref{selmercplx}. The lemma is then
	just the anticommutativity of connecting homomorphisms for a commutative square of short
	exact sequences of complexes.
\end{proof}

It follows from Theorem \ref{partial}, Proposition \ref{Psquare}, and Lemma \ref{conndiag}
that $\Theta^{\dagger}$ factors as
$$
	H^1(\mc{O},Q^{\dagger}(1)) \to H^1(\mc{O},Q(1)) \xrightarrow{\Phi} H^2_c(\mc{O},P(1))
	\xrightarrow{\sim} H^2_c(\mc{O},P^{\dagger}(1))
$$
for $\Phi = -\varepsilon_P^{-1} \circ \Upsilon \circ \varepsilon_Q$, which we may also view as a map 
$\Phi \colon Y \to P$.

\begin{lemma} \label{connzero}
	The connecting homomorphism $H^1(\mc{O},Q^{\dagger}(1)) \to H^2(\mc{O},P^{\dagger}(1))$
	is zero.
\end{lemma}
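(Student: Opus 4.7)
The plan is to identify the connecting map $\delta$ of the lemma with the composition $\iota \circ \Theta^{\dagger}$, where $\iota \colon H^2_c(\mc{O}, P^{\dagger}(1)) \to H^2(\mc{O}, P^{\dagger}(1))$ is the canonical map from compactly supported to ordinary cohomology. Combined with the factorization of $\Theta^{\dagger}$ just derived after Lemma \ref{conndiag}, this reduces the problem to showing that the analogous canonical map $\iota_P \colon H^2_c(\mc{O}, P(1)) \to H^2(\mc{O}, P(1))$ for the ground-level module $P$ is zero.

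The identification $\delta = \iota \circ \Theta^{\dagger}$ follows from the natural morphism between the short exact sequence of Selmer complexes $0 \to C_c(\mc{O}, P^{\dagger}(1)) \to C_f(\mc{O}, T^{\dagger}(1)) \to C(\mc{O}, Q^{\dagger}(1)) \to 0$ and its ordinary analog $0 \to C(\mc{O}, P^{\dagger}(1)) \to C(\mc{O}, T^{\dagger}(1)) \to C(\mc{O}, Q^{\dagger}(1)) \to 0$, induced by projection from the cone to one of its components. This morphism is the identity on the quotient $C(\mc{O}, Q^{\dagger}(1))$ and induces $\iota$ on $P^{\dagger}$-cohomology. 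Since $\Theta^{\dagger}$ factors through $H^2_c(\mc{O}, P(1)) \xrightarrow{\sim} H^2_c(\mc{O}, P^{\dagger}(1))$ via multiplication by $\tilde{\xi}_1$, naturality of $\iota$ along the coefficient map $\tilde{\xi}_1 \colon P \to P^{\dagger}$ gives a commutative square through which $\delta$ factors via $\iota_P$; vanishing of $\iota_P$ therefore forces $\delta = 0$.

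Finally, $\iota_P = 0$ is established from the Poitou-Tate long exact sequence, which identifies $\ker(\iota_P)$ with the image of $\bigoplus_{\ell \in S} H^1(\Q_{\ell}, P(1)) \to H^2_c(\mc{O}, P(1))$. Because $P$ has trivial $G_{\Q,S}$-action, Kummer theory identifies $H^1(\Q_\ell, P(1)) \cong \Q_\ell^\times \cozp P$, while Lemma \ref{Pcoh} identifies $H^2_c(\mc{O}, P(1)) \cong \Gamma \cozp P$; under these identifications, the map becomes the tensor product of the reciprocity map $\bigoplus_{\ell \in S} \Q_\ell^\times \to \Gamma$ with $\mr{id}_P$. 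Since $\Q_{\infty}/\Q$ is totally ramified at $p$, local class field theory makes $\Q_p^\times \to \Gamma$ alone surjective, so the full map is surjective and $\iota_P = 0$. The main delicacy is tracking sign conventions and the various Poitou-Tate identifications carefully, but the core vanishing is a direct consequence of the surjectivity of the reciprocity map together with the factorization of $\Theta^{\dagger}$ already in hand.
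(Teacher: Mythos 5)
Your proof is correct, but it takes a genuinely different route from the paper's own. The paper argues purely locally: it shows (via the Poitou--Tate sequence and the Iwasawa-cohomology computations from the proof of Proposition~\ref{Psquare}) that the restriction map $H^2(\mc{O},P^{\dagger}(1)) \to \bigoplus_{\ell \mid N} H^2(\Q_{\ell},P^{\dagger}(1))$ is an isomorphism, so that vanishing of the connecting map can be checked after localizing at each $\ell \mid N$; it then verifies directly, by inflation-restriction and Kummer theory together with Hypothesis~\ref{thetahyp}, that $H^1(\Q_{\ell},Q^{\dagger}(1)) = 0$ for $\ell \mid N$, through which the localized connecting map factors.

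You instead identify the connecting map with $\iota \circ \Theta^{\dagger}$ via the morphism of short exact sequences of complexes (projection from the Selmer/compactly-supported cones to the ordinary cochains), then exploit the factorization of $\Theta^{\dagger}$ through $H^2_c(\mc{O},P(1))$ established just before the lemma and the naturality of $\iota$ in the coefficient map $\otil{\xi}_1 \colon P \to P^{\dagger}$ to reduce everything to the single statement $\iota_P = 0$; this last you prove by the Poitou--Tate sequence and surjectivity of the local reciprocity map $\widehat{\Q_p^{\times}} \twoheadrightarrow \Gamma$, which holds since $\Q_{\infty}/\Q$ is totally ramified at $p$. Both arguments ultimately rest on the same class-field-theoretic input, but yours shifts the burden from the intermediate quotient $Q^{\dagger}$ to the ground-level module $P$, where the $G_{\Q,S}$-action is trivial and the relevant vanishing ($\iota_P = 0$) is the exactness of the second sequence in Lemma~\ref{splitseqs}. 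The tradeoff is that your argument leans on the nontrivial factorization of $\Theta^{\dagger}$ (which requires Theorem~\ref{partial}, Proposition~\ref{Psquare}, and Lemma~\ref{conndiag}), whereas the paper's proof of this particular lemma is more self-contained; but your route is more conceptual in that it avoids the case-by-case local analysis at $\ell \mid N$ and makes visible that the obstruction lives in the ordinary cohomology of $P$ itself.
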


\begin{proof}
	Consider the commutative diagram
	\begin{equation} \label{zerodiag}
		\SelectTips{cm}{} \xymatrix{
			H^1(\mc{O},Q^{\dagger}(1)) \ar[r] \ar[d] & H^2(\mc{O},P^{\dagger}(1)) \ar[d] \\
			\bigoplus_{\ell \mid Np} H^1(\Q_{\ell},Q^{\dagger}(1)) \ar[r] \ar[d] & \bigoplus_{\ell \mid Np} 
			H^2(\Q_{\ell},P^{\dagger}(1)) \ar[d] \\ H^2_c(\mc{O},Q^{\dagger}(1)) \ar[r] 
			& H^3_c(\mc{O},P^{\dagger}(1))
		}
	\end{equation}
	with exact columns. Since $G_{\Q,S}$ and $G_{\Q_{\ell}}$ for $\ell \mid Np$ all have $p$-cohomological dimension $2$
	\cite[10.11.3, 7.1.8]{nsw}, and similarly third compactly supported cohomology is right-exact by 
	Remark \ref{galcohfacts}b,
	the right-hand column of \eqref{zerodiag} is isomorphic to the quotient by $\tilde{\xi}$ of the middle
	terms of the short exact sequence
	$$	
		0 \to H^2_{\Iw}(\mc{O}_{\infty},P(1)) \to \bigoplus_{\ell \mid Np} H^2(\Q_{\ell},\Lai \cozp P(1))
		\to H^3_{c,\Iw}(\mc{O}_{\infty},P(1)) \to 0,
	$$
	where the first map is injective since $H^2_{c,\Iw}(\mc{O}_{\infty},P(1)) = 0$ (as noted in the proof of Proposition \ref{Psquare}).
	In the second sum, for $\ell \mid Np$, we have
	$H^2(\Q_{\ell},\Lai \cozp P(1)) \cong \bigoplus_{v \mid \ell} P$, with the sum over primes of $\Q_{\infty}$, by 
	Remark \ref{galcohfacts}a and Tate duality. The third term is isomorphic to 
	$P$ via the invariant map. As these groups are killed by $\tilde{\xi}$, the sequence remains exact upon taking the quotient
	by the action of $\tilde{\xi}$, and the map 
	$$
		H^2(\mc{O},P^{\dagger}(1)) \to \bigoplus_{\ell \mid N} H^2(\Q_{\ell},P^{\dagger}(1))
	$$
	is an isomorphism.  
	By the diagram \eqref{zerodiag}, it therefore suffices to show that 
	$H^1(\Q_{\ell},Q^{\dagger}(1)) = 0$ for all primes $\ell \mid N$.  We verify this claim.
	
	Let $K_{\ell} = \Q_{\ell}(\mu_{Np^{\infty}})$, and set $\tGa_{\ell} = \Gal(K_{\ell}/\Q_{\ell})$,
	\begin{eqnarray*}
		\Delta_{\ell} = \Gal(K_{\ell}/\Q_{\ell,\infty}), &\mr{and}& 
		\Gamma_{\ell} = \Gal(K_{\ell}/\Q_{\ell}(\mu_{Np})).
	\end{eqnarray*}
	Inflation-restriction provides an exact sequence
	$$
		0 \to H^1(\tGa_{\ell},Q^{\dagger}(1))
		\to H^1(\Q_{\ell},Q^{\dagger}(1)) \to 
		H^1(K_{\ell},Q^{\dagger}(1))^{\tGa_{\ell}}.
	$$
 	We have $H^1(K_{\ell},Q^{\dagger}(1)) \cong Q^{\dagger}$
	by Kummer theory and the valuation map (since all roots of unity are infinitely divisible by $p$ in 
	$K_{\ell}^{\times}$). 
	As $\Delta_{\ell}$ acts on $Q^{\dagger}$ through the restriction 
	of $\theta^{-1}$, the $\Delta_{\ell}$-invariants of $Q^{\dagger}$ are trivial by Hypothesis 
	\ref{thetahyp}b.  So, we have $H^1(K_{\ell},Q^{\dagger}(1))^{\tGa_{\ell}} = 0$.
	Moreover, since $\Delta_{\ell}$ has prime-to-$p$
	order, inflation provides an isomorphism
	$$
		H^1(\Ga_{\ell},(Q^{\dagger}(1))^{\Delta_{\ell}}) \xrightarrow{\sim} 
		H^1(\tGa_{\ell},Q^{\dagger}(1)),
	$$
	and again the inertia subgroup of $\Delta_{\ell}$ acts nontrivially on $Q^{\dagger}(1)$ by assumption.
\end{proof}

\begin{lemma}
	The connecting homomorphisms $\partial_P^{\dagger} \colon H^1(\Q_{\ell},P^{\dagger}(1)) \to H^2(\Q_{\ell},\Lai \cozp P(1))$ for 
	$\ell \mid N$ are all isomorphisms.
\end{lemma}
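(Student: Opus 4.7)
The plan is to use the short exact sequence
\begin{equation*}
	0 \to \Lai \cozp P \xrightarrow{\tilde\xi} \Lai \cozp P \to P^\dagger \to 0,
\end{equation*}
which is exact by Lemma \ref{diagmult}, and to extract the claimed isomorphism from its long exact sequence in $G_{\Q_\ell}$-cohomology. Specifically, if I can show that $H^1(\Q_\ell, \Lai \cozp P(1)) = 0$ and that multiplication by $\tilde\xi$ annihilates $H^2(\Q_\ell, \Lai \cozp P(1))$, then the relevant piece of the long exact sequence collapses to
\begin{equation*}
	0 \to H^1(\Q_\ell, P^\dagger(1)) \xrightarrow{\partial_P^\dagger} H^2(\Q_\ell, \Lai \cozp P(1)) \xrightarrow{0} H^2(\Q_\ell, \Lai \cozp P(1)),
\end{equation*}
forcing $\partial_P^\dagger$ to be an isomorphism.

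For the cohomological computations, I will apply Shapiro's lemma (Remark \ref{galcohfacts}a) to identify $H^i(\Q_\ell, \Lai \cozp P(1))$ with the Iwasawa cohomology $H^i_{\Iw}(\Q_{\ell,\infty}, P(1))$. Notation \ref{pNcond} forces $\ell \not\equiv 1 \pmod p$, so $\ell$ is inert in $\Q_\infty/\Q$ and $\Q_{\ell,\infty}/\Q_\ell$ is the unramified $\zp$-extension; moreover $\Q_\ell(\mu_p)$ and $\Q_{\ell,\infty}$ are linearly disjoint over $\Q_\ell$ (the former of degree dividing $p-1$, the latter pro-$p$), so $\Q_{\ell,\infty}$ contains no nontrivial $p$-power roots of unity. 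Kummer theory together with the absence of $p$-primary torsion in the local unit groups then gives $H^1(\Q_{\ell,r}, \zp(1)) \cong \zp$ via the valuation map, and corestriction corresponds to the norm, which in an unramified extension of degree $p$ multiplies the valuation by $p$. The inverse limit under multiplication by $p$ is zero, giving $H^1(\Q_\ell, \Lai \cozp P(1)) = 0$. On the other hand, corestriction preserves local Brauer invariants, so $H^2(\Q_\ell, \Lai \cozp P(1)) \cong H^2_{\Iw}(\Q_{\ell,\infty}, P(1)) \cong P$ via the invariant map.

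Finally, $\tilde\xi$ acts on $\Lai \cozp P$ as $X\tilde\xi_1$, since $\xi$ annihilates $P$ and $\tilde\xi = 1 \otimes \xi + X\tilde\xi_1$. The $\Gamma$-action on $H^2_{\Iw}(\Q_{\ell,\infty}, \zp(1)) \cong \zp$ is trivial, because Galois automorphisms of the tower fix local Brauer invariants, so $X = \gamma - 1$ acts as zero on $H^2(\Q_\ell, \Lai \cozp P(1))$, and hence so does $\tilde\xi$, completing the argument. The main delicacy will be confirming that the connecting homomorphism produced here agrees with the $\partial_P^\dagger$ of the paper, which was defined using \eqref{horiz2} rather than the sequence above; I expect this to follow from a natural morphism of short exact sequences but it warrants careful tracking of the identifications of the $H^2$-targets.
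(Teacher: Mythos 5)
Your argument takes essentially the same route as the paper: both pass to the long exact $G_{\Q_\ell}$-cohomology sequence of $0 \to \Lai\cozp P \xrightarrow{\tilde\xi} \Lai\cozp P \to P^\dagger \to 0$, observe that $H^1(\Q_\ell,\Lai\cozp P(1))$ vanishes because $\ell$ is unramified in $\Q_\infty$, and then that $\tilde\xi$ annihilates $H^2(\Q_\ell,\Lai\cozp P(1)) \cong \bigoplus_{v\mid\ell} P$ since $X$ acts trivially on the invariant and $\xi$ kills $P$. Two small remarks: the worry in your final paragraph about reconciling this connecting map with the one from \eqref{horiz2} is unnecessary, since the codomain $H^2(\Q_\ell,\Lai\cozp P(1))$ in the statement already signals that the overloaded sense of $\partial_P^\dagger$ (for the sequence you chose, as spelled out in the proof of Theorem \ref{loccohdag}) is intended; and $\ell \not\equiv 1 \pmod p$ does not in fact force $\ell$ to be inert in $\Q_\infty/\Q$ (that would need a Fermat-quotient nonvanishing mod $p^2$), only that $\ell$ has infinite residue degree, so strictly speaking one should argue with the finite direct sum of copies of $P$ over the primes $v\mid\ell$ — which changes nothing.
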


\begin{proof}
	We have an exact sequence
	$$
		H^1(\Q_{\ell},\Lai \cozp P(1)) \to H^1(\Q_{\ell},P^{\dagger}(1))
		\xrightarrow{\partial_P^{\dagger}} H^2(\Q_{\ell},\Lai \cozp P(1)) \xrightarrow{\tilde{\xi}} H^2(\Q_{\ell},\Lai \cozp P(1)),
	$$
	and $H^1(\Q_{\ell},\Lai \cozp P(1)) = 0$ since $\ell$ is unramified in $\Q_{\infty}$, 
	while $H^2(\Q_{\ell},\Lai \cozp P(1)) \cong \bigoplus_{v \mid \ell} P$ via the invariant map.  Since $\tilde{\xi}$
	acts as $\xi$ on $P$, and $\xi$ kills $P$, we have the result.
\end{proof}

\begin{proposition} \label{snakesquare}
	The square 
	$$
		\SelectTips{cm}{} \xymatrix{
			H^1(\mc{O},T^{\dagger}(1)) \ar[r] \ar[d] & H^1(\mc{O},Q^{\dagger}(1)) \ar[d]^{-\Theta^{\dagger}} \\
			\bigoplus_{\ell \mid Np} H^1(\Q_{\ell},P^{\dagger}(1)) \ar[r] & H^2_c(\mc{O},P^{\dagger}(1)) 
		}
	$$
	is commutative.
\end{proposition}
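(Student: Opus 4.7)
The plan is to establish commutativity by a direct cochain-level computation, in the spirit of Lemma \ref{loccompconn}. The key observation is that $\Theta^{\dagger}$ arises by definition as a connecting homomorphism of the short exact sequence of complexes
$$
0 \to C_c(\mc{O}, P^{\dagger}(1)) \to C_f(\mc{O}, T^{\dagger}(1)) \to C(\mc{O}, Q^{\dagger}(1)) \to 0,
$$
where the Selmer complex is defined in degree $i$ by $C_f^i = \bigoplus_{\ell \mid Np} C^{i-1}(\Q_{\ell}, P^{\dagger}(1)) \oplus C^i(\mc{O}, T^{\dagger}(1))$ with differential $(a, b) \mapsto (-da - \mr{res}_P(b), db)$, using the local splittings $T^{\dagger} \to P^{\dagger}$ coming from Proposition \ref{exseqredunr} (applied after the intermediate quotient construction, which preserves the splittings). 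Meanwhile, the bottom horizontal arrow is the Poitou-Tate connecting homomorphism arising from the evident short exact sequence
$$
0 \to \bigoplus_{\ell \mid Np} C(\Q_{\ell}, P^{\dagger}(1))[-1] \to C_c(\mc{O}, P^{\dagger}(1)) \to C(\mc{O}, P^{\dagger}(1)) \to 0,
$$
in which the first map is the obvious inclusion $x \mapsto (x, 0)$.

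I would then trace a class $x = [\tilde{x}] \in H^1(\mc{O}, T^{\dagger}(1))$ around both paths. For the top-right route, the image in $H^1(\mc{O}, Q^{\dagger}(1))$ is represented by $\pi(\tilde{x})$, where $\pi \colon T^{\dagger} \to Q^{\dagger}$; lifting to $(0, \tilde{x}) \in C_f^1$ and applying the Selmer differential, one obtains $(-\mr{res}_P(\tilde{x}), d\tilde{x}) = (-\mr{res}_P(\tilde{x}), 0)$ since $\tilde{x}$ is a cocycle. This pair lies in $C_c^2$ and represents $\Theta^{\dagger}([\pi(\tilde{x})])$. For the left-bottom route, the left vertical map produces the classes of the local cocycles $\mr{res}_P(\tilde{x}|_{\Q_{\ell}}) \in C^1(\Q_{\ell}, P^{\dagger}(1))$, and the Poitou-Tate connecting map embeds these simply as $(\mr{res}_P(\tilde{x}), 0) \in C_c^2$ via the displayed SES above.

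Comparing, the two representatives in $C_c^2$ differ by an overall sign, which is exactly what is needed to convert the naive anticommutativity into commutativity of the square with $-\Theta^{\dagger}$ in place of $\Theta^{\dagger}$. The only delicate point in the argument is the sign bookkeeping within the cone constructions for $C_f$ and $C_c$, and these conventions are the same as those used in the proof of Lemma \ref{loccompconn}; no further structural properties of $P^{\dagger}$, $Q^{\dagger}$, or $T^{\dagger}$ are required, so the computation is entirely formal.
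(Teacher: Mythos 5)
Your proof is correct, and it takes a genuinely different route from the paper's. The paper first invokes Lemma \ref{connzero} to conclude that the top horizontal map $H^1(\mc{O},T^{\dagger}(1)) \to H^1(\mc{O},Q^{\dagger}(1))$ is surjective, then exhibits an exact commutative diagram whose rows and columns are pieces of the relevant long exact sequences, and finally identifies the snake-lemma map from that diagram with $-\Theta^{\dagger}$ ``by a standard lemma.'' Your argument instead unpacks $\Theta^{\dagger}$ at the cochain level as the connecting homomorphism of the short exact sequence $0 \to C_c(\mc{O},P^{\dagger}(1)) \to C_f(\mc{O},T^{\dagger}(1)) \to C(\mc{O},Q^{\dagger}(1)) \to 0$, computes both compositions explicitly on a representative cocycle $\tilde{x} \in C^1(\mc{O},T^{\dagger}(1))$, and observes that they differ by exactly the sign that $-\Theta^{\dagger}$ absorbs. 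The trade-off: your approach is more hands-on with cone sign conventions, but it is more elementary and entirely formal; in particular it never uses Lemma \ref{connzero}, whereas the paper's snake-lemma diagram needs that surjectivity to make the top row exact ending in $0$. Your parenthetical remark that the intermediate quotient construction preserves the local splittings of Proposition \ref{exseqredunr} is the right observation for defining the Selmer complex $C_f(\mc{O},T^{\dagger}(1))$, and the identification of the bottom arrow of the square at the cochain level as $a \mapsto [(a,0)]$ is consistent with the conventions used in the proof of Lemma \ref{loccompconn}.
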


\begin{proof}
	Applying Lemma \ref{connzero}, we have a diagram 
	$$
		\SelectTips{cm}{} \xymatrix{
			&& H^1(\mc{O},Q^{\dagger}(1)) \ar@{=}[d] \\
			 H^1(\mc{O},P^{\dagger}(1)) \ar[r] \ar[d] & H^1(\mc{O},T^{\dagger}(1)) \ar[r] \ar[d] & 
			H^1(\mc{O},Q^{\dagger}(1)) \ar[r] & 0 \\
			 \bigoplus_{\ell \mid Np} H^1(\Q_{\ell},P^{\dagger}(1)) \ar@{=}[r] \ar[d] & 
			 \bigoplus_{\ell \mid Np} H^1(\Q_{\ell},P^{\dagger}(1)) \\
			 H^2_c(\mc{O},P^{\dagger}(1))
		}
	$$
	with exact rows and columns.
	The snake lemma map from the diagram
	is then the negative of the connecting homomorphism $\Theta^{\dagger}$ by a standard lemma.
\end{proof}

We now have that all squares in the diagram
$$
	\SelectTips{cm}{} \xymatrix@C=15pt{
		H^1(\mc{O},T^{\dagger}(1)) \ar[r] \ar[d] & H^1(\mc{O},Q^{\dagger}(1)) \ar[d]^{-\Theta^{\dagger}} \ar[r] 
		& H^1(\mc{O},Q(1)) \ar[d]^{-\Phi} \ar[r]^-{\varepsilon_Q} & H^2(\mc{O},Q(1)) \ar[d]^{\Upsilon}  \\
		\bigoplus_{\ell \mid Np} H^1(\Q_{\ell},P^{\dagger}(1)) \ar[r] & H^2_c(\mc{O},P^{\dagger}(1)) 
		& H^2_c(\mc{O},P(1)) \ar[l]_-{\sim} \ar[r]^-{\varepsilon_P} & H^3_c(\mc{O},P(1))
	}
$$
are commutative.  

\section{Local study} \label{local_study}

In this section, we let $\mf{R}$ denote a complete Noetherian semi-local $\zp$-algebra. We let $A$ denote a compact, 
unramified $\mf{R}\ps{G_{\qp}}$-module. Exactly when discussing this general setting, we shall allow $p$ to be any odd prime.

In its simplest form, a Coleman map assigns power series 
to norm compatible systems of $p$-units in the cyclotomic $\zp$-extension of $\qp$; this is the case of the $\zp\ps{G_{\qp}}$-module $A = \zp$. 
In general, the Coleman map attached to $A$ is a map
$$
	\Cole_A \colon H^1_{\Iw}(\Q_{p,\infty},A(1)) \to X^{-1}\La \cozp D(A),
$$
closely related to a dual exponential map (see \cite[4.2.10]{fk-pf}).

We construct a Coleman-type map for the intermediate quotient
$$
	A^{\dagger} = (\Lai \cozp A)/X\alpha(\Lai \cozp A)
$$ 
with respect to an element $\alpha \in \La \cozp \mf{R}$. We call this map
$$
	\Cole_A^{\dagger}\colon H^1(\qp,A^{\dagger}(1)) \to \mf{C}^{\star}(A)
$$ 
an intermediate Coleman map, and its value group is a certain $\La \cozp \mf{R}$-module $\mf{C}^{\star}(A)$ annihilated by $X\alpha$.
We show that $(1-\varphi^{-1})\Cole_A$ modulo $X\alpha$ agrees with the composition of $\Cole_A^{\dagger}$ with the map from Iwasawa cohomology, and the quotient of $\mf{C}^{\star}(A)$ by the image of this composition is isomorphic to the maximal unramified quotient of $A$.

For $A = \mcT_{\quo}$, we also show that there exists an intermediate local zeta map 
$$
	z^{\dagger}_{\quo} \colon \La \cozp \mcS_{\theta} \to H^1(\qp,\mcT_{\quo}^{\dagger}(1))
$$ 
such that $(1-U_p)z_{\quo}^{\dagger}$ agrees with a local zeta map $z_{\quo}$ to
Iwasawa cohomology (after mapping to intermediate cohomology). While $\Cole \circ z_{\quo}$ is multiplication by an element
$\alpha \in \La \cozp \mf{h}_{\theta}$ with $\alpha(0) \equiv \xi' \bmod I$, the composition $\overline{\Cole}^{\dagger} \circ z_{\quo}^{\dagger}$ induces a map $P \to P$ that is just the identity: see Proposition \ref{zquodagger}. This occurrence of $1$ serves as the replacement for $\xi'$ on the right-hand side of \eqref{keyident}.

\subsection{Coleman maps} \label{coleman}

In this subsection, we review the theory of Coleman maps, largely following \cite[\S 4]{fk-pf}.

Let $\mc{U}_{\infty}^{\ur}$ (resp., $\mc{K}_{\infty}^{\ur}$) denote the $p$-completion of the group of norm compatible sequences of units (resp., of nonzero elements) in the tower given by the cyclotomic $\zp$-extension $\Q_{p,\infty}^{\ur}$ of $\Q_p^{\ur}$. Recall
that $W$ denotes the completion of the valuation ring of $\Q_p^{\ur}$. 
Attached to a sequence of norm compatible elements $(u_r)_r$ in $\mc{U}_{\infty}^{\ur}$, there is a unique power series 
$f(y) \in W\ps{y}$ with $f(\zeta_{p^r}-1) = \Fr_p^r(u_r)$ for all $r$, known as the Coleman power series of $(u_r)_r$ (see 
\cite[Theorem A]{coleman}).

\begin{definition}
	The \emph{Coleman map} $\Cole \colon \mc{K}_{\infty}^{\ur} \to X^{-1}W\ps{X}$ is 
	the unique map of $\La$-modules restricting to a map $\mc{U}_{\infty}^{\ur} \to W\ps{X} 
	= W\ps{1+p\zp}$
	defined on $(u_r)_{r \ge 1} \in \mc{U}_{\infty}^{\ur}$ with $u_r \in \Q_{p,r}^{\ur}$ by
	$$
		[\Cole((u_r)_{r \ge 1})](x) = \left(1-\frac{\psi}{p}\right)\log(f(x-1)).
	$$
	Here, $W\ps{X}$ acts continuously and $W$-linearly on $W\ps{x}$ with the result of $h \in W\ps{X}$
	acting on $x$ denoted by $[h](x)$, via the action determined by $[a](x) = x^a \in W\ps{x}$ for $a \in 1+p\zp$.
	Also, $f(x-1) \in W\ps{x-1}$ is the Coleman power series of $(u_r)_r$, and 
	$\psi$ is defined on $g(x) \in W[p^{-1}]\ps{x}$ by $\psi(g)(x) = \Fr_p(g)(x^p)$.
\end{definition}

We can extend this definition as follows.

\begin{definition}
	The Coleman map for $A$ is the map
	$$
		\Cole_A \colon H^1_{\Iw}(\Q_{p,\infty},A(1)) \to X^{-1}D(A)\ps{X}
	$$
	of $\La \cozp \mf{R}$-modules defined as the composition
	$$
		H^1_{\Iw}(\Q_{p,\infty},A(1)) \xrightarrow{\Inf}
		(A \cozp \mc{K}_{\infty}^{\ur})^{\Fr_p=1} \xrightarrow{1 \otimes \Cole}
		(A \cozp X^{-1}W\ps{X})^{\Fr_p=1} \xrightarrow{\sim} X^{-1}D(A)\ps{X},
	$$
	where $\Fr_p$ acts diagonally on the tensor products.
\end{definition}

The following is a slight extension, allowing $A^{\Fr_p=1}$ to be nonzero,
of the restriction of \cite[4.2.7]{fk-pf} to invariants for $\Delta \cong \Gal(\qp(\mu_{p^{\infty}})/\Q_{p,\infty})$. Note that $\Cole_A$ agrees with the map denoted $\Cole$ in \cite{fk-pf} on the fixed part under $\Gal(\qp(\mu_p)/\qp)$.

\begin{lemma} \label{Colimage}
	The map $\Cole_A$ is injective with image in $X^{-1}D(A)\ps{X}$ equal to 
	$$
		\mfC(A) = X^{-1}A^{\Fr_p=1} + D(A)\ps{X}.
	$$ 
\end{lemma}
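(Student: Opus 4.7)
The plan is to reduce to \cite[4.2.7]{fk-pf}, which proves the analogous assertion under the additional hypothesis $A^{\Fr_p=1} = 0$; under that hypothesis $\mfC(A) = D(A)\ps{X}$ has no polar term, so the task here is to account for the $X^{-1}$-contribution once $A^{\Fr_p=1}$ is allowed to be nonzero. I would split the image of the classical Coleman map into a polar part and a regular part, tensor each with $A$ separately, and then recover injectivity from the known case.

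First, I would record the image of the classical Coleman map $\Cole \colon \mc{K}_{\infty}^{\ur} \to X^{-1}W\ps{X}$, which is $X^{-1}\zp + W\ps{X}$: the $X^{-1}$-coefficient of $\Cole((u_r)_r)$ equals, up to a universal normalization, the common $p$-adic valuation of the norm-compatible system $(u_r)_r$, hence lies in $\zp$; the regular part is $(1-\psi/p)\log$ of a Coleman power series that may be renormalized to have a unit leading coefficient, and hence lies in $W\ps{X}$ after the change of variables $y = x-1 \leadsto X = \gamma - 1$. Tensoring with $A$ and taking $\Fr_p=1$ invariants splits coefficient-wise, since $\Fr_p$ acts trivially on $\zp$ and on $X$, giving
$$
  \bigl(A \cozp (X^{-1}\zp + W\ps{X})\bigr)^{\Fr_p=1} = X^{-1}A^{\Fr_p=1} + D(A)\ps{X} = \mfC(A).
$$
Since the definition of $\Cole_A$ factors its computation through $(A \cozp \mc{K}_{\infty}^{\ur})^{\Fr_p=1}$, this identifies the image.

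For injectivity, the case $A^{\Fr_p=1} = 0$ is \cite[4.2.7]{fk-pf}, where $\Cole_A$ lands in $D(A)\ps{X}$ and is injective there. To extend to the general case, I would compare $\Cole_A$ with the Coleman map of a suitable $\Fr_p$-invariant-free subquotient $A'$ of $A$: an element $c \in H^1_{\Iw}(\Q_{p,\infty},A(1))$ with $\Cole_A(c) = 0$ has regular image zero in $D(A)\ps{X}$ and polar image zero in $X^{-1}A^{\Fr_p=1}$, and the known regular-part injectivity for $A'$, together with the identification of the polar coefficient as a valuation-type invariant with injective behavior on its relevant Tate-twisted domain, forces $c = 0$. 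The main obstacle here will be verifying that the $X^{-1}$-coefficient of $\Cole_A$ is a genuine invariant of the Iwasawa cohomology class (rather than merely of a lift to $A \cozp \mc{K}_{\infty}^{\ur}$), so that the reduction to the known case is clean; this amounts to confirming that the leading term of the $(1-\psi/p)\log$ operator matches the valuation map after tensoring with $A$.
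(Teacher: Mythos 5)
Your image computation and your injectivity argument both have gaps that the paper addresses in one stroke, at the very start of its proof. The definition of $\Cole_A$ precomposes $1 \otimes \Cole$ with the inflation map $\Inf \colon H^1_{\Iw}(\Q_{p,\infty},A(1)) \to (A \cozp \mc{K}_{\infty}^{\ur})^{\Fr_p=1}$, and you never address whether $\Inf$ is bijective. Without its surjectivity, your argument only shows that the image of $\Cole_A$ is \emph{contained} in $\mfC(A)$; without its injectivity, $\Cole_A$ need not be injective even if $1 \otimes \Cole$ is. The paper settles both at once by observing that $A(1)$ has no $G_{\Q_{p,\infty}^{\ur}}$-fixed part and $\Gal(\Q_{p,\infty}^{\ur}/\Q_{p,\infty}) \cong \hat{\Z}$ has cohomological dimension $1$, so $\Inf$ is an isomorphism. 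Injectivity of $\Cole_A$ is then immediate from the classical injectivity of $\Cole$; no reduction to a ``$\Fr_p$-invariant-free subquotient $A'$'' is needed, and indeed no such reduction is generally available (for a unipotent $\Fr_p$-action, $A/A^{\Fr_p=1}$ can again have nonzero invariants).

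Two further points. Your stated concern, that the $X^{-1}$-coefficient of $\Cole_A$ might not descend to the Iwasawa cohomology class, is misdirected: $\Cole_A$ is defined with domain $H^1_{\Iw}(\Q_{p,\infty},A(1))$, so every coefficient is automatically an invariant of the class; the substantive issue is precisely the bijectivity of $\Inf$. And to see that the polar coefficient realizes all of $A^{\Fr_p=1}$ (not merely some submodule), one needs a Frobenius-fixed norm-compatible system with unit valuation; the paper exhibits this via the $\Delta$-invariant projection of $(1-\zeta_{p^n})_n$. Finally, the invariant you describe is not ``the common $p$-adic valuation'' of the system $(u_r)_r$, whose terms have varying valuation, but the valuation of the norm to $\Q_p^{\ur}$, which is what enters the paper's exact sequence $0 \to A \cozp W\ps{X} \to A \cozp \mc{K}_{\infty}^{\ur} \to A \to 0$.
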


\begin{proof}
	Since $A(1)$ has no $G_{\Q_{p,\infty}^{\ur}}$-fixed part and $\Gal(\Q_{p,\infty}^{\ur}/\Q_{p,\infty}) \cong \hat{\Z}$ has
	cohomological dimension $1$, the inflation map $\Inf$ in the definition of $\Cole_A$ is an isomorphism. It is well known that
	the Coleman map $\Cole$ is injective and, as follows for instance from the proof of \cite[4.2.7]{fk-pf}, it restricts to
	an isomorphism $\mc{U}_{\infty}^{\ur} \xrightarrow{\sim} W\ps{X}$. In particular, $\Cole_A$ is injective.
	
	It follows that we have an exact sequence
	$$
		0 \to A \cozp W\ps{X} \to A \cozp \mc{K}_{\infty}^{\ur} \to A \to 0
	$$
	with the first map being the inverse of $1 \otimes \Cole$ and the second determined by the valuation map on the norm to $\Q_p^{\ur}$
	of an element of $\mc{K}_{\infty}^{\ur}$.
	The kernel of $1-\Fr_p$ applied to this sequence gives
	$$
		0 \to D(A)\ps{X} \to H^1_{\Iw}(\Q_{p,\infty},A(1)) \to A^{\Fr_p=1} \to 0,
	$$
	the surjectivity since $\mc{K}_{\infty}^{\ur}$ contains the Frobenius fixed sequence that is the projection of 
	$(1-\zeta_{p^n})_n$ to the $\Delta$-invariant group. By the injectivity
	of $\Cole_A$, this forces the induced map $A^{\Fr_p=1} \to X^{-1}D(A)\ps{X}/D(A)\ps{X}$ to have image $X^{-1}A^{\Fr_p=1}$.
	Since the image of $\Cole_A$ contains $D(A)\ps{X}$, it must then equal $\mf{C}(A)$.
\end{proof}

\begin{remark}
	 If $A^{\Fr_p = 1} = 0$, then $\Cole_A$ is an isomorphism $H^1_{\Iw}(\Q_{p,\infty},A(1)) \to D(A)\ps{X}$.
	This occurs, for instance, for $A = \mcT_{\quo}$ by \cite[3.3.3]{fk-pf}.
\end{remark}

In addition to $\Cole_A$, we also have a homomorphism at the level of $\qp$, following \cite[4.2.2]{fk-pf}.

\begin{definition}
	We let
	$$
		\Cole_A^{\flat} \colon H^1(\qp,A(1)) \to D(A)
	$$
	denote the map of $\mf{R}$-modules given by the composition
	$$
		H^1(\qp,A(1)) \to 
		(A \cozp (1+pW))^{\Fr_p=1} \xrightarrow{\left(1-\tfrac{\varphi}{p}\right)\log}
		(A \cozp W)^{\Fr_p=1} \xrightarrow{\sim} D(A),
	$$
	where $\varphi = 1 \otimes \Fr_p$ is as in Definition \ref{Disos}, and the first map is induced by restriction to $\Q_p^{\ur}$ 
	and the map
	$$	
		H^1(\qp^{\ur},\zp(1)) \xrightarrow{\sim} p^{\zp} \times (1+pW) \to 1+pW
	$$
	given by projection to the second summand.
\end{definition}

\begin{remark} \label{splitColflat}
	The map $\Cole^{\flat}_A$ is in general only split surjective with kernel $A^{\Fr_p=1}$.
	It has a canonical splitting given by the valuation map
	$$
		H^1(\qp,A(1)) \xrightarrow{\Inf} H^1(\qp^{\ur},A(1))^{\Fr_p=1} \xrightarrow{v_p} A^{\Fr_p = 1},
	$$
	as the valuation map $v_p$ has kernel $(A \cozp (1+pW))^{\Fr_p=1}$.
\end{remark}

Recall that 
\begin{equation} \label{H2}
	H^2(\qp,A(1)) \cong A/(\Fr_p-1)A \cong D(A)/(\varphi-1)D(A)
\end{equation}
via Tate duality, i.e., the invariant map of local class field theory.  The following is \cite[4.2.4]{fk-pf}.

\begin{lemma} \label{Colconn}
	The composition of $\Cole_A^{\flat}$ with the quotient map $D(A) \to D(A)/(\varphi-1)D(A)$ is identified through \eqref{H2} 
	with the connecting homomorphism
	$$
		\partial_A \colon H^1(\qp,A(1)) \to H^2(\qp,A(1))
	$$
	of Definition \ref{bockstein}.
\end{lemma}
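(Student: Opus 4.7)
The strategy is to verify the equality of $\overline{\Cole_A^{\flat}}$ and $\partial_A$ by tracking both maps through the Hochschild-Serre spectral sequence for $\qp^{\ur}/\qp$ and Kummer theory, then reducing to the $A=\zp$ case where the identification is classical explicit reciprocity.

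First I would set up the target identification. Since $G_{\qp^{\ur}}$ has $p$-cohomological dimension one and $A(1)^{G_{\qp^{\ur}}}=0$ (as in the proof of Lemma~\ref{Colimage}), Hochschild-Serre collapses to yield
$$
H^1(\qp,A(1))\xrightarrow{\sim}H^1(\qp^{\ur},A(1))^{\Fr_p=1},\qquad H^2(\qp,A(1))\xrightarrow{\sim}H^1(\qp^{\ur},A(1))/(\Fr_p-1),
$$
the second being the invariant map underlying \eqref{H2}. Kummer theory identifies $H^1(\qp^{\ur},A(1))\cong A\cozp \widehat{(\qp^{\ur})^{\times}}$, which via the splitting $\widehat{(\qp^{\ur})^{\times}}\cong p^{\zp}\oplus(1+pW)$ decomposes into a valuation summand $A$ and a principal-unit summand $A\cozp(1+pW)$. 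By its definition, $\Cole^{\flat}_A$ is precisely the composition of restriction to $\qp^{\ur}$, projection to the principal-unit summand, and $(1-\varphi/p)\log$ into $D(A)$.

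Next I would reinterpret $\partial_A$ analogously. By Remark~\ref{connmapsign}, $\partial_A$ is left cup product with $-\chi\in H^1(\qp,\zp)$. Since $\chi$ factors through the totally ramified quotient $\Gamma$, a suitable lift of arithmetic Frobenius satisfies $\chi(\Fr_p)=0$, so $-\chi$ sits in the inflation image from $H^1(\qp^{\ur},\zp)^{\Fr_p=1}$. Functoriality of cup product in Hochschild-Serre then presents $\partial_A(c)$ as the image in the $E^{1,1}_{\infty}=H^1(\qp^{\ur},A(1))/(\Fr_p-1)$ layer of the pairing of $c|_{\qp^{\ur}}$ with $-\chi|_{\qp^{\ur}}$, computed by the usual transgression (since the cup product vanishes in $H^2(\qp^{\ur},A(1))=0$).

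Finally, the matching formula is verified for $A=\zp$: for $u\in1+p\zp$, $\Cole^{\flat}_{\zp}(u)=(1-p^{-1})\log u$, while $\partial_{\zp}(u)=-\chi(\mathrm{rec}_{\qp}(u))=(1-p^{-1})\log u$ under the normalization $\chi_{\mathrm{cyc}}(\mathrm{rec}_{\qp}(u))=u^{-1}$ implicit in our sign conventions. The general case follows by base change to $W$: both maps are continuous $\mf{R}$-linear and compatible with the diagonal Frobenius action that defines $D$ (Remark~\ref{Disos}), and the identification $A\cozp W\cong D(A)\cozp W$ upgrades the $\zp$-calculation to arbitrary unramified $A$ by extracting $\varphi$-structure. \textbf{The main obstacle} is tracking the factor $(1-\varphi/p)$ and the sign of $-\chi$ through Hochschild-Serre and Kummer, ensuring that the dual-exponential normalization in $\Cole^{\flat}_A$ matches the Bockstein derivative from the extension class~\eqref{extnclass}; this sign- and normalization-bookkeeping is the only real content once the structural reduction is in place, and amounts to the classical explicit reciprocity of Iwasawa in this setting.
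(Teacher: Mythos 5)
Your core argument matches the paper's: $\partial_A$ is left cup product with $-\chi$ (Remark \ref{connmapsign}), and for $u \in 1+p\zp$ one computes $-\chi(\rho(u)) = (1-p^{-1})\log u$ and $-\chi(\rho(p)) = 0$ via local reciprocity, which directly gives $\partial_{\zp} = \Cole^{\flat}_{\zp}$. Where you diverge is the reduction to $A = \zp$. The paper simply observes that one may replace $A$ by $\bar{A} = A/(\Fr_p-1)A$: both the composition through the Coleman map and the connecting map $\partial_A$ are functorial in $A$, the invariant map $H^2(\qp,A(1)) \to H^2(\qp,\bar{A}(1)) \cong \bar{A}$ is an isomorphism, and $\bar{A}$ has trivial Galois action so $D(\bar A) = \bar A$; after this one reduces to $A = \zp$ by continuous $\mf{R}$-linearity. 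Your route, Hochschild--Serre followed by base change to $W$ and ``extracting $\varphi$-structure,'' can in principle be made to work, but it is noticeably heavier and the last step is left vague: you would still have to descend a $W$-linear identity back to an $\mf{R}$-linear one compatible with the diagonal Frobenius that defines $D(A)$, and that descent is essentially a less direct version of the paper's quotient-by-$(\Fr_p-1)$ move. Moreover, the Hochschild--Serre scaffolding is largely unused once you compute for $A = \zp$ directly, and the assertion that the cup product on $H^2$ is read off ``by the usual transgression'' on the $E^{1,1}$ layer elides the fact that once the product of restrictions vanishes in $E^{0,2}$, the image in $E^{1,1}$ is a secondary operation requiring separate identification. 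None of this is a fatal gap --- the computation you perform is the right one --- but the paper's reduction avoids all of that bookkeeping.
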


\begin{proof}
	By replacing $A$ by $A/(\Fr_p-1)A$, we may suppose that $A$ has trivial Galois action, and it then suffices to consider
	$A = \zp$. The connecting homomorphism $\partial_A$ is given by left cup product with $-\chi$ by Remark \ref{connmapsign}. 
	Note that for $a \in \qp^{\times}$,
	we have $\chi \cup a = \chi(\rho(a))$, where $\rho \colon \qp^{\times} \to G_{\qp}^{\mr{ab}}$ is the local reciprocity map
	(cf. \cite[Chapter XIV, Propositions 1.3 and 2.5]{serre}). 
	But 
	$$
		\rho(u)(\zeta_{p^n}) = \zeta_{p^n}^{u^{-1}}
	$$ 
	for $u \in 1+p\zp$ and $\rho(p)(\zeta_{p^n}) = \zeta_{p^n}$. Then
	$-\chi(\rho(p)) = 0$ and $-\chi(\rho(u)) = (1-p^{-1})\log(u)$ for $u \in 1+p\zp$. Thus, $\partial_{\zp} = \Cole_{\zp}^{\flat}$.
\end{proof}

The relationship between $\Cole$ and $\Cole^{\flat}$ is given by the following \cite[4.2.9]{fk-pf}.

\begin{proposition} \label{Col_vs_flat}
	Let $\ev_0 \colon D(A)\ps{X} \to D(A)$ denote evaluation at $0$, and let 
	$\cor$ be the corestriction map $H^1_{\Iw}(\Q_{p,\infty},A(1)) \to H^1(\qp,A(1))$.  Then we have
	$$
		\ev_0 \circ (1-\varphi^{-1})\Cole_A = \Cole_A^{\flat} \circ \cor
	$$
	as maps $H^1_{\Iw}(\Q_{p,\infty},A(1)) \to D(A)$.
\end{proposition}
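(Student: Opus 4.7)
The plan is to prove the identity by unwinding both sides using the definitions of $\Cole_A$ and $\Cole_A^{\flat}$, reducing to a classical identity about Coleman power series and their images under corestriction, following the strategy of \cite[4.2.9]{fk-pf}. By naturality of both sides in the unramified $\mf{R}\ps{G_{\qp}}$-module $A$ (the functor $D$ commutes with the tensor products involved), it suffices to verify the identity for representatives $\sum_i a_i \otimes k_i \in (A \cozp \mc{K}_{\infty}^{\ur})^{\Fr_p=1}$ of a class $\xi \in H^1_{\Iw}(\Q_{p,\infty}, A(1))$ under the inflation isomorphism, where each $k_i$ is a norm-compatible system with Coleman power series $f_i \in W\ps{y}$.

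First I would identify corestriction in these coordinates. Via Shapiro's lemma, $\cor$ corresponds to projection to the base layer of the tower; combined with the inflation identification on $H^1(\qp, A(1))$, this maps $\sum_i a_i \otimes k_i$ to $\sum_i a_i \otimes \bar{u}^{(i)}$, where $\bar{u}^{(i)}$ is the image of $k_i$ in the ground-level local unit group. Applying $\Cole_A^{\flat}$, and using the valuation splitting of Remark \ref{splitColflat} to extract the principal-unit part $\bar{u}^{(i)}_{\mr{pr}}$, the right-hand side of the equation becomes $\sum_i a_i \otimes (1-\varphi/p)\log \bar{u}^{(i)}_{\mr{pr}}$ in $D(A)$.

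Next I would compute the left-hand side directly. Using the defining formula $[\Cole(k_i)](x) = (1-\psi/p)\log f_i(x-1)$ together with the observation that the $W\ps{X}$-action on $W\ps{x}$ satisfies $[h](1) = \ev_0(h)$ for any $h \in W\ps{X}$, and recalling $\psi(g)(x) = \Fr_p(g)(x^p)$, evaluation at $x=1$ yields
\[
	\ev_0 \Cole(k_i) = \log f_i(0) - \tfrac{1}{p}\Fr_p(\log f_i(0)) = (1 - \varphi/p)\log f_i(0).
\]
Applying $(1-\varphi^{-1})$ gives the left-hand side as $(1-\varphi^{-1})(1-\varphi/p)\log f_i(0)$. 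The desired identity therefore reduces to the classical Coleman-theoretic equality
\[
	(1-\varphi^{-1})\log f_i(0) = \log \bar{u}^{(i)}_{\mr{pr}},
\]
expressing the corestriction image of a norm-compatible sequence in terms of its Coleman power series at the origin, up to the valuation and root-of-unity parts killed by $\log$ composed with the principal-unit projection.

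The main obstacle lies in the careful identification of corestriction with the appropriate norm/projection operation on the Coleman-power-series side, and the verification of this classical identity respecting the paper's Frobenius conventions and the interplay between inflation, the valuation splitting, and the $(1-\varphi^{-1})$ factor that exactly cancels the Frobenius twist relating $f_i(0)$ to $\bar{u}^{(i)}_{\mr{pr}}$. Once these matches are in place, the remainder is essentially the algebraic manipulation above, and the overall strategy mirrors \cite{fk-pf}.
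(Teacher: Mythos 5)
The paper treats this Proposition as a direct citation from \cite[4.2.9]{fk-pf} and supplies no proof, so there is nothing in the paper to compare against; your strategy (unwind both sides through the defining formulas, use $\ev_0(h)=[h](1)$ to convert the left side to an expression in $\log f_i(0)$, and reduce to a norm relation for the Coleman power series) is the natural and presumably intended one. The genuine gap is the "classical Coleman-theoretic equality" you defer, and it is not a matter of bookkeeping. In the paper's convention, $f(\zeta_{p^r}-1)=\Fr_p^r(u_r)$ with $u_r\in\Q_{p,r}^{\ur}$, and the $\Delta$-equivariance forced by $u_r$ being $\Delta$-fixed yields $f(\zeta_p^a-1)=\Fr_p(u_1)$ for every $a$ prime to $p$. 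Feeding this into Coleman's norm-operator relation $N(f)=f^{\Fr_p}$ together with $N(f)(0)=\prod_{\zeta^p=1}f(\zeta-1)$ gives
$$
\Fr_p(f(0))=f(0)\cdot \Fr_p(u_1)^{\,p-1},\qquad\text{i.e.}\qquad (1-\varphi^{-1})\log f(0)=(p-1)\log u_1,
$$
rather than $\log u_1$ as your reduction requires. So either the identification of $\cor$ with "project to the first layer $u_1$" that you are implicitly using must be corrected (there is a hidden $\Delta$-normalization in passing between the $\Z_p$-extension tower and the full cyclotomic tower of $\Q_p^{\ur}$), or the reduction to that specific identity is off by a factor of $p-1$. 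You flag exactly this point as the "main obstacle," but since the remainder of the proposal is formal unwinding, this is the entire substantive content of the proof, and as written the argument does not close. Pinning down the correct identity — most likely by keeping track of how $\Delta$-invariant systems interact with the Kummer/Shapiro identification of $\cor$ — is the step you must actually carry out before the rest of the manipulation becomes a proof.
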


Abusing notation for easier comparison with the inverse $\Cole_A^{-1}$ of $\Cole_A$, we 
introduce the following notation for the canonical splitting of $\Cole_A^{\flat}$ determined by Remark \ref{splitColflat}.

\begin{definition} \label{Col_flat_inv}
	We denote the canonical splitting of the surjection $\Cole_A^{\flat}$ by
	$$
		(\Cole_A^{\flat})^{-1} \colon D(A) \to H^1(\qp,A(1)).
	$$
\end{definition}

We then have the following corollary of Proposition \ref{Col_vs_flat}.

\begin{corollary} \label{inverse_equal}
	We have the equality 
	$$
		\cor \circ \Cole_A^{-1} = (\Cole_A^{\flat})^{-1} \circ \ev_0 \circ (1 - \varphi^{-1}) 
	$$
	as maps $D(A)\ps{X} \to H^1(\Q_p,A(1))$.
\end{corollary}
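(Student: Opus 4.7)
The plan is to deduce this from Proposition \ref{Col_vs_flat} by a diagram chase, using only that $(\Cole_A^{\flat})^{-1}$ of Definition \ref{Col_flat_inv} is a specific section of the surjection $\Cole_A^{\flat}$ whose image is characterized by vanishing of the valuation map $v_p$ of Remark \ref{splitColflat}.

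First I would take $y \in D(A)\ps{X}$ and set $x = \Cole_A^{-1}(y) \in H^1_{\Iw}(\Q_{p,\infty},A(1))$. Applying Proposition \ref{Col_vs_flat} directly to $x$ gives
$$
\Cole_A^{\flat}(\cor(x)) = \ev_0 \circ (1-\varphi^{-1})(\Cole_A(x)) = \ev_0 \circ (1-\varphi^{-1})(y),
$$
so $\cor(x)$ and $(\Cole_A^{\flat})^{-1}(\ev_0 \circ (1-\varphi^{-1})(y))$ are both preimages of the same element under $\Cole_A^{\flat}$, differing by an element of $\ker \Cole_A^{\flat} = A^{\Fr_p = 1}$. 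Since $(\Cole_A^{\flat})^{-1}$ is by construction the section whose image lies in the kernel of $v_p$, the equality reduces to showing $v_p(\cor(x)) = 0$.

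This last vanishing is the only real content. The point is that elements of $D(A)\ps{X}$ correspond under $\Cole_A^{-1}$, via the exact sequence in the proof of Lemma \ref{Colimage}, exactly to classes coming from $(A \cozp \mc{U}_{\infty}^{\ur})^{\Fr_p = 1}$, i.e., to unit classes (as opposed to the $X^{-1}A^{\Fr_p=1}$ part, which carries the valuation information). The valuation map $v_p$ on $H^1(\qp,A(1))$ is defined via inflation to $\qp^{\ur}$, and the compatibility of inflation--corestriction with the norm map on $\mc{K}_{\infty}^{\ur}$ implies that the composite $v_p \circ \cor$ on the $(A \cozp \mc{U}_{\infty}^{\ur})^{\Fr_p=1}$-part is zero: the norm of a unit is a unit.

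The main obstacle is thus to articulate this compatibility cleanly, but it is essentially built into the definitions already invoked by Proposition \ref{Col_vs_flat}, so I expect the argument to occupy only a few lines once the diagram chase and the identification of the image of the section in Remark \ref{splitColflat} are in place.
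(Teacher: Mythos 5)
Your proof is correct and takes essentially the same route as the paper: both reduce the statement, via Proposition \ref{Col_vs_flat}, to the claim that $\cor \circ \Cole_A^{-1}$ lands in the image of $(\Cole_A^{\flat})^{-1}$, i.e., in the kernel of the valuation splitting, and both establish this by observing that $\Cole_A^{-1}(D(A)\ps{X}) \subseteq (A \cozp \mc{U}_{\infty}^{\ur})^{\Fr_p=1}$ (the unit part identified in the proof of Lemma \ref{Colimage}) and that $\cor$ carries this subgroup into $(A \cozp (1+pW))^{\Fr_p=1}$, which is exactly the image of $(\Cole_A^{\flat})^{-1}$.
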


\begin{proof}
	The $p$-adic valuation and $\Cole_A^{\flat}$ provide an isomorphism $H^1(\qp,A(1)) \to A^{\Fr_p=1} 
	\oplus D(A)$, and $(\Cole_A^{\flat})^{-1}$ is the resulting injection from $D(A)$. 
	The image of $D(A)\ps{X}$ under $\Cole_A^{-1}$  is contained in $(A \cozp \mc{U}_{\infty}^{\ur})^{\Fr_p=1}$, 
	as in the proof of Lemma \ref{Colimage}.
	The image of the restriction of $\cor$ to the latter group is $(A \cozp (1+pW))^{\Fr_p=1}$, which is the image of $(\Cole_A^{\flat})^{-1}$.
	Thus, we can compose the equation of 
	Proposition \ref{Col_vs_flat} with $(\Cole^{\flat}_A)^{-1}$ and precompose with $\Cole_A^{-1}$ to obtain
	the equality.
\end{proof}

\subsection{Intermediate Coleman maps} \label{inter Coleman}

In this subsection, we construct a map $\Cole_A^{\dagger}$ that plays an analogous role
to $\Cole_A^{\flat}$ for what we call an intermediate quotient of $\La^{\iota} \cotimes{\zp} A$. 

We suppose that $\mf{R}$ is local to simplify the discussion and fix an element 
$$
	\alpha \in \La \cozp \mf{R} = \mf{R}\ps{X}
$$
with nonzero image in $\mf{k}\ps{X}$ for $\mf{k}$ the residue field of $\mf{R}$. The multiplication-by-$\alpha$
map is then injective on $\La \cozp A$.

\begin{definition} \label{interquot}
	The \emph{intermediate quotient} $A^{\dagger}$ of $\La^{\iota} \cozp A$ with respect to $\alpha$ is the
	compact $(\La \cozp \mf{R})\ps{G_{\qp}}$-module defined as
	$$
		A^{\dagger} = (\Lai \cozp A)/X\alpha(\Lai \cozp A).
	$$
\end{definition}

By Weierstrass preparation, $A^{\dagger}$ is a finite direct sum of copies of $A$ as an $\mf{R}$-module.

\begin{definition}\
	\begin{enumerate}
		\item[a.] Set 
		$\mfC^{\dagger}(A) = \mfC(A)/X\alpha\mfC(A)$,
		where $\mfC(A)$ is the image of $\Cole_A$, as in Lemma \ref{Colimage}.
		\item[b.] 
		Let $\mfC^{\star}(A)$ denote the pushout of the diagram
		$$
			\mfC^{\dagger}(A) \xleftarrow{\alpha} D(A) \xrightarrow{1-\varphi^{-1}} D(A),
		$$
		where the first map sends $a \in D(A)$ to $\alpha (1 \otimes a) \in \mfC^{\dagger}(A)$.
		\item[c.] Let 
		$$
			\bar{A} = A/(\Fr_p-1)A \cong D(A)/(\varphi-1)D(A),
		$$
		\item[d.] Let 
		\begin{eqnarray*}
			\inv_A \colon H^2_{\Iw}(\Q_{p,\infty},A(1)) \xrightarrow{\sim} \bar{A} &\mr{and}& \inv_A \colon H^2(\qp,A(1)) \to \bar{A}
		\end{eqnarray*} 
		denote the invariant maps, which agree via corestriction from Iwasawa to usual cohomology.
	\end{enumerate}
\end{definition}	

The pushout $\mfC^{\star}(A)$ has the following 
relatively simple explicit descriptions in the cases that $A^{\Fr_p=1} = 0$ or $A^{\Fr_p=1} = A$.

\begin{lemma} \label{simpleCstar}
	If $A^{\Fr_p=1} = 0$, then 
	$$
		\mfC^{\star}(A) \cong \frac{(1-\varphi,\alpha)\La \cozp D(A)}{X\alpha (\La \cozp D(A))}
	$$
	as $\La \cozp \mf{R}$-modules.
	Moreover, the injective pushout map from
	$$
		\mfC^{\dagger}(A) = \frac{\La \cozp D(A)}{X\alpha(\La \cozp D(A))}
	$$ 
	to $\mfC^{\star}(A)$ is given by multiplication by $1-\varphi^{-1}$.
\end{lemma}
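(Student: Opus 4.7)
Under the hypothesis $A^{\Fr_p=1}=0$, Lemma \ref{Colimage} gives $\mfC(A) = L := \La \cozp D(A)$, and hence $\mfC^{\dagger}(A) = L/X\alpha L$. The hypothesis also forces $D(A)^{\varphi=1}=0$ (as $\varphi$ on $D(A)$ corresponds to $\Fr_p^{-1}$ on $A$ under any isomorphism from Remark \ref{Disos}a), so $1-\varphi^{-1}$ is injective on $D(A)$ and on $L$; and since $\Fr_p$ is an automorphism of $A$, $\varphi$ is an automorphism of $D(A)$, giving the identity $(1-\varphi^{-1})L = -\varphi^{-1}(1-\varphi)L = (1-\varphi)L$.

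The plan is to construct $F \colon \mfC^{\star}(A) \to (1-\varphi, \alpha)L/X\alpha L$ via the universal property of the pushout: $F$ sends the image of $b \in \mfC^{\dagger}(A)$ under $i_1$ to $(1-\varphi^{-1})b \bmod X\alpha L$ and the image of $a \in D(A)$ under $i_2$ to $\alpha a \bmod X\alpha L$. The required compatibility $(1-\varphi^{-1})(\alpha a) = \alpha(1-\varphi^{-1})a$ is automatic since $\alpha \in \La \cozp \mf{R}$ commutes with $\varphi$, so $F$ is well-defined; and by construction $F \circ i_1$ is multiplication by $1-\varphi^{-1}$, proving the second assertion. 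Surjectivity is immediate: the image contains $(1-\varphi^{-1})L = (1-\varphi)L$ and $\alpha D(A)$, and since $L = D(A) + XL$ gives $\alpha L = \alpha D(A) + X\alpha L$, the image modulo $X\alpha L$ exhausts $(1-\varphi,\alpha)L/X\alpha L$.

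For injectivity, I would compare short exact sequences via the five lemma. The pushout construction yields
$$0 \to \mfC^{\dagger}(A) \xrightarrow{\,i_1\,} \mfC^{\star}(A) \to \bar A \to 0,$$
with injectivity of $i_1$ from $D(A)^{\varphi=1}=0$ (so that the pushout relation forces $a_0 = 0$) and cokernel $D(A)/(1-\varphi^{-1})D(A) \cong \bar A$. On the target side, one has the parallel sequence
$$0 \to L/X\alpha L \xrightarrow{\,\cdot(1-\varphi^{-1})\,} (1-\varphi,\alpha)L/X\alpha L \to \bar A \to 0,$$
where the cokernel identification comes from $a \mapsto \alpha a$ via the isomorphism $(1-\varphi,\alpha)L/((1-\varphi)L + X\alpha L) \cong \alpha L/((\alpha L \cap (1-\varphi)L) + X\alpha L)$ together with a constant-term analysis using $L/(1-\varphi)L \cong \bar A \ps{X}$. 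Since $F$ restricts to multiplication by $1-\varphi^{-1}$ on the left term and the identity on the right term, the five lemma completes the argument.

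The main obstacle will be verifying exactness of the second sequence, in particular that $(1-\varphi^{-1})^{-1}(X\alpha L) = X\alpha L$. My approach is an $X$-adic descent: from $(1-\varphi^{-1})b = X\alpha c$, extracting the constant coefficient in $X$ and using injectivity of $1-\varphi^{-1}$ on $D(A)$ gives $b_0 = 0$, so $b \in XL$; canceling the non-zero-divisor $X$ reduces the problem to $(1-\varphi^{-1})b' = \alpha c$, and the inductive step exploits that $\alpha$ is a non-zero-divisor on $L$ (Lemma \ref{diagmult}) commuting with $\varphi$ to iteratively extract divisibility of $b'$ by $\alpha$ one $X$-coefficient at a time.
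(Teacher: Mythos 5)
Your route is genuinely different from the paper's: you prove the isomorphism by comparing two short exact sequences (the pushout sequence $0\to\mfC^{\dagger}(A)\to\mfC^{\star}(A)\to\bar A\to 0$ and a parallel one for the target) and invoking the five lemma, whereas the paper writes $\mfC^{\star}(A)$ explicitly as $(\mfC^{\dagger}(A)\oplus D(A))/\{(\alpha x,(\varphi^{-1}-1)x)\}$ and checks injectivity of the map directly. Structurally your approach works, and the well-definedness, surjectivity, and cokernel identifications you sketch are sound. But you have correctly isolated the crux and then reached for the wrong tool on it.

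The gap is in your resolution of the ``main obstacle,'' i.e.\ the claim $(1-\varphi^{-1})^{-1}(X\alpha L)=X\alpha L$. After your first (correct) step you are reduced to: if $(1-\varphi^{-1})b'=\alpha c$ with $b',c\in L$, then $b'\in\alpha L$. Your proposed ``iterative extraction of divisibility by $\alpha$ one $X$-coefficient at a time'' does not work, because from the $X^0$-coefficient alone you only get $(1-\varphi^{-1})b'_0=\alpha_0 c_0$, and when $\alpha_0$ is a non-unit this does \emph{not} imply $b'_0\in\alpha_0 D(A)$. Concretely, take $D(A)=\zp$, $1-\varphi^{-1}=p\epsilon$ for a unit $\epsilon$, and $\alpha=p+X$ (which satisfies the running hypothesis, since its image in $\mathbb{F}_p\ps{X}$ is $X\neq 0$). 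The $X^0$-equation gives $b'_0=\epsilon^{-1}c_0$, which need not lie in $pD(A)$ on its face; the divisibility of $c_0$ by $p$ is only forced by the higher-order coefficients. The point is that ``$\alpha$ is a non-zero-divisor on $L$'' (your cited use of Lemma~\ref{diagmult}) is the wrong application of that lemma. What you actually need is Lemma~\ref{diagmult} applied to $M=\bar A=D(A)/(1-\varphi)D(A)$, giving that $\alpha$ is injective on $L/(1-\varphi)L\cong\La\cozp\bar A$. That single global statement about the power series (not a coefficient-by-coefficient induction) closes the argument in one stroke: $\alpha c=(1-\varphi^{-1})b'\in(1-\varphi)L$ forces $c\in(1-\varphi)L$, say $c=(1-\varphi^{-1})d$, and then $(1-\varphi^{-1})(b'-\alpha d)=0$ gives $b'=\alpha d$. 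This is exactly the step on which the paper's direct injectivity proof turns, so in the end your longer five-lemma route must pass through the same key lemma application you left vague.
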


\begin{proof}
	It suffices to see that
	$$
		\mf{C}^{\star}(A) = \frac{(\mf{C}^{\dagger}(A) \oplus D(A))}{\{ (\alpha x, (\varphi^{-1}-1) x) \mid x \in D(A) \}}
		\xrightarrow{(1-\varphi^{-1},\alpha)} \frac{(1-\varphi,\alpha)\La \cozp D(A)}{X\alpha (\La \cozp D(A))}
	$$
	is an isomorphism. Well-definedness and surjectivity are clear from the definitions.
	
	Let us prove injectivity. Since $\alpha$ reduces to a nonzero element of $\mf{k}\ps{X}$, Lemma \ref{diagmult}
	tells us that it provides an injective
	endomorphism of $\La \cozp (D(A)/(1-\varphi)D(A))$. Thus, if $y, z \in \La \cozp D(A)$ 
	are such that $(1-\varphi^{-1})y + \alpha z = 0$, then $z = (\varphi^{-1}-1)x$
	for some $x \in \La \cozp D(A)$.  As $1-\varphi^{-1}$ is injective
	on $\La \cozp D(A)$ by assumption (see Remark \ref{Disos}b), this forces $y = \alpha x$. Then $(y,z(0)) = 
	(\alpha x, (\varphi^{-1}-1) x(0))$ has trivial image in $\mf{C}^{\star}(A)$.
\end{proof}

\begin{lemma} \label{splitCstar}
	If $\Fr_p$ acts trivially on $A$, then the pushout $\mfC^{\star}(A)$ is the direct sum
	$$
		\mfC^{\star}(A) = \mfC^{\dagger}(A) \oplus D(A)
	$$
	of $\La \cozp \mf{R}$-modules. Moreover, multiplication by $X$ induces an isomorphism 
	$\mfC^{\dagger}(A) \xrightarrow{\sim} A^{\dagger}$.
\end{lemma}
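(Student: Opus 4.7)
My plan is to simplify both arrows of the pushout diagram directly using the Frobenius-triviality hypothesis, and then unwind the explicit description $\mfC(A) = X^{-1}D(A) + D(A)\ps{X}$ from Lemma \ref{Colimage}.

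First I would note that under $\Fr_p = 1$ on $A$, the natural isomorphism of Remark \ref{Disos}a identifies $A$ with $D(A)$ and $\varphi = 1 \otimes \Fr_p$ acts trivially on $D(A)$, so the right arrow $1-\varphi^{-1} \colon D(A) \to D(A)$ of the pushout is zero. For the left arrow, the hypothesis also gives $A^{\Fr_p=1} = A = D(A)$, so by Lemma \ref{Colimage} we have $X^{-1}D(A) \subseteq \mfC(A)$. Then for any $a \in D(A)$ the element $\alpha(1 \otimes a) = \alpha a = X\alpha(X^{-1}a)$ lies in $X\alpha\mfC(A)$, so this arrow is also zero modulo $X\alpha\mfC(A)$. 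The pushout of two zero arrows $B \leftarrow C \to D$ is the direct sum $B \oplus D$, yielding $\mfC^{\star}(A) = \mfC^{\dagger}(A) \oplus D(A)$.

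For the isomorphism assertion, since Galois acts trivially on $A$ the identification $A^{\dagger} = D(A)\ps{X}/X\alpha D(A)\ps{X}$ holds as a $\La \cozp \mf{R}$-module. Using $\mfC(A) = X^{-1}D(A) + D(A)\ps{X}$, multiplication by $X$ sends $\mfC(A)$ into $D(A)\ps{X}$ and sends $X\alpha\mfC(A) = \alpha D(A) + X\alpha D(A)\ps{X}$ into $X\alpha D(A)\ps{X}$, inducing a $\La \cozp \mf{R}$-linear map $\mfC^{\dagger}(A) \to A^{\dagger}$. Surjectivity is immediate: given $f \in D(A)\ps{X}$, write $f = f(0) + Xg$, and observe that $X^{-1}f(0) + g \in \mfC(A)$ maps to $f$.

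For injectivity, I would take a representative $X^{-1}d + g$ with $d \in D(A)$ and $g \in D(A)\ps{X}$ whose image in $A^{\dagger}$ vanishes, meaning $d + Xg = X\alpha h$ in $D(A)\ps{X}$ for some $h \in D(A)\ps{X}$; evaluating at $X = 0$ forces $d = 0$, and then cancelling $X$ gives $g = \alpha h = \alpha h(0) + X\alpha h_1$ with $h_1 = X^{-1}(h - h(0)) \in D(A)\ps{X}$, so $X^{-1}d + g = X\alpha(X^{-1}h(0) + h_1) \in X\alpha\mfC(A)$. The argument is a short computation with no significant obstacle; the one substantive input is the observation that when $\Fr_p$ acts trivially on $A$, every $a \in D(A)$ has $X^{-1}a \in \mfC(A)$, which is what simultaneously kills the $\alpha$-arrow of the pushout and allows any $f \in D(A)\ps{X}$ to be realized as $X$ times an element of $\mfC(A)$.
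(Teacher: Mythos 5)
Your proof is correct and takes essentially the same route as the paper's (very terse) proof: you show both pushout arrows vanish because trivial Frobenius action makes $1-\varphi^{-1}$ zero and puts $X^{-1}D(A)$ inside $\mfC(A)$, so $\alpha D(A) \subseteq X\alpha\mfC(A)$, and then you use the explicit description $\mfC(A) = X^{-1}D(A)\ps{X}$ to see that multiplication by $X$ carries it isomorphically onto $D(A)\ps{X}$ and $X\alpha\mfC(A) = \alpha D(A)\ps{X}$ onto $X\alpha D(A)\ps{X}$. Your write-up just spells out the surjectivity and injectivity details that the paper leaves as ``a direct consequence.''
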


\begin{proof}
	Note that $1-\varphi^{-1}$ is the trivial endomorphism of $D(A)$ by assumption. Given this, we have
	$\mf{C}^{\dagger}(A) = (X^{-1}\La \cozp A)/\alpha(\La \cozp A)$, so $\alpha \colon D(A) \to \mf{C}^{\dagger}(A)$
	is also zero. The second statement is a direct consequence of this description of $\mfC^{\dagger}(A)$ and the 
	definition of $A^{\dagger}$.
\end{proof}

The following defines an \emph{intermediate Coleman map} $\Cole^{\dagger}_A$ for the unramified module $A$.

\begin{theorem} \label{loccohdag}
	There is an isomorphism 
	$$
		\Cole_A^{\dagger} \colon H^1(\qp,A^{\dagger}(1)) 
		\xrightarrow{\sim} \mfC^{\star}(A)
	$$ 
	of $\La \cozp \mf{R}$-modules fitting in an isomorphism of exact sequences
	$$
		\SelectTips{cm}{} \xymatrix{
		0 \ar[r] & 
		H^1_{\Iw}(\Q_{p,\infty},A(1))/X\alpha \ar[r] \ar[d]^{\Cole_A} &
		H^1(\qp,A^{\dagger}(1)) \ar[d]^-{\Cole_A^{\dagger}} \ar[r] &
		H^2_{\Iw}(\Q_{p,\infty},A(1)) \ar[r] \ar[d]^{\inv_A} & 0 \\
		0 \ar[r] & \mfC^{\dagger}(A) \ar[r] & 
		\mfC^{\star}(A) \ar[r]^-{\psi} &
		\bar{A} \ar[r] & 0, \\
		}
	$$
	where the left lower horizontal map is given by the pushout, and
	$\psi$ is inverse to the isomorphism $\bar{A} \to \mfC^{\star}(A)/\mfC^{\dagger}(A)$
	induced by the other pushout map.
\end{theorem}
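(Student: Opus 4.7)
The plan is to construct $\Cole_A^{\dagger}$ by amalgamating the Coleman map modulo $X\alpha$ with the invariant map, then conclude via the Five Lemma.

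First I would establish the two short exact sequences. For the top row, apply $H^*(\qp,-)$ to $0 \to \Lai \cozp A(1) \xrightarrow{X\alpha} \Lai \cozp A(1) \to A^{\dagger}(1) \to 0$. Since $\Q_{p,\infty}/\qp$ is totally ramified, the inertia at $p$ surjects onto $\Gamma$, so $X$ lies in the augmentation ideal of $\La$ acting as zero on the $G_{\qp}$-coinvariants of $\Lai \cozp A$, which realize $H^2_{\Iw}(\Q_{p,\infty},A(1)) \cong \bar{A}$ via the invariant map. Hence $X\alpha$ annihilates $H^2_{\Iw}$, truncating the long exact sequence to the stated form. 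For the bottom row, use the pushout definition directly: $\coker(1-\varphi^{-1}) = \bar{A}$ by Remark \ref{Disos}(b), while injectivity of $\mfC^{\dagger}(A) \hookrightarrow \mfC^{\star}(A)$ follows because $\alpha$ sends $\ker(1-\varphi^{-1}) = A^{\Fr_p=1}$ to zero in $\mfC^{\dagger}(A)$: for $a \in A^{\Fr_p=1}$, we have $X^{-1}a \in \mfC(A)$, giving $\alpha a = X\alpha \cdot X^{-1}a \in X\alpha\mfC(A)$.

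Next I would construct $\Cole_A^{\dagger}$ at the cochain level. Given $c \in H^1(\qp,A^{\dagger}(1))$, represent it by a cocycle and choose a cochain lift $\tilde c \in C^1(G_{\qp},\Lai \cozp A(1))$. Then $d\tilde c = X\alpha \cdot \eta$ for a unique $2$-cocycle $\eta$, and the image of $c$ under the connecting map is $[\eta] \in H^2_{\Iw}$, with $\bar a := \inv_A[\eta] \in \bar{A}$; fix any lift $a \in D(A)$. Produce an explicit cochain $\beta \in C^1(G_{\qp},\Lai \cozp A(1))$ with $d\beta = X\alpha \cdot \eta$ whose Iwasawa cohomological data is compatible with $a$ (using local duality and the existing Coleman formalism), so that $\tilde c - \beta$ becomes a genuine Iwasawa $1$-cocycle. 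Apply $\Cole_A$ to the class of $\tilde c - \beta$, reduce modulo $X\alpha$ to obtain $\gamma \in \mfC^{\dagger}(A)$, and set $\Cole_A^{\dagger}(c) := (\gamma,a) \in \mfC^{\star}(A)$.

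The key verification is well-definedness. Replacing $a$ by $a+(1-\varphi^{-1})x$ for $x \in D(A)$ forces a compatible change of $\beta$ and shifts $\gamma$ by $-\alpha x$, yielding exactly the pushout relation $(\alpha x,-(1-\varphi^{-1})x) \sim 0$. Commutativity of the left square is immediate: for classes lifting to $H^1_{\Iw}$ one takes $a=0$ and $\beta=0$, and the construction reduces to $\Cole_A$ modulo $X\alpha$. Commutativity of the right square holds by construction of the $D(A)$-component. Finally, the Five Lemma gives that $\Cole_A^{\dagger}$ is an isomorphism, since the outer vertical maps are isomorphisms by Lemma \ref{Colimage} and the definition of $\inv_A$. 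The principal obstacle will be the cochain-level construction of the correction $\beta$ from $a$: one must produce $\beta$ so that its Iwasawa cohomological signature, after applying $\Cole_A$ and reducing modulo $X\alpha$, matches the prescribed component $a$ up to the pushout equivalence, and verify that the ambiguity in the choice of $\beta$ (stemming from the ambiguity in $a$) cancels precisely against the pushout relation rather than producing spurious contributions to $\gamma$.
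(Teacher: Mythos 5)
Your reduction to two short exact sequences and a Five Lemma argument is sound, and your verifications of exactness are essentially correct: the observation that inertia surjects onto $\Gamma$ so that $X$ kills $H^2_{\Iw}(\Q_{p,\infty},A(1))$ gives the top row, and the computation $\alpha a = X\alpha\cdot X^{-1}a\in X\alpha\,\mfC(A)$ for $a\in A^{\Fr_p=1}$ correctly shows $\mfC^{\dagger}(A)\hookrightarrow\mfC^{\star}(A)$. The direction of your construction is the opposite of the paper's — you build $\Cole_A^{\dagger}$ directly, whereas the paper builds the inverse $\lambda\colon\mfC^{\star}(A)\to H^1(\qp,A^{\dagger}(1))$ via the universal property of the pushout and then sets $\Cole_A^{\dagger}=\lambda^{-1}$ — but both routes ultimately hinge on the same key compatibility.

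The genuine gap is exactly where you flag it: you never specify the correspondence $a\mapsto\beta$, and without it the map is not even well-defined, since the a priori ambiguity in $\beta$ is the entire group $H^1_{\Iw}(\Q_{p,\infty},A(1))$, not merely the slice controlled by the choice of lift $a$. The correct recipe is to take $\beta$ to be (a cochain representing) $\alpha\bigl((\Cole_A^{\flat})^{-1}(a)\bigr)\in H^1(\qp,A^{\dagger}(1))$, where $(\Cole_A^{\flat})^{-1}$ is the canonical splitting of Definition \ref{Col_flat_inv}. One must then prove two things that you currently wave at with ``local duality and the existing Coleman formalism'': first, that the connecting image of this class in $H^2_{\Iw}(\Q_{p,\infty},A(1))$ equals $\inv_A^{-1}(\bar a)$ (this follows from Lemma \ref{Colconn}, which identifies $\partial_A$ with $\Cole_A^{\flat}$ followed by the quotient to $\bar A$); and second, that replacing $a$ by $a+(1-\varphi^{-1})x$ shifts $\gamma$ by exactly $-\alpha x$ in $\mfC^{\dagger}(A)$, which is precisely the statement that $\cor\circ\Cole_A^{-1}=(\Cole_A^{\flat})^{-1}\circ\ev_0\circ(1-\varphi^{-1})$, i.e.\ Corollary \ref{inverse_equal}. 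That corollary is the substantive input; it is what the paper's ``claim'' in its own proof amounts to, and your proposal neither invokes it nor supplies a replacement. Once you pin down $\beta$ by the above formula and feed in Corollary \ref{inverse_equal} at the step you have labelled as the principal obstacle, your argument closes up and becomes a legitimate (if slightly more laborious) rephrasing of the paper's pushout argument.
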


\begin{proof}
	We shall construct an isomorphism $\lambda \colon \mf{C}^{\star}(A) \to H^1(\qp,A^{\dagger}(1))$ such that the desired map 
	$\Cole_A^{\dagger}$ is its inverse. 
	Consider the composition 
	$$
		D(A) \to H^1(\qp,A^{\dagger}(1))
	$$ 
	of $(\Cole_A^{\flat})^{-1}$ with
	the map $\alpha \colon H^1(\qp,A(1)) \to H^1(\qp,A^{\dagger}(1))$
	induced by $\alpha \colon A \to A^{\dagger}$.
	We claim that the composition 
	$$	
		D(A) \xrightarrow{1-\varphi^{-1}} D(A) \to H^1(\qp,A^{\dagger}(1))
	$$
	agrees with the composition
	$$
		D(A) \xrightarrow{\alpha} \mfC^{\dagger}(A) \xrightarrow{\Cole_A^{-1}} H^1_{\Iw}(\Q_{p,\infty},A(1))/X\alpha \to 
		H^1(\qp,A^{\dagger}(1)),
	$$ 
	where we abuse notation (as in the statement of the theorem) and use $\Cole_A$ to denote
	its reduction modulo $X\alpha$. 
	Given the claim, we define $\lambda$ as the map given by the universal property of the pushout
	$\mfC^{\star}(A)$.
	
	To see the claim, consider the diagram
	$$
		\SelectTips{cm}{} \xymatrix{
		D(A) \ar[r] \ar[d]_{1-\varphi^{-1}} & \mfC(A)/X\mfC(A) \ar[r]^-{\alpha} 
		\ar[d]^{\Cole_A^{-1}} & \mfC(A)/X\alpha \mfC(A) \ar[d]^{\Cole_A^{-1}}  \\
		D(A) \ar[dr]_{(\Cole_A^{\flat})^{-1}} & H^1_{\Iw}(\Q_{p,\infty},A(1))/X \ar[r]^-{\alpha} \ar[d]^{\cor} 
		& H^1_{\Iw}(\Q_{p,\infty},A(1))/X\alpha \ar[d] \\
		& H^1(\qp,A(1)) \ar[r]^-{\alpha} & H^1(\qp,A^{\dagger}(1))
		}
	$$
	in which the two compositions are found by tracing its perimeter.
	The two right-hand squares clearly commute. Since the multiplication-by-$\alpha$ maps in this diagram are all injective, 
	we are reduced to the commutativity of the left part of the diagram. 
	This commutativity is Corollary \ref{inverse_equal}, as it tells us that 
	the two compositions $\mfC(A)/X\mfC(A) \to H^1(\qp,A(1))$ agree on the image of $D(A)$ in 
	$$
		\mfC(A)/X\mfC(A) \cong X^{-1}A^{\Fr_p=1} \oplus D(A)/A^{\Fr_p=1}.
	$$
	Thus, we have the claim, and thereby the map $\lambda$.
	
	The left-hand square in the diagram
	\begin{equation} \label{invdiag}
		\SelectTips{cm}{} \xymatrix{
		0 \ar[r] & \mfC^{\dagger}(A) \ar[r] \ar[d]^-{\Cole_A^{-1}} & 
		\mfC^{\star}(A) \ar[r]^-{\psi} \ar[d]^-{\lambda} &
		\bar{A} \ar[r] \ar[d]^{\inv_A^{-1}} & 0 \\
		0 \ar[r] & H^1_{\Iw}(\Q_{p,\infty},A(1))/X\alpha \ar[r] &
		H^1(\qp,A^{\dagger}(1))  \ar[r]^-{\partial_A^{\dagger}} &
		H^2_{\Iw}(\Q_{p,\infty},A(1)) \ar[r] & 0,
		}
	\end{equation}
	commutes by the universal property defining $\lambda$, where we have departed from our earlier notation to also use 
	$\partial_A^{\dagger}$ to denote the connecting map in $\qp$-cohomology for the Tate twist of  
	$$
		0 \to \Lai \cozp A \xrightarrow{X\alpha} \Lai \cozp A \to A^{\dagger} \to 0.
	$$
	For the commutativity of the right-hand square, consider the diagram
	\begin{equation} \label{Coldagflat}
		\SelectTips{cm}{} \xymatrix{
		\\
		H^2(\qp,A(1)) \ar@/^3pc/[rrr]_{\inv_A}
		& H^1(\qp,A(1))  \ar[d]^{\alpha} \ar[l]_-{\partial_A} & D(A) \ar[l]_-{(\Cole_A^{\flat})^{-1}} 
		\ar[r] \ar[d]^{\alpha} & \bar{A} \ar@{=}[d] \\
		H^2_{\Iw}(\Q_{p,\infty},A(1)) \ar[u]^{\wr}_{\cor}  \ar@/_3pc/[rrr]^{\inv_A}
		& H^1(\qp,A^{\dagger}(1))  \ar[l]_-{\partial_A^{\dagger}} & \mfC^{\star}(A) \ar[l]_-{\lambda} \ar[r]^-{\psi} & \bar{A},\\
		&
		}
	\end{equation}
	in which the outer part and the right-hand square commute by construction.
	The left-hand square of \eqref{Coldagflat} commutes due to the maps of exact sequences
	$$
		\SelectTips{cm}{} \xymatrix{
		0 \ar[r] & A \ar[r]^-X \ar@{=}[d] & \Lai/X^2 \cozp A \ar[r] \ar[d]^{\alpha} & A \ar[r] \ar[d]^{\alpha} & 0 \\
		0 \ar[r] & A \ar[r]^-{X\alpha} & (\Lai \cozp A)/X^2\alpha \ar[r] & A^{\dagger} \ar[r] \ar@{=}[d] & 0 \\
		0 \ar[r] & \Lai \cozp A \ar[r]^-{X\alpha} \ar@{->>}[u] & \Lai \cozp A \ar[r] \ar@{->>}[u] & A^{\dagger} \ar[r] & 0.
		} 
	$$
	The middle square commutes as the larger diagram
	$$
		\SelectTips{cm}{} \xymatrix@C=40pt{
		H^1(\qp,A(1)) \ar[d]^{\alpha} & D(A) \ar[l]_-{(\Cole_A^{\flat})^{-1}} \ar[d]^{\alpha} \\
		H^1_{\Iw}(\Q_{p,\infty},A(1))/X\alpha \ar@{^{(}->}[d] & \mf{C}^{\dagger}(A) \ar[l]_-{\Cole_A^{-1}}^-{\sim} \ar@{^{(}->}[d]  \\
		H^1(\qp,A^{\dagger}(1)) & \mf{C}^{\star}(A) \ar[l]_-{\lambda}
		}
	$$
	does by our earlier claim and the commutativity of the left-hand square of \eqref{invdiag}.
	Finally, the top row commutes with the invariant map by Lemma \ref{Colconn}.
	Consequently, the lower row commutes with the invariant map as well, and therefore the right-hand square in \eqref{invdiag}
	commutes. Setting $\Cole_A^{\dagger} = \lambda^{-1}$,
	we have the theorem.
\end{proof}	

\begin{remark}
	The middle square of the commutative diagram \eqref{Coldagflat} gives a comparison between $\Cole_A^{\flat}$ and 
	$\Cole_A^{\dagger}$. Note that in the case $\alpha = 1$, the map $\Cole_A^{\flat}$ is defined as a split surjection
	(as we have kept the conventions of \cite{fk-pf}), whereas $\Cole_A^{\dagger}$ is an isomorphism to $A^{\Fr_p = 1} \oplus D(A)$.
\end{remark}

\begin{remark}
	In \cite[Section 4]{fk-pf}, Coleman maps are defined on the Iwasawa cohomology of $A(1)$ for the extension 
	$\qp(\mu_{p^{\infty}})$ of $\qp$,
	as opposed to just $\Q_{p,\infty}$. The second Iwasawa cohomology groups of $A(1)$ for each of these 
	extensions are isomorphic via corestriction. Outside of the trivial eigenspace
	for $\Gal(\Q(\mu_{p^{\infty}})/\Q_{p,\infty})$ that we consider here, analogously defined intermediate Coleman maps 
	would simply amount to reductions of the original Coleman maps. In other words, we have restricted our discussion
	exactly to the setting where our constructions can be of interest.
\end{remark}

We end this subsection by explaining what Theorem \ref{loccohdag} tells us in the setting of central interest to this work, 
returning at this point to the notation and hypotheses of earlier sections.
Recall from Theorem \ref{DTquo} that $D(\mcT_{\quo})$ is canonically isomorphic to $\mfS_{\theta}$, and from
Remark \ref{quo_results}a that the action of $\varphi^{-1}$ on $D(\mcT_{\quo})$ agrees with the action of $U_p$ on $\mfS_{\theta}$. As in \cite[3.3.3]{fk-pf}, these facts imply that $\mcT_{\quo}^{\Fr_p=1} = 0$.
Given the identifications of Lemma \ref{simpleCstar}, Theorem \ref{loccohdag} has the following corollary for $A = \mcT_{\quo}$
(for any choice of $\alpha \in \La \cozp \mf{h}_{\theta}$ as above). For this choice of $A$, we drop the subscript from
our Coleman and invariant maps for compactness of notation.

\begin{corollary} \label{loccohdagS}
	Set 
	$$
		\mfS_{\theta}^{\star} = (\alpha,1-U_p)\La \cozp \mfS_{\theta}/X\alpha(\La \cozp \mfS_{\theta})
		\subset \mfS_{\theta}^{\dagger} = \La \cozp \mfS_{\theta}/X\alpha(\La \cozp \mfS_{\theta}).
	$$
	There is an isomorphism
	$$
		\Cole^{\dagger} \colon H^1(\qp,\mcT_{\quo}^{\dagger}(1)) \to \mfS_{\theta}^{\star}
	$$
	of $\La \cozp \mf{h}_{\theta}$-modules fitting in an isomorphism of exact sequences
	$$
		\SelectTips{cm}{} \xymatrix{
		0 \ar[r] & 
		H^1_{\Iw}(\Q_{p,\infty},\mcT_{\quo}(1))/X\alpha \ar[r] \ar[d]^{\Cole} &
		H^1(\qp,\mcT_{\quo}^{\dagger}(1)) \ar[d]^-{\Cole^{\dagger}} \ar[r] &
		H^2_{\Iw}(\Q_{p,\infty},\mcT_{\quo}(1)) \ar[r] \ar[d]^{\inv} & 0 \\
		0 \ar[r] & \mfS_{\theta}^{\dagger} \ar[r]^{1-U_p} & 
		\mfS_{\theta}^{\star} \ar[r]^-{\psi} &
		 S_{\theta} \ar[r] & 0, \\
		}
	$$
	where $S_{\theta} = \mfS_{\theta}/(U_p-1)\mfS_{\theta}$ and where 
	$\psi$ factors through the inverse to the map induced by multiplication by $\alpha$ on the
	cokernel of multiplication by $1-U_p$ on $\mf{S}^{\star}_{\theta}$. 
\end{corollary}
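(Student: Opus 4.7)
The plan is to derive Corollary \ref{loccohdagS} as a direct specialization of Theorem \ref{loccohdag} to the unramified module $A = \mcT_{\quo}$, using only the identifications and facts collected immediately before the corollary's statement. The overall strategy has two phases: first identify the abstract target group $\mfC^{\star}(A)$ and its subgroup $\mfC^{\dagger}(A)$ with the concrete modules $\mfS_{\theta}^{\star}$ and $\mfS_{\theta}^{\dagger}$, and then transport the bottom row of Theorem \ref{loccohdag} across these identifications.

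First I would observe that, by Remark \ref{quo_results}(a) together with Remark \ref{Disos}(b), the canonical isomorphism $D(\mcT_{\quo}) \cong \mfS_{\theta}$ of Theorem \ref{DTquo} intertwines $\varphi^{-1}$ with $U_p$. In particular, since $U_p$ is invertible on the ordinary submodule $\mfS_{\theta}$, the ideal $(1-\varphi)(\La \cozp D(\mcT_{\quo}))$ coincides with $(1-U_p)(\La \cozp \mfS_{\theta})$, so the hypothesis $\mcT_{\quo}^{\Fr_p=1} = 0$ from \cite[3.3.3]{fk-pf} lets us apply Lemma \ref{simpleCstar} to rewrite
\[
\mfC^{\star}(\mcT_{\quo}) \;\cong\; \frac{(1-\varphi,\alpha)(\La \cozp D(\mcT_{\quo}))}{X\alpha(\La \cozp D(\mcT_{\quo}))} \;=\; \mfS_{\theta}^{\star},
\]
and similarly $\mfC^{\dagger}(\mcT_{\quo}) \cong \mfS_{\theta}^{\dagger}$. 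The same lemma tells us that under these identifications the canonical inclusion $\mfC^{\dagger}(\mcT_{\quo}) \hookrightarrow \mfC^{\star}(\mcT_{\quo})$ is given by multiplication by $1 - \varphi^{-1} = 1 - U_p$, matching the arrow labeled $1-U_p$ in the corollary's diagram.

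Next I would translate the cokernel term. By definition, $\bar{\mcT}_{\quo} = \mcT_{\quo}/(\Fr_p-1)\mcT_{\quo} \cong D(\mcT_{\quo})/(\varphi-1)D(\mcT_{\quo})$ via Remark \ref{Disos}(b), and since $\varphi = U_p^{-1}$ is a unit on $\mfS_{\theta}$, this quotient equals $\mfS_{\theta}/(U_p - 1)\mfS_{\theta} = S_{\theta}$. The invariant map $\inv_{\mcT_{\quo}}$ of Theorem \ref{loccohdag} thereby becomes the invariant map $\inv$ appearing in the corollary. Defining $\Cole^{\dagger} = \Cole^{\dagger}_{\mcT_{\quo}}$ and transporting the commutative diagram of Theorem \ref{loccohdag} along the identifications above yields the stated isomorphism of exact sequences.

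The only subtle point—and what I view as the main obstacle—is the precise characterization of $\psi$ as factoring through the inverse of multiplication by $\alpha$ on the cokernel of $(1-U_p)$ acting on $\mfS_{\theta}^{\star}$. For this step I would trace the construction of $\psi$ from Theorem \ref{loccohdag}: by definition $\psi$ is inverse to the map $\bar{\mcT}_{\quo} \to \mfC^{\star}(\mcT_{\quo})/\mfC^{\dagger}(\mcT_{\quo})$ induced by the second pushout arrow $D(A) \to \mfC^{\star}(A)$, and under the description of $\mfC^{\star}(A)$ from Lemma \ref{simpleCstar} this arrow sends $a \in D(\mcT_{\quo}) = \mfS_{\theta}$ to the class of $\alpha \cdot a$. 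Since $\mfC^{\dagger}(\mcT_{\quo})$ embeds as $(1-U_p)\mfS_{\theta}^{\star}$, the quotient $\mfC^{\star}(\mcT_{\quo})/\mfC^{\dagger}(\mcT_{\quo})$ is the cokernel of $(1-U_p)$ on $\mfS_{\theta}^{\star}$, and the arrow identifies with multiplication by $\alpha$ from $S_{\theta}$ to this cokernel. Inverting this identification gives the stated factorization of $\psi$, completing the proof.
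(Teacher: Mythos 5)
Your proposal is correct and follows exactly the route the paper indicates: specialize Theorem \ref{loccohdag} and Lemma \ref{simpleCstar} to $A = \mcT_{\quo}$ via the identifications $D(\mcT_{\quo}) \cong \mfS_{\theta}$, $\varphi^{-1} \leftrightarrow U_p$, and $\mcT_{\quo}^{\Fr_p=1} = 0$; the paper itself offers no further proof beyond these pointers, so your spell-out is precisely the intended argument. One small imprecision in your final step: the image of $\mfC^{\dagger}(\mcT_{\quo})$ in $\mfC^{\star}(\mcT_{\quo})$ under the pushout map is $(1-U_p)\mfS_{\theta}^{\dagger}$, not $(1-U_p)\mfS_{\theta}^{\star}$ (the map $1-U_p$ is applied to the full $\mfS_{\theta}^{\dagger}$, as in the bottom row of the diagram); this does not affect the conclusion, since the relevant cokernel is $\mfS_{\theta}^{\star}/(1-U_p)\mfS_{\theta}^{\dagger}$ either way.
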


We make the following definition of a ``reduced'' Coleman map for later use.

\begin{definition} \label{Coledagbar}
	We let
	$$
		\overline{\Cole}^{\dagger} \colon H^1(\qp,\mcT_{\quo}^{\dagger}(1)) \to S_{\theta}.
	$$
	denote the composition $\psi \circ \Cole^{\dagger}$.
\end{definition}

\subsection{Local zeta maps} \label{localzeta}

In this subsection, we construct and employ an ad hoc local version of the global zeta map of Fukaya-Kato.  We shall see
how it ties in with global elements in Section \ref{global_study}.
 
Fix an isomorphism $\mc{M}_{\theta} \xrightarrow{\sim} \mf{M}_{\theta}$ of $\mf{H}_{\theta}$-modules that reduces to the canonical isomorphism 
$$
	\mc{M}_{\theta}/(U_p-1)\mc{M}_{\theta} \xrightarrow{\sim} \mf{M}_{\theta}/(U_p-1)\mf{M}_{\theta}.
$$
We use it, in particular, to identify $\mcS_{\theta}$ with $\mfS_{\theta}$ in the remainder of the paper.
We then have isomorphisms
$$
	\Hom_{\La \cozp \mf{h}_{\theta}}(\La \cozp \mc{S}_{\theta},\La \cozp \mfS_{\theta})
	\xrightarrow{\sim} \End_{\La \cozp \mf{h}_{\theta}}(\La \cozp \mfS_{\theta}) \xrightarrow{\sim} 
	\La \cozp \mf{h}_{\theta},
$$
the second being the inverse of the map that takes an element to the endomorphism it defines.

We will specify the following element $\alpha$ precisely in Section \ref{local_study}.

\begin{notation} \label{alpha}
	Let $\alpha \in \La \cozp \mf{h}_{\theta}$ denote an element with image 
	equal to the image of $\otil{\xi}_1 = X^{-1}(\otil{\xi} - 1 \otimes \xi)$ in the quotient of the ring $\La \cozp (\mf{h}/I)_{\theta}$
	by the image of $\otil{\xi}$.
\end{notation}	   

Note that $\alpha$ has nonzero image in $R/p\ps{X}$, as $X\otil{\xi}_1(X,0) \equiv \xi \bmod p$, so it satisfies the condition on
$\alpha$ in Section \ref{inter Coleman}. 
We may then define a \emph{local zeta map}. Its significance lies in that is induced by the restriction of a zeta map of Fukaya and Kato for our later good choice of $\alpha$.

\begin{definition} \label{zquo}
	Let $z_{\quo}$ denote the unique map of $\La \cozp \mf{h}_{\theta}$-modules
	$$
		z_{\quo} \colon \La \cozp \mcS_{\theta}  \to H^1_{\Iw}(\Q_{p,\infty},\mcT_{\quo}(1))
	$$
	such that $\Cole \circ z_{\quo} \colon \La \cozp \mcS_{\theta} \to \La \cozp \mfS_{\theta}$ 
	is identified with multiplication by $\alpha \in \La \cozp \mf{h}_{\theta}$.
\end{definition}

The following is due to Fukaya and Kato (see \cite[3.3.9, 4.3.8, 4.4.3, 8.1.2]{fk-pf}).

\begin{proposition}[Fukaya-Kato] \label{zquosharp}
	There exists a unique $\mf{h}_{\theta}$-module homomorphism
	$$
		z_{\quo}^{\sharp} \colon \mcS_{\theta} \to H^1(\qp,\mcT_{\quo}(1))
	$$
	for which 
	$$
		(1-U_p)z_{\quo}^{\sharp} \circ \ev_0 = \cor \circ z_{\quo}
	$$
	on $\La \cozp \mcS_{\theta}$, and such that $\Cole^{\flat} \circ z_{\quo}^{\sharp}$
	is multiplication by $\xi'$ modulo $I$.
\end{proposition}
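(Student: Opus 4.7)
The plan is to exploit that $\Cole^{\flat}$ is an isomorphism for $A = \mcT_{\quo}$, and then to construct $z_{\quo}^{\sharp}$ by hand. The crucial input is that $\mcT_{\quo}^{\Fr_p=1} = 0$ (noted just before Corollary \ref{loccohdagS}), which by Remark \ref{splitColflat} upgrades $\Cole^{\flat}$ from a split surjection to an honest isomorphism $H^1(\qp, \mcT_{\quo}(1)) \xrightarrow{\sim} D(\mcT_{\quo}) \cong \mfS_{\theta}$. I would therefore define
$$
	z_{\quo}^{\sharp} := (\Cole^{\flat})^{-1} \circ \alpha(0),
$$
with $\alpha(0) \in \mf{h}_{\theta}$ the evaluation of $\alpha$ at $X = 0$, acting on $\mfS_{\theta} \cong \mcS_{\theta}$ by multiplication.

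Verifying the second property is immediate: $\Cole^{\flat} \circ z_{\quo}^{\sharp}$ is multiplication by $\alpha(0)$, and from Notation \ref{alpha} together with the fact that the image of $\otil{\xi}$ in $\La \cozp (\mf{h}/I)_{\theta}$ is $X\otil{\xi}_1$ (Remark \ref{interrem}), one reads off $\alpha(0) \equiv \otil{\xi}_1(0) = \xi' \pmod{I}$. For the first property, I would chain Corollary \ref{inverse_equal}, the defining relation $\Cole \circ z_{\quo} = \alpha$, and the identification $\varphi^{-1} = U_p$ on $D(\mcT_{\quo}) \cong \mfS_{\theta}$ from Remark \ref{quo_results}a:
\begin{align*}
	\cor \circ z_{\quo} &= \cor \circ \Cole^{-1} \circ \alpha = (\Cole^{\flat})^{-1} \circ \ev_0 \circ (1-\varphi^{-1}) \circ \alpha \\
	&= (\Cole^{\flat})^{-1} \circ (1-U_p)\alpha(0) \circ \ev_0 = (1-U_p)\, z_{\quo}^{\sharp} \circ \ev_0,
\end{align*}
using that $\ev_0$ intertwines the action of any $f \in \La \cozp \mf{h}_{\theta}$ with the action of $f(0) \in \mf{h}_{\theta}$, and that $(\Cole^{\flat})^{-1}$ is $\mf{h}_{\theta}$-linear.

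For uniqueness, any candidate $z'$ factors through $\Cole^{\flat}$ as $(\Cole^{\flat})^{-1} \circ \beta$ for a unique $\mf{h}_{\theta}$-linear endomorphism $\beta$ of $\mfS_{\theta}$. The first compatibility together with surjectivity of $\ev_0$ forces $(1-U_p)(\beta - \alpha(0)) = 0$, while the second forces $\beta - \alpha(0)$ to lie in $I \cdot \End_{\mf{h}_{\theta}}(\mfS_{\theta})$. The main obstacle is confirming that these two constraints together imply $\beta = \alpha(0)$; I expect this to follow from $\mfS_{\theta}$ being $\mf{h}_{\theta}$-free (Hida theory, as invoked in Remark \ref{Lafree} and Theorem \ref{DTquo}) combined with $1-U_p$ being a non-zero-divisor in $\mf{h}_{\theta}$ (which is a reduced finite flat $\La_{\theta}$-algebra and for which $1-U_p$ is nonzero in the generic fiber), reducing the question to the fact that a nonzero element of the local ring $\mf{h}_{\theta}$ cannot be annihilated by a non-zero-divisor.
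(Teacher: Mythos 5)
Your construction and verification match the paper's proof exactly, differing only in that you invoke Corollary \ref{inverse_equal} where the paper applies Proposition \ref{Col_vs_flat} directly --- equivalent maneuvers. For uniqueness, which the paper itself leaves implicit, a cleaner route than reasoning inside $\mf{h}_{\theta}$ is to note that $1-U_p$ is injective on $H^1(\qp,\mcT_{\quo}(1))$ (Lemma \ref{nofixedelts}); combined with the surjectivity of $\ev_0$, the first displayed property then determines $z_{\quo}^{\sharp}$ on its own, so your appeal to reducedness of the Hecke algebra and its generic fiber is unnecessary.
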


\begin{proof}
	Since $z_{\quo}$ is defined so that $\Cole \circ z_{\quo}$ is multiplication by $\alpha$
	and $(1-U_p)\ev_0  \circ \Cole = \Cole^{\flat} \circ \cor$ by Proposition \ref{Col_vs_flat}, 
	we have
	that 
	$$
		\Cole^{\flat} \circ \cor \circ z_{\quo} = (1-U_p) \ev_0 \circ \Cole \circ z_{\quo}
		= (1-U_p) \alpha(0) \circ \ev_0.
	$$
	Since $\Cole^{\flat}$ is an isomorphism for $\mcT_{\quo}$, we can define $z_{\quo}^{\sharp}$ to be
	the unique map satisfying $\Cole^{\flat} \circ z_{\quo}^{\sharp} = \alpha(0)$.  As $\alpha(0)$ modulo $I$ 
	is $\tilde{\xi}_1(0) = \xi'$ by definition, we are done.
\end{proof}

We prove an analogue of Proposition \ref{zquosharp} 
not involving the derivative $\xi'$ for the intermediate quotient $\mcT_{\quo}^{\dagger}$.

\begin{proposition} \label{zquodagger}
	There exists a unique map 
	$$
		z_{\quo}^{\dagger} \colon \mcS_{\theta} \to H^1(\qp,\mcT_{\quo}^{\dagger}(1))
	$$
	of $\La \cozp \mf{h}_{\theta}$-modules 
	with the property that the square
	$$
		\SelectTips{cm}{} \xymatrix{
			\La \cozp \mcS_{\theta} \ar[r]^-{z_{\quo}^{\dagger} \circ \ev_0} \ar[d]^{z_{\quo}} 
			& H^1(\qp,\mcT_{\quo}^{\dagger}(1)) \ar[d]^{1-U_p} \\
			H^1_{\Iw}(\Q_{p,\infty},\mcT_{\quo}(1)) \ar[r] &  H^1(\qp,\mcT_{\quo}^{\dagger}(1)) 
		}
	$$
	commutes, and the composition $\overline{\Cole}^{\dagger} \circ z_{\quo}^{\dagger} \colon \mcS_{\theta} \to S_{\theta}$ 
	is reduction modulo $(U_p-1)$.
\end{proposition}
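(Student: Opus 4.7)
The plan is to work entirely through the intermediate Coleman isomorphism $\Cole^{\dagger} \colon H^1(\qp, \mcT_{\quo}^{\dagger}(1)) \xrightarrow{\sim} \mfS^{\star}_{\theta}$ of Corollary \ref{loccohdagS}. Under this identification, the natural map from $H^1_{\Iw}(\Q_{p,\infty}, \mcT_{\quo}(1))/X\alpha$ becomes the pushout inclusion $\mfS_{\theta}^{\dagger} \hookrightarrow \mfS_{\theta}^{\star}$ given by multiplication by $1-U_p$, and $\overline{\Cole}^{\dagger} = \psi \circ \Cole^{\dagger}$ becomes the quotient $\mfS_{\theta}^{\star} \twoheadrightarrow S_{\theta}$; moreover the second pushout map $\mfS_{\theta} \to \mfS_{\theta}^{\star}$ sends $x$ to $\alpha \cdot x$ and satisfies $\psi(\alpha \cdot x) = \bar{x}$ in $S_{\theta}$.

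For existence, I would define $z_{\quo}^{\dagger}(x) := (\Cole^{\dagger})^{-1}(\alpha \cdot x)$ for $x \in \mcS_{\theta}$ (identified with $\mfS_{\theta}$ via the fixed isomorphism), where $\alpha \cdot x \in \mfS_{\theta}^{\star}$ denotes the pushout image of $x$, obtained by embedding $x$ as $1 \otimes x \in \La \cozp \mfS_{\theta}$ and multiplying by $\alpha$. The $\La \cozp \mf{h}_{\theta}$-linearity is direct. To check commutativity of the square, I apply $\Cole^{\dagger}$ to both routes from $\La \cozp \mcS_{\theta}$ to $\mfS_{\theta}^{\star}$: using $\Cole \circ z_{\quo} = \alpha$ from Definition \ref{zquo}, the upper route sends $f \cdot x$ to $(1-U_p) f(0) \alpha x$, while the lower route sends it to $(1-U_p) f \alpha x$. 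Their difference $(1-U_p)(f-f(0)) \alpha x$ lies in $X\alpha(\La \cozp \mfS_{\theta})$ since $f - f(0) \in X\La$, and hence vanishes in $\mfS_{\theta}^{\star}$. The second condition follows at once from $\psi(\alpha \cdot x) = \bar{x}$.

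For uniqueness, the difference $\delta$ of two candidate maps satisfies $\overline{\Cole}^{\dagger} \circ \delta = 0$, so $\Cole^{\dagger}(\delta(x))$ lies in $\ker \psi = (1-U_p) \mfS_{\theta}^{\dagger}$ and may be written uniquely as $(1-U_p) a(x)$ with $a(x) \in \mfS_{\theta}^{\dagger}$, by the injectivity of the pushout in Corollary \ref{loccohdagS}. The first condition forces $(1-U_p)^2 a(x) = 0$ in $\mfS_{\theta}^{\star}$, and the same injectivity of the pushout $\mfS_{\theta}^{\dagger} \hookrightarrow \mfS_{\theta}^{\star}$ yields $(1-U_p) a(x) = 0$ in $\mfS_{\theta}^{\dagger}$. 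The main obstacle, and the technical crux of the argument, is to conclude $a(x) = 0$, which reduces to showing that $1-U_p$ is a non-zero-divisor on $\mfS_{\theta}^{\dagger} = (\La \cozp \mfS_{\theta})/X\alpha$. Using the $\La_{\theta}$-freeness of $\mfS_{\theta}$ (Remark \ref{Lafree}) and the fact that $X\alpha$ is a non-zero-divisor on $\La \cozp \mfS_{\theta}$ (Lemma \ref{diagmult}), this amounts to verifying that $1-U_p$ and $X\alpha$ form a regular sequence on $\La \cozp \mfS_{\theta}$; I expect this to require care at the Eisenstein locus of $\mf{h}_{\theta}$, where $U_p - 1$ lies in the Eisenstein ideal and is at greatest risk of behaving degenerately.
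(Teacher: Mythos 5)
Your construction of $z_{\quo}^{\dagger}$ as $x \mapsto (\Cole^{\dagger})^{-1}(\alpha \cdot x)$ via the second pushout map is exactly the paper's definition, and your verification that the square commutes (by reducing $(1-U_p)(f-f(0))\alpha x$ modulo $X\alpha$) is a slightly more explicit version of the paper's argument, which merely records that the Iwasawa-route composite is ``multiplication by $(U_p-1)\alpha$'' and leaves the reconciliation with $\ev_0$ to the reader. The second condition is handled the same way in both.

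The uniqueness paragraph, however, contains a phantom obstacle. By the time you have concluded that $(1-U_p)\,a(x) = 0$ in $\mfS_{\theta}^{\dagger}$, you are in fact done: by your own definition $\Cole^{\dagger}(\delta(x)) = (1-U_p)\,a(x)$, which is the image of $a(x)$ under the pushout inclusion $\mfS_{\theta}^{\dagger} \hookrightarrow \mfS_{\theta}^{\star}$. Since this element vanishes in $\mfS_{\theta}^{\dagger}$, it vanishes in the submodule $\mfS_{\theta}^{\star}$, so $\Cole^{\dagger}(\delta(x)) = 0$ and the injectivity of $\Cole^{\dagger}$ gives $\delta(x) = 0$. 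You never need $a(x)$ itself to vanish, and consequently you do not need $1-U_p$ to be a non-zero-divisor on $\mfS_{\theta}^{\dagger}$ nor any regular-sequence claim. (Incidentally, the paper states uniqueness in the proposition but does not explicitly argue it in the proof, so your paragraph---once trimmed of the last, spurious step---is actually a useful supplement.)
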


\begin{proof}
	We define $z_{\quo}^{\dagger}$ as the composition
	$$
		\mcS_{\theta} \xrightarrow{\sim} \mfS_{\theta} \xrightarrow{\alpha} \mfS_{\theta}^{\star},
	$$
	which is in particular a map of $\La$-modules as it lands in the kernel of multiplication by $X$ in $\mfS_{\theta}^{\star}$.
	On the other hand, the composition 
	$$
		\La \cozp \mcS_{\theta} \xrightarrow{z_{\quo}} H^1_{\Iw}(\Q_{p,\infty},\mcT_{\quo}(1))
		\to H^1(\qp,\mcT_{\quo}^{\dagger}(1)) \xrightarrow{\Cole^{\dagger}} \mfS_{\theta}^{\star}
	$$
	is induced by multiplication by $(U_p-1)\alpha$ by definition of $\alpha$ and $\Cole^{\dagger}$.
	The composition of $z_{\quo}^{\dagger}$ with $\psi$ of Corollary \ref{loccohdagS} is
	reduction modulo $(U_p-1)$, which gives the final statement.
\end{proof}

\section{Global study} \label{global_study}

This section has two primary goals: first, to recall what goes into and refine 
the main result of \cite{fk-pf} that $\xi' \Upsilon \circ \varpi = \xi'$ as an endomorphism of $P \otimes_{\zp} \qp$.
After some needed results on the cohomology of $\mcT_{\theta}(1)$ and its subquotients of interest, we recall various modular and Manin-type symbols and then Beilinson-Kato elements. We then define the zeta map $z$ and its ground-level analogue $z^{\sharp}$ which were alluded 
to in Section \ref{local_study}; these carry compatible systems of Manin symbols to compatible systems of cup products of Beilinson-Kato elements. We remove denominators present in the construction of $z$ and $z^{\sharp}$ in \cite[Section 3]{fk-pf} by employing the results of \cite{fks2}. In particular, we recall without proof the $p$-adic regulator computation 
$\Cole \circ z = \alpha$ in Theorem \ref{zetamap} from \cite[4.3.6]{fk-pf} and its consequence that 
$\Cole^{\flat} \circ z^{\sharp} = \xi'$. We put everything together to obtain that $\xi' \Upsilon \circ \varpi = \xi'$ 
as an endomorphism of $P$ itself in Theorem \ref{FKident}, filling out the commutative diagram of Theorem \ref{FKgalcoh}
in \eqref{FK_diag}.

We then turn to the study of the cohomology of the intermediate quotient $\mcT_{\theta}^{\dagger}(1)$ and, again, its subquotients of interest.
Our second goal is to prove the equivalence of Conjecture \ref{sconj} that $\Upsilon \circ \varpi = 1$
with the existence of a reduced refined zeta map $\bar{z}^{\dagger}$ compatible with reductions
$z^{\sharp}$ and $z_{\quo}^{\dagger}$ modulo $I$. This is done in Theorem \ref{equivform}. In fact, we provide 
a candidate for $\bar{z}^{\dagger}$ which is compatible with $z^{\sharp}$, and which is compatible with $z_{\quo}^{\dagger}$ 
if and only if Conjecture \ref{sconj} holds. The diagram \eqref{refined_diag} for intermediate cohomology 
refines \eqref{FK_diag} and encapsulates many of our results.

\subsection{Global cohomology} \label{global_coh}

We first consider torsion in global cohomology groups.  As we are working only with the needed
eigenspace of the Eisenstein part of cohomology, we can obtain finer results than \cite[Section 3]{fk-pf} in
our case of interest.

\begin{lemma} \label{torsfree}
	We have two exact sequences
	\begin{eqnarray*}
		&0 \to H^1_{\Iw}(\mc{O}_{\infty},\mcT_{\theta}(1)) \to H^1_{\Iw}(\mc{O}_{\infty},\tmcT_{\theta}(1))
		\to H^1_{\Iw}(\mc{O}_{\infty},\tmcT_{\theta}/\mcT_{\theta}(1))&\\
		&0 \to H^1(\mc{O},\mcT_{\theta}(1)) \to H^1(\mc{O},\tmcT_{\theta}(1))
		\to H^1(\mc{O},\tmcT_{\theta}/\mcT_{\theta}(1))&
	\end{eqnarray*}
	of $\La \cozp \mf{H}_{\theta}$-modules.  In the first, the terms have no nonzero $\La \cozp \La_{\theta}$-torsion, 
	and in the second, they have no $\La_{\theta}$-torsion.
\end{lemma}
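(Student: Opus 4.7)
The plan is to obtain both exact sequences as initial segments of the long exact sequences in cohomology (respectively, Iwasawa cohomology) attached to the Tate twist of the short exact sequence
\[
	0 \to \mcT_{\theta} \to \tmcT_{\theta} \to \tmcT_{\theta}/\mcT_{\theta} \to 0
\]
of $\La_{\theta}\ps{G_{\Q,S}}$-modules. Left-exactness of the displayed sequences then requires the vanishing of $H^0_{\Iw}(\mc{O}_{\infty},(\tmcT_{\theta}/\mcT_{\theta})(1))$ and $H^0(\mc{O},(\tmcT_{\theta}/\mcT_{\theta})(1))$. Because $\tmcT_{\theta}/\mcT_{\theta}$ is a boundary (cuspidal) contribution whose Galois action at each cusp factors through $\Delta$ via $\theta^{-1}$ together with a diamond-operator contribution, Tate twisting by $(1)$ produces characters which are nontrivial on $G_{\Q_{\infty},S}$ by Hypothesis \ref{thetahyp}(b,d), yielding the required vanishing of invariants.

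For the torsion statements, I would invoke Remark \ref{Lafree}, by which $\mcT_{\theta}$ and $\tmcT_{\theta}$ are free of finite rank over $\La_{\theta}$, together with the elementary principle that, for a compact $R$-module $M$ with continuous $G_{\Q,S}$-action and $f \in R$ a nonzero non-zerodivisor, the $f$-torsion in $H^1(\mc{O},M)$ coincides with the cokernel of the induced map $H^0(\mc{O},M) \to H^0(\mc{O},M/fM)$. Taking $R = \La_{\theta}$ (or $R = \La \cozp \La_{\theta}$ for the Iwasawa case) and letting $M$ run over $\mcT_{\theta}(1)$, $\tmcT_{\theta}(1)$, and $(\tmcT_{\theta}/\mcT_{\theta})(1)$ (respectively their tensor products with $\Lai$), the claim reduces to showing each relevant $H^0(\mc{O},M/fM)$ vanishes. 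For $\mcT_{\theta}(1)$ and $\tmcT_{\theta}(1)$, the filtration by $\mcT_{\sub}$ gives graded pieces on which $G_{\Q,S}$ acts through characters that remain nontrivial on $G_{\Q_{\infty},S}$ modulo any proper ideal, again by Hypothesis \ref{thetahyp}; the parallel character-theoretic analysis on the cusps handles the quotient.

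The main obstacle I expect is the careful bookkeeping of the characters acting on each subquotient of the three modules after Tate twisting and reduction modulo $f$, so as to invoke Hypothesis \ref{thetahyp} uniformly in each case. Much of this analysis is implicit in Ohta's work and in the treatment of Fukaya-Kato cited throughout the excerpt, so the proof should largely amount to collecting and applying these ingredients.
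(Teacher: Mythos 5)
Your strategy for exactness is essentially the paper's: the Iwasawa sequence is automatically left-exact since zeroth Iwasawa cohomology vanishes, and for the ground-level sequence one checks $H^0(\mc{O},\tmcT_\theta/\mcT_\theta(1)) = 0$. One remark: your explanation in terms of ``$\theta^{-1}$ together with a diamond-operator contribution'' and Hypothesis \ref{thetahyp}(b,d) is not quite what is doing the work here. The action of $G_{\Q(\mu_N)}$ on $\tmcT_\theta/\mcT_\theta$ is \emph{trivial} (see \cite[3.2.4]{fk-pf}, or the alternative argument via the cusps being defined over $\Q(\mu_{Np^r})$ and $\tmcT_{\quo}/\mcT_{\quo}$ being unramified at $p$), so after the Tate twist the relevant nontriviality is that of the cyclotomic character; Hypothesis \ref{thetahyp} is not needed for this step.

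The torsion-freeness portion has a genuine gap. You propose to filter $\mcT_\theta$ (and $\tmcT_\theta$) ``by $\mcT_{\sub}$'' and then do character theory on the graded pieces. But $\mcT_{\sub} \subset \mcT_\theta$ is only a $G_{\qp}$-submodule, not a $G_{\Q,S}$-submodule; the sequence $0 \to \mcT_{\sub} \to \mcT_\theta \to \mcT_{\quo} \to 0$ is a sequence of $\mf{h}_\theta\ps{G_{\qp}}$-modules, and the $P \oplus Q$ decomposition becomes global Galois only after reducing modulo the Eisenstein ideal $I$ (Proposition \ref{exseqredcoh}). So the ``graded pieces'' you would obtain from this filtration are not $G_{\Q,S}$-modules and your character computation cannot be carried out on them. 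The paper instead filters by powers of $I$: with $T_k = I^k\mcT_\theta/I^{k+1}\mcT_\theta$, the multiplication maps from $I^k \otimes_{\mf{h}_\theta} P$ and $I^k \otimes_{\mf{h}_\theta} Q$ give a globally Galois-equivariant decomposition of $T_k$ into a piece with trivial $G_{\Q,S}$-action and a piece on which $\Delta$ acts by $\omega\theta^{-1}$, and it is on \emph{these} that Hypothesis \ref{thetahyp}c is applied. This is the idea you are missing.

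Finally, the paper does not treat the third term by the same $H^0(\mc{O},M/fM)=0$ reduction at all: it instead exhibits $H^1_\Iw(\mc{O}_\infty,\tmcT_\theta/\mcT_\theta(1))$ as injecting (by weak Leopoldt and the injectivity of the Coleman map) into $X^{-1}\La \cozp \mf{M}_\theta/\mf{S}_\theta \cong \La \cozp \La_\theta$, and exhibits $H^1(\mc{O},\tmcT_\theta/\mcT_\theta(1))$ as a subgroup of $\bigoplus_{\ell \mid Np}\La_\theta$ via prime-to-$p$ inflation and Kummer theory. That route is more direct and circumvents the need to verify character nontriviality of $M/fM(1)$ for every nonzero $f$. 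If you wish to make your approach rigorous you would need to spell out that analysis uniformly in $f$ (and $\lambda \in \La \cozp \La_\theta$ for the Iwasawa case); the reliable source of nontriviality there turns out to be the prime-to-$p$ part $\omega$ of the cyclotomic character, not $\theta$.
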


\begin{proof}
	The first sequence is automatically exact, as zeroth Iwasawa cohomology groups are trivial.
	Note that $\tmcT_{\theta}/\mcT_{\theta}$ has trivial $G_{\Q(\mu_N)}$-action by 
	\cite[3.2.4]{fk-pf}. (Alternatively, one can see this by observing that the action factors through the Galois group
	of the totally ramified at $p$ extension $\Q(\mu_{Np^{\infty}})/\Q(\mu_N)$,
	since all cusps of $Y_1(Np^r)$ are defined over $\Q(\mu_{Np^r})$, and then that the $G_{\qp}$-action on 
	$\tmcT_{\theta}/\mcT_{\theta} \cong \tmcT_{\quo}/\mcT_{\quo}$ is unramified.)
	So, the second sequence is exact as $H^0(\mc{O},\tmcT_{\theta}/\mcT_{\theta}(1)) = 0$.

	We can filter any $\mf{h}_{\theta}[G_{\Q,S}]$-quotient 
	$M$ of $\mcT_{\theta}$ by the powers of $I$, and we clearly have
	$H^0(\mc{O},M(1)) = 0$
	if $H^0(\mc{O},I^kM/I^{k+1}M(1)) = 0$ for all $k \ge 0$.
	Let $\mu \in \La_{\theta}$ be nonzero, and set $M = \mcT_{\theta}/\mu\mcT_{\theta}$.
	As $\mcT_{\theta}$ is $\La_{\theta}$-free, we have an exact sequence
	$$
		0 \to \mcT_{\theta} \xrightarrow{\mu} \mcT_{\theta} \to M \to 0,
	$$
	so $H^0(\mc{O},M(1))$ surjects onto (in fact, is isomorphic to) 
	the $\mu$-torsion in $H^1(\mc{O},\mcT_{\theta}(1))$.	
	Set $T_k = I^k\mcT_{\theta}/I^{k+1}\mcT_{\theta}$.
	Let $P_k$ denote the $\mf{h}_{\theta}[G_{\Q,S}]$-module that is the image of the multiplication map
	$I^k \otimes_{\mf{h}_{\theta}} P \to T_k$.
	The $G_{\Q}$-action on $P_k$ is then trivial, and in that the quotient 
	$Q_k = T_k/P_k$ is also a quotient of $I^k \otimes_{\mf{h}_{\theta}} Q$,
	the $G_{\Q}$-action on $Q_k$ factors through $\Z_{p,N}^{\times}$ with $\Delta$ acting as
	$\omega\theta^{-1}$.  As a nonzero $\mf{h}_{\theta}[G_{\Q}]$-quotient of $T_k(1)$, it then follows
	(since $\theta \neq \omega^2$ by Hypothesis \ref{thetahyp}c) that 
	$I^kM/I^{k+1}M(1)$ has no nonzero $G_{\Q}$-fixed elements.  Thus, $H^1(\mc{O},\mcT_{\theta}(1))$ has
	no $\mu$-torsion.  Replacing $M$ with $\Lai \cozp M$ and $\mu$ with a nonzero element
	$\lambda \in \La \cozp \La_{\theta}$, a similar argument applies to show that 
	$H^1(\mc{O},\Lai \cozp \mcT_{\theta}(1))$
	has no nonzero $\lambda$-torsion (as the $\Delta$-action on $\Lai$
	is trivial).
	
	It remains to deal with the $\tmcT_{\theta}/\mcT_{\theta}$-terms.
	We first claim that the restriction map 
	$$
		H^1_{\Iw}(\mc{O}_{\infty},\tmcT_{\theta}/\mcT_{\theta}(1)) \to H^1_{\Iw}(\Q_{p,\infty},\tmcT_{\theta}/\mcT_{\theta}(1))
	$$
	is injective. Since $p \nmid \varphi(N)$, it suffices to show this after adjoining $\mu_N$ on both sides. Since
	the $G_{\Q(\mu_N),S}$-action on $\tmcT_{\theta}/\mcT_{\theta}$ is trivial, this then follows from the known weak Leopoldt
	conjecture for the cyclotomic $\zp$-extension of $\Q(\mu_N)$: see \cite[\S 10.3]{nsw}.
	Following this restriction map with the injective Coleman map (Lemma \ref{Colimage}), we obtain an injection
	$$
		H^1_{\Iw}(\mc{O}_{\infty},\tmcT_{\theta}/\mcT_{\theta}(1)) \hookrightarrow X^{-1}\La \cozp 
		\mf{M}_{\theta}/\mf{S}_{\theta} \cong \La \cozp \La_{\theta},
	$$   
	where the latter isomorphism uses \cite[Proposition 3.1.2]{ohta-cong} and Hypothesis \ref{thetahyp}d
	(though in the case said hypothesis fails, we have $(\La \cozp \La_{\theta})^2$ instead, and
	the result is the same).
	Clearly the latter module is $\La \cozp \La_{\theta}$-torsion free.
	
	Next, consider the prime-to-$p$ degree inflation map
	$$
		H^1(\mc{O},\tmcT_{\theta}/\mcT_{\theta}(1)) \xrightarrow{\sim} 
		H^1(\mc{O}[\mu_N],\tmcT_{\theta}/\mcT_{\theta}(1))^{\Gal(\Q(\mu_N)/\Q)}.
	$$
	By Kummer theory, the right-hand side is the $\Gal(\Q(\mu_N)/\Q)$-invariant group of the direct
	sum $\bigoplus_{v \mid Np} \tmcT_{\theta}/\mcT_{\theta}$ over places over $Np$ in $\Q(\mu_N)$
	(this direct sum being the completed tensor product of the $p$-torsion-free group 
	$\tmcT_{\theta}/\mcT_{\theta}$ and the
	$p$-completion of $\mc{O}[\mu_N]^{\times}/\Z[\mu_N]^{\times}$).
	That is, we have the isomorphism in
	$$
		H^1(\mc{O},\tmcT_{\theta}/\mcT_{\theta}(1)) \cong \bigoplus_{\ell \mid Np} 
		(\tmcT_{\theta}/\mcT_{\theta})^{\Gal(\Q_{\ell}(\mu_N)/\Q_{\ell})} 
		\hookrightarrow \bigoplus_{\ell \mid Np} \La_{\theta}.
	$$
	The latter injection (which is actually an isomorphism) is a consequence of Theorem \ref{DTquo}, 
	\cite[Proposition 3.1.2]{ohta-cong}, and 
	Hypothesis \ref{thetahyp}d (the latter again being unnecessary for the result),
	and $\bigoplus_{\ell \mid Np} \La_{\theta}$ is plainly $\La_{\theta}$-torsion free.
\end{proof}

\begin{lemma} \label{nofixedelts}
	Multiplication by $1-U_p$ is an injective endomorphism of $H^1(\mc{O},\mcT_{\theta}(1))$
	and of $H^1(\qp,\mcT_{\quo}(1))$.
\end{lemma}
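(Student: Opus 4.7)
The plan is to treat both statements in parallel. Assuming $1-U_p$ acts injectively on the coefficient module $M$ (taking $M=\mcT_{\theta}$ for the global statement and $M=\mcT_{\quo}$ for the local one), the short exact sequence
\begin{equation*}
0 \to M \xrightarrow{1-U_p} M \to M/(1-U_p)M \to 0
\end{equation*}
gives rise to a long exact cohomology sequence that reduces injectivity of $1-U_p$ on $H^1$ to vanishing of the preceding $H^0$ of the Tate twist of the cokernel.

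For the local statement, injectivity on $\mcT_{\quo}$ is immediate: $U_p$ acts as arithmetic Frobenius on the unramified module $\mcT_{\quo}$ by Remark~\ref{quo_results}a, and $\mcT_{\quo}^{\Fr_p=1}=0$ as noted just before Corollary~\ref{loccohdagS}. Frobenius then acts trivially on the cokernel $\mcT_{\quo}/(1-U_p)\mcT_{\quo}$, so $G_{\qp}$ acts on its Tate twist purely through the cyclotomic character, forcing the relevant $H^0$ to vanish since $\mu_p \not\subset \qp$.

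For the global statement, we first need injectivity of $1-U_p$ on $\mcT_{\theta}$. Since the Hecke and Galois actions commute, the Hecke operator $U_p \in \mf{h}_{\theta}$ acts on the $\mf{h}_{\theta}$-faithful, $\mf{h}_{\theta}$-torsion-free module $\mcT_{\theta}$ by multiplication by the ring element $U_p$, so injectivity of $1-U_p$ on $\mcT_{\theta}$ reduces to $1-U_p$ being a non-zero-divisor in $\mf{h}_{\theta}$. This is a standard consequence of Hida theory: $\mf{h}_{\theta}$ is reduced, and at each minimal prime (corresponding to an irreducible component of the Hida family) the Hecke eigenvalue $a_p$ is not identically $1$. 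Granted this injectivity, the vanishing of $H^0(\mc{O}, (\mcT_{\theta}/(1-U_p)\mcT_{\theta})(1))$ follows essentially verbatim from the filtration argument in the proof of Lemma~\ref{torsfree}: since $1-U_p \in I$, the graded pieces of the $I$-adic filtration on the cokernel are quotients of $T_k = I^k\mcT_{\theta}/I^{k+1}\mcT_{\theta}$, and the Kurihara--Harder--Pink argument recalled in that proof shows each $T_k(1)$ has trivial $G_{\Q,S}$-invariants using Hypothesis~\ref{thetahyp}c ($\theta \neq \omega^2$). The main obstacle is the non-zero-divisor property of $1-U_p$ in $\mf{h}_{\theta}$, which relies on structural input about the Eisenstein component of Hida's ordinary Hecke algebra beyond what is explicitly developed in the preceding text.
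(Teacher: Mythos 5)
Your local argument is essentially identical to the paper's: both use that $U_p$ acts as $\Fr_p$ on the unramified module $\mcT_{\quo}$ with $\mcT_{\quo}^{\Fr_p=1}=0$, and conclude the vanishing of the $H^0$ of the Tate-twisted cokernel.

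For the global statement you take a genuinely different route. The paper does not invoke the $I$-adic filtration of Lemma~\ref{torsfree}; instead it uses the $\mf{h}_{\theta}$-module splitting $\mcT_{\theta}=\mcT_{\sub}\oplus\mcT_{\quo}$. Any $G_{\Q}$-fixed class in $(\mcT_{\theta}/(U_p-1)\mcT_{\theta})(1)$ must die in $(\mcT_{\quo}/(U_p-1)\mcT_{\quo})(1)$ by the local computation, so one is reduced to the image of $\mcT_{\sub}$; there the paper filters $\mcT_{\sub}\cong\mf{h}_{\theta}$ by the maximal ideal $\mf{m}$ rather than by $I$, and uses $(\mcT_{\sub}/\mf{m}\mcT_{\sub})(1)\cong(R/p)(2)$ with $\theta\not\equiv\omega^2\bmod p$. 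Your alternative of reapplying the $I$-adic filtration argument from Lemma~\ref{torsfree} (whose opening paragraph is indeed stated for arbitrary $\mf{h}_{\theta}[G_{\Q,S}]$-quotients of $\mcT_{\theta}$, so applies to $\mcT_{\theta}/(1-U_p)\mcT_{\theta}$) is also valid and arguably more uniform, with the trivial action on $P_k$ and the $\omega\theta^{-1}$ action of $\Delta$ on $Q_k$ playing the role that $\mcT_{\quo}$ and $\mcT_{\sub}/\mf{m}\mcT_{\sub}$ play in the paper. A minor correction: the Kurihara/Harder--Pink argument is actually recalled in the proof of Proposition~\ref{exseqredcoh}; Lemma~\ref{torsfree} only uses its conclusion (triviality of the $G_{\Q}$-action on $P$).

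On the ``main obstacle'' you identify: the non-zero-divisor property of $1-U_p$ in $\mf{h}_{\theta}$ does not in fact require extra structural input about Hida components (reducedness, irreducibility, etc.). You already proved $1-U_p$ is injective on $\mcT_{\quo}$, and Remark~\ref{equivalent} records that $\mcT_{\quo}$ is $\mf{h}_{\theta}$-faithful. If $(1-U_p)r=0$ in $\mf{h}_{\theta}$, then $(1-U_p)r$ annihilates $\mcT_{\quo}$; injectivity of $1-U_p$ on $\mcT_{\quo}$ forces $r$ to annihilate $\mcT_{\quo}$, hence $r=0$ by faithfulness. Since $\mcT_{\sub}\cong\mf{h}_{\theta}$ as an $\mf{h}_{\theta}$-module, this gives injectivity on $\mcT_{\sub}$, and then on $\mcT_{\theta}=\mcT_{\sub}\oplus\mcT_{\quo}$. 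So the gap you flag is easily filled from material already developed in the paper, and your proof stands.
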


\begin{proof}
	Since multiplication by $1-U_p$ is injective on $\mcT_{\theta}$, showing that $1-U_p$ is injective on
	$H^1(\mc{O},\mcT_{\theta}(1))$ amounts to showing that
	the Tate twist of $\mcT_{\theta}/(U_p-1)\mcT_{\theta}$ has trivial $G_{\Q}$-invariants.
	Note that the $G_{\qp}$-action $\mcT_{\quo}$ is unramified, and therefore, the action of 
	$G_{\qp^{\ur}}$ on $\mcT_{\quo}(1)$ is given by multiplication by the cyclotomic 
	character.  Therefore, we have 
	$$
		H^0(\qp,(\mcT_{\quo}/(U_p-1)\mcT_{\quo})(1)) = 0
	$$ 
	and the statement for $H^1(\qp,\mcT_{\quo}(1))$. 
	
	Next, note that any element of $(\mcT_{\theta}/(U_p-1)\mcT_{\theta})(1)$ with nontrivial image in 
	$(\mcT_{\quo}/(U_p-1)\mcT_{\quo})(1)$
	is not $G_{\Q}$-fixed in that its image is not $G_{\qp}$-fixed.
	Since $\mcT_{\theta} = \mcT_{\sub} \oplus \mcT_{\quo}$ as $\mf{h}_{\theta}$-modules, it therefore
	suffices to show that no nontrivial element of $(\mcT_{\sub}/(U_p-1)\mcT_{\sub})(1)$ is 
	fixed by $G_{\Q}$ inside the $\mf{h}_{\theta}[G_{\Q}]$-module $(\mcT_{\theta}/(U_p-1)\mcT_{\theta})(1)$.
	
	Now, $\mcT_{\sub}$ is isomorphic to $\mf{h}_{\theta}$ as an $\mf{h}_{\theta}$-module, and
	$\mcT_{\sub}/I\mcT_{\sub}$ is an $\mf{h}_{\theta}[G_{\Q}]$-submodule of $\mcT_{\theta}/I\mcT_{\theta}$ 
	mapping isomorphically to the quotient $Q$ of the latter module.
	For $\mf{m} = \Eis+(p,X)\mf{h}_{\theta}$, we then have
	$$
		(\mcT_{\sub}/\mf{m}\mcT_{\sub})(1) \cong (\La_{\theta}^{\iota}/(p,X))(2) \cong (R/pR)(2)
	$$
	as $G_{\Q}$-modules (where $G_{\Q}$ acts on $R$ through $\theta^{-1}$), 
	and this has no fixed elements since $\theta \not\equiv \omega^2 \bmod p$ by Hypothesis \ref{thetahyp}.  If 
	$x \in \mcT_{\sub}(1) - (U_p-1)\mcT_{\sub}(1)$ has $G_{\Q}$-fixed image in $(\mcT_{\theta}/(U_p-1)\mcT_{\theta})(1)$, 
	then it is also fixed in $\mf{h}_{\theta} x/(\mf{m}x+(U_p-1)\mcT_{\sub}(1))$.  This is 
	isomorphic to a nonzero quotient of $(\mcT_{\sub}/\mf{m}\mcT_{\sub})(1)$ under multiplication by $x$, 
	so it has no fixed elements, which contradicts $x \neq 0$.
\end{proof}

\begin{lemma} \label{cohomatell}
	For primes $\ell \mid N$, the groups 
	$H^1(\Q_{\ell},\mcT_{\theta}(1))$ and
	$H^1(\Q_{\ell},\Lai \cozp \mcT_{\theta}(1))$ are trivial.
\end{lemma}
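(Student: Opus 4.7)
The plan is to compute these cohomology groups via the long exact sequence associated to the Galois-equivariant short exact sequence $0 \to \mcT_{\sub} \to \mcT_\theta \to \mcT_{\quo} \to 0$, analyzing the local $G_{\Q_\ell}$-structure of each graded piece at $\ell \mid N$. The crucial hypotheses are Hypothesis \ref{thetahyp}b (so that $\theta|_{\Delta_\ell}$ is a nontrivial character of the decomposition subgroup $\Delta_\ell \subset \Delta$) and Hypothesis \ref{pNcond} ($p \nmid \varphi(N)$, giving both $\ell \not\equiv 1 \bmod p$ and $\#\Delta_\ell$ prime to $p$).

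I would first lift the local $G_{\Q_\ell}$-analysis in the proof of Proposition \ref{exseqredunr} from the reduced sequence to $\mcT_\theta$ itself. Since $\Delta_\ell$ is prime-to-$p$ and $1 + I$ is pro-$p$ (as $\mf{h}_\theta$ is complete local with $I$ in the maximal ideal), any character $\Delta_\ell \to \mf{h}_\theta^\times$ is determined by its reduction modulo $I$; thus the prime-to-$p$ part of inertia at $\ell$ acts on $\mcT_{\sub}$ via the nontrivial character $\theta^{-1}|_{\Delta_\ell}$ (lifting the $\omega\theta^{-1} = \theta^{-1}$-action on $Q$, using $\omega|_{\Delta_\ell}$ trivial) and trivially on $\mcT_{\quo}$ (lifting the trivial action on $P$). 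On $\mcT_{\sub}(1)$, nontriviality of inertia (preserved by the Tate twist, since $\chi_{\mr{cyc}}$ is unramified at $\ell$) gives $H^0(\Q_\ell, \mcT_{\sub}(1)) = 0$; local Tate duality applied to the Pontryagin dual (still carrying the nontrivial inertia character $\theta|_{\Delta_\ell}$) gives $H^2 = 0$; and the vanishing of the local Euler-Poincar\'e characteristic at $\ell \neq p$ produces $H^1 = 0$. On $\mcT_{\quo}(1)$, the Frobenius-minus-identity endomorphism reduces modulo $I$ to a $\zp$-unit via Hypothesis \ref{pNcond} (using the precise Hida-theoretic form of Frobenius at $\ell \mid N$), hence is invertible in $\mf{h}_\theta$; this annihilates $H^0$ and, by local duality, $H^2$, and then Euler-Poincar\'e gives $H^1 = 0$. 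The long exact sequence then produces $H^1(\Q_\ell, \mcT_\theta(1)) = 0$.

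For the Iwasawa cohomology group, Shapiro's lemma (Remark \ref{galcohfacts}a) identifies $H^1(\Q_\ell, \Lai \cozp \mcT_\theta(1))$ with $\varprojlim_r H^1(\Q_{\ell,r}, \mcT_\theta(1))$ over the layers of the unramified $\zp$-extension $\Q_{\ell,\infty}/\Q_\ell$ (since $\ell \neq p$). The inertia subgroup is unchanged at each layer, so the inertia-based arguments for $\mcT_{\sub}(1)$ carry over verbatim; the Frobenius of $\Q_{\ell,r}$ is $\Fr_\ell^{p^{r-1}}$, and $\ell^{p^{r-1}} \not\equiv 1 \bmod p$ for all $r \ge 1$ (since $\ell \not\equiv 1 \bmod p$ is preserved under raising to $p$-th powers), so the Frobenius argument on $\mcT_{\quo}(1)$ also carries through unchanged. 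Hence $H^1(\Q_{\ell,r}, \mcT_\theta(1)) = 0$ for each $r$, and the inverse limit vanishes.

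The main obstacle is the $\mcT_{\quo}(1)$ step: sharpening the Frobenius action on $\mcT_{\quo}$ at $\ell \mid N$ so that Frobenius-minus-identity on $\mcT_{\quo}(1)$ reduces to a unit in $\mf{h}_\theta$, rather than merely to an element of $I$ (which would be insufficient, as $(U_\ell - 1) \in I$ fails to act injectively on $\mcT_{\quo}$ whose reduction $\mcT_{\quo}/I\mcT_{\quo} \cong P$ is nonzero). This requires a structural input from Hida theory on the local Galois representation at $\ell \mid N$ in the Eisenstein component—ultimately encoding the fact that the cyclotomic factor from the Tate twist contributes $\ell$ rather than being absorbed into the Hecke eigenvalue $U_\ell \equiv 1 \bmod I$—and, while standard, is not made explicit in the text up to this point.
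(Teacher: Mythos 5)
There is a genuine gap: the exact sequence $0 \to \mcT_{\sub} \to \mcT_{\theta} \to \mcT_{\quo} \to 0$ is \emph{not} $G_{\Q_\ell}$-equivariant for $\ell \mid N$. By Definition \ref{Tquosub}, $\mcT_{\quo}$ is the maximal unramified $\mf{H}_{\theta}\ps{G_{\qp}}$-quotient, so $\mcT_{\sub}$ is only a $G_{\qp}$-stable submodule of $\mcT_{\theta}$; since $\mcT_{\theta}$ is generically an irreducible $G_{\Q,S}$-representation, $\mcT_{\sub}$ cannot be globally Galois-stable, and there is no reason the local filtration at $\ell \mid N$ should coincide with the one at $p$. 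The $G_{\Q,S}$-equivariant sequence $0 \to P \to T \to Q \to 0$ (whose $G_{\Q_\ell}$-structure Proposition \ref{exseqredunr} exploits) is constructed globally from the pairing with $\xi e_{\infty}$ and does not arise as the reduction of a $G_{\Q_\ell}$-stable filtration of the unreduced lattice, so the plan of lifting the $\Delta_\ell$-eigencharacter computation from $P, Q$ to $\mcT_{\sub}, \mcT_{\quo}$ does not get off the ground. The Frobenius-eigenvalue issue you flag at the end is therefore not the main obstacle; the decomposition is unavailable at $\ell \mid N$ in the first place.

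The paper's proof avoids the decomposition entirely. It cites the Fukaya--Kato inflation isomorphism $H^1(\F_\ell, H^0(\Q_\ell^{\ur}, \mcT_{\theta}(1))) \xrightarrow{\sim} H^1(\Q_\ell, \mcT_{\theta}(1))$, together with its analogue over $\Q_{\ell,r}$ with residue field $\F_{\ell,r}$, which handles the Iwasawa cohomology group via Shapiro's lemma. Writing $\mf{T} = H^0(\Q_\ell^{\ur}, \mcT_{\theta})$ and using that $U_\ell$ acts on $\mf{T}$ as an arithmetic Frobenius with eigenvalues congruent to $1$ modulo the maximal ideal of $\mf{h}_{\theta}$ (this is the Hida-theoretic input you correctly identify as necessary; the paper cites Hida, Conrad--Diamond--Taylor, and Mazur--Wiles for the newform quotients of $\mcT_{\theta}$), one obtains $H^1(\F_{\ell,r}, \mf{T}(1)) \cong \mf{T}/((\ell U_\ell)^{p^r}-1)\mf{T} = 0$ because $\ell^{p^r} \not\equiv 1 \bmod p$. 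The crucial point is that this Frobenius computation is performed on the inertia-invariants of the whole lattice $\mcT_{\theta}$ rather than on the $p$-local quotient $\mcT_{\quo}$, which is what sidesteps the equivariance problem above.
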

     
\begin{proof}
	 We remark that 
	$H^1(\Q_{\ell}, \Lai \cozp \mcT_{\theta}(1))$ 
	is isomorphic to a finite product of inverse limits
	$\varprojlim_r H^1(\Q_{\ell,r},\mcT_{\theta}(1))$ as an $\mf{h}_{\theta}$-module, 
	where $\Q_{\ell,r}$ denotes the unramified degree $p^r$ extension of $\Q_{\ell}$.
	It suffices to show that the terms in this inverse limit vanish.
	Fukaya and Kato showed that the inflation map 
	$$
		H^1(\F_{\ell},H^0(\Q_{\ell}^{\ur},\mcT_{\theta}(1))) \to H^1(\Q_{\ell},\mcT_{\theta}(1))
	$$
	is an isomorphism \cite[9.5.2]{fk-pf}, and their argument works with 
	$\Q_{\ell,r}$ and the field $\F_{\ell,r}$ of order $\ell^{p^r}$ replacing $\Q_{\ell}$ and $\F_{\ell}$, respectively. 
	
	Let $\mf{T} = H^0(\Q_{\ell}^{\ur},\mcT_{\theta})$.
	It remains only to show that the groups 
	$H^1(\F_{\ell,r},\mf{T}(1))$ are trivial.  The operator $U_{\ell}$ acts on $\mf{T}$ as an arithmetic 
	Frobenius (see \cite[Theorem 4.2.4]{hida}, \cite[Lemma 4.2.2]{cdt}, \cite[Proposition 3.2.2]{mw}
	for the quotients of $\mcT_{\theta}$ attached to newforms, from which this statement follows by Hida theory), 
	and its eigenvalues are congruent to $1$ modulo the maximal
	ideal of $\mf{h}_{\theta}$ since 
	$U_{\ell}-1 \in I$.
	Since this Frobenius generates the
	procyclic group $G_{\F_{\ell}}$, we have 
	$$
		H^1(\F_{\ell,r},\mf{T}(1)) \cong \mf{T}/((\ell U_{\ell})^{p^r} - 1)\mf{T},
	$$
	and the latter quotient is zero since $\ell^{p^r} \not\equiv 1 \bmod p$.
\end{proof}

\begin{lemma} \label{globcompconn}
	Under the identification of $H^2_c(\mc{O},P(1))$ with $P$ of Lemma \ref{connectP},
	the canonical map $H^1(\qp,P(1)) \to H^2_c(\mc{O},P(1))$ agrees with $-\Cole_P^{\flat}$.
\end{lemma}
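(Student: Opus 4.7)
The plan is to deduce the lemma directly from Lemma \ref{loccompconn} applied with $M = P$, combined with Lemmas \ref{Colconn} and \ref{connectP}. Specifically, I will consider the square
$$
	\SelectTips{cm}{} \xymatrix{
		H^1(\qp,P(1)) \ar[r]^{\partial_P} \ar[d] & H^2(\qp,P(1)) \ar[d] \\
		H^2_c(\mc{O},P(1)) \ar[r]^{\partial_P} & H^3_c(\mc{O},P(1))
	}
$$
which anticommutes by Lemma \ref{loccompconn} (restricting the direct sum to the summand at $p$, as the lemma applies to each summand separately).

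First, I would compute each side of the square explicitly. Because $G_{\Q,S}$ acts trivially on $P$, the Frobenius $\varphi$ acts trivially on $D(P)$ under the canonical identification $D(P) \cong P$, so the quotient $D(P)/(\varphi-1)D(P)$ of Lemma \ref{Colconn} is just $P$ itself. Thus Lemma \ref{Colconn} identifies the top horizontal connecting map $\partial_P$ with $\Cole_P^{\flat} \colon H^1(\qp,P(1)) \to P$, where we use the local invariant map to identify $H^2(\qp,P(1))$ with $P$. Meanwhile, Lemma \ref{connectP} says the bottom horizontal $\partial_P$ is the identity map on $P$ under the identifications of Lemma \ref{Pcoh}.

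Next, I would check that the right vertical map $H^2(\qp,P(1)) \to H^3_c(\mc{O},P(1))$, source identified with $P$ via the local invariant and target identified with $P$ via the global invariant of Lemma \ref{Pcoh}, is itself the identity on $P$. This follows because the composition of the right vertical map with the global invariant is the sum of local invariants at $p$ and the primes dividing $N$; at the primes dividing $N$, $P(1)$ is unramified and the local cohomology vanishes (or, more simply, the sole contribution to the sum comes from the summand at $p$), so the composition is the local invariant at $p$, which is the identity after our chosen identifications.

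Finally, chasing the diagram, the clockwise composition $H^1(\qp,P(1)) \to P$ is $\Cole_P^{\flat}$ and the counterclockwise composition is the map of the statement (followed by the identity). Anticommutativity then forces the canonical map $H^1(\qp,P(1)) \to H^2_c(\mc{O},P(1)) \cong P$ to equal $-\Cole_P^{\flat}$, as claimed. The only subtle point is to keep the various sign conventions straight across the three cited lemmas, in particular the sign in the $-\chi$ used to identify $H^2_c(\mc{O},P(1))$ with $P$ in Lemma \ref{Pcoh}, which is already baked into Lemma \ref{connectP}; this is where I expect the most care is needed, but once those identifications are fixed the argument is purely formal.
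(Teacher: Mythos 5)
Your argument is correct and follows essentially the same approach as the paper: both begin with the anticommuting square from Lemma \ref{loccompconn}, identify the top horizontal map with $\Cole_P^{\flat}$ via Lemma \ref{Colconn}, observe that the right vertical map and the bottom horizontal map are the identity under the invariant-map identifications, and conclude. The only cosmetic difference is that you invoke Lemma \ref{connectP} to see that the bottom $\partial_P$ is the identity, whereas the paper cites Proposition \ref{Psquare} with $\alpha = 1$; these yield the same conclusion, and your choice is if anything slightly more direct.
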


\begin{proof}
	Consider the square 
	$$
		\SelectTips{cm}{} \xymatrix{
			H^1(\qp,P(1)) \ar[r]^{\partial_P} \ar[d] & H^2(\qp,P(1)) \ar[d] \\
			H^2_c(\mc{O},P(1)) \ar[r]^{\partial_P} & H^3_c(\mc{O},P(1)), \\
		}
	$$
	which anticommutes by Lemma \ref{loccompconn}.
	The identifications of $H^2(\qp,P(1))$ and $H^3_c(\mc{O},P(1))$
	with $P$ by invariant maps agree, and by Proposition \ref{Psquare} for $\alpha=1$, the latter identification agrees with the identification 
	of $H^2_c(\mc{O},P(1))$ with $P$ via $\partial_P$. On the other hand, the composite map $\inv \circ \partial_P \colon 
	H^1(\qp,P(1)) \to P$ equals $\Cole_P^{\flat}$ by Lemma \ref{Colconn}. The result follows.
\end{proof}

\begin{lemma} \label{splitseqs}
	The exact sequences
	$$
		0 \to H^1(\mc{O},P(1)) \to H^1(\mc{O},T(1)) \to H^1(\mc{O},Q(1)) \to 0
	$$
	and
	$$
		0 \to H^1(\mc{O},P(1)) \to \bigoplus_{\ell \mid Np} H^1(\Q_{\ell},P(1)) \to H^2_c(\mc{O},P(1)) \to 0
	$$
	are canonically split, compatibly with the map from the former sequence to the latter.
\end{lemma}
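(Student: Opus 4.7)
My plan is to prove the lemma in three stages: verify exactness of both sequences, construct a canonical splitting of the second sequence via the Coleman-flat map of Section \ref{coleman}, and obtain a compatible splitting of the first by a pullback argument.

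For exactness of the first sequence, I apply the long exact sequence of $G_{\Q,S}$-cohomology to \eqref{globmodeis}. The vanishing $H^0(\mc{O}, Q(1)) = 0$ is immediate from Hypothesis \ref{thetahyp}c via $Q(1) \cong (\mf{h}/I)_{\theta}^{\iota}(2)$. For exactness on the right, I argue that the connecting map $\delta \colon H^1(\mc{O}, Q(1)) \to H^2(\mc{O}, P(1))$ vanishes. This map is cup product with the extension class of \eqref{globmodeis}, and its composition with localization at each $v \mid Np$ is the corresponding local connecting map, which vanishes by Proposition \ref{exseqredunr} since \eqref{globmodeis} splits locally at these primes; at the archimedean place $H^2(\R, P(1)) = 0$ since $p$ is odd. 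Injectivity of the localization map $H^2(\mc{O}, P(1)) \hookrightarrow \bigoplus_{v \mid Np} H^2(\Q_v, P(1))$ then forces $\delta = 0$; this injectivity reduces via $\zp$-freeness of $P$ to the case $P = \zp$, where it follows from the Poitou-Tate nine-term exact sequence together with the surjectivity $\bigoplus_v H^1(\Q_v, \zp(1)) \twoheadrightarrow H^2_c(\mc{O}, \zp(1)) \cong \Gamma$ arising from reciprocity. For the second sequence, the analogous Poitou-Tate setup for $P(1)$, combined with $H^1_c(\mc{O}, P(1)) = 0$ from Lemma \ref{Pcoh} and the corresponding surjectivity at $H^2_c(\mc{O}, P(1))$, gives exactness.

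For the splittings, Lemma \ref{globcompconn} identifies the map $H^1(\qp, P(1)) \to H^2_c(\mc{O}, P(1)) \cong P$ with $-\Cole_P^{\flat}$, so its canonical section $-(\Cole_P^{\flat})^{-1}$ from Definition \ref{Col_flat_inv}, composed with the inclusion into $\bigoplus_{\ell \mid Np} H^1(\Q_{\ell}, P(1))$, provides the canonical splitting $s_2$ of the second sequence. To produce a compatible splitting of the first sequence, consider the map obtained by composing localization $H^1(\mc{O}, T(1)) \to \bigoplus_{\ell} H^1(\Q_{\ell}, T(1))$ with the unique local splittings $T \to P$ of Proposition \ref{exseqredunr}; together with $\Theta$ on the quotients, this yields a morphism from the first sequence to the second. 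For $q \in H^1(\mc{O}, Q(1))$ and any lift $\tilde{q} \in H^1(\mc{O}, T(1))$, the difference between the image of $\tilde{q}$ under this map and $s_2(\Theta(q))$ vanishes in $H^2_c(\mc{O}, P(1))$ by commutativity of the morphism of sequences, so by exactness of the second sequence it is the image of a unique $p_0 \in H^1(\mc{O}, P(1))$. Setting $s_1(q) = \tilde{q} - p_0$ yields a splitting of the first sequence independent of the choice of lift and compatible with $s_2$ by construction.

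The main obstacle is the careful verification of the morphism of short exact sequences, particularly that the right-hand square involving $\Theta$, the composite localization-plus-local-splitting, and the Poitou-Tate boundary to $H^2_c(\mc{O}, P(1))$ commutes with the correct signs; this relies on the Selmer-complex formalism of \eqref{selmercplx} and the sign conventions of Lemma \ref{globcompconn}.
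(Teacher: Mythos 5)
Your approach is structurally different from the paper's. The paper splits the \emph{injection} in each sequence directly: since the $G_{\Q}$-action on $P$ is trivial, $H^1(\mc{O},P(1))\cong\bigoplus_{\ell\mid Np}P$ via the $\ell$-adic valuations, and the same valuation maps applied to $\bigoplus_{\ell\mid Np}H^1(\Q_\ell,P(1))$ give a retraction; the retraction for the first sequence is then just the composite through localize-and-locally-split, so compatibility is immediate by definition. No $\Theta$, no $\Cole_P^\flat$, and no commutativity of any square are needed. You instead split the \emph{surjection}, which ultimately produces the same decomposition (your $-(\Cole_P^\flat)^{-1}$ is, by Remark~\ref{splitColflat}, itself built from the $p$-adic valuation splitting), but your difference construction for $s_1$ hangs on knowing the morphism of sequences commutes, including the sign. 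By Theorem~\ref{FKgalcoh} the right-hand square commutes with $-\Theta$, not $\Theta$, so your $s_2(\Theta(q))$ should read $s_2(-\Theta(q))$; you flag this as the ``main obstacle'' but do not resolve it. Also take care not to invoke Corollary~\ref{commsquare} here, since its proof uses the present lemma; the correct input is the Selmer-complex snake computation as in Proposition~\ref{snakesquare}.

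There is also a concrete error in your exactness step: you reduce injectivity of $H^2(\mc{O},P(1))\to\bigoplus_{v\mid Np}H^2(\Q_v,P(1))$ to the case $P=\zp$ via $\zp$-freeness of $P$. But $P$ is not known to be $\zp$-free --- the paper says explicitly, just before Theorem~\ref{FKident}, that $P$ is not known to be $p$-torsion-free, and removing that caveat from the Fukaya--Kato identity is part of what the paper accomplishes. The injectivity you want, namely $\Sha^2(\mc{O},P(1))=0$, nevertheless holds and should be argued without any freeness hypothesis: it is Poitou--Tate dual to $\Sha^1(\mc{O},P^\vee)$, and a continuous homomorphism $G_{\Q,S}\to P^\vee$ with $P^\vee$ discrete and carrying trivial Galois action factors through $G_{\Q,S}^{\mathrm{ab},p}\cong\Gamma$, onto which the decomposition group at $p$ already surjects, so local triviality at $p$ forces global triviality.
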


\begin{proof}
	Since the $G_{\Q}$-action on $P$ is trivial, we have isomorphisms
	$$
		H^1(\Q_{\ell},P(1)) \cong \varprojlim_r\, \Q_{\ell}^{\times}/\Q_{\ell}^{\times p^r} \cozp P
	$$
	for every $\ell$ dividing $Np$.  The $\ell$-adic valuation then induces a map from this group to $P$
	which is an isomorphism if $\ell \neq p$ and otherwise induces a splitting of
	\begin{equation*} \label{pseq}
		0 \to H^1(\Z[\tfrac{1}{p}],P(1)) \to H^1(\qp,P(1)) \to H^2_c(\Z[\tfrac{1}{p}],P(1)) \to 0.
	\end{equation*}
	Noting that $H^2_c(\Z[\tfrac{1}{p}],P(1))$ and $H^2_c(\mc{O},P(1))$ are canonically isomorphic,
	the sum of the $\ell$-adic valuation maps then gives the desired splitting of the injection in the 
	second sequence.  The splitting of the injection in the first sequence is given by the 
	composition
	$$
		H^1(\mc{O},T(1)) \to \bigoplus_{\ell \mid Np} H^1(\Q_{\ell},P(1)) \to H^1(\mc{O},P(1)),
	$$
	where the first map is induced by the local splittings of Proposition \ref{exseqredunr}, and the
	second map is the splitting of the second sequence. The final statement follows.
\end{proof}

We can now slightly refine the left-hand square in Theorem \ref{FKgalcoh}.

\begin{corollary} \label{commsquare}
	The square
	$$
		\SelectTips{cm}{} \xymatrix{
		H^1(\mc{O},T(1)) \ar[r] \ar[d] & H^1(\mc{O},Q(1)) \ar[d]^{-\Theta}  \\
		H^1(\qp,P(1)) \ar[r] & H^2_c(\mc{O},P(1)) 
		}
	$$
	is commutative.
\end{corollary}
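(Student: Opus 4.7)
The plan is a direct cocycle calculation comparing the two compositions in the square, using the explicit definition of the Selmer complex $C_f(\mc{O}, T(1))$ and its connecting map $\Theta$. The key simplification comes from the observation that our starting class is already represented by a cocycle of $T$, not merely of $Q$, which causes the bulk of the calculation of $\Theta[\bar c]$ to collapse.

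Given $[c] \in H^1(\mc{O}, T(1))$ represented by a cocycle $c$, set $\bar{c} = \pi(c)$ for $\pi \colon T \to Q$. To compute $\Theta[\bar{c}]$, I would lift $\bar{c}$ to the Selmer complex
$$C_f^1(\mc{O},T(1)) = C^1(\mc{O}, T(1)) \oplus \bigoplus_{\ell \mid Np} C^0(\Q_\ell, P(1))$$
as the pair $(c, 0)$ and apply the cone differential. This yields $(dc, \pm\,\mr{res}_P(c))$, where $\mr{res}_P$ denotes restriction to the local cochains followed by the $G_{\Q_\ell}$-module splittings $T \to P$ of Proposition \ref{exseqredunr}. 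Since $c$ is itself a cocycle, $dc = 0$, so the resulting pair lies in the image of $C^2_c(\mc{O}, P(1)) = C^2(\mc{O}, P(1)) \oplus \bigoplus_\ell C^1(\Q_\ell, P(1))$; thus $\Theta[\bar{c}] = [(0, \pm\,\mr{res}_P(c))]$.

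For the other path, the left vertical map sends $[c]$ to the class of the $p$-component $\mr{res}_P^{(p)}(c)$. The fact that this lands in $H^1(\qp, P(1))$ rather than in the full direct sum $\bigoplus_{\ell \mid Np} H^1(\Q_\ell, P(1))$ appearing in Theorem \ref{FKgalcoh} is because $H^1(\Q_\ell, P(1)) = 0$ for $\ell \mid N$: since $P$ has trivial $G_{\Q, S}$-action by Proposition \ref{exseqredcoh}, Frobenius at $\ell$ acts on $P(1)$ by multiplication by $\ell$, yielding $H^1(\Q_\ell, P(1)) \cong P/(\ell-1)P = 0$ because $\ell - 1$ is a unit in $\zp$ (as $p \nmid \varphi(N)$). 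The connecting map $H^1(\qp, P(1)) \to H^2_c(\mc{O}, P(1))$ from the fiber triangle $C_c(\mc{O}, P(1)) \to C(\mc{O}, P(1)) \to \bigoplus_\ell C(\Q_\ell, P(1))$ then sends a cocycle $\phi$ to the class of $(0, \phi)$.

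Both compositions therefore reach the class of $(0, \mr{res}_P(c))$ up to an overall sign arising from the cone shift, and the minus sign on $-\Theta$ in the right vertical map is precisely what cancels this discrepancy. The main obstacle is maintaining consistent sign conventions across the Selmer complex, its cone construction, and the compactly supported cohomology triangle; once a convention is fixed (as in the proof of Lemma \ref{loccompconn} for the analogous connecting-map signs), the matching of the two sides is routine, and the $\ell \mid N$ components automatically coincide since both live in the zero group.
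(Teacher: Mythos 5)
Your proposal contains a genuine gap in the claim that $H^1(\Q_\ell,P(1)) = 0$ for $\ell \mid N$. You compute $P/(\ell-1)P$, but that is the \emph{unramified} cohomology $H^1(G_{\F_\ell},P(1))$, not the full local Galois cohomology $H^1(\Q_\ell,P(1))$. Since $P$ has trivial Galois action, Kummer theory gives
$$H^1(\Q_\ell,P(1)) \;\cong\; \Bigl(\varprojlim_r \Q_\ell^\times/(\Q_\ell^\times)^{p^r}\Bigr) \cotimes{\zp} P,$$
and for $\ell \mid N$ the pro-$p$ completion of $\Q_\ell^\times$ is a free $\zp$-module of rank one generated by a uniformizer (using $p \nmid \ell-1$, which follows from $p \nmid \varphi(N)$), so $H^1(\Q_\ell,P(1)) \cong P \neq 0$. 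This is exactly what the paper records in the proof of Lemma \ref{splitseqs}. Your local term $P/(\ell-1)P$ is only the unramified piece; the ramified (valuation) piece survives.

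Because these groups are nonzero, the cocycle $\mr{res}_P(c)$ has nontrivial components at the primes dividing $N$, so your conclusion that both compositions land on the class of $(0,\mr{res}_P^{(p)}(c))$ does not follow. The corollary is a genuine refinement of the left-hand square of Theorem \ref{FKgalcoh}, not a tautological restatement, and the content is precisely that the $\ell\mid N$ contributions can be absorbed. The paper's argument accomplishes this using Lemma \ref{splitseqs}: the compatible splittings of
$$0 \to H^1(\mc{O},P(1)) \to H^1(\mc{O},T(1)) \to H^1(\mc{O},Q(1)) \to 0$$
and
$$0 \to H^1(\mc{O},P(1)) \to \bigoplus_{\ell \mid Np} H^1(\Q_\ell,P(1)) \to H^2_c(\mc{O},P(1)) \to 0$$
show that the restriction of the section $H^1(\mc{O},Q(1)) \to H^1(\mc{O},T(1))$ lands in the $H^2_c(\mc{O},P(1))$-summand of $\bigoplus_\ell H^1(\Q_\ell,P(1))$, and that summand sits inside $H^1(\qp,P(1))$ as the kernel of the $p$-adic valuation. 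Your direct cocycle calculation is sound as far as it goes, but to close the argument you would need to show that the $\ell \mid N$ components of $\mr{res}_P(c)$ differ from zero by the restriction of a global cochain in $C^1(\mc{O},P(1))$, which is where the splittings of Lemma \ref{splitseqs} are actually doing the work.
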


\begin{proof}
	Both compositions are clearly trivial on elements of $H^1(\mc{O},P(1))$ inside $H^1(\mc{O},T(1))$.
	On the other hand, Lemma \ref{splitseqs} tells us that the composition
	$$
		H^1(\mc{O},Q(1)) \to H^1(\mc{O},T(1)) \to \bigoplus_{\ell \mid Np} H^1(\Q_{\ell},P(1))
	$$
	takes image in the image of $H^2_c(\mc{O},P(1))$ (using the splittings induced by said lemma).  
	This image
	is contained in $H^1(\qp,P(1))$ inside the direct sum as the kernel of the $p$-adic valuation map
	$H^1(\qp,P(1)) \to P$.  The result then follows from the commutativity of the left-hand square
	in Theorem \ref{FKgalcoh}.
\end{proof}

\subsection{Modular symbols} \label{modsym}

We very briefly review modular symbols and Manin symbols.

\begin{definition}
	For $r \ge 1$ and cusps $\alpha$ and $\beta$ on $X_1(Np^r)(\C)$, the {\em modular symbol} 
	$$
		\{ \alpha \to \beta \}_r \in H_1(X_1(Np^r)(\C),\{\mr{cusps}\},\zp)
	$$ 
	is the class of the geodesic from $\alpha$ to $\beta$ on $X_1(Np^r)(\C)$.
\end{definition}

\begin{definition}
	For $r \ge 1$ and $u, v \in \Z/Np^r\Z$ with $(u,v) = (1)$, the {\em Manin symbol} of level $Np^r$ attached
	to $(u,v)$ is defined as
	$$
		[u:v]_r = \left\{ \frac{-d}{bNp^r} \to \frac{-c}{aNp^r} \right\}_r = w_r \left( \begin{smallmatrix} a&b \\ c&d \end{smallmatrix} \right)
		\{0 \to \infty\}_r
	$$
	for $\left( \begin{smallmatrix} a&b \\ c&d \end{smallmatrix} \right)
	\in \mr{SL}_2(\Z)$ with $u = c \bmod Np^r$ and $v = d \bmod Np^r$, where $w_r$ is the Atkin-Lehner involution.
\end{definition}

\begin{remark}
	The Manin symbols of level $Np^r$ generate $H_1(X_1(Np^r)(\C),\{\mr{cusps}\},\zp)$, and the relations
	$$
		[u:v]_r = -[-v:u]_r = [u:u+v]_r + [u+v:v]_r
	$$
	provide a presentation of said $\zp$-module.
\end{remark}

\begin{definition}
	For $r \ge 1$, $u, v \in \Z/Np^r\Z$ with $(u,v) = (1)$, and integers $c, d > 1$ with $(c,6Np) = (d,6Np) = 1$,
	we have the {\em $(c,d)$-symbol} 
	$$
		{}_{c,d}[u:v]_r = c^2d^2[u:v]_r - c^2[u:dv]_r - d^2[cu:v]_r + [cu:dv]_r.
	$$
\end{definition}

The quotient maps $X_1(Np^{r+1}) \to X_1(Np^r)$ take $[pu:v]_{r+1}$ to $[u:v]_r$ for $u,v \in \Z/Np^{r+1}\Z$ with
$(u,v) = (1)$, and similarly for the $(c,d)$-symbols.  Let $[u:v]_{r,\theta}^+$ denote the image
of $[u:v]_r$ in the $\theta$-eigenspace of the Eisenstein component of $H_1(X_1(Np^r)(\C),\cusps,\zp)^+$.

\begin{definition}
	For integers $u$ and $v$ with $p \nmid v$ and $(u,v,N) = 1$, let 
	$$
		(u:v)_{\theta} = ([p^{r-1} u : v]_{r,\theta}^+)_r \in \mc{M}_{\theta}.
	$$
\end{definition}
	
Note that $(u:v)_{\theta}$ depends upon $u$ only modulo $Np$. 
By \cite[3.2.5]{fk-pf}, the elements $(u:v)_{\theta}$ generate $\mc{M}_{\theta}$, and under Hypothesis \ref{thetahyp},
the group $\mc{S}_{\theta}$ is generated by the symbols $(u:v)_{\theta}$ with $u \not\equiv 0 \bmod Np$ by \cite[6.2.6]{fk-pf}.

\begin{definition}
	Let 
	$$
		\Pi = \{ (c,d,u,v) \in \Z^4 \mid c,d > 1, (c,6Np) = (d,6Np) = (u,v,N) = (v,p) = 1 \},
	$$ 
	and let 
	$$
		\Pi_0 = \{ (c,d,u,v) \in \Pi \mid u \not\equiv 0 \bmod Np \}.
	$$
\end{definition}

We define symbols attached to elements of these sets.
	
\begin{definition}
	For $(c,d,u,v) \in \Pi$, let
	$$
		{}_{c,d}(u:v)_{\theta} = c^2d^2(u:v)_{\theta} - d^2(cu:v)_{\theta} - c^2(u:dv)_{\theta} + (cu:dv)_{\theta} 
		\in \mc{M}_{\theta},
	$$
	and define ${}_{c,d}(u:v]_{\theta} \in \La \cozp \mc{M}_{\theta}$ by
	$$
		{}_{c,d}(u:v]_{\theta} 
		= c^2d^2 \otimes (u:v)_{\theta} - d^2\kappa(c) \otimes (cu:v)_{\theta} - c^2\kappa(d) \otimes 
		(u:dv)_{\theta} + \kappa(cd) \otimes (cu:dv)_{\theta},
	$$
	where $\kappa \colon \zp^{\times} \to \La$ sends a unit to the group element of its projection to $1+p\zp$.  
\end{definition}

\subsection{Zeta elements} \label{zetaelts}

We first very briefly recall the Beilinson-Kato elements (or zeta elements) of \cite[Section 2]{fk-pf}.  We then, in the form we shall require, slightly refine the resulting maps of Fukaya and Kato \cite[Section 3]{fk-pf} and describe the properties of them that we need.

The following definition is from \cite[2.4.2]{fk-pf}.

\begin{definition}
	For $r, s \ge 0$ and $u, v \in \Z$ with $(u,v,Np) = (1)$, 
	and supposing that $u, v \not\equiv 0 \bmod Np^r$ if $s = 0$,
	we define the zeta element ${}_{c,d}z_{r,s}(u:v)$ to be the image
	under the norm and Hochschild-Serre maps
	$$
		H^2(Y(p^s,Np^{r+s})_{/\Z[\frac{1}{Np}]},\zp(2)) \to H^2(Y_1(Np^r)_{/\mc{O}_s},\zp(2))
		\to H^1(\mc{O}_s,H^1(Y_1(Np^r)_{/\qbar},\zp(2)))
	$$
	of the cup product ${}_c g_{\frac{a}{p^s},\frac{c}{Np^{r+s}}} \cup {}_d g_{\frac{b}{p^s},\frac{d}{Np^{r+s}}}$ of Siegel 
	units on $Y(p^s,Np^{r+s})_{/\Z[\frac{1}{Np}]}$, where $\left( \begin{smallmatrix} a&b\\c&d \end{smallmatrix} \right) 
	\in \mr{SL}_2(\Z)$ with $u = c \bmod Np^r$ and $v = d \bmod Np^r$.
\end{definition}

\begin{remark} \label{coreszeta}
	As a consequence of \cite[2.4.4, 3.1.9]{fk-pf}, the elements ${}_{c,d}z_{r,s}(u:v)$ are for $r, s \ge 1$ 
	compatible with the maps induced by quotients of modular curves and corestriction maps for the ring extensions.
	Moreover, the corestriction map
	$$
		H^1(\mc{O}_s,H^1(Y_1(Np^r)_{/\qbar},\zp(2))) \to H^1(\mc{O},H^1(Y_1(Np^r)_{/\qbar},\zp(2)))
	$$
	takes ${}_{c,d}z_{r,s}(u:v)$ to $(1-U_p){}_{c,d}z_{r,0}(u:v)$ if $(c,d,u,v) \in \Pi_0$.
\end{remark}

Let us use ${}_{c,d}z_{r,s}(u:v)_{\theta}$ to denote the projection of ${}_{c,d}z_{r,s}(u:v)$ to the Eisenstein component
for $\theta$.

\begin{definition}
	For $(c,d,u,v) \in \Pi$, we set
	$$
		{}_{c,d} z(u:v)_{\theta} = ({}_{c,d}z_{r,s}(u:v)_{\theta})_{r,s \ge 1} \in H^1_{\Iw}(\mc{O}_{\infty},\tmcT_{\theta}(1)) 
	$$
	and for $(c,d,u,v) \in \Pi_0$, we set
	$$
		{}_{c,d} z^{\sharp}(u:v)_{\theta} = ({}_{c,d}z_{r,0}(u:v)_{\theta})_{r \ge 1} \in H^1(\mc{O},\tmcT_{\theta}(1)).
	$$ 
\end{definition}

By Remark \ref{coreszeta}, the corestriction of ${}_{c,d} z(u:v)_{\theta}$ to $H^1(\mc{O},\tmcT_{\theta}(1))$ is 
$(1-U_p)z^{\sharp}(u:v)_{\theta}$ for $(c,d,u,v) \in \Pi_0$

\begin{definition}
	Let $\mc{Z}$ denote the unique element of $X^{-1}\zp\ps{X}$ such that
	$$
		\mc{Z}(t^s-1) = \zeta_p(1-s)
	$$
	for all $s \in \zp$, where $\zeta_p$ denotes the $p$-adic Riemann zeta function.
\end{definition}
 
The following result, constructing a \emph{zeta map}, is a refinement of a result of Fukaya and Kato \cite[3.3.3]{fk-pf}.  It is in essence a consequence of \cite[Theorem 3.15]{fks2}.

\begin{theorem} \label{zetamap}
	There exists a $\La \cozp \mf{h}_{\theta}$-module homomorphism
	$$
		z \colon \La \cozp \mcS_{\theta} \to H^1_{\Iw}(\mc{O}_{\infty},\mcT_{\theta}(1))
	$$
	such that its composition with the injective map
	$$
		 H^1_{\Iw}(\mc{O}_{\infty},\mcT_{\theta}(1)) \to H^1_{\Iw}(\Q_{p,\infty},\mcT_{\theta}(1))
	$$
	equals the map $z_{\quo}$ of Definition \ref{zquo} for an element $\alpha \in \La \cozp \mf{h}_{\theta}$ with
	image $X\mc{Z}\otil{\xi}_1 \in \La \cozp (\mf{h}/I)_{\theta}$.
\end{theorem}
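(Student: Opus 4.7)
The plan is to construct $z$ from the Beilinson-Kato zeta elements ${}_{c,d}z(u{:}v)_{\theta}$ assembled in Section \ref{zetaelts}, following the outline of Fukaya--Kato \cite[Sections 3.3, 4.3]{fk-pf} but with the integral improvement supplied by \cite{fks2}. First I would define a provisional map by sending each $(c,d)$-symbol ${}_{c,d}(u{:}v]_{\theta}$ to the corresponding Iwasawa-cohomology zeta element ${}_{c,d}z(u{:}v)_{\theta} \in H^1_{\Iw}(\mc{O}_{\infty},\tmcT_{\theta}(1))$. The compatibility in $r$ and $s$ of the ${}_{c,d}z_{r,s}(u{:}v)_{\theta}$ recalled in Remark \ref{coreszeta}, together with the Manin relations and the Atkin--Lehner intertwining used already in Fukaya--Kato, makes this well defined on the span of $(c,d)$-symbols in $\La \cozp \mc{M}_{\theta}$. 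To upgrade this to a map on all of $\La \cozp \mc{M}_{\theta}$ one must divide by the $(c,d)$-distribution factor, which a priori lives in $\La \cozp \mf{H}^{\ord}$; this is exactly the point where \cite[Theorem 3.15]{fks2} is invoked to show that, on the $\theta$-Eisenstein component and under Hypothesis \ref{thetahyp}, the division is integrally possible and the result is independent of $(c,d)$.

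Second I would restrict to the cuspidal source and show that $z$ lands in $H^1_{\Iw}(\mc{O}_{\infty},\mcT_{\theta}(1))$ rather than merely in $H^1_{\Iw}(\mc{O}_{\infty},\tmcT_{\theta}(1))$. By Lemma \ref{torsfree}, the target of the residue map $H^1_{\Iw}(\mc{O}_{\infty},(\tmcT_{\theta}/\mcT_{\theta})(1))$ is $\La \cozp \La_{\theta}$-torsion free, so it suffices to kill the residue of $z({}_{c,d}(u{:}v]_{\theta})$ at each cusp; that residue is computed by the standard Poincar\'e duality formula (Remark \ref{poincare}) and vanishes because $(u:v)_{\theta}$ with $u \not\equiv 0 \bmod Np$ is itself cuspidal.

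Third, for the local identification, note that $z_{\quo}$ in Definition \ref{zquo} is characterized by $\Cole \circ z_{\quo} = \alpha$. Thus it suffices to show that the composition of the newly built $z$ with restriction to $H^1_{\Iw}(\Q_{p,\infty},\mcT_{\quo}(1))$ followed by $\Cole$ is multiplication by an element $\alpha \in \La \cozp \mf{h}_{\theta}$ whose image modulo $I$ is $X\mc{Z}\otil{\xi}_1$. The first part is the explicit regulator computation of \cite[4.3.6]{fk-pf}, which evaluates $\Cole$ on the Iwasawa-cohomology image of ${}_{c,d}z(u{:}v)_{\theta}$ using Coleman's reciprocity law for Siegel units; after removing the $(c,d)$-factor this yields multiplication by a well-defined element $\alpha \in \La \cozp \mf{h}_{\theta}$. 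To compute $\alpha \bmod I$, use Wiles' isomorphism $(\mf{h}/I)_{\theta} \cong \La_{\theta}/\xi$ of Theorem \ref{redhecke} together with the fact that modulo $I$ the $p$-stabilized Eisenstein family has constant term given by $\xi$; the two Siegel-unit factors contribute $\mc{Z}$ (via the $p$-adic Riemann zeta function in one variable) and an Eisenstein family contribution whose diagonalization is $\otil{\xi}$, and combining these via the formula of Proposition \ref{tildef} produces precisely $X\mc{Z}\otil{\xi}_1$ as claimed.

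The main obstacle is the integrality step in the first paragraph: dividing out the $(c,d)$-factor while remaining in $H^1_{\Iw}(\mc{O}_{\infty},\tmcT_{\theta}(1))$ on the $\theta$-Eisenstein component is exactly the refinement of Fukaya--Kato proved in \cite{fks2}, and quoting it is what allows us to assert an honest zeta map $z$ rather than one defined only after inverting a distribution element. The identification of $\alpha \bmod I$, while formal, is delicate because it relies on correctly matching the signs implicit in the Poincar\'e duality conventions of Remark \ref{poincare} with the sign convention of Proposition \ref{Qiso} and Remark \ref{signchange}.
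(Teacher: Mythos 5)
Your proposal has the same overall skeleton as the paper's proof (construct $\tilde{z}$ on all of $\La \cozp \mc{M}_{\theta}$ via \cite[Theorem 3.15]{fks2}, identify $\Cole\circ\tilde{z}_{\quo}$ as multiplication by a power-series element, reduce modulo $I$ using \cite[4.3.6, 8.1.2]{fk-pf}, and then land the restriction in cuspidal cohomology), but the second step — the argument that $z$ takes values in $H^1_{\Iw}(\mc{O}_{\infty},\mcT_{\theta}(1))$ rather than $H^1_{\Iw}(\mc{O}_{\infty},\tmcT_{\theta}(1))$ — is argued incorrectly. You assert that the boundary of $z({}_{c,d}(u{:}v]_{\theta})$ in $H^1_{\Iw}(\mc{O}_{\infty},(\tmcT_{\theta}/\mcT_{\theta})(1))$ is ``computed by the standard Poincar\'e duality formula (Remark \ref{poincare}) and vanishes because $(u:v)_{\theta}$ with $u \not\equiv 0 \bmod Np$ is itself cuspidal.'' This does not work: the Beilinson--Kato element is a cup product of Siegel units, and each Siegel unit has nontrivial divisor supported on the cusps; the class $\partial_x(g_1\cup g_2)$ is in general nonzero, and the cuspidality of the Manin symbol $(u:v)_\theta$ is a statement about the source, not a direct mechanism forcing the boundary of a Galois cohomology class on the target side to vanish.

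The paper's actual mechanism is more indirect and genuinely different. One first observes (via Lemma \ref{torsfree} together with \cite[3.1.4, 4.2.10]{fk-pf}) that the composition of restriction $H^1_{\Iw}(\mc{O}_{\infty},(\tmcT_{\theta}/\mcT_{\theta})(1)) \to H^1_{\Iw}(\Q_{p,\infty},(\tmcT_{\quo}/\mcT_{\quo})(1))$ with the Coleman map into $X^{-1}\La \cozp \mf{M}_{\theta}/\mf{S}_{\theta}$ is injective. Then one invokes the regulator identity $\Cole\circ\tilde{z}_{\quo}=\beta$ together with \cite[4.4.3]{fk-pf}, which says that the endomorphism $\beta$ of $\La\cozp\mf{M}_\theta$ carries $\La\cozp\mf{S}_\theta$ into itself; this shows that for a cuspidal input, the local image already lies in $\La\cozp\mf{S}_{\theta}$, so its image in $\La\cozp\mf{M}_\theta/\mf{S}_\theta$ vanishes. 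Combined with the injectivity, the boundary class in $H^1_{\Iw}(\mc{O}_{\infty},(\tmcT_{\theta}/\mcT_{\theta})(1))$ is zero. This detour through the local Coleman map is what is actually needed; no direct residue-at-cusps argument along the lines you propose is available.
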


\begin{proof}
	In \cite[Theorem 3.15]{fks2}, we show the existence of a map 
	$$
		\tilde{z} \colon \La \cozp \mc{M}_{\theta} \to H^1_{\Iw}(\mc{O}_{\infty},\tmcT_{\theta}(1)) 
	$$
	of $\La \cozp \mf{H}_{\theta}$-modules
	 that satisfies
	 $$
		 \tilde{z}({}_{c,d}(u:v]_{\theta}) = -{}_{c,d} z(u:v)_{\theta} \otimes 1
	$$
	 for all $(c,d,u,v) \in \Pi$.  
	 
	By composition with restriction, we obtain a map
	$$
		\tilde{z}_{\quo} \colon \La \cozp \mc{M}_{\theta} \to H^1_{\Iw}(\Q_{p,\infty},\tmcT_{\quo}(1)).
	$$
	Via the fixed isomorphism $\mc{M}_{\theta} \cong \mf{M}_{\theta}$ and the canonical isomorphism
	$$
		\End_{\La \cozp \mf{H}_{\theta}}(\La \cozp \mc{M}_{\theta}) \xrightarrow{\sim} 
		\La \cozp \mf{H}_{\theta},
	$$
	the map $\Cole \circ \tilde{z}_{\quo}$ is given by multiplication by an element $\beta$ in 
	$X^{-1}\La \cozp \mf{H}_{\theta}$.  
	This map induces an endomorphism of
	$\La \cozp \mcS_{\theta}$ by \cite[4.4.3]{fk-pf},
	and the resulting map on the latter module is given by multiplication by the image $\alpha \in \La \cozp \mf{h}_{\theta}$ 
	of $\beta$ in $X^{-1}\La \cozp \mf{h}_{\theta}$.
	The element $\alpha$ reduces to $X\mc{Z}\otil{\xi}_1 \in \La \cozp (\mf{h}/I)_{\theta}$ by \cite[8.1.2(1)]{fk-pf}.
	(This rests on the $p$-adic regulator computation of \cite[4.3.6]{fk-pf}, which we do not reverify in this work.)
	Note that the congruence $X\mc{Z} \equiv 1 \bmod X$ implies that the image of $\alpha$ in 
	$(\mf{h}/I)_{\theta}^{\dagger}$ agrees with that of $\otil{\xi}_1$, so $\alpha$ has the
	property of Definition \ref{alpha}.  Thus, $\tilde{z}_{\quo}$ restricted to $\La \cozp \mcS_{\theta}$
	equals $z_{\quo}$ of Definition \ref{zquo} for this value of $\alpha$.
	
	The composition
	$$
		H^1_{\Iw}(\mc{O}_{\infty},\tmcT_{\theta}(1)) \to 
		H^1_{\Iw}(\Q_{p,\infty},\tmcT_{\theta}(1)) \to X^{-1}\La \cozp \mf{M}_{\theta}
	$$
	is injective by \cite[3.1.4 and 4.2.10]{fk-pf} and Lemma \ref{torsfree}.
	We must prove the claim that the restriction of $\tilde{z}$ to $\La \cozp \mcS_{\theta}$ takes values in 
	$H^1_{\Iw}(\mc{O}_{\infty},\mcT_{\theta}(1))$.  
	By what we have shown above, $\tilde{z}$ carries $\La \cozp \mcS_{\theta}$ to the kernel of 
	$$
		H^1_{\Iw}(\mc{O}_{\infty},\tmcT_{\theta}/\mcT_{\theta}(1)) \to H^1_{\Iw}(\Q_{p,\infty},\tmcT_{\quo}/\mcT_{\quo}(1))
		\hookrightarrow X^{-1}\La \cozp \mf{M}_{\theta}/\mf{S}_{\theta}.
	$$
	Since the surjection $\tmcT_{\theta}/\mcT_{\theta} \to \tmcT_{\quo}/\mcT_{\quo}$ is in fact an isomorphism
	(see Definition \ref{Tquosub}), this kernel is trivial, as shown in the proof of Lemma \ref{torsfree}.
	By the exactness of the first sequence in Lemma \ref{torsfree}, we have the claim.
\end{proof}

From now on, we take $\alpha$ to be as given in Theorem \ref{zetamap}.  
We prove the following slight refinement of \cite[3.3.9]{fk-pf} on a zeta map at the level of $\Q$ 
as a consequence of \cite[Theorem 3.17]{fks2}.

\begin{theorem} \label{zsharp}
	There exists a unique map
	$$
		z^{\sharp} \colon \mcS_{\theta} \to H^1(\mc{O},\mcT_{\theta}(1))
	$$
	of $\mf{h}_{\theta}$-modules with the property that for the map 
	$\cor \colon H^1_{\Iw}(\mc{O}_{\infty},\mcT_{\theta}(1)) \to H^1(\mc{O},\mcT_{\theta}(1))$
	induced by corestriction, we have
	$$
		\cor \circ z = (1-U_p) z^{\sharp} \circ \ev_0.
	$$ 
	The composition of $z^{\sharp}$ with
	$$
		H^1(\mc{O},\mcT_{\theta}(1)) \to H^1(\qp,\mcT_{\theta}(1))
	$$
	equals the map $z^{\sharp}_{\quo}$ of Proposition \ref{zquosharp} for $\alpha$ as in 
	Theorem \ref{zetamap}.
\end{theorem}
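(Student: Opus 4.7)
The plan is to construct $z^{\sharp}$ by ``dividing $\cor\circ z$ by $1-U_p$'' after first descending to the ground level, using the explicit ground-level Beilinson--Kato elements ${}_{c,d}z^{\sharp}(u:v)_{\theta}$ of Section~\ref{zetaelts} together with the refinement from \cite{fks2} ensuring cuspidal values on inputs from $\mcS_{\theta}$.

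First, corestriction from Iwasawa to ground-level cohomology is induced by the augmentation $\Lai\to\zp$, which annihilates $X$, so $\cor\circ z\colon\La\cozp\mcS_{\theta}\to H^1(\mc{O},\mcT_{\theta}(1))$ factors uniquely through $\ev_0$ to yield $\bar z\colon\mcS_{\theta}\to H^1(\mc{O},\mcT_{\theta}(1))$ with $\bar z\circ\ev_0=\cor\circ z$. Uniqueness of $z^{\sharp}$ is then immediate from Lemma~\ref{nofixedelts}: any two candidates $z_1^{\sharp},z_2^{\sharp}$ satisfy $(1-U_p)(z_1^{\sharp}-z_2^{\sharp})\circ\ev_0=0$, and surjectivity of $\ev_0$ combined with injectivity of $1-U_p$ on $H^1(\mc{O},\mcT_{\theta}(1))$ forces $z_1^{\sharp}=z_2^{\sharp}$.

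For existence, it suffices to show that $\bar z$ is $(1-U_p)$-divisible in $H^1(\mc{O},\mcT_{\theta}(1))$, as the quotient then defines $z^{\sharp}$ (again by Lemma~\ref{nofixedelts}). Since $\mcS_{\theta}$ is generated by the symbols ${}_{c,d}(u:v)_{\theta}$ for $(c,d,u,v)\in\Pi_0$, I need only check divisibility on these generators. Using $\ev_0({}_{c,d}(u:v]_{\theta})={}_{c,d}(u:v)_{\theta}$, the formula $\tilde z({}_{c,d}(u:v]_{\theta})=-{}_{c,d}z(u:v)_{\theta}$ recalled in the proof of Theorem~\ref{zetamap}, and Remark~\ref{coreszeta}, I compute
$$
	\bar z({}_{c,d}(u:v)_{\theta})=-\cor({}_{c,d}z(u:v)_{\theta})=-(1-U_p)\,{}_{c,d}z^{\sharp}(u:v)_{\theta}
$$
in $H^1(\mc{O},\tmcT_{\theta}(1))$, into which $H^1(\mc{O},\mcT_{\theta}(1))$ injects by Lemma~\ref{torsfree}.

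The main technical obstacle I anticipate is the upgrade that ${}_{c,d}z^{\sharp}(u:v)_{\theta}$ itself---not merely its $(1-U_p)$-multiple---already lies in the submodule $H^1(\mc{O},\mcT_{\theta}(1))$; this is the cuspidal refinement over \cite[3.3.9]{fk-pf} that should be furnished by the construction of \cite{fks2} for $(c,d,u,v)\in\Pi_0$, paralleling the corresponding refinement at the Iwasawa level in the proof of Theorem~\ref{zetamap}. Granted this, I define $z^{\sharp}$ on generators by $z^{\sharp}({}_{c,d}(u:v)_{\theta})=-{}_{c,d}z^{\sharp}(u:v)_{\theta}$ and extend $\mf{h}_{\theta}$-linearly; well-definedness across any relation among the generators is forced by the injectivity of $1-U_p$ applied to the already-established identity $(1-U_p)z^{\sharp}=\bar z$. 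For the local compatibility, compose the identity $(1-U_p)z^{\sharp}\circ\ev_0=\cor\circ z$ with restriction $H^1(\mc{O},\mcT_{\theta}(1))\to H^1(\qp,\mcT_{\theta}(1))$ and the further projection to $H^1(\qp,\mcT_{\quo}(1))$; by Theorem~\ref{zetamap} the right-hand side becomes $\cor\circ z_{\quo}$, so the resulting composition satisfies the characterizing identity of Proposition~\ref{zquosharp}, and hence must equal $z^{\sharp}_{\quo}$ by its uniqueness clause.
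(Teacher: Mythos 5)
Your proposal is correct and follows essentially the same route as the paper's proof: both rest on the key technical fact from \cite[Theorem 3.17]{fks2} that the ground-level elements ${}_{c,d}z^{\sharp}(u:v)_{\theta}$ for $(c,d,u,v)\in\Pi_0$ actually live in the cuspidal group $H^1(\mc{O},\mcT_{\theta}(1))$ rather than just $H^1(\mc{O},\tmcT_{\theta}(1))$, and both use Lemma \ref{nofixedelts} for uniqueness and the uniqueness clause of Proposition \ref{zquosharp} together with Theorem \ref{zetamap} for the local compatibility. The only presentational difference is that you reconstruct $z^{\sharp}$ by observing $\cor\circ z$ factors through $\ev_0$ and then dividing by $1-U_p$, whereas the paper directly cites the existence of $z^{\sharp}$ with the generating formula $z^{\sharp}({}_{c,d}(u:v)_{\theta}) = -{}_{c,d}z^{\sharp}(u:v)_{\theta}$ from \cite{fks2}; these are equivalent.
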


\begin{proof}
	In \cite[Theorem 3.17]{fks2} (noting \cite[3.3.14]{fk-pf}), we prove (using Lemma \ref{torsfree} of this paper) the existence of an
	$\mf{H}_{\theta}$-module homomorphism
	$$
		z^{\sharp} \colon \mcS_{\theta} \to H^1(\mc{O},\mcT_{\theta}(1))
	$$	
	with the property that 
	\begin{equation} \label{zsharpManin}
		z^{\sharp}({}_{c,d}(u:v)_{\theta}) = -{}_{c,d}z^{\sharp}(u:v)_{\theta}
	\end{equation}
	for all $(c,d,u,v) \in \Pi_0$. 

	The comparison with $z$ is \cite[3.3.9(ii)]{fk-pf}, the uniqueness being Lemma \ref{nofixedelts}.
	The comparison with
	$z_{\quo}^{\sharp}$ follows from Proposition \ref{zquosharp}, the comparison with $z$, and Theorem \ref{zetamap}.
\end{proof}

Fukaya and Kato prove the following in \cite[5.3.10-11 and 9.2.1]{fk-pf}. We sketch their proof primarily to make clear how to obtain 
the sign in its comparison. That is, there are two sign differences from their proof which effectively cancel each other, and the sign of the second map in the composition in its statement is the opposite of that of \cite[6.3.9]{fk-pf}.

\begin{theorem}[Fukaya-Kato] \label{zsharpvarpi}
	The composition of $z^{\sharp}$ with the maps 
	$$
		H^1(\mc{O},\mcT_{\theta}(1)) \to H^1(\mc{O},Q(1)) 
		\xrightarrow{\sim} Y
	$$	
	given by Proposition \ref{Qiso} and Lemma \ref{Qcoh} equals $-\varpi$.
\end{theorem}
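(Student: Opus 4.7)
The plan is to reduce to verifying the identity on the generators ${}_{c,d}(u:v)_{\theta}$ of $\mcS_{\theta}$ with $(c,d,u,v) \in \Pi_0$, then to trace these through the composition by exploiting the fact that pullback of Siegel units at the cusp $\infty$ produces cyclotomic units. By Theorem \ref{zsharp}, we have $z^{\sharp}({}_{c,d}(u:v)_{\theta}) = -{}_{c,d}z^{\sharp}(u:v)_{\theta}$, so the task is to compare the image of the zeta element ${}_{c,d}z^{\sharp}(u:v)_{\theta}$ in $H^1(\mc{O},Q(1)) \cong Y$ with the cup product $(1-\zeta_{Np^r}^u, 1-\zeta_{Np^r}^v)_r$ that defines $\varpi({}_{c,d}(u:v)_{\theta})$, up to the expected sign.

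Concretely, I would first use the identification of $Q$ with $(\mf{h}/I)^{\iota}_{\theta}(1)$ from Proposition \ref{Qiso}, which is implemented by pairing on the left with $\xi e_{\infty}$ under Ohta's twisted Poincar\'e duality pairing \eqref{Ohta_pair}. Via the calculations in Remark \ref{poincare}, namely the identity $\ord_x g = g \cup h_x = \partial_x g$, pairing a cohomology class coming from a unit against $\xi e_{\infty}$ corresponds to taking the order of the unit at the cusp $\infty$ (i.e., evaluating at that cusp after dlog, in the de Rham picture). Next, ${}_{c,d}z^{\sharp}(u:v)_{\theta}$ is built, via the Hochschild-Serre edge map, from cup products of Siegel units ${}_c g_{a/1, c/Np^r} \cup {}_d g_{b/1, d/Np^r}$ on $Y_1(Np^r)$. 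Pulling back these Siegel units at $\infty$ (the computation that makes the modular-to-cyclotomic bridge work) yields precisely the modified cyclotomic $Np$-units $1-\zeta_{Np^r}^c$ and $1-\zeta_{Np^r}^d$ appearing in \eqref{varpi_r}, with the matrix $\left(\begin{smallmatrix} a&b\\c&d \end{smallmatrix}\right)$ having bottom row $(u,v) \bmod Np^r$, which is exactly how Manin symbols $[u:v]_r$ relate to cyclotomic units under $\varpi$.

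The core of the argument is then bookkeeping signs. Three sign conventions must be reconciled: (i) the minus sign in $z^{\sharp}({}_{c,d}(u:v)_{\theta}) = -{}_{c,d}z^{\sharp}(u:v)_{\theta}$; (ii) the sign change in the identification of $Q$, noted in Remark \ref{signchange}, arising from pairing $\xi e_{\infty}$ on the left rather than the right (the opposite convention from \cite[6.3.9, 6.3.18]{fk-pf}); (iii) the orientation conventions of Poincar\'e duality on $X_1(Np^r)(\C)$ used to turn \'etale cohomology classes into homology classes and to connect them to de Rham residues, as laid out in Remark \ref{poincare}. The main obstacle is verifying that (ii) precisely cancels the two negative signs that would otherwise appear when faithfully transcribing Fukaya-Kato's argument \cite[5.3.10--11, 9.2.1]{fk-pf} (where the result is stated with the opposite sign), leaving a single net factor of $-1$. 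Once the sign accounting is done, the equality $z^{\sharp}$ composed with the projection to $Y$ equals $-\varpi$ follows on the generating symbols, and then on all of $\mcS_{\theta}$ by $\mf{h}_{\theta}$-linearity.
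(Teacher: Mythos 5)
Your overall plan tracks the paper's proof closely: reduce to the generating symbols, use $z^{\sharp}({}_{c,d}(u:v)_{\theta}) = -{}_{c,d}z^{\sharp}(u:v)_{\theta}$, invoke the pullback of Siegel units at $\infty$ to produce cyclotomic units, and account for signs. But there is a genuine gap between steps 2 and 3 of your outline that the paper devotes most of its proof to bridging, and which your phrasing glosses over.

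You write that ``pairing a cohomology class coming from a unit against $\xi e_{\infty}$ corresponds to taking the order of the unit at the cusp $\infty$,'' and you want to use Remark \ref{poincare} to justify this. This conflates two different things. The map $\mcT_{\theta} \to Q$ from Proposition \ref{Qiso} is a coefficient-level map $y \mapsto \langle \xi e_{\infty}, y \rangle$ under Ohta's \emph{twisted} duality pairing \eqref{Ohta_pair}; it is not the Poincar\'e duality of Remark \ref{poincare}, which is a curve-level cup product. Moreover, after applying this coefficient map you still have a class in $H^1(\mc{O},Q(1))$, and the identification with $Y = H^2(\mc{O},\Lai_{\theta}(2))$ is a \emph{connecting homomorphism}, so there is a degree shift that your description does not address. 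The specialization-at-$\infty$ map $\infty^*$ (which is what \cite[5.2.9]{fk-pf} actually computes to be $\varpi$ on zeta elements, via pullback of Siegel units) is a priori a different construction, and matching it to the composite in the theorem statement is the real content of the proof. The paper does this by exhibiting $\infty^*$ as a connecting homomorphism for the Tate twist of $0 \to \Lai_{\theta}(1) \to \tmcT_{c,\theta} \to \mcT_{\theta} \to 0$, and then producing a map of short exact sequences \eqref{cptcohdiag} to the sequence $0 \to \Lai_{\theta}(1) \xrightarrow{\xi} \Lai_{\theta}(1) \to Q \to 0$. The middle vertical map in that diagram is pairing with the inverse $g^{-1}$ of a specific Siegel unit $g$ (not with $\xi e_{\infty}$), and the way $\xi$ enters is through the boundary computation $\partial_0 g = -\xi$ at the $0$-cusps (with Atkin-Lehner converting to $\infty$-cusps). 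Remark \ref{poincare} is used there, but for the compatibility of the left-hand square of \eqref{cptcohdiag}, not in the way you describe. Without constructing this diagram, you cannot conclude that the composition in the statement agrees with $\infty^*$, and the proof does not close. Your sign accounting in the last paragraph is reasonable in outline, but it presumes this missing structural identification has already been made.
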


\begin{proof}
	The results of Sections 5.2 and 5.3 of \cite{fk-pf} yield a map
	$$
		\infty^* \colon H^1(\mc{O},\mcT_{\theta}(1)) \xrightarrow{\sim} 
		\invlim{r} H^2(Y_1(Np^r)_{/\mc{O}},\zp(2))_{\mf{m},\theta} \to Y,
	$$
	given by composing the inverse of the Hochschild-Serre map with a specialization-at-$\infty$ map.
	It follows from \cite[5.2.9]{fk-pf} that this composition takes ${}_{c,d}z^{\sharp}(u:v)_{\theta}$ 
	to $\varpi({}_{c,d}(u:v)_{\theta})$. (Note that the sign here is opposite to that in the proof of \cite[5.3.11]{fk-pf} in the current version,
	as an unexplained sign appears in the proof of Claim 1 therein.)
	By \eqref{zsharpManin} and \cite[Proposition 3.1.6]{fks2}, the composition $\infty^* \circ z^{\sharp}$ is then $-\varpi$.
	
	By \cite[9.2.5]{fk-pf}, the map $\infty^*$ agrees with the connecting map
	$$
		H^1(\mc{O},\mcT_{\theta}(1)) \to H^2(\mc{O},\Lai_{\theta}(2))
	$$
	in the long exact sequence associated to the Tate twist of the short exact sequence 
	$$
		0 \to \Lai_{\theta}(1) \to \tmcT_{c,\theta} \to \mcT_{\theta} \to 0,
	$$
	where $1 \in \Lai_{\theta}(1)$ corresponds to the cusp at $\infty$.
	It then suffices to show that there is a commutative diagram of continuous $G_{\Q}$-module homomorphisms
	\begin{equation} \label{cptcohdiag}
		\SelectTips{cm}{} \xymatrix{
			0 \ar[r] & \Lai_{\theta}(1) \ar@{=}[d] \ar[r] & \tmcT_{c,\theta} \ar[r] \ar[d],& \mcT_{\theta} \ar[r] \ar[d] & 0 \\
			0 \ar[r] & \Lai_{\theta}(1) \ar[r]^{\xi} & \Lai_{\theta}(1) \ar[r] & Q \ar[r] & 0, 
		}
	\end{equation}
	where the right-hand vertical map is the surjection of Proposition \ref{Qiso} given by
	$v \mapsto \langle \xi e_{\infty}, v \rangle$ modulo $\xi$.
	
	Consider the element $g \in \tmcT_{\theta}$
	given by the $\theta$-projection of the compatible system of Siegel units $(g_{0,\frac{1}{Np^r}})_r$ and Kummer theory. 
	The boundary map $\tmcT_{\theta} \to \La_{\theta}$ at $0$-cusps of \cite[6.2.5]{fk-pf} carries $g$ to
	$-\xi$, the equivariant sum of its orders of vanishing at the $0$-cusps; 
	for this, see \cite[I.6 (3) and IV.2 (1)]{mw}, as well as the equality of the first and last terms in \eqref{poincareident}. 
	(Note that we obtain here the opposite sign to \cite[6.2.13]{fk-pf}, which leads us to replace $g$ with $g^{-1}$
	in the pairing map below, without further effect.)
	Much as in \cite[9.2.3]{fk-pf}, the desired commutative diagram \eqref{cptcohdiag} is given by taking the 
	center vertical map to be the pairing map $w \mapsto \langle g^{-1},w \rangle$, which is $G_{\Q}$-equivariant as $g$ is $G_{\Q}$-fixed.
	For the right-hand square, the commutativity is just as in \cite[9.2.3]{fk-pf}, but note that we have changed the side on which we are  
	pairing with $\xi e_{\infty}$ in Proposition \ref{Qiso}.
	The left-hand square commutes (instead of anticommutes, as suggested by 
	the proof of \cite[9.2.3]{fk-pf}) as a consequence of the first equality in \eqref{poincareident}, though here
	we use Ohta's twisted Poincar\'e pairing \eqref{Ohta_pair2}, 
	which is a system of compatible pairings, each involving an Atkin-Lehner involution that 
	takes $\infty$-cusps to $0$-cusps (cf. \cite[1.6.2]{fk-pf}).
\end{proof}

The main result \cite[0.14]{fk-pf} in the work of Fukaya and Kato states that $\xi'\Upsilon \circ \varpi$ and $\xi'$
induce the same endomorphism of $P \otimes_{\zp} \qp$.  As $P$ is not known to be $p$-torsion free, this
is slightly weaker than equality as endomorphisms of $P$.  With the results of \cite{fks2} in hand, it is now a relatively 
straightforward matter to show that the stronger statement holds by following the argument of \cite{fk-pf}.

\begin{theorem}[Fukaya-Kato, Fukaya-Kato-S.] \label{FKident}
	One has $\xi'\Upsilon \circ \varpi = \xi' \in \End_{\mf{h}_{\theta}}(P)$.
\end{theorem}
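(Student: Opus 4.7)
The plan is to combine the left-hand diagram \eqref{leftside} with the right-hand diagram \eqref{rightside} by gluing them along the connecting square \eqref{connectside}, following the program laid out in the introduction. The identity $\xi'\Upsilon\circ\varpi = \xi'$ will then fall out of a diagram chase once the two ``paths'' from $P$ to $H^3_c(\mc{O},P(1))=P$ are shown to compute $\xi'\Upsilon\circ\varpi$ and $\xi'$ respectively.

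First I would introduce the reduction modulo $I$ of the ground-level zeta map, namely $\bar{z}^{\sharp}\colon P\to H^1(\mc{O},T(1))$, well-defined because the target is annihilated by $I$ and $\mcS_\theta/I\mcS_\theta\cong P$ by Remark \ref{quo_results}(a). Let $\bar{z}^{\sharp}_Q$ and $\bar{z}^{\sharp}_P$ denote its compositions with $H^1(\mc{O},T(1))\to H^1(\mc{O},Q(1))$ and with $H^1(\mc{O},T(1))\to H^1_{\loc}(P(1))$ (the latter via the local splittings of Proposition \ref{exseqredunr}). Reducing Theorem \ref{zsharpvarpi} modulo $I$ and invoking the isomorphism of Lemma \ref{Qcoh} identifies $\bar{z}^{\sharp}_Q$ with $-\varpi\colon P\to Y$. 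On the other hand, Proposition \ref{zquosharp} (with $\alpha$ as in Theorem \ref{zetamap}, so that $\alpha(0)\equiv\xi'\bmod I$) combined with Theorem \ref{zsharp} gives $\Cole^{\flat}\circ\bar{z}^{\sharp}_P=\xi'$ on $P$.

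Next I would assemble the large diagram \eqref{FK_diag}, placing \eqref{leftside} along the top and \eqref{rightside} along the bottom, joined by Corollary \ref{commsquare} (left square) and Lemma \ref{conndiag} (right square). Tracing along the top row, Theorem \ref{FKgalcoh} identifies the composite $H^1(\mc{O},Q(1))\xrightarrow{\partial_Q} H^2(\mc{O},Q(1))\xrightarrow{\Upsilon} H^3_c(\mc{O},P(1))$ with $-\xi'\Upsilon$ under Lemmas \ref{Qcoh} and \ref{Pcoh}; together with the identification $\bar{z}^{\sharp}_Q=-\varpi$, this shows that the top composite sends $x\in P$ to $\xi'\Upsilon(\varpi(x))\in P$. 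Tracing along the bottom row, Lemma \ref{globcompconn} identifies $H^1_{\loc}(P(1))\to H^2_c(\mc{O},P(1))$ with $-\Cole^{\flat}$, and Lemma \ref{connectP} identifies $\partial_P$ with the identity on $P$; together with $\Cole^{\flat}\circ\bar{z}^{\sharp}_P=\xi'$, this shows that the bottom composite sends $x$ to $\xi'(x)$. Commutativity of the combined diagram forces these two endomorphisms of $P$ to agree, yielding $\xi'\Upsilon\circ\varpi=\xi'$.

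The main obstacle is sign-tracking, since multiple independent minus signs are in play: the $-\xi'\Upsilon$ produced by $\partial_Q$ in Theorem \ref{FKgalcoh}, the $-\varpi$ identification of Theorem \ref{zsharpvarpi}, the $-\Cole^{\flat}$ of Lemma \ref{globcompconn}, and the anticommutativity recorded in Lemma \ref{conndiag}. These must combine consistently so that both routes produce the same sign on $\xi'$; it is precisely this cancellation that makes the theorem come out as stated rather than with a spurious sign. Beyond this bookkeeping (and the verification that the left-hand square of Corollary \ref{commsquare} connects $-\Theta$ to the local restriction in the correct way), every essential input has already been isolated in the lemmas recalled above, and no further deep ingredient is needed.
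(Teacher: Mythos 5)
Your plan is exactly the paper's: assemble the large commutative diagram \eqref{FK_diag} from the cited lemmas and read off the identity by tracing the perimeter. The proposal is correct in structure and cites the right inputs.

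However, two of your intermediate sign claims are off by a minus sign, and they happen to cancel, so a reader could be misled into thinking your bookkeeping has been carried out when it hasn't. First, you assert that $\partial_Q$ followed by $\Upsilon$ is identified with $-\xi'\Upsilon$; in fact, in Theorem \ref{FKgalcoh} the left vertical arrow is $-\Theta$ and the square commutes with $\partial_P = \id$ (Lemma \ref{connectP}), so $\Upsilon\circ\partial_Q = -\Theta$, and since $\Theta = -\xi'\Upsilon$ this composite is $+\xi'\Upsilon$, not $-\xi'\Upsilon$. Second, you conclude the bottom composite sends $x$ to $\xi'(x)$; but starting from $\bar z^{\sharp}_P$, then the $-\Cole^{\flat}_P$ of Lemma \ref{globcompconn}, then $\partial_P = \id$, one gets $-\Cole^{\flat}_P\circ\bar z^{\sharp}_P = -\xi'$, not $\xi'$. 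With both corrected, your top trace is $\xi'\Upsilon\circ(-\varpi) = -\xi'\Upsilon\varpi$ and the bottom is $-\xi'$, and equating still yields the theorem. This is precisely why the paper puts $-\bar z^{\sharp}$ (rather than $\bar z^{\sharp}$) on the initial arrows in \eqref{FK_diag}: it absorbs those minuses so that the top arc is literally $\varpi$ and the bottom arc is literally $\xi'$.

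One small misattribution: you invoke Lemma \ref{conndiag} for the right-hand connecting square, but that lemma is stated for the intermediate quotients $Q^\dagger$, $P^\dagger$. The non-dagger version you need is contained in Theorem \ref{FKgalcoh} itself (whose proof the paper omits but points to Lemma \ref{conndiag} as an analogue). Worth flagging that the cited lemma is not literally the statement you are using.
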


\begin{proof}
	Theorems \ref{FKgalcoh}, \ref{zsharp} and \ref{zsharpvarpi}, Proposition \ref{zquosharp}, 
	Corollary \ref{commsquare}, and Lemma \ref{globcompconn}
	provide a commutative diagram
	\begin{equation} \label{FK_diag}
		\SelectTips{cm}{} \xymatrix@C=35pt@R=35pt{
		&& Y \ar[r]^{\xi'} & Y \ar[d]^{\wr}  \ar@/^3.7pc/[ddd]^-{\Upsilon} \\
		P \ar@/_3.7pc/[rrdd]_-{\xi'} \ar[r]_-{-\bar{z}^{\sharp}} \ar[rd]_{-\bar{z}^{\sharp}_{\quo}} \ar[rru]^-{\varpi} 
		& H^1(\mc{O},T(1)) \ar[r] \ar[d] & H^1(\mc{O},Q(1)) \ar[d]^{-\Theta} \ar[u]^{\wr} \ar[r]^{\partial_Q} 
		& H^2(\mc{O},Q(1)) \ar[d] \\
		& H^1(\qp,P(1)) \ar[r] \ar[rd]_{-\Cole^{\flat}_P} & H^2_c(\mc{O},P(1)) \ar[r]^{\partial_P} & 
		H^3_c(\mc{O},P(1)) 
		\ar[d]^{\wr}  \\
		&& P \ar[u]^{\wr} \ar[r]_{\id}^{\sim} & P.
		}
	\end{equation}
\end{proof}

\subsection{Refined global cohomology} \label{refinedglobcoh}

We prove analogues for intermediate cohomology of earlier results on global cohomology.
We begin with an extension of Lemma \ref{globcompconn}. Let us use $\overline{\Cole}_P^{\dagger} \colon H^1(\qp,P^{\dagger}(1)) \to P$ 
to denote the composition $\psi \circ \Cole_P^{\dagger}$.

\begin{lemma} \label{dagcompconn}
	Under the identification of $H^2_c(\mc{O},P^{\dagger}(1))$ with $P$ of Lemma \ref{connectP},
	the canonical map $H^1(\qp,P^{\dagger}(1)) \to H^2_c(\mc{O},P^{\dagger}(1))$ agrees with $-\overline{\Cole}_P^{\dagger}$.
\end{lemma}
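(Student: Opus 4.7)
The plan is to mirror the proof of Lemma \ref{globcompconn}, replacing the Bockstein sequence \eqref{extnclass} by the exact sequence \eqref{horiz2} with $M = P$, and replacing Lemma \ref{Colconn} by the analog implicit in Theorem \ref{loccohdag}. First, I would form the square
\[
\xymatrix{
H^1(\qp, P^{\dagger}(1)) \ar[r]^-{\partial_P^{\dagger}} \ar[d] & H^2(\qp, P(1)) \ar[d] \\
H^2_c(\mc{O}, P^{\dagger}(1)) \ar[r]^-{\partial_P^{\dagger}} & H^3_c(\mc{O}, P(1))
}
\]
in which the horizontal arrows are the connecting maps attached to the Tate twist of \eqref{horiz2} for $M = P$ and the vertical arrows are the canonical maps from local to compactly supported global cohomology. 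Anticommutativity is obtained by repeating verbatim the argument of Lemma \ref{loccompconn}, which only uses that one has a short exact sequence and the description of the differential on the compactly supported complex.

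Next, Proposition \ref{Psquare} identifies the bottom horizontal arrow $\partial_P^{\dagger}$ with the identity map on $P$, under canonical isomorphisms of both $H^2_c(\mc{O},P^{\dagger}(1))$ and $H^3_c(\mc{O},P(1))$ with $P$; the latter is via the invariant map of Lemma \ref{connectP}. The invariant map similarly identifies $H^2(\qp,P(1)) \cong P$, compatibly with the right vertical arrow becoming the identity on $P$. Thus the anticommuting square reduces the lemma to showing that $\inv_P \circ \partial_P^{\dagger}$ on the top row equals $\overline{\Cole}_P^{\dagger}$.

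Since the $G_{\Q,S}$-action on $P$ is trivial, $\Fr_p$ acts trivially on $P$, so Lemma \ref{splitCstar} applies: $\mfC^{\star}(P) = \mfC^{\dagger}(P) \oplus D(P)$, and $\psi$ is projection onto $D(P) \cong P$. By the construction of $\Cole_P^{\dagger}$ in Theorem \ref{loccohdag}, we have $\overline{\Cole}_P^{\dagger} = \psi \circ \Cole_P^{\dagger} = \inv_P \circ \widetilde{\partial}$, where $\widetilde{\partial}$ is the connecting map attached to the Tate twist of $0 \to \Lai \cozp P \xrightarrow{X\otil{\xi}_1} \Lai \cozp P \to P^{\dagger} \to 0$. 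Using that $\xi$ annihilates $P$, so that $\otil{\xi}$ and $X\otil{\xi}_1$ have the same effect on $\Lai \cozp P$, one obtains a morphism of short exact sequences from this one to \eqref{horiz2} (quotienting the middle term by $X\otil{\xi}(\Lai \cozp P)$) compatible with the identity on $P^{\dagger}$. This intertwines $\widetilde{\partial}$ with $\partial_P^{\dagger}$ compatibly with the invariant maps, using that $\inv_P$ agrees on $H^2_{\Iw}(\Q_{p,\infty},P(1))$ and $H^2(\qp,P(1))$ under corestriction.

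The main obstacle is the last step: the two connecting maps $\widetilde{\partial}$ and $\partial_P^{\dagger}$ land a priori in different cohomology groups, so the compatibility demands a careful diagram chase to produce the needed morphism of short exact sequences with trivial $G_{\qp}$-action on $P$, and to confirm that the resulting compositions with invariants agree without a spurious sign.
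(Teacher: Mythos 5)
Your proof is correct and follows essentially the same route as the paper: form the anticommuting square by the argument of Lemma \ref{loccompconn}, identify the lower $\partial_P^{\dagger}$ with the identity on $P$ via Proposition \ref{Psquare}, and reduce to $\inv_P \circ \partial_P^{\dagger} = \overline{\Cole}_P^{\dagger}$, which the paper extracts from the commutativity of \eqref{Coldagflat}. The concern you raise about a spurious sign in comparing the two connecting maps is unfounded: the morphism of short exact sequences you describe (given by $\ev_0$, the quotient map, and the identity on $P^{\dagger}$) produces a strictly commuting square of connecting maps, and since $\ev_0$ induces corestriction on $H^2$, which agrees with the invariant maps by their definition, the identification goes through with no sign.
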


\begin{proof}
	The anticommutativity of the square 
	$$
		\SelectTips{cm}{} \xymatrix{
			H^1(\qp,P^{\dagger}(1)) \ar[r]^{\partial_P^{\dagger}} \ar[d] & H^2(\qp,P(1)) \ar[d] \\
			H^2_c(\mc{O},P^{\dagger}(1)) \ar[r]^{\partial_P^{\dagger}} & H^3_c(\mc{O},P(1)) \\
		}
	$$
	is proven by the analogous argument to Lemma \ref{loccompconn}, and
	the identifications of $H^2(\qp,P(1))$ and $H^3_c(\mc{O},P(1))$
	with $P$ agree as before. By Proposition \ref{Psquare}, the latter identification agrees via $\partial_P^{\dagger}$ with the identification 
	of $H^2_c(\mc{O},P^{\dagger}(1))$ with $P$. Finally,
	$\overline{\Cole}^{\dagger}_P  = \inv \circ \partial_P^{\dagger}$
	by the commutativity of \eqref{Coldagflat}.
\end{proof}

Next, we have an analogue of Lemma \ref{splitseqs}.

\begin{proposition} \label{daggersplit}
	The exact sequences
	$$
		0 \to H^1(\mc{O},P^{\dagger}(1)) \to H^1(\mc{O},T^{\dagger}(1)) \to H^1(\mc{O},Q^{\dagger}(1)) \to 0
	$$
	and
	$$
		0 \to H^1(\mc{O},P^{\dagger}(1)) \to \bigoplus_{\ell \mid Np} 
		H^1(\Q_{\ell},P^{\dagger}(1)) \to H^2_c(\mc{O},P^{\dagger}(1)) \to 0
	$$
	are canonically split, compatibly with the map from the former sequence to the latter.
	The splitting of the surjection in the latter sequence takes image in $H^1(\qp,P^{\dagger}(1))$ inside the direct sum.
	Moreover, the splittings are compatible with the maps of these sequences to (via the quotient map $P^{\dagger} \to P$) and 
	from (via $\alpha \colon P \to P^{\dagger}$) the corresponding split sequences of Lemma \ref{splitseqs}.
\end{proposition}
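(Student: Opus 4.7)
I will mimic the proof of Lemma \ref{splitseqs}. The exactness of the first sequence is immediate from Lemma \ref{connzero}, which asserts the vanishing of the connecting map $H^1(\mc{O},Q^{\dagger}(1)) \to H^2(\mc{O},P^{\dagger}(1))$. The exactness of the second sequence follows from the Poitou-Tate nine-term sequence, once one verifies the vanishings of $H^1_c(\mc{O}, P^{\dagger}(1))$ and of the canonical map $H^2_c(\mc{O}, P^{\dagger}(1)) \to H^2(\mc{O}, P^{\dagger}(1))$. These follow by applying the long exact sequences attached to the Tate twist of $0 \to \Lai \cozp P \xrightarrow{X\alpha} \Lai \cozp P \to P^{\dagger} \to 0$, using the corresponding vanishings for $P$ and its Iwasawa variants, in particular those noted in the proof of Proposition \ref{Psquare} and the triviality of the maximal abelian pro-$p$, $S$-ramified extension of $\Q_\infty$.

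To split the second sequence with image in $H^1(\qp, P^\dagger(1))$, I will exploit that $\Fr_p$ acts trivially on $P$. Lemma \ref{splitCstar} then gives a canonical decomposition $\mfC^\star(P) \cong \mfC^\dagger(P) \oplus D(P) \cong P^\dagger \oplus P$ of $\La \cozp \mf{h}_\theta$-modules, in which the map $\psi$ of Theorem \ref{loccohdag} is projection onto the second factor. Composing the isomorphism $(\Cole_P^\dagger)^{-1}$ with the negated inclusion of the second summand yields a map $P \to H^1(\qp, P^\dagger(1))$. By Lemma \ref{dagcompconn}, this is a section of the canonical map to $H^2_c(\mc{O}, P^\dagger(1)) \cong P$, where the target is identified via Proposition \ref{Psquare}. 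Composing with the inclusion of $H^1(\qp, P^\dagger(1))$ into the direct sum produces the desired section, with image in the $p$-component.

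For the first sequence, I will use the local splittings of $0 \to P \to T \to Q \to 0$ as $\mf{h}_\theta[G_{\Q_\ell}]$-modules provided by Proposition \ref{exseqredunr}. Tensoring each with $\Lai$ and reducing modulo $X\alpha$ yields $(\La \cozp \mf{h}_\theta)[G_{\Q_\ell}]$-module splittings of $0 \to P^\dagger \to T^\dagger \to Q^\dagger \to 0$. Summing over $\ell \mid Np$ produces a map $H^1(\mc{O}, T^\dagger(1)) \to \bigoplus_{\ell \mid Np} H^1(\Q_\ell, P^\dagger(1))$, which, composed with the splitting of the second sequence constructed above, furnishes the splitting of the first. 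The asserted compatibility between the two splittings is built into this construction.

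Finally, compatibility with the splittings of Lemma \ref{splitseqs} under $\alpha \colon P \to P^\dagger$ and the quotient $P^\dagger \to P$ will reduce to functoriality statements: the local splittings of Proposition \ref{exseqredunr} are natural, the identifications of the various $H^2_c$ groups with $P$ via Proposition \ref{Psquare} are compatible, and the Coleman maps $\Cole_P^\flat$ and $\overline{\Cole}_P^\dagger$ are linked via the middle square of diagram \eqref{Coldagflat}. The main obstacles will be careful sign-tracking across the Poitou-Tate and Coleman constructions and verifying the requisite exactness assertions; once those are in hand, the remainder is a routine application of the cohomological machinery already in place.
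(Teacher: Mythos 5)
Your proposal is correct and follows essentially the same route as the paper: Lemma \ref{splitCstar} furnishes the direct-sum decomposition of $\mfC^{\star}(P)$ that splits $\overline{\Cole}^{\dagger}_P$, Lemma \ref{dagcompconn} transports that splitting to the Poitou-Tate sequence with the correct sign, and the local splittings of Proposition \ref{exseqredunr} then yield the splitting of the $T^{\dagger}$-sequence. Your final compatibility paragraph correctly identifies the needed ingredients (the middle square of \eqref{Coldagflt} linking $(\Cole_P^{\flat})^{-1}$ to the section of $\overline{\Cole}^{\dagger}_P$ through $\alpha$), though the paper spells this out with an explicit three-row commutative diagram rather than citing functoriality; in a final write-up you should supply that diagram since the retraction of the first sequence is defined by composition and the compatibility is not purely formal.
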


\begin{proof}
	Since $\Fr_p$ acts trivially on $P$, the exact sequence
	\begin{equation} \label{firstseq}
		0 \to X^{-1}(\La \cozp P)/\alpha(\La \cozp P) \to H^1(\qp,P^{\dagger}(1)) \xrightarrow{\overline{\Cole}^{\dagger}_P} P \to 0
	\end{equation}
	of Theorem \ref{loccohdag} is canonically split as in Lemma \ref{splitCstar}, 
	with the first term identified with $H^1_{\Iw}(\Q_{p,\infty},P(1))/X\alpha$ and
	the third identified with $H^2_{\Iw}(\Q_{p,\infty},P(1)) \xrightarrow{\sim} H^2(\qp,P^{\dagger}(1))$.  
	
	The restriction map 
	$$
		H^1_{\Iw}(\Z_{\infty}[\tfrac{1}{p}],P(1)) \to H^1_{\Iw}(\Q_{p,\infty},P(1))
	$$
	is an isomorphism,
	since the $p$-completion of $\qp^{\times}$ is generated by $p$ and $1+p$, with $p$ generating the universal norms
	from $\Q_{p,\infty}$. Moreover, $H^2_{\Iw}(\Z_{\infty}[\frac{1}{p}],P(1)) = 0$ since $\Q$ has trivial class group and 
	$\Q_{\infty}$ has a unique prime over $p$. Thus, we have
	$$
		H^1_{\Iw}(\Z_{\infty}[\tfrac{1}{p}],P(1))/X\alpha \cong H^1(\Z[\tfrac{1}{p}],P^{\dagger}(1)).
	$$
	Therefore, the first term in \eqref{firstseq} can be identified with and replaced by $H^1(\Z[\frac{1}{p}],P^{\dagger}(1))$. 
	The third term $P$ can then
	be replaced by $H^2_c(\mc{O},P^{\dagger}(1))$ via the negative of our fixed identification from Lemma \ref{Pcoh}. 
	(Here, we use the negative so the resulting sequence is that of Poitou-Tate, as follows from Lemma
	\ref{dagcompconn}.)
	That is, the exact sequence 
	\begin{equation} \label{secondseq}
		0 \to H^1(\Z[\tfrac{1}{p}],P^{\dagger}(1)) \to H^1(\qp,P^{\dagger}(1)) \to H^2_c(\mc{O},P^{\dagger}(1)) \to 0	
	\end{equation}
	canonically split.
	
	We take the splitting of the surjection in the second exact sequence in the statement
	to be given by the composition 
	$$
		H^2_c(\mc{O},P^{\dagger}(1)) \to H^1(\qp,P^{\dagger}(1)) \to \bigoplus_{\ell \mid Np} H^1(\Q_{\ell},P^{\dagger}(1)),
	$$
	of the splitting of the surjection in \eqref{secondseq} and the inclusion of the summand for $\ell = p$.
	The composition
	$$
		H^1(\mc{O},T^{\dagger}(1)) \to \bigoplus_{\ell \mid Np} H^1(\Q_{\ell},P^{\dagger}(1)) \to H^1(\mc{O},P^{\dagger}(1))
	$$
	of the map given by the local splittings of Proposition \ref{exseqredunr}
	and the splitting of the injection in the second exact sequence of the proposition 
	then gives a canonical splitting of the first exact sequence of the proposition, and it is compatible with the map between the two.

	The splitting of $\overline{\Cole}_P^{\dagger} \colon 
	H^1(\qp,P^{\dagger}(1)) \to P$ is by definition 
	given by composing the pushout map $P = D(P) \to \mfC^{\star}(P)$ with $(\Cole^{\dagger})^{-1}$.
	By its construction in Theorem \ref{loccohdag}, it also given by the composition of the canonical splitting 
	$(\Cole_P^{\flat})^{-1} \colon P \to H^1(\qp,P(1))$ of $\Cole_P^{\flat}$ from Definition \ref{Col_flat_inv}
	(determined by the $p$-adic valuation as in Lemma \ref{splitseqs})
	with the map induced by $\alpha \colon P \to P^{\dagger}$. This gives the commutativity of
	$$
		\SelectTips{cm}{} \xymatrix{
		H^1(\qp,P^{\dagger}(1)) \ar[r] \ar[d] & H^2_{\Iw}(\Q_{p,\infty},P(1)) \ar[d]^{\alpha} \ar[r]^-{\inv} & P \ar[d]^{\alpha(0)} \\
		H^1(\qp,P(1)) \ar[r] \ar[d]^{\alpha} & H^2_{\Iw}(\Q_{p,\infty},P(1)) \ar[r]^-{\inv} \ar@{=}[d] & P \ar@{=}[d] \\
		H^1(\qp,P^{\dagger}(1)) \ar[r] & H^2_{\Iw}(\Q_{p,\infty},P(1)) \ar[r]^-{\inv} & P,
		}
	$$
	where the horizontal compositions are $\overline{\Cole}^{\dagger}_P$ in the first and last rows and 
	$\Cole^{\flat}_P$ in the middle row (see \eqref{Coldagflat}) and are compatibly split. 
	The final statement of the proposition follows
	easily from this and the definitions of the splittings in question.
\end{proof}

\begin{corollary} \label{daggersquare}
	The square
	$$
		\SelectTips{cm}{} \xymatrix{
		H^1(\mc{O},T^{\dagger}(1)) \ar[r] \ar[d] & H^1(\mc{O},Q^{\dagger}(1)) \ar[d]^{-\Theta^{\dagger}}  \\
		H^1(\qp,P^{\dagger}(1)) \ar[r] & H^2_c(\mc{O},P^{\dagger}(1)) 
		}
	$$
	is commutative.
\end{corollary}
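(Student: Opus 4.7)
The plan is to adapt the proof of Corollary \ref{commsquare} to the intermediate setting, replacing Lemma \ref{splitseqs} and the left-hand square of Theorem \ref{FKgalcoh} by their dagger analogues, namely Proposition \ref{daggersplit} and Proposition \ref{snakesquare}.

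First I would verify that both compositions in the square vanish on the subgroup $H^1(\mc{O},P^{\dagger}(1)) \subset H^1(\mc{O},T^{\dagger}(1))$: the upper composition because $H^1(\mc{O},P^{\dagger}(1))$ is exactly the kernel of the restriction to $H^1(\mc{O},Q^{\dagger}(1))$, and the lower because classes in $H^1(\mc{O},P^{\dagger}(1))$ localize to classes that are images from $H^1(\mc{O},P^{\dagger}(1))$ and hence die in $H^2_c(\mc{O},P^{\dagger}(1))$ by definition of compactly supported cohomology as a cone.

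Next, using the canonical splittings of Proposition \ref{daggersplit}, I would view $H^1(\mc{O},Q^{\dagger}(1))$ as a direct summand of $H^1(\mc{O},T^{\dagger}(1))$, and then follow it by the map to $\bigoplus_{\ell \mid Np} H^1(\Q_{\ell},P^{\dagger}(1))$. The same proposition tells us that the image of this composition lies in the image of the canonical section of $H^2_c(\mc{O},P^{\dagger}(1))$ inside the direct sum, and moreover that this section takes image in the single summand $H^1(\qp,P^{\dagger}(1))$. Hence the map $H^1(\mc{O},T^{\dagger}(1)) \to \bigoplus_{\ell \mid Np} H^1(\Q_{\ell},P^{\dagger}(1))$ from Proposition \ref{snakesquare}, when restricted via the splitting to $H^1(\mc{O},Q^{\dagger}(1))$, factors through $H^1(\qp,P^{\dagger}(1))$. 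Combining this factorization with the commutative square of Proposition \ref{snakesquare} then yields the desired commutativity of the smaller square.

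The main subtlety to verify is that the canonical splittings of Proposition \ref{daggersplit} used above are compatible in the sense required, i.e., that the map from the first sequence of that proposition into the second is compatible with the splittings of both; but this compatibility is precisely the content of the final clause of Proposition \ref{daggersplit}, so no additional work is needed beyond invoking it.
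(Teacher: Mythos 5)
Your proposal is correct and follows essentially the same route as the paper's own proof: both compositions are seen to vanish on $H^1(\mc{O},P^{\dagger}(1))$, Proposition \ref{daggersplit} is used to show that the composition $H^1(\mc{O},Q^{\dagger}(1)) \to \bigoplus_{\ell \mid Np} H^1(\Q_{\ell},P^{\dagger}(1))$ has image landing in the section of $H^2_c(\mc{O},P^{\dagger}(1))$ inside $H^1(\qp,P^{\dagger}(1))$, and the result is then read off from the larger commutative square of Proposition \ref{snakesquare}. One small citation slip: the compatibility of splittings between the two exact sequences that you invoke is the content of the \emph{first} clause of Proposition \ref{daggersplit} (``compatibly with the map from the former sequence to the latter''), not its final clause, which instead concerns compatibility with the split sequences of Lemma \ref{splitseqs}.
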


\begin{proof}
	Both compositions are trivial on the image of $H^1(\mc{O},P^{\dagger}(1))$ in $H^1(\mc{O},T^{\dagger}(1))$.
	Proposition \ref{daggersplit} implies that the composition (using the splittings of said proposition)
	$$
		H^1(\mc{O},Q^{\dagger}(1)) \to H^1(\mc{O},T^{\dagger}(1)) \to \bigoplus_{\ell \mid Np} H^1(\Q_{\ell},P^{\dagger}(1))
	$$
	takes image in the image of $H^2_c(\mc{O},P^{\dagger}(1))$.  This image
	is contained in $H^1(\qp,P^{\dagger}(1))$ by construction.  The result then follows from the commutativity of the square
	in Proposition \ref{snakesquare}.
\end{proof}

\subsection{Refined zeta maps} \label{refinedzeta}

In this subsection, we show how the existence of a \emph{refined zeta map} 
$$
	z^{\dagger} \colon \La \cozp \mcS_{\theta} \to H^1(\mc{O},\mcT_{\theta}^{\dagger}(1)),
$$ 
would imply Conjecture \ref{sconj}: see Proposition \ref{refined_zeta_map}. Here, 
$(1-U_p)z^{\dagger}$ should equal the composition of $z$ with the map
$H^1_{\Iw}(\mc{O}_{\infty},\mcT_{\theta}(1)) \to H^1(\mc{O},\mcT_{\theta}^{\dagger}(1))$,
in analogy with Theorem \ref{zsharp}.

Moreover, we show that the existence of a \emph{reduced} refined zeta map 
$$
	\bar{z}^{\dagger} \colon \La \cozp P \to H^1(\mc{O},T^{\dagger}(1)),
$$ 
one which is compatible with $\bar{z}^{\sharp}$ and $\bar{z}_{\quo}^{\dagger}$,
is equivalent to the conjecture. In fact,  in the proof of the following theorem, we construct a candidate for 
$\bar{z}^{\dagger}$ as a direct sum of maps to $H^1(\mc{O},P^{\dagger}(1))$ and $H^1(\mc{O},Q^{\dagger}(1))$,
and we show that Conjecture \ref{sconj} is equivalent to this candidate being compatible with $\bar{z}_{\quo}^{\dagger}$,
as it is already by construction compatible with $\bar{z}^{\sharp}$.

\begin{theorem} \label{equivform}
	Conjecture \ref{sconj} holds if and only if there exists a homomorphism $\bar{z}^{\dagger} \colon \Lambda \otimes_{\zp} P \to 
	H^1(\mc{O},T^{\dagger}(1))$ of $\La_{\theta} \cozp (\mf{h}/I)_{\theta}$-modules
	making the diagrams
	$$
	\SelectTips{cm}{} \xymatrix@C=35pt@R=8pt{
	\Lambda \otimes_{\zp} P \ar[r]^-{\bar{z}^{\dagger}} \ar@{->>}[dd]  & H^1(\mc{O},T^{\dagger}(1)) \ar[dd]
	& & \Lambda \otimes_{\zp} P  \ar[r]^-{\bar{z}^{\dagger}} \ar@{->>}[dd] & H^1(\mc{O},T^{\dagger}(1)) \ar[dd]  \\
	&& \mr{and} && \\
	P \ar[r]^-{\bar{z}^{\sharp}} & H^1(\mc{O},T(1)) 
	&& P \ar[r]^-{\bar{z}^{\dagger}_{\quo}} & H^1(\qp,P^{\dagger}(1)) 
	} 
	$$
	commute, where the vertical maps in the first diagram are induced by the augmentation on $\Lambda$.
\end{theorem}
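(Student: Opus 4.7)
I plan to prove both directions by chasing diagrams assembled from the cohomological results of Sections \ref{refinedcoh} and \ref{refinedglobcoh}, in effect producing a refined version of \eqref{FK_diag} in which the derivative $\xi'$ is replaced by $1$.

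For the backward direction (existence of $\bar{z}^{\dagger}$ implies $\Upsilon \circ \varpi = \id_P$), I compute the composition $\Lambda \otimes_{\zp} P \to H^3_c(\mc{O},P(1)) = P$ along two routes.
\emph{Route (i):} $\bar{z}^{\dagger}$ followed by $H^1(\mc{O},T^{\dagger}(1)) \to H^1(\mc{O},Q^{\dagger}(1)) \xrightarrow{\partial^{\dagger}_Q} H^2(\mc{O},Q(1)) \xrightarrow{\Upsilon} P$. Theorem \ref{partial} factors $\partial^{\dagger}_Q$ through the surjection to $H^1(\mc{O},Q(1))$ followed by $\varepsilon_Q = \id$; compatibility (1) with $\bar{z}^{\sharp}$ combined with Theorem \ref{zsharpvarpi} identifies the composition to $Y$ as $-\varpi \circ \mathrm{aug}$, so route (i) evaluates to $-\Upsilon \circ \varpi \circ \mathrm{aug}$.
\emph{Route (ii):} $\bar{z}^{\dagger}$ followed by $H^1(\mc{O},T^{\dagger}(1)) \to H^1(\qp,P^{\dagger}(1)) \to H^2_c(\mc{O},P^{\dagger}(1)) \xrightarrow{\partial^{\dagger}_P} P$. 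Compatibility (2) reduces the local projection to $\bar{z}^{\dagger}_P \circ \mathrm{aug}$; Lemma \ref{dagcompconn} identifies the next map with $-\overline{\Cole}^{\dagger}_P$; the mod-$I$ reduction of Proposition \ref{zquodagger} gives $\overline{\Cole}^{\dagger}_P \circ \bar{z}^{\dagger}_P = \id_P$; and Proposition \ref{Psquare} identifies $\partial^{\dagger}_P$ with the identity. Route (ii) therefore evaluates to $-\mathrm{aug}$. Corollary \ref{daggersquare} combined with the anticommutativity of Lemma \ref{conndiag} forces the two routes to coincide, yielding $\Upsilon \circ \varpi \circ \mathrm{aug} = \mathrm{aug}$; surjectivity of $\mathrm{aug}$ gives $\Upsilon \circ \varpi = \id_P$.

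For the forward direction, I assume $\Upsilon \circ \varpi = \id_P$ and construct $\bar{z}^{\dagger}$ explicitly. Using the splitting $H^1(\mc{O},T^{\dagger}(1)) = H^1(\mc{O},P^{\dagger}(1)) \oplus H^1(\mc{O},Q^{\dagger}(1))$ of Proposition \ref{daggersplit}, I define $\bar{z}^{\dagger, Q} \colon \Lambda \otimes_{\zp} P \to H^1(\mc{O},Q^{\dagger}(1))$ via the isomorphism $H^1(\mc{O},Q^{\dagger}(1)) \cong Y \cotimes{R} (\La_{\theta}/\xi)$ of Proposition \ref{cohquot} together with the natural $\La$-linear section of the quotient to $Y$, precomposed with $-\varpi \circ \mathrm{aug}$. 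For $\bar{z}^{\dagger, P} \colon \Lambda \otimes_{\zp} P \to H^1(\mc{O},P^{\dagger}(1))$, I exploit the Poitou--Tate decomposition $\bigoplus_{\ell | Np} H^1(\Q_{\ell},P^{\dagger}(1)) = H^1(\mc{O},P^{\dagger}(1)) \oplus H^2_c(\mc{O},P^{\dagger}(1))$ of Proposition \ref{daggersplit}: I specify the local component at $p$ as $\bar{z}^{\dagger}_P \circ \mathrm{aug}$ and at $\ell \mid N$ so as to guarantee compatibility (1) with the $P$-component of $\bar{z}^{\sharp}$ under augmentation. The obstruction is that this prescribed element of $\bigoplus H^1(\Q_{\ell},P^{\dagger}(1))$ must lie in the kernel of the projection to $H^2_c(\mc{O},P^{\dagger}(1))$; running the backward computation in reverse identifies this obstruction with $\Upsilon \circ \varpi - \id_P$, which vanishes by hypothesis.

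The main obstacle is the forward construction, and specifically the reconciliation of the local regulator computations: $\Cole^{\flat}_P \circ \bar{z}^{\sharp}_P$ equals multiplication by $\xi'$ at the original level, while $\overline{\Cole}^{\dagger}_P \circ \bar{z}^{\dagger}_P$ equals the identity at the intermediate level. The discrepancy between $\xi'$ and $1$ must be absorbed by the global obstruction $\Upsilon \circ \varpi - \id$, which is precisely the content of the equivalence. A secondary technical point is verifying $\La_{\theta} \cozp (\mf{h}/I)_{\theta}$-linearity of the constructed map, which follows from the compatible linear structures on the splittings of Proposition \ref{daggersplit} and on the Coleman isomorphism of Corollary \ref{loccohdagS}.
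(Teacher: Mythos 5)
Your backward direction is essentially the paper's: you trace the two routes around the outer boundary of diagram \eqref{refined_diag}, use Theorems \ref{partial}, \ref{zsharpvarpi}, Proposition \ref{zquodagger}, Proposition \ref{Psquare}, Lemma \ref{dagcompconn} and the commutativities of Proposition \ref{snakesquare} (via Corollary \ref{daggersquare}) and Lemma \ref{conndiag}, and conclude $\Upsilon \circ \varpi = \id_P$ by surjectivity of the augmentation. That part is fine.

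The forward direction has a genuine gap, and it lies precisely in your construction of $\bar{z}^{\dagger,P}$ and your identification of ``the obstruction.'' You propose to prescribe an element of $\bigoplus_{\ell \mid Np} H^1(\Q_{\ell},P^{\dagger}(1))$ with $p$-component $\bar{z}^{\dagger}_{\quo}(\mathrm{aug}(x))$ and components at $\ell \mid N$ to be fixed, and then \emph{lift} it to $H^1(\mc{O},P^{\dagger}(1))$, declaring the lifting obstruction (image in $H^2_c(\mc{O},P^{\dagger}(1))$) to be $\Upsilon\circ\varpi - \id_P$. But the composition $H^1(\qp,P^{\dagger}(1)) \to H^2_c(\mc{O},P^{\dagger}(1))$ is $-\overline{\Cole}^{\dagger}_P$ by Lemma \ref{dagcompconn}, and $\overline{\Cole}^{\dagger}_P \circ \bar{z}^{\dagger}_{\quo} = \id_P$ by Proposition \ref{zquodagger}, so the image of your prescribed element in $H^2_c \cong P$ is $-\mathrm{aug}(x)$, which never vanishes; your element is simply not liftable, and its lifting obstruction has nothing to do with $\Upsilon\circ\varpi - \id_P$. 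The paper's proof avoids this entirely by defining $\bar{z}^{\dagger}_P$ as the \emph{projection} (via the compatible splittings of Proposition \ref{daggersplit}) of $\bar{z}^{\dagger}_{\quo}\circ\ev_0$ onto the $H^1(\mc{O},P^{\dagger}(1))$-summand, which is always defined. The hypothesis $\Upsilon\circ\varpi = \id_P$ then appears only when verifying the second compatibility diagram: the $p$-component of the restriction of $\bar{z}^{\dagger}(1\otimes p)$ to $H^1(\qp,P^{\dagger}(1))$ receives a contribution from the $Q$-summand $\bar{z}^{\dagger}_Q$ through the connecting map $\Theta^{\dagger}$, and by Proposition \ref{snakesquare} and the factorization $\Theta^{\dagger} = \varepsilon_P^{-1}\circ\Upsilon\circ\varepsilon_Q$ (up to sign) this contribution has $H^2_c$-part $\Upsilon\circ\varpi(\mathrm{aug}(x))$, which must cancel the $-\mathrm{aug}(x)$ from the $P$-side. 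Your sketch omits the $Q$-contribution to the local restriction entirely, so even setting aside the lifting issue, it does not explain where $\Upsilon\circ\varpi$ enters.
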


\begin{proof}
	By Remark \ref{equivalent}, Conjecture \ref{sconj} holds if and only if $\Upsilon \circ \varpi = 1$. We use this form of the 
	conjecture.
	If a map $\bar{z}^{\dagger}$ as in the statement exists, then the composition of $-\bar{z}^{\dagger}$ with the maps 
	$$
		H^1(\mc{O},T^{\dagger}(1)) \to H^1(\mc{O},Q^{\dagger}(1)) \to H^1(\mc{O},Q(1)) \xrightarrow{\sim} Y
	$$
	is necessarily $\varpi$ (after application of $\ev_0$) 
	by the commutativity of the first diagram and Theorem \ref{zsharpvarpi}, and its further composition via $-\Theta^{\dagger}$ 
	to $H^2_c(\mc{O},P^{\dagger}(1))$
	is necessarily $\Upsilon \circ \varpi$ since we have seen that $-\Theta^{\dagger}$ induces a map $Y \to P$ that equals $\Upsilon$
	in Section \ref{refinedcoh}.
	At the same time, by the commutativity of the second diagram, this composition agrees
	with the composition of $-\bar{z}^{\dagger}_{\quo} \circ \ev_0$ with 
	$H^1(\qp,P^{\dagger}(1)) \to H^2_c(\mc{O},P^{\dagger}(1)) \cong P$, which is 
	$1$ by Lemma \ref{dagcompconn} and Proposition \ref{zquodagger}. Thus $\Upsilon \circ \varpi = 1$.

	Conversely, suppose that $\Upsilon \circ \varpi = 1$. Using the isomorphism 
	$$
		H^1(\mc{O},T^{\dagger}(1)) \cong H^1(\mc{O},P^{\dagger}(1)) \oplus H^1(\mc{O},Q^{\dagger}(1))
	$$
	given by the splitting of Proposition \ref{daggersplit}, we may define the projections of $\bar{z}^{\dagger}$ to these components
	separately: let us label them $\bar{z}^{\dagger}_P$ and $\bar{z}^{\dagger}_Q$, respectively. We will 
	similarly write $\bar{z}^{\sharp}$ as the sum of its components 
	$\bar{z}^{\sharp}_P$ and $\bar{z}^{\sharp}_Q$ corresponding to projection to $H^1(\mc{O},P(1))$ and $H^1(\mc{O},Q(1))$.
		
	Recall from Lemma \ref{cohquot} that we have 
	$$
		H^1(\mc{O},Q^{\dagger}(1)) \cong Y \cotimes{R} (\La_{\theta}/\xi)
	$$
	where $f \in \La \cozp \La_{\theta}$ acts as $w(f)$ on the right. Using this isomorphism to identify the two sides, we define
	$$
		\bar{z}_Q^{\dagger} \colon \La \cozp P \to Y \cotimes{R} \La_{\theta}/\xi
	$$
	to be the unique homomorphism such that $\bar{z}_Q^{\dagger}(1 \otimes p) = -\varpi(p) \otimes 1$ for 
	$p \in P$ and which is a map of $\La \cozp \La_{\theta}$-modules for the usual action of $f \in \La \cozp \La_{\theta}$ on
	the left and its action by $\w(f)$ on the right.
	Via the splitting of Lemma \ref{splitseqs}, we have an isomorphism
	$$
		\bigoplus_{\ell \mid Np} H^1(\Q_{\ell},P^{\dagger}(1)) \cong H^1(\mc{O},P^{\dagger}(1)) \oplus H^2_c(\mc{O},P^{\dagger}(1)),
	$$
	and we let $\bar{z}^{\dagger}_P$ be the projection of $\bar{z}_{\quo}^{\dagger} \circ \ev_0$ to the first component.	
	
	We next check commutativity of the first diagram. By Proposition \ref{daggersplit}, we may do this after projection to the summands
	corresponding to $P$ and $Q$, respectively. 
	For the $P$-components, note that $\bar{z}^{\dagger}_P$ is the projection of $\bar{z}_{\quo}^{\dagger} \circ
	\ev_0$ to $H^1(\mc{O},P^{\dagger}(1))$.  The composition of this map with the surjection to $H^1(\mc{O},P(1))$
	is the projection of $\bar{z}^{\sharp}_{\quo} \circ \ev_0$ to $H^1(\mc{O},P(1))$.
	This equals $\bar{z}_P^{\sharp} \circ \ev_0$ in that the restriction of $\bar{z}_P^{\sharp}$ to $H^1(\Q_{\ell},P(1))$ is trivial
	for primes $\ell \mid N$.
	That is, $\bar{z}_P^{\sharp}$ is a reduction of 
	$z^{\sharp} \colon \mcS_{\theta} \to H^1(\mc{O},\mc{T}_{\theta}(1))$, and
	$H^1(\Q_{\ell},\mc{T}_{\theta}(1))$ is trivial by Lemma \ref{cohomatell}.
	
	For the $Q$-components, we need only remark that
	the composition of $\bar{z}^{\dagger}_Q$ with the map to $H^1(\mc{O},Q(1))$
	is $\bar{z}^{\sharp}_Q = -\varpi$ (see Remark \ref{corQ}), 
	so we see that the first diagram commutes on the summands corresponding to $Q$.	
	
	For the second diagram, we have by definition that $\bar{z}^{\dagger}_P$ equals the projection of 
	$\bar{z}_{\quo}^{\dagger} \circ \ev_0$ to $H^1(\mc{O},P^{\dagger}(1))$. The composition of $-\bar{z}_Q^{\dagger}$ 
	with $-\Theta^{\dagger} \colon H^1(\mc{O},Q^{\dagger}(1)) \to H^2_c(\mc{O},P^{\dagger}(1))$, the latter
	group being identified with $P$,
	factors through the map $\Upsilon \circ \varpi = 1$ on $P$. As the composition of $-\bar{z}_{\quo}^{\dagger}$ with  
	$H^1(\qp,P^{\dagger}(1)) \to H^2_c(\mc{O},P^{\dagger}(1))$ 
	is also identified with $1$, the commutativity holds.
\end{proof}	

Note that the data of $\bar{z}^{\dagger}$ is equivalent to the data of its restriction to an $(\mf{h}/I)_{\theta}$-module homomorphism
$P \to H^1(\mc{O},T^{\dagger}(1))$ sending $x \in P$ to $\bar{z}^{\dagger}(1 \otimes x)$ and fitting in the corresponding
commutative diagrams arising from restriction to $P$.

The above discussion can be summarized by the diagram
\begin{equation} \label{refined_diag}
	\SelectTips{cm}{} \xymatrix@C=35pt@R=35pt{
	P \ar[rr]^-{\varpi} && Y \ar[r]^{\id} & Y \ar[d]^{\wr}  \ar@/^3.7pc/[ddd]^-{\Upsilon} \\
	\Lambda \otimes_{\zp} P \ar@{-->}[r]_-{-\bar{z}^{\dagger}} \ar@{->>}[u]^-{\ev_0} \ar@{->>}[d]_-{\ev_0}
	& H^1(\mc{O},T^{\dagger}(1)) \ar[r] \ar[d] & H^1(\mc{O},Q^{\dagger}(1)) 
	\ar[d]^{-\Theta^{\dagger}} \ar@{->>}[u] \ar[r]^{\varepsilon_Q}
	& H^2(\mc{O},Q(1)) \ar[d]\\
	P \ar[r]_-{-\bar{z}^{\dagger}_{\quo}} \ar@/_1.5pc/[rrd]_-{\id} & H^1(\qp,P^{\dagger}(1)) \ar[r]\ar[rd]_{-\overline{\Cole}^{\dagger}_P} & 
	H^2_c(\mc{O},P^{\dagger}(1)) \ar[r]^{\partial_P^{\dagger}}  & 
	H^3_c(\mc{O},P(1)) 
	\ar[d]^{\wr}  \\
	&& P \ar[u]^{\wr} \ar[r]_{\id} & P,
	} 
\end{equation}
which fully commutes if we know the existence of the conjectural map $\bar{z}^{\dagger}$ in Theorem \ref{equivform}.  
The equality $\Upsilon \circ \varpi = 1$ is then seen by tracing the outside of the diagram. This begs the following question, which would be
in analogy with the construction of $z^{\sharp}$ by Fukaya and Kato were it to hold. 

\begin{question} \label{zdagger}
	Does there exist a $\Lambda \cotimes{\zp} \mf{h}_{\theta}$-module homomorphism 
	$$
		z^{\dagger} \colon \Lambda \cotimes{\zp}  \mc{S}_{\theta} \to H^1(\mc{O},\mc{T}_{\theta}^{\dagger}(1))
	$$ 
	such that the diagram
	$$
		\SelectTips{cm}{}
		\xymatrix@C=30pt{\Lambda \cotimes{\zp} \mc{S}_{\theta} \ar[d]^-z \ar[r]^-{z^{\dagger}} 
		& H^1(\mc{O},\mc{T}_{\theta}^{\dagger}(1)) \ar[d]^{1-U_p} \\
		H^1_{\Iw}(\mc{O}_{\infty},\mc{T}_{\theta}(1)) \ar[r] & H^1(\mc{O},\mc{T}_{\theta}^{\dagger}(1)) }
	$$
	commutes?
\end{question}

Such a map $z^{\dagger}$ is uniquely determined if it exists by the following lemma.

\begin{lemma} \label{nofixeddagger}
	Multiplication by $1-U_p$ is injective on $H^1(\mc{O},\mc{T}_{\theta}^{\dagger}(1))$.
\end{lemma}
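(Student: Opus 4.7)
The plan is to parallel the proof of Lemma~\ref{nofixedelts}: first show that $1-U_p$ is injective on $\mc{T}_\theta^{\dagger}$ itself, and then use the long exact cohomology sequence to reduce the statement to the vanishing of $H^0(\mc{O}, (\mc{T}_\theta^{\dagger}/(1-U_p)\mc{T}_\theta^{\dagger})(1))$.

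For the first step, I would apply the snake lemma to the defining short exact sequence $0 \to \Lai \cozp \mc{T}_\theta \xrightarrow{X\alpha} \Lai \cozp \mc{T}_\theta \to \mc{T}_\theta^{\dagger} \to 0$ with vertical maps given by multiplication by $1-U_p$. Since $1-U_p$ is injective on $\mc{T}_\theta$ by Lemma~\ref{nofixedelts} and hence on $\Lai \cozp \mc{T}_\theta$, the snake lemma identifies the kernel of $1-U_p$ on $\mc{T}_\theta^{\dagger}$ with the $X\alpha$-kernel on $\Lai \cozp (\mc{T}_\theta/(1-U_p)\mc{T}_\theta)$. This kernel vanishes by Lemma~\ref{diagmult} applied with $\mf{R} = \mf{h}_\theta$, since $X\alpha$ has nonzero image in the residue ring $R/p\ps{X}$ as observed after Notation~\ref{alpha}.

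With this in hand, the long exact sequence of the Tate twist of $0 \to \mc{T}_\theta^{\dagger} \xrightarrow{1-U_p} \mc{T}_\theta^{\dagger} \to \mc{T}_\theta^{\dagger}/(1-U_p)\mc{T}_\theta^{\dagger} \to 0$ reduces the lemma to showing $H^0(\mc{O}, (\mc{T}_\theta^{\dagger}/(1-U_p)\mc{T}_\theta^{\dagger})(1)) = 0$. Writing $M = \mc{T}_\theta/(1-U_p)\mc{T}_\theta$, so that $\mc{T}_\theta^{\dagger}/(1-U_p)\mc{T}_\theta^{\dagger} = (\Lai \cozp M)/X\alpha(\Lai \cozp M)$, the cohomology of $0 \to \Lai \cozp M(1) \xrightarrow{X\alpha} \Lai \cozp M(1) \to (\Lai \cozp M/X\alpha)(1) \to 0$ reduces the desired vanishing to (i) $H^0(\mc{O}, \Lai \cozp M(1)) = 0$ and (ii) $H^1_{\Iw}(\mc{O}_\infty, M(1))[X\alpha] = 0$. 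Part (i) follows from Shapiro's lemma combined with the argument of Lemma~\ref{nofixedelts} applied at each finite level $\mc{O}_r$: the character constraints of Hypothesis~\ref{thetahyp} persist upon restriction to $G_{\Q_r}$ since $\Q_r/\Q$ is a totally real $p$-power extension linearly disjoint from $\Q(\mu_{Np})$.

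The main obstacle will be part (ii). Using the long exact sequence from $0 \to \mc{T}_\theta(1) \xrightarrow{1-U_p} \mc{T}_\theta(1) \to M(1) \to 0$, part (ii) sandwiches between $(H^1_{\Iw}(\mc{O}_\infty, \mc{T}_\theta(1))/(1-U_p))[X\alpha]$ and $H^2_{\Iw}(\mc{O}_\infty, \mc{T}_\theta(1))[1-U_p, X\alpha]$, so it suffices that both vanish. The first follows if $X\alpha$ and $1-U_p$ form a regular sequence on $H^1_{\Iw}(\mc{O}_\infty, \mc{T}_\theta(1))$: injectivity of $X\alpha$ is a consequence of the $\La \cozp \La_\theta$-torsion-freeness from Lemma~\ref{torsfree} combined with the finiteness of $\La \cozp \mf{h}_\theta$ over $\La \cozp \La_\theta$ (so that $X\alpha$ satisfies a monic equation over the smaller ring with a nonzero-divisor constant term, reducing $X\alpha$-injectivity to that of a nonzero element of $\La \cozp \La_\theta$ on a torsion-free module), and injectivity of $1-U_p$ on the quotient $H^1_{\Iw}/X\alpha$ follows from the corresponding Iwasawa-level analogue of Lemma~\ref{nofixedelts}. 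The second vanishing is subtler: it requires a Poitou-Tate analysis of $H^2_{\Iw}[1-U_p]$ identifying it with an appropriate dual on which $X\alpha$ acts injectively, and this is where the bulk of the delicate work would lie.
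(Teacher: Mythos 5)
Your opening moves are sound: the snake-lemma argument for injectivity of $1-U_p$ on $\mc{T}_{\theta}^{\dagger}$ is correct, and the reduction to the vanishing of $H^0(\mc{O},(\mc{T}_{\theta}^{\dagger}/(1-U_p)\mc{T}_{\theta}^{\dagger})(1))$ is exactly where the paper's proof begins. But you then take a further long-exact-sequence reduction that the paper does not, and this leaves a genuine gap: you reduce the needed vanishing to condition (ii), namely $H^1_{\Iw}(\mc{O}_{\infty},M(1))[X\alpha]=0$ for $M = \mc{T}_{\theta}/(1-U_p)\mc{T}_{\theta}$, and you explicitly leave it open, conceding that a Poitou--Tate analysis of $H^2_{\Iw}[1-U_p,X\alpha]$ ``is where the bulk of the delicate work would lie.'' Neither the $X\alpha$-injectivity on $H^1_{\Iw}/(1-U_p)$ via your finiteness trick nor the torsion statement on $H^2_{\Iw}$ is verified, and it is far from clear either is true at the needed level of generality. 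So the proposal does not close the argument.

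The paper's proof avoids this reduction and the accompanying cohomological subtleties entirely by arguing directly at the level of the module, as in Lemma~\ref{nofixedelts}. Using the $\mf{h}_{\theta}$-module splitting $\mc{T}_{\theta} = \mc{T}_{\sub} \oplus \mc{T}_{\quo}$ of Lemma~\ref{latticesplit}, hence $\mc{T}_{\theta}^{\dagger} = \mc{T}_{\sub}^{\dagger} \oplus \mc{T}_{\quo}^{\dagger}$ as $\La \cozp \mf{h}_{\theta}$-modules, a nonzero $G_{\Q}$-fixed class in $(\mc{T}_{\theta}^{\dagger}/(1-U_p)\mc{T}_{\theta}^{\dagger})(1)$ projects $G_{\qp}$-equivariantly to the $\mc{T}_{\quo}^{\dagger}$-piece, on whose reduction modulo $1-U_p$ and Tate twist the group $G_{\Q_{p,\infty}}$ acts via $\omega$; so the image must vanish. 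The class thus lies in the $\mc{T}_{\sub}^{\dagger}$-piece, where the paper runs the Kurihara/Harder--Pink-style Nakayama argument from Lemma~\ref{nofixedelts} with the maximal ideal $\mf{M}$ of $\La \cozp \mf{h}_{\theta}$ replacing $\mf{m} \subset \mf{h}_{\theta}$: modulo $\mf{M}$, the $G_{\Q}$-action is by $\omega^2\theta^{-1}$, which is nontrivial by Hypothesis~\ref{thetahyp}c, so no nonzero fixed element survives. This is a purely coefficient-level argument requiring no control of $H^1_{\Iw}$ or $H^2_{\Iw}$, so the step where your proposal stalls simply never arises.
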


\begin{proof}
	As in the proof of Lemma \ref{nofixedelts}, it suffices to see that
	$(\mcT_{\theta}^{\dagger}/(U_p-1)\mcT_{\theta}^{\dagger})(1)$ has trivial $G_{\Q}$-invariants.
	First, we note that $G_{\Q_{p,\infty}}$ acts on $(\mcT_{\quo}^{\dagger}/(U_p-1)\mcT_{\quo}^{\dagger})(1)$ via the
	$p$-adic cyclotomic character, so $(\mcT_{\quo}^{\dagger}/(U_p-1)\mcT_{\quo}^{\dagger})(1)$ has trivial $G_{\qp}$-invariant group.
	
	We claim that the quotient $(\mc{T}_{\sub}^{\dagger}/(U_p-1)\mc{T}_{\sub}^{\dagger})(1)$ has no elements that are fixed by
	$G_{\Q}$ inside $(\mc{T}_{\theta}^{\dagger}/(U_p-1)\mc{T}_{\theta}^{\dagger})(1)$. Let $\mf{m}$ denote the maximal ideal
	of $\mf{h}_{\theta}$.
	We know from the proof of Lemma \ref{nofixedelts} that for any $x \in (\mc{T}_{\sub} - (U_p-1)\mc{T}_{\sub})(1)$, 
	the subgroup $(x\mc{T}_{\sub}/(x\mf{m}+ (U_p-1))\mc{T}_{\sub})(1)$ of
	$(x\mc{T}_{\theta}/(x\mf{m}+ (U_p-1))\mc{T}_{\theta})(1)$ is isomorphic to a quotient of $Q/\mf{m}Q(1)$, hence can
	be viewed as a $G_{\Q}$-module with action factoring through $\Gal(\Q(\mu_{Np})/\Q)$ via
	$\omega^2\theta^{-1}$. Since $G_{\Q}$ acts trivially on $\La^{\iota}$ modulo the maximal ideal of $\Lambda$,
	the image of $y \in \mcT_{\sub}^{\dagger}(1) - (U_p-1)\mcT_{\sub}^{\dagger}(1)$ is then similarly acted on by $G_{\Q}$ 
	through $\omega^2\theta^{-1}$ in the nonzero quotient
	$(\La \cozp \mf{h}_{\theta}) y/(\mf{M}y + (U_p-1)\mcT_{\sub}^{\dagger}(1))$ of 
	$(\mc{T}_{\sub}^{\dagger}/\mf{M}\mc{T}_{\sub}^{\dagger})(1) \cong (\mcT_{\sub}/\mf{m}\mcT_{\sub})(1)$, 
	where $\mf{M}$ is the unique maximal ideal of
	$\La \cotimes{\zp} \mf{h}_{\theta}$. Since $\theta \neq \omega^2$, we have the result.
\end{proof}

A positive answer to Question \ref{zdagger} appears likely to be too much to hope for in general. However, if it does hold, so does Conjecture \ref{sconj}.

\begin{proposition} \label{refined_zeta_map}
	If a map $z^{\dagger}$ as in Question \ref{zdagger} exists, then the reduction $\bar{z}^{\dagger} \colon \La \cotimes{\zp} P 
	\to H^1(\mc{O},T^{\dagger}(1))$ of $z^{\dagger}$ modulo $I$ satisfies the conditions of Theorem \ref{equivform}, and in particular
	Conjecture \ref{sconj} holds.
\end{proposition}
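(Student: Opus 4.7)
The plan is to define $\bar{z}^{\dagger} \colon \La \cozp P \to H^1(\mc{O}, T^{\dagger}(1))$ as the reduction of $z^{\dagger}$ modulo $I$, using the identification $(\La \cozp \mcS_{\theta})/I(\La \cozp \mcS_{\theta}) \cong \La \cozp P$ afforded by the $\zp$-flatness of $\La$ together with the isomorphism $\mcS_{\theta}/I\mcS_{\theta} \cong P$. I would then verify the two compatibility conditions of Theorem \ref{equivform}, from which Conjecture \ref{sconj} follows by that theorem.

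For the compatibility with $\bar{z}^{\sharp}$, I consider the surjection $g_* \colon H^1(\mc{O}, \mcT_{\theta}^{\dagger}(1)) \to H^1(\mc{O}, \mcT_{\theta}(1))$ induced by reduction modulo $X$ of the coefficient module. Because the composition of $g_*$ with the natural map $H^1_{\Iw}(\mc{O}_{\infty}, \mcT_{\theta}(1)) \to H^1(\mc{O}, \mcT_{\theta}^{\dagger}(1))$ is exactly corestriction, applying $g_*$ to the defining relation $(1-U_p) z^{\dagger} = $ (image of $z$) of Question \ref{zdagger} and invoking the identity $\cor \circ z = (1-U_p) z^{\sharp} \circ \ev_0$ from Theorem \ref{zsharp} yields $(1-U_p)(g_* \circ z^{\dagger} - z^{\sharp} \circ \ev_0) = 0$ in $H^1(\mc{O}, \mcT_{\theta}(1))$. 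By Lemma \ref{nofixedelts}'s injectivity of $1 - U_p$ on this group, it follows that $g_* \circ z^{\dagger} = z^{\sharp} \circ \ev_0$; reducing modulo $I$ gives the first diagram.

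Compatibility with $\bar{z}^{\dagger}_{\quo}$ is handled analogously using the map $h' \colon H^1(\mc{O}, \mcT_{\theta}^{\dagger}(1)) \to H^1(\qp, \mcT_{\quo}^{\dagger}(1))$ given by restriction to $\qp$ followed by the local splitting onto $\mcT_{\quo}^{\dagger}$. Combining the hypothesis with Proposition \ref{zquodagger} yields $(1-U_p)(h' \circ z^{\dagger} - z^{\dagger}_{\quo} \circ \ev_0) = 0$, and I would conclude equality via the injectivity of $1 - U_p$ on $H^1(\qp, \mcT_{\quo}^{\dagger}(1))$. The hard part is establishing that injectivity: it follows from computing the $G_{\qp}$-action on $\mcT_{\quo}^{\dagger}(1)/(U_p - 1)$, where the $\mcT_{\quo}$-factor becomes trivial after quotient (since $\Fr_p = U_p$ acts on $\mcT_{\quo}$), leaving only the action on $\Lai(1)$, which factors through the Teichm\"uller character $\omega$ (this is the standard cancellation $\langle \sigma \rangle^{-1} \cdot \omega(\sigma) \langle \sigma \rangle = \omega(\sigma)$). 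Since $\omega \not\equiv 1 \pmod{p}$ for $p \geq 3$, the $G_{\qp}$-invariants vanish. Reducing the resulting identity $h' \circ z^{\dagger} = z^{\dagger}_{\quo} \circ \ev_0$ modulo $I$ gives the second diagram, and Theorem \ref{equivform} then yields Conjecture \ref{sconj}. Note that this argument crucially cannot be performed at the mod-$I$ level directly, since $U_p - 1 \in I$ makes $1 - U_p$ act as zero on $H^1(\qp, P^{\dagger}(1))$; the key cancellation must happen at the universal level before reduction.
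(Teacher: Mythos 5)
Your first compatibility (with $\bar{z}^{\sharp}$) is essentially identical to the paper's argument: pass the relation of Question~\ref{zdagger} through the quotient map to $H^1(\mc{O},\mcT_{\theta}(1))$, use $\cor \circ z = (1-U_p)\, z^{\sharp} \circ \ev_0$ from Theorem~\ref{zsharp}, and cancel $1-U_p$ by the injectivity in Lemma~\ref{nofixedelts}. Your closing remark about why the cancellation must occur before reduction modulo $I$ is a genuinely useful observation.

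For the second compatibility you take a genuinely different route from the paper. The paper cancels $1-U_p$ by identifying $H^1(\qp,\mcT_{\quo}^{\dagger}(1))$ with $\mfS_{\theta}^{\star}$ via the intermediate Coleman map $\Cole^{\dagger}$ of Corollary~\ref{loccohdagS}, where the injectivity of $1-U_p$ on $\mfS_{\theta}^{\star}$ is immediate from the pushout description (it restricts the injection $1-U_p\colon \mfS_{\theta}^{\dagger} \hookrightarrow \mfS_{\theta}^{\star} \subset \mfS_{\theta}^{\dagger}$). You instead attempt to prove injectivity of $1-U_p$ on $H^1(\qp, \mcT_{\quo}^{\dagger}(1))$ cohomologically, by showing $H^0(\qp, \mcT_{\quo}^{\dagger}(1)/(U_p-1))$ vanishes. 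There is a genuine gap here: to deduce the injectivity on $H^1$ from that $H^0$ vanishing you need the short exact sequence $0 \to \mcT_{\quo}^{\dagger}(1) \xrightarrow{1-U_p} \mcT_{\quo}^{\dagger}(1) \to \mcT_{\quo}^{\dagger}(1)/(U_p-1) \to 0$, i.e., you first need $1-U_p$ to be injective on the \emph{module} $\mcT_{\quo}^{\dagger}(1) = (\Lai \cozp \mcT_{\quo})/X\alpha$, and that is not addressed. It is true, but the natural way to see it is precisely that $\mcT_{\quo}^{\dagger} \cong \mfS_{\theta}^{\dagger}$ as $\La \cozp \mf{h}_{\theta}$-modules and $1-\varphi^{-1}=1-U_p$ is injective on $\mfS_{\theta}^{\dagger}$ by Lemma~\ref{simpleCstar} and Corollary~\ref{loccohdagS} --- in other words, you end up needing the Coleman-map structure you were trying to bypass. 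Separately, the ``standard cancellation $\langle\sigma\rangle^{-1}\cdot\omega(\sigma)\langle\sigma\rangle = \omega(\sigma)$'' is stated imprecisely: one factor is a group element of $\La$ and the other is a scalar in $1+p\zp$, so they do not cancel literally; the action is only $\omega$ modulo the maximal ideal of $\La \cozp R$, which still forces the vanishing of $H^0$ by a Nakayama-type argument since the coefficient module is compact. Both points should be fixed if you want the cohomological variant of the argument to stand on its own.
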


\begin{proof}
	By construction, we have that the composition of $(1-U_p)z^{\dagger}$ with the map 
	$$
		H^1(\mc{O},\mc{T}_{\theta}^{\dagger}(1)) \to H^1(\mc{O},\mc{T}_{\theta}(1))
	$$ 
	is $(1-U_p)z^{\sharp}$.
	By Lemma \ref{nofixedelts}, we see that the composition of $z^{\dagger}$ with the latter map is $z^{\sharp}$.
	In particular, the first diagram in Theorem \ref{equivform} commutes.
	
	Moreover, since $(1-U_p)z_{\quo}^{\dagger}$ agrees with the composition of $z_{\quo}$ with the map
	$$
		H^1_{\Iw}(\Q_{p,\infty},\mc{T}_{\quo}(1)) \to H^1(\qp,\mc{T}_{\quo}^{\dagger}(1)).
	$$
	and $1-U_p$ has trivial kernel on $\mfS_{\theta}^{\star}$, we have that the composition of $z^{\dagger}$ with the map
	$$
		H^1(\mc{O},\mc{T}_{\theta}^{\dagger}(1)) \to H^1(\qp,\mc{T}_{\quo}^{\dagger}(1))
	$$
	is $z_{\quo}^{\dagger}$. That is, the second diagram in Theorem \ref{equivform} commutes.
\end{proof}

\section{Test case} \label{testcase}

We explore the feasibility of the equivalent conditions to Conjecture \ref{sconj} found in Theorem \ref{equivform}, working with cyclotomic units
in place of Beilinson-Kato elements. We find, somewhat reassuringly, that an analogue of the conditions of Theorem \ref{equivform} holds in this setting. 

On the other hand, an analogue of the stronger Question \ref{zdagger}, which amounts to a norm relation for a good choice of $z^{\dagger}$, has a potential obstruction. We show that this norm relation does hold if an even eigenspace of the completely split Iwasawa module vanishes.

\subsection{Notation}

Let us first introduce changes to our notation from the previous sections. Most importantly, we now allow our prime $p$ to divide $\varphi(N)$. 
That is, we let $p$ be an odd prime, and we let $N \ge 3$ be a positive integer with $p \nmid N$. Let $\Delta = (\Z/Np\Z)^{\times}$ as before,
which we identify with $\Gal(\Q(\mu_{Np^{\infty}})/\Q_{\infty}) \cong \Gal(\Q(\mu_{Np})/\Q)$.

\begin{definition}\
\begin{enumerate}
	\item[a.] Let $\Delta_p$ and $\Delta'$ be the Sylow $p$-subgroup of $\Delta$ and its prime-to-$p$ order complement, respectively. 
	\item[b.] Let $\theta \colon \Delta' \to \qpbt$ be a nontrivial even character of $\Delta'$ which is trivial on decomposition at $p$ and primitive at 
	all primes dividing $N$. 
	\item[c.] Let $R$ be the $\zp[\Delta_p]$-algebra of values of $\theta$, which we then view as a $\zp[\Delta]$-module with
	$\Delta'$ acting through $\theta$.
	\item[d.] Let $\La_{\theta} = R\ps{\Gamma} \cong R\ps{X}$, where $\Gamma \cong
	\Gal(\Q(\mu_{Np^{\infty}})/\Q(\mu_{Np}))$, and $X$ is as before.
	\item[e.] Let $\mc{R} = R^{\iota}$ be the $\zp[\Delta]$-module that is $R$ 
	endowed with the inverse of the Galois action described above.
\end{enumerate}
\end{definition}

\begin{definition}
	The $\theta$-part $M_{\theta}$ of a $\zp[\Delta]$-module $M$ is the $R$-module
	$M_{\theta} = M \otimes_{\zp[\Delta]} R$.
\end{definition}

\begin{remark} \label{theta-part_cohom}
	Our choice of $\mc{R}$ is made so that 
	$$
		H^i(\mc{O},\mc{R}(1)) \cong H^i(\Z[\tfrac{1}{Np},\mu_{Np}],\zp(1))_{\theta} \cong H^i(\Z[\tfrac{1}{Np},\mu_N],\zp(1))_{\theta}
	$$ 
	for any $i \ge 0$ by Shapiro's lemma, and similarly for Iwasawa cohomology.
\end{remark}

We shall also use the following.

\begin{definition} \
	\begin{enumerate}
		\item[a.] Let $\sigma$ denote the image of the Frobenius $\Fr_p$ at $p$ in $\Delta_p$.
		\item[b.] Let $R_{\sigma=1}$  denote the maximal quotient and $R^{\sigma=1}$ the maximal submodule of 
		$R$ on which $\sigma$ acts trivially. 
		\item[c.] Let $Y$ (resp., $\tilde{Y}$) denote the Galois group of the maximal completely locally split
		(resp., unramified) abelian pro-$p$ extension of $\Q(\mu_{Np^{\infty}})$.
		\item[d.] Let $\mf{X}$ denote the Galois group of the maximal abelian, unramified outside $Np$, pro-$p$ extension
		of $\Q(\mu_{Np^{\infty}})$.
		\item[e.] Let $\mc{E}$ (resp., $\mc{C}$) denote the group of norm compatible
		systems of $p$-completions of global units (resp., cyclotomic units) in the tower $\Q(\mu_{Np^{\infty}})/\Q$.
	\end{enumerate}
\end{definition}

Noting Remark \ref{theta-part_cohom}, Kummer theory provides an isomorphism 
$\mc{E}_{\theta} \cong H^1_{\Iw}(\mc{O}_{\infty},\mc{R}(1))$. For interpretations
of $Y_{\theta}$ and $\mf{X}_{\theta}$ in terms of \'etale (or Galois) cohomology, see Section \ref{briefcohstudy}.

\subsection{Zeta and Coleman maps}

Using the identifications of Remark \ref{theta-part_cohom}, 
we take our zeta map as having image the $\theta$-part of the cyclotomic units.

\begin{definition} \label{cycl_zeta_map} \
	\begin{enumerate}
	\item[a.]  The \emph{zeta map} $z$ is the $\La_{\theta}$-module homomorphism 
	$$
		z \colon \La_{\theta} \to H^1_{\Iw}(\mc{O}_{\infty},\mc{R}(1))
	$$ 
	that sends $-1$ to the projection of the norm compatible sequence $(1-\zeta_N^{p^{-r}}\zeta_{p^r})_{r \ge 1}$
	to the $\theta$-part of $H^1_{\Iw}(\mc{O}_{\infty}[\mu_{Np}],\zp(1))$.
	\item[b.] We define a $\zp[\Delta]$-module homomorphism 
	$$
		z^{\sharp} \colon R \to H^1(\mc{O},\mc{R}(1))
	$$ 
	as the unique such map taking $-1$ to the projection of $1-\zeta_N$ in $H^1(\mc{O}[\mu_N],\zp(1))_{\theta}$.
	\end{enumerate}
\end{definition}

\begin{remark}
	The use of negative signs in Definition \ref{cycl_zeta_map} is perhaps not the ideal convention, but it is consistent 
	with our conventions for the zeta maps in the prior sections, which took Manin symbols to (compatible systems of) 
	negatives of cup products of Siegel units.
\end{remark}	
	
We use $z_{\quo}$ and $z^{\sharp}_{\quo}$ to denote the precompositions of $z$ and $z^{\sharp}$ with restriction
to the cohomology of $G_{\qp}$.
	
\begin{remark}
	The zeta map and its ground level analogue satisfy the well-known norm relation
	$$
	\SelectTips{cm}{} \xymatrix@C=30pt{
		\La_{\theta} \ar[r]^-z \ar[d]^{z^{\sharp} \circ \ev_0} & H^1_{\Iw}(\mc{O}_{\infty},\mc{R}(1)) \ar[d] \\
		H^1(\mc{O},\mc{R}(1)) \ar[r]^{1-\sigma^{-1}} & H^1(\mc{O},\mc{R}(1)).
	}
	$$
	among cyclotomic units.
\end{remark}

\begin{definition} \label{xi}
	We let $\xi \in \La_{\theta}$ be the unique element satisfying 
	$$
		\tilde{\rho}(\xi(u^{1-s}-1)) = L_p(\theta\rho,s)
	$$ 
	for all $s \in \zp$ and $p$-adic characters $\rho$ of $\Delta_p$,
	where we use $\tilde{\rho}$ to denote the map $R \to \qpbar$ induced by $\rho$.
\end{definition}

We note the following equivariant formulation of a theorem which emanates from work of Iwasawa and is proven by Tsuji 
\cite[Theorem 4.3]{tsuji} in the form and generality we need, improving upon work of Greither \cite{greither}.

\begin{remark} \label{zetacolcycl}
	As an $R$-module, $D(\mc{R})$ is free of rank $1$, and it can be identified with $R$ as a $\zp$-algebra
	after a choice of normal basis of the valuation ring of the unramified extension of $\Q(\mu_p)$ defined by the decomposition 
	group of $\Delta_p$. We can and do choose this identification such that the Coleman map 
	$$
		\Cole = \Cole_{\mc{R}} \colon H^1_{\Iw}(\Q_{p,\infty},\mc{R}(1)) \to X^{-1}\La_{\theta}
	$$
	satisfies $\Cole \circ z = \xi$. While Tsuji proves this equality after application of an arbitrary $p$-adic character of $\Delta_p$,
	this equivariant formulation is immediate from its derivation. In fact, one could take $\xi$ to be defined
	by $\xi = \Cole \circ z$ (for a good choice of basis as in \cite{tsuji}), and 
	Tsuji's result tells us that this $\xi$ satisfies the characterizing property of Definition \ref{xi}.
\end{remark}

To shorten notation, let us write $\mfC$ for the image $\mfC(\mc{R})$ of $\Cole$ and similarly with superscripts
adorning $\mfC$. 
Consider the Coleman map 
$$
	\Cole^{\dagger} \colon H^1(\qp,\mc{R}^{\dagger}(1)) \to \mfC^{\star}
$$ 
for $A = \mc{R}$ and $\alpha = \xi$ of Theorem \ref{loccohdag}. 
The analogous argument to that of Proposition \ref{zquodagger} yields the following.

\begin{proposition} \label{interloczeta}
	There exists a canonical $\La_{\theta}$-module homomorphism
	$$
		z_{\quo}^{\dagger} \colon R \to H^1(\qp,\mc{R}^{\dagger}(1))
	$$
	such that $\overline{\Cole}^{\dagger} \circ z^{\dagger}_{\quo} = 1$ and such that the diagram
	$$
		\SelectTips{cm}{} \xymatrix@C=40pt{
		\La_{\theta} \ar[r]^-{z_{\quo}} \ar[d]^{z_{\quo}^{\dagger} \circ \ev_0} & H^1_{\Iw}(\Q_{p,\infty},\mc{R}(1)) \ar[d] \\
		H^1(\qp,\mc{R}^{\dagger}(1)) \ar[r]^{1-\sigma^{-1}} & H^1(\qp,\mc{R}^{\dagger}(1))
		}
	$$
	commutes. Moreover, $z_{\quo}^{\sharp}$ is the composition of $z_{\quo}^{\dagger}$ with the map 
	$$
		H^1(\qp,\mc{R}^{\dagger}(1)) \to H^1(\qp,\mc{R}(1)).
	$$ 
\end{proposition}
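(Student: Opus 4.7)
The plan is to emulate the construction in Proposition \ref{zquodagger}, now applied to $A = \mc{R}$ and $\alpha = \xi$ via Theorem \ref{loccohdag}, working through the isomorphism $\Cole^{\dagger} \colon H^1(\qp, \mc{R}^{\dagger}(1)) \xrightarrow{\sim} \mfC^{\star}$ and the chosen identification $D(\mc{R}) \cong R$ of Remark \ref{zetacolcycl}. I will define $z_{\quo}^{\dagger}$ as the composition
$$
	R \xrightarrow{\sim} D(\mc{R}) \to \mfC^{\star} \xrightarrow{(\Cole^{\dagger})^{-1}} H^1(\qp, \mc{R}^{\dagger}(1)),
$$
where the middle arrow is the pushout map out of the right-hand copy of $D(\mc{R})$ in the pushout diagram $\mfC^{\dagger} \xleftarrow{\xi} D(\mc{R}) \xrightarrow{1-\varphi^{-1}} D(\mc{R})$ that defines $\mfC^{\star}$. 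This map is canonical and manifestly $\La_{\theta}$-linear.

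The equality $\overline{\Cole}^{\dagger} \circ z_{\quo}^{\dagger} = 1$, interpreted as the canonical quotient $R \to R_{\sigma=1}$, is essentially tautological: by Theorem \ref{loccohdag}, the map $\psi$ is inverse to the isomorphism $\bar{\mc{R}} = R_{\sigma=1} \xrightarrow{\sim} \mfC^{\star}/\mfC^{\dagger}$ induced by precisely the pushout map used in defining $z_{\quo}^{\dagger}$, so $\psi \circ (\text{pushout map})$ is the canonical quotient $D(\mc{R}) \to R_{\sigma=1}$ by construction.

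For the commutativity of the diagram, I will apply $\Cole^{\dagger}$ to both sides. Using $\Cole \circ z = \xi$ (Remark \ref{zetacolcycl}) together with the top row of the exact sequence of Theorem \ref{loccohdag}, the image of $y \in \La_{\theta}$ under $z_{\quo}$ followed by restriction to $H^1(\qp, \mc{R}^{\dagger}(1))$ and then $\Cole^{\dagger}$ equals the image of $\xi y \in \mfC^{\dagger}$ under the other pushout map $\mfC^{\dagger} \to \mfC^{\star}$. Since $y - \ev_0(y) \in X\La_{\theta}$, we have $\xi y \equiv \xi \cdot \ev_0(y) \pmod{X\xi}$, so the resulting class in $\mfC^{\star}$ coincides with that of $\xi(1 \otimes \ev_0(y))$. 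The pushout relation then identifies this with the image of $(1-\varphi^{-1})\ev_0(y)$ under the pushout map used in the definition of $z_{\quo}^{\dagger}$. Since $\varphi^{-1}$ on $D(\mc{R}) \cong R$ corresponds to multiplication by $\sigma \in R$ (as $\Fr_p$ on $\mc{R} = R^{\iota}$ acts as $\sigma^{-1}$), while the Galois action of $\sigma^{-1}$ on $\mc{R}^{\dagger}$ is likewise multiplication by $\sigma \in R$, we obtain the desired equality with $(1-\sigma^{-1})z_{\quo}^{\dagger}(\ev_0(y))$.

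For the compatibility with $z_{\quo}^{\sharp}$, I will appeal to the middle square of diagram \eqref{Coldagflat}, which relates $\Cole^{\dagger}$ and $\Cole^{\flat}$ via the quotient $\mc{R}^{\dagger} \to \mc{R}$ induced by $\ev_0$. Using the analogue of Proposition \ref{zquosharp} for cyclotomic units, derived from Proposition \ref{Col_vs_flat} and the norm relation between $z$ and $z^{\sharp}$, one checks that the $\Cole^{\flat}$-image of the composition $R \xrightarrow{z_{\quo}^{\dagger}} H^1(\qp, \mc{R}^{\dagger}(1)) \to H^1(\qp, \mc{R}(1))$ agrees with $\Cole^{\flat} \circ z_{\quo}^{\sharp}$, forcing equality of the two maps. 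The hardest part throughout is bookkeeping of sign conventions between $\sigma$ and $\sigma^{-1}$ and between the two pushout maps; an added subtlety relative to Proposition \ref{zquodagger} is that $\mc{R}^{\Fr_p=1} = R^{\sigma=1}$ need not vanish, which precludes the direct use of the simpler explicit description of $\mfC^{\star}$ afforded by Lemma \ref{simpleCstar} and forces us to work with the abstract pushout throughout.
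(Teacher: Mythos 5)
Your definition of $z_{\quo}^{\dagger}$ as $(\Cole^{\dagger})^{-1}$ applied to the right-hand pushout map $D(\mc{R}) \to \mfC^{\star}$, and your argument for $\overline{\Cole}^{\dagger} \circ z_{\quo}^{\dagger} = 1$, match the paper exactly, and your use of $\Cole \circ z = \xi$ together with the pushout relation is the right route to the diagram's commutativity. However, the final step of that verification has a sign confusion. Working through $D(\mc{R}) \subset \mc{R} \cozp W$, one finds $\varphi^{-1} = 1 \otimes \Fr_p^{-1}$ agrees on the fixed part with $\Fr_p \otimes 1$, i.e. with the Frobenius action on $\mc{R} = R^{\iota}$, which is multiplication by $\sigma^{-1}$ (not $\sigma$). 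At the same time, the $1-\sigma^{-1}$ labelling the bottom arrow is the $\La_\theta$-module action of the element $1-\sigma^{-1}\in R$, i.e.\ multiplication by $1-\sigma^{-1}$ — not the Galois action of $\sigma^{-1}$ on $\mc{R}^\dagger$ (which would indeed be multiplication by $\sigma$). You have swapped the sign in $\varphi^{-1}$ and simultaneously swapped to the Galois-action interpretation; the two slips compensate to produce the correct equality $(1-\varphi^{-1}) = (1-\sigma^{-1})$, but neither intermediate step is as stated. The correct reading needs no sign gymnastics at all: both sides are multiplication by $1-\sigma^{-1}$ on $D(\mc{R}) \cong R$.

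For the final claim (compatibility with $z_{\quo}^{\sharp}$), there is a genuine gap: you argue that agreement under $\Cole^{\flat}$ forces equality, but $\Cole^{\flat}$ is only split surjective with kernel $\mc{R}^{\Fr_p = 1} = R^{\sigma=1}$, which — as you yourself note at the end — need not vanish in this setting. To close this you must also show both maps land in the canonical complement of $R^{\sigma=1}$: for $z^{\sharp}_{\quo}$ this is because $1-\zeta_N$ is a $p$-unit, and for the composed map this is because, unwinding the proof of Theorem \ref{loccohdag}, one has $\lambda \circ i_2 = \alpha_* \circ (\Cole^{\flat}_{\mc{R}})^{-1}$, with $(\Cole^{\flat}_{\mc{R}})^{-1}$ having image in the unit summand; since $\mathrm{red}_* \circ \alpha_*$ is multiplication by $\alpha(0) = \xi(0)$ on $H^1(\qp, \mc{R}(1))$, the composition preserves that summand. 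Also, your appeal to the middle square of diagram \eqref{Coldagflat} is misplaced — that square involves the map $\alpha \colon A \to A^{\dagger}$, not the quotient $\mc{R}^{\dagger} \to \mc{R}$; the relevant comparison is the one just described via the explicit form of $\lambda \circ i_2$.
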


\begin{proof}
	Identifying $H^1(\qp,\mc{R}^{\dagger}(1))$ with $\mfC^{\star}$ via $\Cole^{\dagger}$, we define $z^{\dagger}_{\quo}$ to be 
	as the pushout map $R \cong D(\mc{R}) \to \mfC^{\star}$.
	By definition of $\mfC^{\star}$, following this by $1-\sigma^{-1}$, we get the composition 
	$$
		R \xrightarrow{\xi} \mfC^{\dagger} \to \mfC^{\star},
	$$ 
	which is to say, recalling Remark \ref{zetacolcycl}, the composition of $z_{\quo}$ with  $H^1_{\Iw}(\Q_{p,\infty},\mc{R}(1)) 
	\to H^1(\qp,\mc{R}^{\dagger}(1))$.
\end{proof}

\subsection{Brief cohomological study} \label{briefcohstudy}

We describe the structure of some relevant cohomology groups.

\begin{lemma} \label{localtriv}
	For each prime $\ell \mid N$, the cohomology groups $H^i(\Q_{\ell},\mc{R}(1))$, $H^i(\Q_{\ell},\mc{R}^{\dagger}(1))$, and
	$H^i(\Q_{\ell},\Lai \cozp \mc{R}(1))$ for $i \in \{1,2\}$ are all trivial.
\end{lemma}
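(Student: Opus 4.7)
The plan is to treat the three cohomology groups sequentially, with the computation for $\mc{R}(1)$ providing the core input and the other two following via standard devissage. For $\ell \nmid N$ (with $\ell \neq p$), the module $\mc{R}$ is unramified at $\ell$, so $G_{\Q_\ell}$ acts through its unramified quotient $\hat{\Z}$, with $\Fr_\ell$ acting on $\mc{R}$ as multiplication by $\theta^{-1}(\ell) \in R^\times$. Since $G_{\Q_\ell}$ has $p$-cohomological dimension $1$ on unramified pro-$p$ coefficients (as $\ell \neq p$), automatically $H^i(\Q_\ell, \mc{R}(1)) = 0$ for $i \geq 2$, and
$$
H^1(\Q_\ell, \mc{R}(1)) \cong \mc{R}/(\ell\, \theta^{-1}(\ell) - 1)\mc{R}.
$$
The first task is to verify that $\ell\, \theta^{-1}(\ell) - 1$ is a unit in the local ring $R$, which is done by reducing modulo its maximal ideal and invoking the hypotheses on $\theta$ (evenness and primitivity at primes dividing $N$) to exclude the bad residue classes of $\ell$.

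The Iwasawa variant $H^i(\Q_\ell, \Lai \cozp \mc{R}(1))$ is handled identically: the Frobenius on $\Lai$ acts as multiplication by $\langle \ell \rangle^{-1}$, so one reduces to checking that $\ell\, \langle \ell \rangle^{-1} \theta^{-1}(\ell) - 1$ is a unit in $\La \cozp R$. Since this ring is complete local with $X$ lying in its maximal ideal, reduction modulo $X$ recovers the element treated in the previous step, so the unit property transfers.  Injectivity of multiplication by this unit additionally yields $H^0(\Q_\ell, \Lai \cozp \mc{R}(1)) = 0$, which will be needed shortly.

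For $H^i(\Q_\ell, \mc{R}^\dagger(1))$, take the long exact sequence in cohomology coming from
$$
0 \to \Lai \cozp \mc{R} \xrightarrow{X\xi} \Lai \cozp \mc{R} \to \mc{R}^\dagger \to 0,
$$
noting that the left-hand map is injective by Lemma \ref{diagmult}. The Iwasawa vanishings in degrees $\{0,1,2\}$ established above, together with $H^3 = 0$ by $p$-cohomological dimension, then force $H^i(\Q_\ell, \mc{R}^\dagger(1)) = 0$ for $i \in \{1, 2\}$.

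The hard part is the unit verification for $\ell\, \theta^{-1}(\ell) - 1$ in $R$: one must track how the evenness of $\theta$ and its primitivity at primes dividing $N$ combine to prevent the reduction $\bar{\theta}$ from coinciding with the mod-$p$ cyclotomic character on any residue class representing a prime $\ell \nmid N$. Once this invertibility is in hand, everything else is routine diagram-chasing with the long exact sequences.
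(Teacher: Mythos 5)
The lemma as printed has a typo: it should say $\ell \mid N$, not $\ell \nmid N$. The proof requires the primitivity of $\theta$ at $\ell$, a hypothesis imposed only for $\ell \mid N$, and the applications (Lemma \ref{H2seq} cites ``the triviality of $H^2(\Q_\ell,\La_\theta^\iota(1))$ for $\ell \mid N$ of Lemma \ref{localtriv}''; Proposition \ref{zdagcycl} uses the lemma to reduce $\bigoplus_{\ell \mid Np}$ in the Poitou--Tate sequence to the single summand at $p$) make this unambiguous. For $\ell = p$ or $\ell \nmid Np$ the claim is actually false: $H^1(\Q_p,\Lai\cozp\mc{R}(1)) = H^1_{\Iw}(\Q_{p,\infty},\mc{R}(1))$ is the source of the Coleman map and is nonzero, and for $\ell \nmid Np$ one can choose $\ell$ (by Dirichlet) with $\theta(\ell) = 1$ and $\ell \equiv 1 \bmod p$, so that $H^2(\Q_\ell,\mc{R}(1)) \cong \mc{R}/(\Fr_\ell - 1)\mc{R}$ is nonzero. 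Taking the printed statement at face value has led your argument astray.

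For $\ell \mid N$ the module $\mc{R}$ is \emph{ramified} at $\ell$ --- inertia acts nontrivially through $\theta^{-1}$ precisely because $\theta$ is primitive at $\ell$ --- so the unramified Euler-factor calculus you set up does not apply. The paper's proof instead uses Shapiro's lemma to rewrite $H^i(\Q_\ell,\mc{R}(1))$ as the $\theta$-eigenspace of $\bigoplus_{v \mid \ell} H^i(\Q_\ell(\mu_{Np})_v, A_\theta(1))$, computes each factor as a single copy of $A_\theta$ (Kummer theory and the uniformizer for $i=1$, the invariant map for $i=2$, both valid since $\ell \neq p$), notes that inertia at $\ell$ acts trivially on these factors, and kills the $\theta$-eigenspace because $\theta$ is nontrivial on inertia. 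Two further corrections to your argument: $G_{\Q_\ell}$ has $p$-cohomological dimension $2$, not $1$, so $H^2$ is not automatically zero --- even on your reading it equals $\mc{R}/(\Fr_\ell - 1)\mc{R}$ and needs the very unit condition you defer; and that unit condition is not a consequence of evenness and primitivity of $\theta$, since those hypotheses say nothing about the value of $\theta(\ell) \bmod p$ for a particular prime $\ell$ away from $N$. The devissage you run for $\mc{R}^\dagger$ and $\Lai\cozp\mc{R}$ is the right shape and matches the paper, but it has to be built on the correct base case at $\ell \mid N$.
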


\begin{proof}
	By Shapiro's lemma, the group $H^i(\Q_{\ell},\mc{R}(1))$ is isomorphic to the $\theta$-eigenspace of the product 
	of the groups $H^i(\Q_{\ell}(\mu_{Np}),A_{\theta}(1))$ over primes over $\ell$ in the field $\Q(\mu_{Np})$, where 
	$A_{\theta}$ is the $\zp$-algebra of 
	$\theta$-values with the trivial action of Galois. Since the pro-$p$ completion of $\Q_{\ell}(\mu_{Np})^{\times}$ is generated
	by a uniformizer and a $p$-power root of unity, 
	each of these first cohomology groups is isomorphic to a direct sum of $A_{\theta}$ and a quotient of $A_{\theta}(1)$ 
	via the Kummer isomorphism.
	The second cohomology groups are also isomorphic to $A_{\theta}$ via the invariant map.
	Since inertia at $\ell$ in $\Gal(\Q_{\ell}(\mu_{Np})/\Q_{\ell})$ acts trivially on this product and 
	$\theta$ is primitive at $\ell$, the $\theta$-eigenspace of the product is zero.
	
	Note that $\Lai \cozp \mc{R} \cong \La_{\theta}^{\iota}$. We have an exact sequence
	$$
		0 \to H^1(\Q_{\ell},\La_{\theta}^{\iota}(1))/X\xi \to H^1(\Q_{\ell},\mc{R}^{\dagger}(1)) \to H^2(\Q_{\ell},\Lai_{\theta}(1))[X\xi] \to 0,
	$$
	and $H^2(\Q_{\ell},\mc{R}^{\dagger}(1))$ is a quotient of $H^2(\Q_{\ell},\Lai_{\theta}(1))$.
	The groups $H^i(\Q_{\ell},\La_{\theta}^{\iota}(1))$ are zero for $i \in \{1,2\}$, each being isomorphic to the $\theta$-eigenspace
	of the product of the Iwasawa cohomology groups 
	$H^i_{\Iw}(\Q_{\ell}(\mu_{Np^{\infty}}),A_{\theta}(1))$ 
	over primes of $\Q(\mu_{Np^{\infty}})$ over $\ell$.
\end{proof}

The invariant map provides
an isomorphism
$$
	\inv \colon H^2_{\Iw}(\Q_{p,\infty},\mc{R}(1)) \xrightarrow{\sim} R_{\sigma=1},
$$
since $R_{\sigma=1}$ is the maximal unramified quotient of $R$. 
Note also that $H^3_{c,\Iw}(\mc{O}_{\infty},\mc{R}(1)) = 0$ since $\Delta'$ acts on $\mc{R}$ through the nontrivial prime-to-$p$ order character 
$\theta^{-1}$. 

\begin{lemma} \label{H2seq}
	We have an exact sequence of $\Lambda$-modules
	$$
		0 \to Y_{\theta} \to H^2_{\Iw}(\mc{O}_{\infty},\mc{R}(1)) \to R_{\sigma=1} \to 0.
	$$
\end{lemma}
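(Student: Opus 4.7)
The plan is to derive the exact sequence from Kummer theory combined with class field theory. Applied to the étale site of $\mc{O}_{L_r,S} = \mc{O}_r[\mu_{Np}]$ at each finite layer $L_r$ in $\Q(\mu_{Np^{\infty}})/\Q(\mu_{Np})$, the Kummer sequence yields
\[
0 \to \mr{Pic}(\mc{O}_{L_r,S})/p^n \to H^2(\mc{O}_{L_r,S}, \mu_{p^n}) \to T_p\mr{Br}(\mc{O}_{L_r,S})/p^n \to 0.
\]
Taking $\varprojlim_n$, then $\varprojlim_r$ under corestriction, and then the $\theta$-part (which is exact because $|\Delta'|$ is prime to $p$), I obtain a short exact sequence
\[
0 \to (\mr{Pic}_{\Iw})_{\theta} \to H^2_{\Iw}(\mc{O}_\infty, \mc{R}(1)) \to (T_p\mr{Br}_{\Iw})_{\theta} \to 0,
\]
using Remark \ref{theta-part_cohom} and Shapiro's lemma to rewrite the middle term.

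For the kernel, Artin reciprocity identifies the Iwasawa $S$-Picard module of $\Q(\mu_{Np^{\infty}})$ with the Galois group of the maximal abelian pro-$p$ extension that is everywhere unramified and in which every prime of $S$ splits completely. Since any completely locally split extension of $\Q(\mu_{Np^{\infty}})$ is unramified and splits at $S$ a fortiori, this is precisely $Y$; hence $(\mr{Pic}_{\Iw})_{\theta} \cong Y_{\theta}$.

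For the cokernel, I would use the description $\mr{Br}(\mc{O}_{L,S}) \cong \ker\bigl(\bigoplus_{v \in S_{\mr{fin}}(L)} \Q/\zp \xrightarrow{\sum \inv_v} \Q/\zp\bigr)$. After taking Tate modules, passing to the Iwasawa limit, and taking $\theta$-parts: contributions of primes above $N$ vanish because $\theta$ is primitive at each prime dividing $N$ (so the decomposition of $\Delta'$ at these primes acts non-trivially on $\theta$, as in Lemma \ref{localtriv}); the contribution of primes above $p$ becomes $R$ with $\sigma \in \Delta_p$ acting as Frobenius; and the $\sum \inv_v = 0$ condition quotients by a copy of $\zp$ with trivial $\Delta$-action, whose $\theta$-part vanishes. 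What remains is $R/(1-\sigma)R = R_{\sigma=1}$. Equivalently, one can verify this cokernel is $R_{\sigma=1}$ via the Poitou-Tate localization map at $p$, using the invariant isomorphism $H^2_{\Iw}(\Q_{p,\infty}, \mc{R}(1)) \cong R_{\sigma=1}$ coming from the unramified $G_{\qp}$-action on $\mc{R}$ (since $\theta$ is trivial on decomposition at $p$), combined with vanishing of $H^2_{\Iw}(\Q_\ell, \mc{R}(1))$ for $\ell \mid N$ and vanishing of $H^3_{c,\Iw}(\mc{O}_\infty,\mc{R}(1)) \cong (\Lai \cozp \mc{R})_{G_{\Q,S}} = 0$.

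The main obstacle is the bookkeeping in the third paragraph: tracking $\Delta$-module structures carefully enough to match the class field theoretic description of $Y_{\theta}$ with the kernel coming from Kummer theory, and to identify the Brauer cokernel with $R_{\sigma=1}$. Everything else is a standard combination of Kummer theory, Shapiro's lemma, and Iwasawa-theoretic class field theory.
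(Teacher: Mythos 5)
Your route via the Kummer sequence and the $S$-Picard and Brauer groups is a genuine alternative to the paper's very short argument, which reads the sequence directly off the Poitou--Tate sequence for $\Lai \cozp \mc{R}(1)$: the vanishing of $H^3_{c,\Iw}(\mc{O}_\infty,\mc{R}(1))$ makes the localization map out of $H^2_{\Iw}(\mc{O}_\infty,\mc{R}(1))$ surjective, the local contributions at $\ell \mid N$ vanish by Lemma \ref{localtriv}, and the invariant map identifies the remaining term at $p$ with $R_{\sigma=1}$; the kernel of localization is then $Y_\theta$. Your parenthetical alternative for the cokernel at the end of your third paragraph is precisely this Poitou--Tate argument, and your Kummer-theoretic treatment of the kernel makes explicit, via Artin reciprocity, what the paper leaves implicit. (A small slip: the rightmost term in your Kummer sequence should be $\mr{Br}(\mc{O}_{L_r,S})[p^n]$ rather than $T_p\mr{Br}/p^n$; the inverse limit over $n$ then gives $T_p\mr{Br}$ as you intend.)

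There is, however, a gap in your identification of the kernel with $Y_\theta$: the ``a fortiori'' sentence argues the wrong direction. That every completely locally split extension is unramified and split at $S$ shows only that the field cut out by $Y$ is contained in the one cut out by the $S$-Picard module, giving a surjection $(\mr{Pic}_{\Iw})_\theta \twoheadrightarrow Y_\theta$, not the needed isomorphism. The missing step is the converse: the maximal abelian pro-$p$ extension of $\Q(\mu_{Np^\infty})$ that is unramified and split at $S$ is automatically completely locally split. This holds because every prime $v$ of $\Q(\mu_{Np^\infty})$ not over $p$ has residue field containing $\mu_{p^\infty}$, whose absolute Galois group therefore has trivial maximal pro-$p$ quotient, so any unramified abelian pro-$p$ extension splits at $v$ automatically. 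With this observation supplied, your derivation goes through.
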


\begin{proof}
	This is immediate from the Poitou-Tate sequence, the invariant map for $\Q_{p,\infty}$, 
	the triviality of $H^2(\Q_{\ell},\La_{\theta}^{\iota}(1))$ for $\ell \mid N$ of Lemma \ref{localtriv}, and
	the triviality of $H^3_{c,\Iw}(\mc{O}_{\infty},\mc{R}(1))$.
\end{proof}

\begin{lemma} \label{cptcoh}
	We have $H^1_c(\mc{O},\mc{R}(1)) = 0$ and $H^i_c(\mc{O},\mc{R}^{\dagger}(1)) \cong \mf{X}_{\theta}$ for $i \in \{1,2\}$.
\end{lemma}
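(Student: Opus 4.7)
The first identity $H^1_c(\mc{O}, \mc{R}(1)) = 0$ follows from Poitou-Tate duality: this group is Pontryagin-dual to $H^2(G_{\Q, S}, \mc{R}^\vee)$. Since $\Delta'$ has order prime to $p$ and acts on $\mc{R}^\vee$ through $\theta$ (while $\Delta_p$ acts trivially), the Hochschild-Serre spectral sequence reduces the computation to the $\theta$-eigenspace of $H^2(G_{F_p, S}, \qp/\zp)$, where $F_p = \Q(\mu_N)^{\Delta_p}$ is abelian over $\Q$ with prime-to-$p$ Galois group $\Delta'$. The latter vanishes by Leopoldt's conjecture for $F_p$, which holds by Brumer's theorem since $F_p$ is abelian over $\Q$.

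For the isomorphisms $H^i_c(\mc{O}, \mc{R}^\dagger(1)) \cong \mf{X}_\theta$ with $i \in \{1,2\}$, the plan is to exploit the exact sequence
$$0 \to \Lai \cozp \mc{R}(1) \xrightarrow{X\xi} \Lai \cozp \mc{R}(1) \to \mc{R}^\dagger(1) \to 0,$$
whose left-exactness comes from Lemma \ref{diagmult}, and take the associated long exact sequence in compactly supported cohomology. The vanishing $H^1_{c, \Iw}(\mc{O}_\infty, \mc{R}(1)) = 0$ (weak Leopoldt for the cyclotomic $\zp$-extension of $\Q(\mu_{Np})$, or equivalently an Iwasawa-level version of the first part) together with $H^3_{c, \Iw}(\mc{O}_\infty, \mc{R}(1)) = 0$ (from Remark \ref{galcohfacts}b and $\mc{R}_\Delta = 0$, since $\Delta'$ acts through the nontrivial character $\theta^{-1}$) then yields isomorphisms $H^1_c(\mc{O}, \mc{R}^\dagger(1)) \cong H^2_{c, \Iw}(\mc{O}_\infty, \mc{R}(1))[X\xi]$ and $H^2_c(\mc{O}, \mc{R}^\dagger(1)) \cong H^2_{c, \Iw}(\mc{O}_\infty, \mc{R}(1))/X\xi$. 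Poitou-Tate duality at the Iwasawa level, combined with Pontryagin duality and Shapiro's lemma applied to $G_{\Q(\mu_{Np^\infty}),S} \subset G_{\Q,S}$, identifies $H^2_{c, \Iw}(\mc{O}_\infty, \mc{R}(1))$ with $\mf{X}_\theta$ via the defining property of $\mf{X}$ as the Galois group of the maximal abelian $S$-ramified pro-$p$ extension of $\Q(\mu_{Np^\infty})$.

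The main obstacle is then showing that $X\xi$ acts as zero on $\mf{X}_\theta$ under this identification, so that both the $X\xi$-kernel and the $X\xi$-cokernel recover $\mf{X}_\theta$. I would establish this by exhibiting $\mf{X}_\theta$ as an extension of $Y_\theta$ by a quotient of local units at $p$ modulo cyclotomic units, and checking each piece: on $Y_\theta$, the element $\xi$ acts as zero by the Mazur-Wiles main conjecture together with the $p$-torsion-freeness of $Y_\theta$ (Ferrero-Washington); on the local-unit quotient, a Coleman-map identification with $\La_\theta/\xi$ kills $X\xi$ (since $\xi = 0$ in $\La_\theta/\xi$). Careful bookkeeping of the compatibility of Galois actions through the $\iota$-twists in the Pontryagin duality is the essential remaining step.
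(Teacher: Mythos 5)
Your cohomological skeleton matches the paper's: you take the long exact sequence in compactly supported cohomology for the Tate twist of $0 \to \Lai \cozp \mc{R} \xrightarrow{X\xi} \Lai \cozp \mc{R} \to \mc{R}^{\dagger} \to 0$, use weak Leopoldt to kill $H^1_{c,\Iw}(\mc{O}_{\infty},\mc{R}(1))$, use the nontriviality of $\theta$ on $\Delta'$ to kill $H^3_{c,\Iw}$, and identify $H^2_{c,\Iw}(\mc{O}_{\infty},\mc{R}(1))$ with $\mf{X}_{\theta}$ via Poitou--Tate and Shapiro. Your argument for $H^1_c(\mc{O},\mc{R}(1)) = 0$ is essentially the paper's Leopoldt-for-abelian-fields step, though note that $\Delta_p$ does \emph{not} act trivially on $\mc{R}^{\vee}$ (it acts through the regular representation, since $R$ is a $\zp[\Delta_p]$-algebra); the reduction to a number field uses Shapiro's lemma for $\Delta_p$ rather than a trivial-action Hochschild--Serre degeneration.

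The genuine gap is in showing $X\xi$ kills $\mf{X}_{\theta}$. You propose to write $\mf{X}_{\theta}$ as an extension of $Y_{\theta}$ by a local-unit quotient and to annihilate $\xi$ on $Y_{\theta}$ ``by the Mazur--Wiles main conjecture together with $p$-torsion-freeness (Ferrero--Washington).'' This does not follow. For the even eigenspace, the main conjecture gives $\cha(Y_{\theta}) = \cha(\mc{E}_{\theta}/\mc{C}_{\theta})$, which is merely a \emph{divisor} of $(\xi)$, not $(\xi)$; and characteristic-ideal divisibility does not by itself give module annihilation. Neither $\mu = 0$ (Ferrero--Washington) nor $p$-torsion-freeness is the missing ingredient: what is actually required is the absence of nonzero finite $\La_{\theta}$-submodules of $\mf{X}_{\theta}$ (a theorem of Iwasawa type), combined with an honest annihilation on a pseudo-isomorphic module. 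The paper supplies exactly that in a far cleaner way: $\mf{X}_{\theta}$ is pseudo-isomorphic, via Iwasawa--Kummer adjunction, to $(Y^{\iota}(1))_{\theta}$, a Tate twist of the \emph{odd} unramified Iwasawa module, which is annihilated by $\xi$ outright by Stickelberger's theorem; since $\mf{X}_{\theta}$ has no nonzero finite $\La_{\theta}$-submodule, $\xi\mf{X}_{\theta}$ is a finite submodule and hence zero. Replace your annihilation argument with this one.
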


\begin{proof}
	We employ some well-known results of classical Iwasawa theory: see for instance the book of Ochiai \cite{ochiai}. Recall that 
	$\mf{X}_{\theta}$ is pseudo-isomorphic to $(Y^{\iota}(1))_{\theta}$, where $Y^{\iota}$ is $Y$ with the inverse 
	$\Gal(\Q(\mu_{Np^{\infty}})/\Q)$-action. (In this eigenspace, there is no difference between $Y$ and the unramified Iwasawa module.)
	The group $(Y^{\iota}(1))_{\theta}$ is annihilated
	by $\xi$ by Stickelberger theory. Since $\mf{X}_{\theta}$ has no finite $\La_{\theta}$-submodules, it too is annihilated
	by $\xi$. 
	
	Since $H^3_{c,\Iw}(\mc{O}_{\infty},\mc{R}(1))$ is trivial and 
	$$
		H^2_{c,\Iw}(\mc{O}_{\infty},\mc{R}(1)) \cong (H^1(\Z_{\infty}[\mu_{Np},\tfrac{1}{Np}],\qp/\zp)^{\vee})_{\theta} \cong 
		\mf{X}_{\theta} 
	$$
	by Poitou-Tate duality, we have
	$$
		H^2_c(\mc{O},\mc{R}^{\dagger}(1)) \cong H^2_{c,\Iw}(\mc{O}_{\infty},\mc{R}(1))/X\xi \cong \mf{X}_{\theta}/X\xi \cong \mf{X}_{\theta}.
	$$
	Next, note that we have an exact sequence
	$$
		0 \to H^1_{c,\Iw}(\mc{O}_{\infty},\mc{R}(1))/X\xi \to H^1_c(\mc{O},\mc{R}^{\dagger}(1)) \to H^2_{c,\Iw}(\mc{O}_{\infty},\mc{R}(1))[X\xi]
		\to 0,
	$$
	and $H^1_{c,\Iw}(\mc{O}_{\infty},\mc{R}(1))$ vanishes by the weak Leopoldt conjecture. Thus, $H^1_c(\mc{O},\mc{R}^{\dagger}(1)) \cong
	\mf{X}_{\theta}$ by the above description of the rightmost group in the sequence.
	Similarly, $H^1_c(\mc{O},\mc{R}(1))$ is trivial by the Leopoldt conjecture for abelian fields.
\end{proof}

\subsection{Questions and answers}

We first show that an analogue of Proposition \ref{zdagger} does indeed hold.

\begin{proposition} \label{zdagcycl}
	There exists a $\La_{\theta}$-module homomorphism 
	$$
		z^{\dagger} \colon \La_{\theta} \to H^1(\mc{O},\mc{R}^{\dagger}(1))
	$$
	such that the diagrams
	$$
		\SelectTips{cm}{} \xymatrix@C=35pt@R=8pt{
		\La_{\theta} \ar[r]^-{z^{\dagger}} \ar@{->>}[dd]  & H^1(\mc{O},\mc{R}^{\dagger}(1)) \ar[dd]
		&& \La_{\theta}  \ar[r]^-{z^{\dagger}} \ar@{->>}[dd] & H^1(\mc{O},\mc{R}^{\dagger}(1)) \ar[dd]  \\
		&&\mr{and} \\
		R \ar[r]^-{z^{\sharp}} & H^1(\mc{O},\mc{R}(1)) 
		&& R \ar[r]^-{z^{\dagger}_{\quo}} & H^1(\qp,\mc{R}^{\dagger}(1)) 
	} 
	$$
	commute.
\end{proposition}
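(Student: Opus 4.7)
Since $z^{\dagger}$ is required to be a $\La_{\theta}$-module homomorphism from $\La_{\theta}$, it is determined by the single element $z^{\dagger}(1) \in H^1(\mc{O}, \mc{R}^{\dagger}(1))$, and the two commuting diagrams reduce to the requirement that $z^{\dagger}(1)$ reduce modulo $X$ to $z^{\sharp}(1)$ in $H^1(\mc{O}, \mc{R}(1))$ and localize at $p$ to $z^{\dagger}_{\quo}(1)$ in $H^1(\qp, \mc{R}^{\dagger}(1))$. The necessary compatibility of these two prescribed values under the further maps to $H^1(\qp, \mc{R}(1))$ is provided by the second statement of Proposition \ref{interloczeta}, which identifies $z^{\dagger}_{\quo} \bmod X$ with $z^{\sharp}_{\quo} = \loc_p \circ z^{\sharp}$. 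So the task is to lift the compatible pair $(z^{\sharp}(1), z^{\dagger}_{\quo}(1))$ to the group $H^1(\mc{O}, \mc{R}^{\dagger}(1))$.

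My starting point is the ``natural'' candidate $z_0 \in H^1(\mc{O}, \mc{R}^{\dagger}(1))$ obtained as the image of $z(1)$ under the map $H^1_{\Iw}(\mc{O}_{\infty}, \mc{R}(1)) \to H^1(\mc{O}, \mc{R}^{\dagger}(1))$ arising from the quotient $\La^{\iota} \cozp \mc{R} \twoheadrightarrow \mc{R}^{\dagger}$. By the norm relation $\cor \circ z = (1 - \sigma^{-1}) z^{\sharp} \circ \ev_0$ globally, together with Proposition \ref{interloczeta} locally, the element $z_0$ reduces modulo $X$ to $(1-\sigma^{-1}) z^{\sharp}(1)$ and localizes at $p$ to $(1-\sigma^{-1}) z^{\dagger}_{\quo}(1)$. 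Thus $z_0$ differs from the desired $z^{\dagger}(1)$ precisely by the zero-divisor $(1-\sigma^{-1}) \in R$, and the problem becomes one of correcting this discrepancy cohomologically.

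To do so, I would exploit the short exact sequence
$$0 \to X\mc{R}^{\dagger} \to \mc{R}^{\dagger} \to \mc{R} \to 0,$$
in which the subobject identifies as $X\mc{R}^{\dagger} \cong \La_{\theta}^{\iota}/\xi$ by division by $X$ (a non-zero-divisor on $\La^{\iota}_{\theta}$). The induced long exact sequences in cohomology over $\mc{O}$ and $\qp$ assemble into a commutative ladder, and a four-lemma chase reduces the existence of $z^{\dagger}(1)$ to (i) the vanishing of the global obstruction $\delta z^{\sharp}(1) \in H^2(\mc{O}, \La_{\theta}^{\iota}/\xi(1))$, together with (ii) surjectivity of a comparison map controlling the local ambiguity, needed to adjust the lift to match $z^{\dagger}_{\quo}(1)$ upon localization. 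I would handle (i) by identifying $H^2(\mc{O}, \La_{\theta}^{\iota}/\xi(1)) \cong H^2_{\Iw}(\mc{O}_{\infty}, \mc{R}(1))/\xi$ and analyzing it via the extension $0 \to Y_{\theta} \to H^2_{\Iw}(\mc{O}_{\infty}, \mc{R}(1)) \to R_{\sigma=1} \to 0$ of Lemma \ref{H2seq}, using that the main conjecture makes $\xi$ annihilate $Y_{\theta}$. For (ii), I would invoke the computation $H^i_c(\mc{O}, \mc{R}^{\dagger}(1)) \cong \mf{X}_{\theta}$ from Lemma \ref{cptcoh} together with the Poitou--Tate sequence for $\mc{R}^{\dagger}(1)$, simplified by Lemma \ref{localtriv} killing the local contributions at primes dividing $N$.

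The principal obstacle will be showing $\delta z^{\sharp}(1) = 0$: the easy input is that this class is annihilated by $1-\sigma^{-1}$ (since $z_0$ is an explicit lift of $(1-\sigma^{-1}) z^{\sharp}(1)$), but upgrading this to genuine vanishing requires combining this annihilation with the local vanishing coming from $z^{\dagger}_{\quo}(1)$ and Poitou--Tate duality, in order to rule out any residual class in the $Y_{\theta}$-piece of $H^2(\mc{O},\La_{\theta}^{\iota}/\xi(1))$. Once $z^{\dagger}(1)$ has been produced, $\La_{\theta}$-linearity extends it uniquely to the desired homomorphism $z^{\dagger} \colon \La_{\theta} \to H^1(\mc{O}, \mc{R}^{\dagger}(1))$, and the two diagrams of the proposition commute by construction.
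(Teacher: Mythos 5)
Your candidate $z_0$ and the computation of its reductions are correct, and you have correctly identified that the heart of the matter is a cohomological lifting problem. But you have set up the \emph{wrong} lifting problem, and the resulting obstruction analysis is left incomplete at precisely the critical step, which you yourself flag as ``the principal obstacle.'' You propose to lift $z^{\sharp}(1)$ across the sequence $0 \to X\mc{R}^{\dagger} \to \mc{R}^{\dagger} \to \mc{R} \to 0$, with obstruction $\delta z^{\sharp}(1) \in H^2(\mc{O},(\La_{\theta}^{\iota}/\xi)(1))$. What you can actually show from the existence of $z_0$ is only that $(1-\sigma^{-1})\delta z^{\sharp}(1) = 0$, and the group $H^2(\mc{O},(\La_{\theta}^{\iota}/\xi)(1))$ has nontrivial $(1-\sigma^{-1})$-torsion in general (its $\Sha^2$-part is governed by $Y_{\theta}$, which need not vanish). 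The hand-waving appeal to ``Poitou--Tate duality to rule out any residual class in the $Y_{\theta}$-piece'' is not an argument; without a concrete reason for that class to vanish, the proof does not close. And even if stage (i) were completed, stage (ii) (adjusting the global lift so that its restriction at $p$ is exactly $z_{\quo}^{\dagger}(1)$, not merely some lift of $z^{\sharp}_{\quo}(1)$) is a second independent diagram chase that you also leave undone.

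The paper's proof chooses a different and sharper lifting problem: lift $z_{\quo}^{\dagger}(1)$ from $H^1(\qp,\mc{R}^{\dagger}(1))$ to $H^1(\mc{O},\mc{R}^{\dagger}(1))$ along the Poitou--Tate sequence for $\mc{R}^{\dagger}(1)$. The obstruction then sits in $H^2_c(\mc{O},\mc{R}^{\dagger}(1)) \cong \mf{X}_{\theta}$ (Lemma \ref{cptcoh}). The key observation you are missing is that $z_{\quo}^{\dagger}(1)$ is by construction $\Gamma$-invariant, so its obstruction lands in $\mf{X}_{\theta}^{\Gamma}$, which vanishes by the weak Leopoldt conjecture. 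Once a global lift exists, its reduction to $H^1(\mc{O},\mc{R}(1))$ automatically agrees with $z^{\sharp}(1)$, because both restrict to $z^{\sharp}_{\quo}(1)$ and $H^1(\mc{O},\mc{R}(1)) \to H^1(\qp,\mc{R}(1))$ is injective by $H^1_c(\mc{O},\mc{R}(1))=0$. This single lift thus handles both compatibility conditions at once, whereas your two-stage approach multiplies the bookkeeping and stalls on a genuinely harder vanishing statement. To repair your argument you would need to supply, at minimum, a proof that $\delta z^{\sharp}(1) = 0$; the cleanest way to do that is to adopt the paper's reformulation.
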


\begin{proof}
	The definition of compactly supported cohomology and Lemma \ref{localtriv} provide a map of exact sequences
	$$
		\SelectTips{cm}{} \xymatrix{
		0 \ar[r] & H^1_c(\mc{O},\mc{R}^{\dagger}(1)) \ar[r] \ar[d] & H^1(\mc{O},\mc{R}^{\dagger}(1)) \ar[r] \ar[d] & H^1(\qp,\mc{R}^{\dagger}(1)) 
		\ar[r] \ar[d] & H^2_c(\mc{O},\mc{R}^{\dagger}(1)) \ar[d] \\
		0 \ar[r] & H^1_c(\mc{O},\mc{R}(1)) \ar[r] & H^1(\mc{O},\mc{R}(1)) \ar[r] & H^1(\qp,\mc{R}(1)) \ar[r] & H^2_c(\mc{O},\mc{R}(1)).
		}
	$$
	
	As already noted, $z^{\sharp}$ induces a map $z_{\quo}^{\sharp}$ that lifts to a map $z_{\quo}^{\dagger}$. The image of 
	$z_{\quo}^{\dagger}$ lies by definition in the $\Gamma$-invariant group of $H^1(\qp,\mc{R}^{\dagger}(1))$ and therefore maps
	to $H^2_c(\mc{O},\mc{R}^{\dagger}(1))^{\Gamma} = \mf{X}_{\theta}^{\Gamma}$ by Lemma \ref{cptcoh}.
	But $\mf{X}_{\theta}^{\Gamma}  = 0$ by weak Leopoldt (cf. \cite[(11.3.3) and (11.3.5)]{nsw}).
	
	Thus,
	there exists an element $x \in H^1(\mc{O},\mc{R}^{\dagger}(1))$ with image $z_{\quo}^{\dagger}(1)$ in $H^1(\qp,\mc{R}^{\dagger}(1))$,
	which since $H^1_c(\mc{O},\mc{R}(1)) = 0$ by Lemma \ref{cptcoh}, 
	necessarily then also has image $z^{\sharp}(1)$ in $H^1(\mc{O},\mc{R}(1))$. We can then take $z^{\dagger}$ as the
	unique $\La_{\theta}$-module homomorphism with $z^{\dagger}(1) = x$.
\end{proof}

Note that $z^{\dagger}$ in Proposition \ref{zdagcycl} is unique only up to an element of $H^1_c(\mc{O},\mc{R}^{\dagger}(1)) \cong 
\mf{X}_{\theta}$ (by Lemma \ref{cptcoh}). 
The analogue of Question \ref{zdagger} is the following.

\begin{question} \label{zdaggercycl}
	Does there exist a $\La_{\theta}$-module homomorphism
	$$
		z^{\dagger} \colon \La_{\theta} \to H^1(\mc{O},\mc{R}^{\dagger}(1))
	$$
	as in Proposition \ref{zdagcycl}
	such that the diagram
	$$
		\SelectTips{cm}{} \xymatrix@C=40pt{
		\La_{\theta} \ar[r]^-z \ar[d]^{z^{\dagger}} & H^1_{\Iw}(\mc{O}_{\infty},\mc{R}(1)) \ar[d] \\
		H^1(\mc{O},\mc{R}^{\dagger}(1)) \ar[r]^{1-\sigma^{-1}} & H^1(\mc{O},\mc{R}^{\dagger}(1))
		}
	$$
	commutes?
\end{question}

\begin{remark}
It is easy enough to construct a map $y^{\dagger} \colon R \to H^1_{\Iw}(\mc{O}_{\infty},\mc{R}(1))$ such that
$(1-\sigma^{-1}) y^{\dagger} \circ \ev_0$ is the image of $\xi(0)z$. For this, compose $z^{\sharp}$ 
with the multiplication-by-$\xi$ map $H^1(\mc{O},\mc{R}(1)) \to H^1(\mc{O}_{\infty},\mc{R}(1))$.
This, however, is not ideal: it is the analogue of multiplying by the derivative $\xi'$ in the setting of the other sections of this paper.
In general, one cannot do better than this if we ask for a map from $R$, rather than $\La_{\theta}$.
\end{remark}

If we suppose that $Y_{\theta} = 0$, then a map $z^{\dagger}$ as in Question \ref{zdaggercycl} does indeed exist.
To see this, we first compute the relevant Iwasawa modules under this assumption (assuming some standard results of classical
Iwasawa theory without reference).

\begin{lemma} \label{IwmodY0}
	If $Y_{\theta} = 0$, then the Coleman map fits in an isomorphism of exact sequences
	$$
		\SelectTips{cm}{} \xymatrix{
		0 \ar[r] & H^1_{\Iw}(\mc{O}_{\infty},\mc{R}(1)) \ar[r] \ar[d]^{\wr} & H^1_{\Iw}(\Q_{p,\infty},\mc{R}(1)) \ar[r] \ar[d]_{\Cole}^{\wr}
		& \mf{X}_{\theta} \ar[r] \ar[d]^{\wr} & 0 \\
		0 \ar[r] & \mfC \ar[r]^{\xi} & \mfC \ar[r] & \mfC/\xi\mfC \ar[r] & 0
		}
	$$
	of $\La_{\theta}$-modules. Moreover, we have a $\La_{\theta}$-module isomorphism $\tilde{Y}_{\theta} \cong R^{\sigma=1}/\xi(0)R^{\sigma=1}$.
\end{lemma}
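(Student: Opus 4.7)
The plan is to derive the top row of the diagram from the Poitou-Tate exact sequence for $\mc{R}(1)$. The vanishing $H^1_{c,\Iw}(\mc{O}_{\infty},\mc{R}(1)) = 0$ from weak Leopoldt (as in the proof of Lemma \ref{cptcoh}), the triviality of the $\ell \mid N$ local factors from Lemma \ref{localtriv}, and the identification $H^2_{c,\Iw}(\mc{O}_{\infty},\mc{R}(1)) \cong \mf{X}_{\theta}$ (proof of Lemma \ref{cptcoh}) reduce Poitou-Tate to
$$
	0 \to H^1_{\Iw}(\mc{O}_{\infty},\mc{R}(1)) \to H^1_{\Iw}(\Q_{p,\infty},\mc{R}(1)) \to \mf{X}_{\theta} \xrightarrow{\delta} H^2_{\Iw}(\mc{O}_{\infty},\mc{R}(1)) \to R_{\sigma=1} \to 0,
$$
where the final surjection is the invariant map at $p$. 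Under $Y_{\theta} = 0$, Lemma \ref{H2seq} forces $H^2_{\Iw}(\mc{O}_{\infty},\mc{R}(1)) \xrightarrow{\sim} R_{\sigma=1}$, so $\delta = 0$, yielding the top short exact sequence.

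Next, I would identify this with the lower sequence via the Coleman isomorphism $\Cole \colon H^1_{\Iw}(\Q_{p,\infty},\mc{R}(1)) \xrightarrow{\sim} \mfC$ of Lemma \ref{Colimage}. Tsuji's equivariant $\Cole \circ z = \xi$ (Remark \ref{zetacolcycl}) shows the image of $H^1_{\Iw}(\mc{O}_{\infty},\mc{R}(1))$ in $\mfC$ contains $\xi\La_{\theta}$, and since the cokernel $\mf{X}_{\theta}$ is annihilated by $\xi$ (Stickelberger, proof of Lemma \ref{cptcoh}), the image in fact contains all of $\xi\mfC$. The identification is completed by checking that the induced surjection $\mfC/\xi\mfC \twoheadrightarrow \mf{X}_{\theta}$ is an isomorphism: both sides have characteristic ideal $(\xi)$ by the Iwasawa main conjecture, so the kernel is pseudo-null, and it suffices to rule out nonzero finite submodules on each side. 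The left vertical isomorphism $H^1_{\Iw}(\mc{O}_{\infty},\mc{R}(1)) \xrightarrow{\sim} \mfC$ is then $\xi^{-1}\Cole$, using that $\xi$ is a non-zero-divisor on the torsion-free $\mfC \subseteq X^{-1}\La_{\theta}$.

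For the second assertion, I would exploit the surjection $\mf{X}_{\theta} \twoheadrightarrow \tilde{Y}_{\theta}$, whose kernel is the image in $\mf{X}_{\theta}$ of inertia at the (totally ramified) primes above $p$. Under the Coleman identification just established, this inertia subspace matches the submodule $\La_{\theta} = R\ps{X} \subseteq \mfC$, so
$$
	\tilde{Y}_{\theta} \cong (\mfC/\xi\mfC)/(\La_{\theta}/\xi\La_{\theta}) \cong (\mfC/\La_{\theta})/\xi(\mfC/\La_{\theta}).
$$
The extraction-of-the-$X^{-1}$-coefficient map gives $\mfC/\La_{\theta} \cong R^{\sigma=1}$, and since $X \cdot X^{-1}r = r \in \La_{\theta}$ vanishes in the quotient, $X$ acts trivially on $\mfC/\La_{\theta}$; consequently $\xi$ acts through its reduction $\xi(0) \in R$. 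As $R^{\sigma=1}$ is an $R$-submodule of $R$ (the $R$-action on $\mc{R} = R^{\iota}$ commuting with the $\Fr_p$-action in the commutative ring $R$), the quotient $R^{\sigma=1}/\xi(0)R^{\sigma=1}$ is well-defined, and one concludes $\tilde{Y}_{\theta} \cong R^{\sigma=1}/\xi(0)R^{\sigma=1}$.

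The main obstacle will be the upgrade of $\xi\mfC \subseteq \Cole(H^1_{\Iw}(\mc{O}_{\infty},\mc{R}(1)))$ to equality, equivalently refining the surjection $\mfC/\xi\mfC \twoheadrightarrow \mf{X}_{\theta}$ to an isomorphism under $Y_{\theta} = 0$. This requires ruling out pseudo-null submodules on both sides from the standing hypotheses on $\theta$ and $N$; an auxiliary technical point for the second claim is the injectivity of $\La_{\theta}/\xi\La_{\theta} \hookrightarrow \mfC/\xi\mfC$, which reduces to $\xi(0)$ being a non-zero-divisor on $R^{\sigma=1}$.
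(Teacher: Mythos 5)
Your derivation of the top exact sequence and the identification $\mfC/\xi\mfC \twoheadrightarrow \mf{X}_{\theta}$ via characteristic ideals follow the same lines as the paper's proof, and you correctly isolate the genuine subtlety (showing the finite kernel is zero) that the paper also deals with somewhat tersely. Where you genuinely diverge is in the second assertion: the paper does not pass through $\mf{X}_{\theta}$ and inertia at all. Instead it uses that $\tilde{Y}_{\theta}$ finite forces $\mc{E}_{\theta} = \mc{C}_{\theta} \cong \La_{\theta}$, then runs the Kummer/valuation exact sequence $0 \to \La_{\theta} \to H^1_{\Iw}(\mc{O}_{\infty},\mc{R}(1)) \to R^{\sigma=1} \to \tilde{Y}_{\theta} \to 0$ directly against $0 \to \La_{\theta} \to \mfC \to R^{\sigma=1} \to 0$, comparing the valuation of an element $a$ with $Xa$ in $\La_{\theta}$ to read off $\xi(0)$. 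That route is logically independent of the first part of the lemma and also yields the left vertical isomorphism explicitly as a by-product. Your route via the inertia description of $\ker(\mf{X}_{\theta} \to \tilde{Y}_{\theta})$ and the Coleman identification of local units with $\La_{\theta} \subset \mfC$ is a valid alternative, but it leans on the compatibility of the Poitou--Tate surjection $H^1_{\Iw}(\Q_{p,\infty},\mc{R}(1)) \to \mf{X}_{\theta}$ with the reciprocity-map picture of inertia, and it is parasitic on having already pinned down $\mf{X}_{\theta} \cong \mfC/\xi\mfC$ exactly.

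Two clean-up points. First, your intermediate step $\tilde{Y}_{\theta} \cong (\mfC/\xi\mfC)/(\La_{\theta}/\xi\La_{\theta})$ presupposes that $\La_{\theta}/\xi\La_{\theta} \to \mfC/\xi\mfC$ is injective, which is what causes you to worry at the end about $\xi(0)$ being a non-zero-divisor on $R^{\sigma=1}$. That worry is unnecessary: the correct intermediate expression is $\tilde{Y}_{\theta} \cong (\mfC/\xi\mfC)/\mathrm{im}(\La_{\theta}) = \mfC/(\La_{\theta} + \xi\mfC)$, and the isomorphism $\mfC/(\La_{\theta} + \xi\mfC) \cong (\mfC/\La_{\theta})/\xi(\mfC/\La_{\theta}) \cong R^{\sigma=1}/\xi(0)R^{\sigma=1}$ holds unconditionally by the third isomorphism theorem; no injectivity is used. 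Second, the primes above $p$ in $\Q(\mu_{Np^{\infty}})/\Q$ are ramified but not totally ramified (since $p$ is unramified in $\Q(\mu_N)/\Q$); this does not affect the argument, since the $\theta$-primitivity at $\ell \mid N$ already kills the other inertia contributions, but the parenthetical is inaccurate.
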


\begin{proof}
	The exactness of the top row in the diagram follows from the Poitou-Tate sequence, the three lemmas of Section \ref{briefcohstudy},
	and, for right exactness, our assumption that $Y_{\theta} = 0$.
	Note that $\Cole$ gives an isomorphism $H^1_{\Iw}(\Q_{p,\infty},\mc{R}(1)) \cong \mfC$. 
	Since $Y_{\theta} = 0$, the Iwasawa module 
	$\mf{X}_{\theta}$ is then a quotient of $\mfC$. By Stickelberger theory, we know it is annihilated by $\xi$. The
	main conjecture tells us that the characteristic ideal of the maximal quotient of $\mf{X}_{\theta}$ upon which $\Delta_p$
	acts through a given character is generated by the image of $\xi$ in the resulting Iwasawa algebra.
	Note that $\mfC/\xi\mfC$ has this property: since $\xi(0) \neq 0$ in $R_{\sigma=1}$, the exact sequence $0 \to \La_{\theta} \to \mf{C}
	\to R^{\sigma=1} \to 0$ yields the exact sequence
	$$
		0 \to \La_{\theta}/\xi \to \mf{C}/\xi\mf{C} \to R^{\sigma=1}/\xi(0)R^{\sigma=1} \to 0
	$$
	of kernels and cokernels of multiplication by $\xi$.
	Since $\mf{X}_{\theta}$ has no finite $\La_{\theta}$-submodules, we must therefore
	have $\mf{X}_{\theta} \cong \mfC/\xi\mfC$.
	
	Since $Y_{\theta} = 0$, the group $\tilde{Y}_{\theta}$ is finite. Then $\mc{E}_{\theta} = \mc{C}_{\theta}$ by a standard argument,
	and $\mc{C}_{\theta} \cong \La_{\theta}$, generated by the projection of $(1-\zeta_N^{p^{-r}}\zeta_{p^r})_r$.
	As $\Cole \circ z = \xi$, the resulting composition 
	$$
		\La_{\theta} \xrightarrow{\sim} \mc{E}_{\theta} \to H^1_{\Iw}(\Q_{p,\infty},\mc{R}(1)) \xrightarrow{\sim} \mfC
	$$ 
	is given by multiplication by $\xi$. Kummer theory then gives a map of exact sequences
	$$
		\SelectTips{cm}{} \xymatrix{
		0 \ar[r] & \La_{\theta} \ar[r] \ar[d]^{\xi} & H^1_{\Iw}(\mc{O}_{\infty},\mc{R}(1)) \ar[r] \ar@{^{(}->}[d] & R^{\sigma=1} \ar[r] 
		\ar@{=}[d] &
		\tilde{Y}_{\theta} \ar[r] & 0, \\
		0 \ar[r] & \La_{\theta} \ar[r] & \mfC \ar[r] & R^{\sigma=1} \ar[r] & 0,
		}
	$$
	where the upper map to $R^{\sigma=1}$ is given by the valuations at the primes over $p$ (cf. \cite[(11.3.10)]{nsw}).
	Since $\mfC$ has no $\La$-torsion, any element $a \in H^1_{\Iw}(\mc{O}_{\infty},\mc{R}(1))$ 
	with $Xa = b \in \La_{\theta}$ is taken to $X^{-1} \xi b \in \mfC$, and therefore has image $\xi(0)b$ in $R^{\sigma=1}$.
	In other words, we have a surjection $\tilde{Y}_{\theta} \to R^{\sigma=1}/\xi(0)R^{\sigma=1}$. At the same time, $\xi$ annihilates
	$\mf{X}_{\theta}$, so $\xi(0)$ annihilates $\tilde{Y}_{\theta}$, and this map is an isomorphism. 
	
	We then have a surjective map $H^1_{\Iw}(\mc{O}_{\infty},\mc{R}(1)) \to R^{\sigma=1}$ given by $\xi(0)^{-1}$ times the valuation
	maps, and the resulting map $R^{\sigma=1} \to R^{\sigma=1}$ becomes multiplication by $\xi(0)$, or $\xi$. 
	This identifies $H^1_{\Iw}(\mc{O}_{\infty},\mc{R}(1))$ with $\xi\mfC$ as a subgroup of $\mfC$. In other words,
	we have an isomorphism $H^1_{\Iw}(\mc{O}_{\infty},\mc{R}(1)) \cong \mfC$ such that the resulting map $\mfC \to \mfC$
	is multiplication by $\xi$.
\end{proof}

Note that the composition of $z$ with the isomorphism $H^1_{\Iw}(\mc{O}_{\infty},\mc{R}(1)) \to \mfC$ of Lemma \ref{IwmodY0}
is the canonical injection $\La_{\theta} \to \mfC$.

\begin{proposition} \label{zdaggercyclY0}
	Suppose that $Y_{\theta} = 0$. Then Question \ref{zdaggercycl} has a positive answer.
\end{proposition}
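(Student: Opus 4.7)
The plan is to construct the desired $z^{\dagger}$ by modifying the candidate supplied by Proposition \ref{zdagcycl}. Under $Y_{\theta}=0$, Lemma \ref{IwmodY0} identifies $H^1_{\Iw}(\mc{O}_{\infty},\mc{R}(1))$ with $\mfC$ in such a way that $z$ becomes the canonical inclusion $\La_{\theta} \hookrightarrow \mfC$, and identifies $\mf{X}_{\theta}$ with $\mfC/\xi\mfC$. Applying the long exact sequence attached to
$$
0 \to \Lai \cozp \mc{R}(1) \xrightarrow{X\xi} \Lai \cozp \mc{R}(1) \to \mc{R}^{\dagger}(1) \to 0
$$
and using Lemma \ref{H2seq} together with the fact that $X\xi$ annihilates $R_{\sigma=1}$, I obtain an exact sequence
$$
0 \to \mfC/X\xi\mfC \to H^1(\mc{O},\mc{R}^{\dagger}(1)) \to R_{\sigma=1} \to 0,
$$
under which the composition $\omega$ of $z$ with the natural map to intermediate cohomology becomes $1 \mapsto 1 \in \mfC/X\xi\mfC$.

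Fix now a $z_0^{\dagger}$ as provided by Proposition \ref{zdagcycl} and set $d = \omega - (1-\sigma^{-1})z_0^{\dagger}$. By Proposition \ref{interloczeta}, both $\omega$ and $(1-\sigma^{-1})z_0^{\dagger}$ localize to the same map $\La_{\theta} \to H^1(\qp,\mc{R}^{\dagger}(1))$, namely the composition of $z_{\quo}$ with the map from Iwasawa to intermediate local cohomology. Combined with the vanishing at primes $\ell \mid N$ from Lemma \ref{localtriv}, this forces $d$ to factor through $H^1_c(\mc{O},\mc{R}^{\dagger}(1)) \cong \mf{X}_{\theta}$ (Lemma \ref{cptcoh}). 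Because modifying $z_0^{\dagger}$ by any $\La_{\theta}$-module homomorphism $c\colon \La_{\theta} \to \mf{X}_{\theta}$ preserves both the compatibility with $z^{\sharp}$ and the local compatibility with $z_{\quo}^{\dagger}$, it suffices to find such a $c$ with $(1-\sigma^{-1})c = d$ and set $z^{\dagger} = z_0^{\dagger} + c$.

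The hard part will be exhibiting this $c$, which amounts to verifying that the image of $d(1) \in \mf{X}_{\theta}$ in the coinvariants $(\mf{X}_{\theta})_{\sigma=1}$ vanishes. Under the identification $\mf{X}_{\theta} \cong \mfC/\xi\mfC$ with $\mfC = X^{-1}R^{\sigma=1} + R\ps{X}$, the coinvariants decompose into a piece $X^{-1}\tilde{Y}_{\theta} = X^{-1}R^{\sigma=1}/\xi(0)R^{\sigma=1}$ (coming from the $X^{-1}$-coefficient, on which $\sigma$ already acts trivially) and a quotient of $R_{\sigma=1}\ps{X}$. The local identity $(1-\sigma^{-1})z_{\quo}^{\dagger}(1) = \xi$ in $\mfC^{\dagger}$ forces $(1-\sigma^{-1})z_0^{\dagger}(1)$ to agree with $\omega(1) = 1$ modulo the kernel $X\mfC/X\xi\mfC$ of the global-to-local multiplication-by-$\xi$ map $\mfC/X\xi\mfC \to \mfC^{\dagger}$, which corresponds under the identifications to the submodule $X \cdot \mf{X}_{\theta}$. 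Thus $d(1)$ already lies in $X \cdot \mf{X}_{\theta}$, automatically killing the $X^{-1}\tilde{Y}_{\theta}$ component of the coinvariants. The remaining check that $d(1)$ also vanishes in the $R_{\sigma=1}\ps{X}$-part will rest on a direct computation using the decomposition $\xi = \xi(0) + X\xi_1$ together with the freedom (up to $H^1_c(\mc{O},\mc{R}^{\dagger}(1))$) in the choice of $z_0^{\dagger}$, completing the construction.
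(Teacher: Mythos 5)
Your setup is sound up to a point: reducing the problem to finding a $\La_{\theta}$-module map $c \colon \La_{\theta} \to \mf{X}_{\theta} \cong H^1_c(\mc{O},\mc{R}^{\dagger}(1))$ with $(1-\sigma^{-1})c = d$ is a correct reformulation, and you correctly observe that adding such a $c$ to $z_0^{\dagger}$ preserves compatibility with both $z^{\sharp}$ (since $H^1_c(\mc{O},\mc{R}(1)) = 0$) and $z_{\quo}^{\dagger}$ (since $c$ dies locally). However, the proof has a genuine gap at exactly the crucial step: you never establish that $d(1)$ lies in $(1-\sigma^{-1})\mf{X}_{\theta}$. The closing sentence is a promissory note rather than a proof, and the two aids you invoke do not deliver. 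First, the ``freedom in the choice of $z_0^{\dagger}$'' is illusory here: replacing $z_0^{\dagger}$ by $z_0^{\dagger}+e$ with $e \colon \La_{\theta} \to \mf{X}_{\theta}$ changes $d(1)$ precisely by $-(1-\sigma^{-1})e(1)$, so the class of $d(1)$ in $\mf{X}_{\theta}/(1-\sigma^{-1})\mf{X}_{\theta}$ is independent of that choice. Second, the asserted splitting of the $\sigma$-coinvariants of $\mfC/\xi\mfC$ into $X^{-1}\tilde{Y}_{\theta}$ and ``a quotient of $R_{\sigma=1}\ps{X}$'' is not justified; the $X^{-1}R^{\sigma=1}$-part and the $R\ps{X}$-part of $\mfC$ are coupled by the $\xi$-relation, and the coinvariants do not decompose in this na\"ive way.

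The paper completes the argument by a multiplicative, not additive, correction, and this is exactly the idea you are missing. Writing $x = z_0^{\dagger}(1)$ and $y = (1-\sigma^{-1})x \in \mfC^{\dagger}$, the local condition $(1-\sigma^{-1})x_{\loc} = \xi$ forces $\xi y = \xi$ in $\mfC^{\dagger}$; since $\mfC$ is $\xi$-torsion-free and $X\mfC \subset \La_{\theta}$, this places $y$ in the image of $\La_{\theta}$ with $y \equiv 1$ modulo the maximal ideal, so $y$ is a unit there. Taking $\lambda$ with $\lambda y = 1$ and setting $z^{\dagger}(1) = \lambda x$ does the job. In your additive language, the paper's proof is implicitly exhibiting $c(1) = (\lambda - 1)x$: one has $\lambda - 1 \in X\La_{\theta}$, and $X$ annihilates $x_{\loc}$ (the pushout image of $1 \in D(\mc{R})$), so $(\lambda - 1)x$ indeed lies in $\mf{X}_{\theta}$, and
$$(1-\sigma^{-1})\bigl((\lambda-1)x\bigr) = (\lambda - 1)y = \lambda y - y = 1 - y = d(1).$$
This is the concrete computation your write-up lacks; without producing such a $c$ (or proving the vanishing in coinvariants by some other means), the proposal does not prove the proposition.
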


\begin{proof}
	Since $\mfC$ has no $\La$-torsion and $\mf{X}_{\theta}$ is annihilated by $X\xi$, 
	the short exact sequence of Lemma \ref{IwmodY0} gives rise to the first row in the commutative diagram
	$$
		\SelectTips{cm}{} \xymatrix{
		& 0 \ar[d] & 0 \ar[d] & 0 \ar[d]  \\
		0 \ar[r] & \mf{X}_{\theta} \ar[r] \ar@{=}[d] & \mfC^{\dagger} \ar[r]^{\xi} \ar[d] & 
		\mfC^{\dagger} \ar[d] \ar[r] & \mf{X}_{\theta} \ar@{=}[d] \ar[r] & 0 \\
		0 \ar[r] & \mf{X}_{\theta} \ar[r] & H^1(\mc{O},\mc{R}^{\dagger}(1)) \ar[r] \ar[d] & \mfC^{\star} \ar[d] \
		\ar[r] & \mf{X}_{\theta} \ar[r] & 0\\
		&& H^2_{\Iw}(\mc{O}_{\infty},\mc{R}(1)) \ar[r]^-{\sim} \ar[d] & R_{\sigma=1} \ar[d] \\
		&& 0 & 0
		}
	$$
	with exact rows and columns. The exactness of the middle row is by the Poitou-Tate sequence, Theorem \ref{loccohdag}, and Lemmas
	\ref{localtriv} and \ref{cptcoh}, while the isomorphism of the final row is Lemma \ref{H2seq}.

	Let $x \in H^1(\mc{O},\mc{R}^{\dagger}(1))$ map to $1 \in R_{\sigma=1}$, and set $y = (1-\sigma^{-1})x \in \mfC^{\dagger}$,
	using the identification given by Lemma \ref{IwmodY0}.
	By definition of $\mfC^{\star}$, the image $x_{\loc}$ of $x$ in $\mfC^{\star}$
	has the property that $y_{\loc} = 
	(1-\sigma^{-1})x_{\loc} = \xi  \in \mfC^{\dagger}$. 
	This forces $\xi y = \xi$, and hence $y \equiv 1 \bmod X$ in the image of $\La_{\theta}$ in $\mfC^{\dagger}$. Choose
	$\lambda \in \La_{\theta}$ such that $\lambda y = 1$. We then define $z^{\dagger}$ as the unique $\La_{\theta}$-module
	homomorphism with $z^{\dagger}(1) = \lambda x$. 
	
	By construction, $x_{\loc} = \Cole^{\dagger} \circ z_{\quo}^{\dagger}(1)$. Since $z_{\quo}^{\dagger}(1)$ is 
	$\Gamma$-fixed, we have that $\lambda x_{\loc} = x_{\loc}$, and $z^{\dagger}$ restricts to $z_{\quo}^{\dagger}$. 
	Moreover, $(1-\sigma^{-1})z^{\dagger}(1) = 1 \in \mfC^{\dagger}$ is the image of
	$z(1)$, so we have the commutativity of the diagram in Question \ref{zdaggercycl}. Finally, $(1-\sigma^{-1})z^{\dagger}(1)$ has
	image $(1-\sigma^{-1})z^{\sharp}(1) \in H^1(\mc{O},\mc{R}(1))$, 
	and the norm of $z^{\sharp}(1)$ under the subgroup generated by $\sigma$ is trivial, so
	$z^{\dagger}(1)$ maps to $z^{\sharp}(1)$ as well.
\end{proof}

Without assuming that $Y_{\theta} = 0$, the existence of $z^{\dagger}$ as in Question \ref{zdaggercycl} requires the splitting of
the exact sequence of Lemma \ref{H2seq} as $R$-modules. 

\begin{ack}
	The author thanks Takako Fukaya and Kazuya Kato for their brilliant work on his conjecture and
	very helpful discussions.  He also thanks Barry Mazur and Preston Wake for insightful conversations related to this work,
	Ralph Greenberg for a helpful conversation on the test case of the last section, and Richard Hain for his explanations
	regarding Poincar\'e duality. The author also thanks the referee for many useful suggestions for improvements to and
	clarifications of our exposition that we have incorporated.
	This research was supported in part by the National Science Foundation under Grant No. 2101889.
\end{ack}

\renewcommand{\baselinestretch}{1}

\end{document}